\documentclass[sn-mathphys,Numbered]{sn-jnl}

\usepackage{amsmath}
\usepackage{amssymb}
\usepackage{comment}
\usepackage{amsfonts}
\usepackage{epsfig}
\usepackage{graphicx}
\usepackage{hyperref}
\usepackage{xcolor}
\usepackage{todonotes,color} 
\usepackage{enumitem}
\usepackage{stmaryrd}
\usepackage{amsthm}
\usepackage{mathabx} 
\usepackage{multicol}
\usepackage{savesym}
\savesymbol{widering}
\usepackage[T1]{fontenc}
\usepackage[type1]{libertine}

\usepackage[libertine]{newtxmath}
\usepackage{bm}

\newtheorem{theorem}{Theorem}[section]
\newtheorem{proposition}[theorem]{Proposition}
\newtheorem{corollary}[theorem]{Corollary}

\newtheorem{lemma}[theorem]{Lemma}

\newtheorem{remark}[theorem]{Remark}

\raggedbottom

\def\N{\mathbb{N}}
\def\R{\mathbb{R}}
\def\C{\mathbb{C}}
\def\T{\mathbb{T}}
\def\Z{\mathbb{Z}}
\def\OO{\mathcal{O}}

\def\th{\theta}

\def\dd{\,d}

\def\fN{f^{N}}
\def\gN{g^{M}}

\def\hP{h^{P}}

\def\fNg{f^{\geq N+1}}
\def\gNg{g^{\geq M+1}}

\def\hPg{h^{\geq P+1}}

\def\fNm{\overline{f}^{N}}
\def\gNm{\overline{g}^{M}}

\def\hPm{\overline{h}^{P}}

\def\fNt{\widetilde{f}^{N}}

\def\hPm{\overline{h}^{P}}

\def\Cv{a_{V}}
\def\CIP{a_V^{\pa}}

\def\pa{\mathbf{p}}
\def\Qa{\mathbf{Q}}
\def\TT{\mathbf{w}}

\def\apa{a_{\pa}}
\def\bpa{b_{\pa}}

\def\Apa{A_{\pa}}

\def\CIP{a_V^{\pa}}

\def\CC{\mathbf{C}}

\def\u{u}
\newcommand{\m}{\mathfrak{m}}
\def\Mg{\mathcal{M}}
\def\M{M}
\def\v{\Theta}
\def\vf{\mathup{\theta}}
\newcommand{\cbf}{\mathbf{c}}

\renewcommand{\H}{\mathcal{H}}
\renewcommand{\L}{\mathcal{L}}
\newcommand{\NN}{\mathcal{N}}
\def\Cc{\mathcal{C}}
\def\Spec{{\rm Spec}\,}
\newcommand{\Id}{\mathrm{Id}}
\newcommand{\dist}{\mathrm{dist}}
\newcommand{\wh}{\widehat}
\newcommand{\wt}{\widetilde}

\def\Re{{\rm Re}\,}
\def\Im{{\rm Im}\,}
\def\Lip{{\rm Lip}\,}

\def\xu{u}
\def\K{\mathcal{K}}
\def\RR{\mathcal{R}}

\def\MM{\mathcal{M}}
\def\KK{\mathcal{K}}

\def\BB{\mathcal{B}}
\def\XX{\mathcal{X}}

\def\A{\mathbf{A}}
\def\B{\mathbf{B}}

\newcommand{\TTT}{\mathcal{T}}
\newcommand{\U}{\mathcal{U}}

\newcommand{\LL}{\mathcal{L}}

\newcommand{\F}{\mathcal{F}}
\newcommand{\eps}{\varepsilon}
\newcommand{\Kc}{K}

\newcommand{\Vext}{\mathbb{A}}
\newcommand{\Bext}{\mathbb{B}}
\numberwithin{equation}{section}

\DeclareRobustCommand{\mathup}[1]{\begingroup\changegreek\mathrm{#1}\endgroup}

\makeatletter
\def\changegreek{\@for\next:={%
  alpha,beta,gamma,delta,epsilon,zeta,eta,theta,kappa,lambda,mu,nu,xi,pi,rho,sigma,%
  tau,upsilon,phi,chi,psi,omega,varepsilon,vartheta,varpi,varrho,varsigma,varphi}%
  \do{\expandafter\let\csname\next\expandafter\endcsname\csname\next up\endcsname}}
\def\changegreekbf{\@for\next:={%
  alpha,beta,gamma,delta,epsilon,zeta,eta,theta,kappa,lambda,mu,nu,xi,pi,rho,sigma,%
  tau,upsilon,phi,chi,psi,omega,varepsilon,vartheta,varpi,varrho,varsigma,varphi}%
  \do{\expandafter\def\csname\next\expandafter\endcsname\expandafter{%
    \expandafter\bm\expandafter{\csname\next up\endcsname}}}}

\makeatother

\begin{document}
\title[Invariant manifolds of degenerate tori and  parabolic orbits to infinity]{
Invariant manifolds of degenerate tori and double parabolic orbits to infinity in the $(n+2)$-body problem}

\author[1,3]{\fnm{Inmaculada} \sur{Baldom\'a}}\email{immaculada.baldoma@upc.edu}

\author[2,3]{\fnm{Ernest} \sur{Fontich}}\email{fontich@ub.edu}

\author[1,3]{\fnm{Pau} \sur{Mart\'in}}\email{p.martin@upc.edu}

\affil[1]{\orgdiv{Departament de Matem\`atiques}, \orgname{Universitat Polit\`ecnica de Catalunya (UPC)}, \orgaddress{\street{Pau Gargallo, 14}, \postcode{08028} \city{Barcelona},   \country{Spain}}}

\affil[2]{\orgdiv{Departament de Matem\`atiques i Inform\`atica}, \orgname{Universitat de Barcelona (UB)}, \orgaddress{\street{Gran Via, 585}, 
\postcode{08007 Barcelona}, 
\country{Spain}}}

\affil[3]{
\orgname{Centre de Recerca Matem\`atica (CRM)}, \orgaddress{\street{Carrer de l'Albareda},  \postcode{08193} \city{Bellaterra},  \country{Spain}}}

\abstract{There are many interesting dynamical systems in which degenerate invariant tori appear. We give conditions under which these degenerate tori have stable and unstable invariant manifolds, with stable and unstable directions having arbitrary finite dimension. The setting in which the dimension is larger than one was not previously considered and is technically more involved because in such case the invariant manifolds do not have, in general,  polynomial approximations. As an example, we apply our theorem to prove that there are motions in the $(n+2)$-body problem in which the distances among the first $n$ bodies remain bounded for all time, while the relative distances between the first $n$-bodies and the last two and the distances between the last bodies tend to infinity, when time goes to infinity. Moreover, we prove that the final motion of the first $n$ bodies corresponds to a KAM torus of the $n$-body problem.} 

\keywords{Parabolic Tori, Invariant manifolds, $N$-Body Problem}
 



\maketitle

\tableofcontents

\section{Introduction}

\subsection{Parabolic invariant tori with stable and unstable invariant manifolds}

Consider, as a motivating example, the analytic local system of ordinary differential equations
\begin{equation}
\label{eq:Problema_de_Simo}
\left\{
\begin{aligned}
	\dot x & = f(x,y) \left(A_s x + X(x,y,\theta)\right), \\
		\dot y & = f(x,y) \left(A_u y + Y(x,y,\theta)\right), \\
		\dot \theta & = \omega + \Theta(x,y,\theta),
\end{aligned}
\right.
\end{equation}
where $(x,y) \in B \subset \R^n\times \R^m$, $B$ is a ball around the origin, $\theta \in \T^d = (\R/2\pi\Z)^d$, the matrices $A_s$ and $A_u$ satisfy $\mathrm{Spec}\, A_s$, $\mathrm{Spec}\, (-A_u) \subset \{z\in \C\mid\; \mathrm{Im}\, z <0\}$, $\omega \in \R^d$ is a Diophantine frequency vector, $X$, $Y$ are of order greater or equal than 2 with respect $(x,y)$, and $\Theta$ of order greater or equal than 1. Assume that $f$ has order $N$ in $(x,y)$, with $N\ge 0$. Under these hypotheses, the set $\TTT = \{x=0, \; y=0\}$ is an invariant torus of the system and the flow on $\TTT$ is a rigid rotation with frequency vector $\omega$.

If $f \equiv 1$, it is well known that $\TTT$ is an invariant hyperbolic torus with stable and unstable invariant manifolds, which are analytic graphs over $(x,\theta)$ and $(y,\theta)$, respectively.

Assume that $N\ge 1$. Then, the set $\TTT$, although still invariant, is no longer hyperbolic but degenerate. We will say that $\TTT$ is a \emph{parabolic} torus, as opposed to hyperbolic and elliptic. 
In this case, it is a non-trivial matter to establish the local behaviour of the system around $\TTT$. For instance, if $d = 0$, that is, if~\eqref{eq:Problema_de_Simo} does not depend on the angles $\theta$, the system, provided $f(x,y)\ne 0$, is equivalent to a system with a hyperbolic fixed point (by means of the rescaling of time $ds/dt = f(x,y)$) and, hence, it possesses \emph{formal} stable and unstable invariant manifolds, $y = \gamma^s(x)$ and $x = \gamma^u(y)$, in the sense  that $\gamma^{s,u}$ are formal series which are  invariant by~\eqref{eq:Problema_de_Simo}. However, if $d\ge 1$, $n \ge 2$, and $f$ is not a function depending only on $x$, it is not difficult to see that, in general, there is no formal stable manifold because, if one tries to find $y = \gamma^s(x,\theta)$ as a  series in $x$ with coefficients depending on $\theta$  invariant by~\eqref{eq:Problema_de_Simo}, formal obstructions appear. On the contrary, it is not difficult to see that, if $n=1$ or $f$  only depends on $x$, there is always a series representing the stable manifold, regardless of the dimension of the angles. 

Of course, the existence of a formal stable invariant manifold of $\TTT$ does not imply the existence of a true invariant one nor the formal obstructions  necessarily prevent the existence of a true invariant manifold. These questions, that is, if $\TTT$ in~\eqref{eq:Problema_de_Simo} possesses stable or unstable invariant manifolds and, in the case it does, what kind of regularity these manifolds have, were posed by Sim\'o in his 10th problem \cite{Simo2018}, were he remarked the formal obstructions that appear in the case $d\ge 1$ and $n \ge 2$. 

In the present work  we will consider a more general situation, namely, vector fields of the form
\begin{equation}
\label{camp_vectorial_intro}
X(x,y,\theta) = 
\begin{pmatrix}   f^N(x,y,\th,\lambda)  + \OO(\|(x,y)\|^{N+1})   \\
 g^{M} (x,y,\th,\lambda) + \OO(\|(x,y)\|^{M+1}) \\
\omega  +  h^{\geq P}  (x,y,\th,\lambda ) 
\end{pmatrix},
\end{equation}
where $f^N$, $g^M$, and $h^P$ are functions of orders $N$, $M$, and $P$ in $(x,y)$, respectively. Here, the set $\TTT$ is also invariant by the flow of $X$. We will provide a set of assumptions under which $\TTT$ has a stable invariant manifold. 
For the unstable manifold one simply has to consider the reversed time vector field.
Observe that equation~\eqref{eq:Problema_de_Simo} is a particular case of this type of vector fields.

It is important to remark that equation~\eqref{eq:Problema_de_Simo}, although degenerate, appears in many interesting problems. The fact that in many cases $\TTT$ possesses stable and unstable invariant manifolds, has important consequences in the global dynamics of the corresponding systems. 
Actually, we will deal, more generally, with a quasiperiodic non-autonomous version of 
\eqref{camp_vectorial_intro}.

One of the first important examples is the Sitnikov problem~\cite{Sitnikov60,Moser01}, a particular instance of the restricted 3-body problem. In some special coordinates, the Sitnikov problem can be written in the form~\eqref{eq:Problema_de_Simo} with $n=1$, $d=1$, and $f(x,y) = (x+y)^3$. 
McGehee~\cite{McGehee73} proved an existence result of analytic (out of the fixed point) stable manifolds for two dimensional maps  
which implies the existence of an analytic stable manifold for $\TTT$. 
A generalization of this statement for $C^k$ maps providing one dimensional stable manifolds in arbitrary dimension was carried out in~\cite{BFdLM2007}, using the parametrization method. Besides the Sitnikov problem, the restricted planar 3-body problem, either circular or elliptic~\cite{SimoL80,DelshamsKRS12,GuardiaMS14,GuardiaMSS17}, or the planar 3-body problem~\cite{GuardiaMPS22} can be written in the form~\eqref{eq:Problema_de_Simo} with $n=1$, $d=1$, and $f(x) = (x+y)^3$, with important dynamical consequences. Indeed, in all these works, devoted to show the existence of either chaotic and oscillatory motions or diffusion phenomena, one of the key ingredients of the proof is the existence of invariant manifolds of certain parabolic fixed points or periodic orbits at infinity and their analytic dependence with respect to several parameters. See also \cite{Don33}  for a different approach to parabolic tori in celestial mechanics. Parabolic points with invariant manifolds can also be found in problems in economics (see~\cite{Lee2021,Fefferman2021}). In this last case, $n=1$, $d=0$, and $f(x) = x$.

The approaches in \cite{McGehee73,BFdLM2007} required $n=1$ and $d=1$, that is, they only work if the stable invariant manifold for the stroboscopic return map is one dimensional. The generalization for $d\ge 2$ but keeping $n=1$ was carried out in~\cite{BFM20}, with implications in the general $n$-body problem, which, in certain parts of the phase space, can be written in the form~\eqref{eq:Problema_de_Simo} with $n=1$, $d=2n+2$, and $f(x) = x^3$. In this case, $\TTT$ in~\eqref{eq:Problema_de_Simo} admits a formal stable invariant manifold as a power series in $x$ with coefficients depending on $\theta$, which is used as a seed in the parametrization method.

Studying parabolic fixed points with stable invariant manifolds of dimension larger than one with the parametrization method is more involved. The reason is that, unlike the previous cases, if the dimension of the invariant manifolds is larger than one, in general they do not admit a Taylor expansion at the fixed point. To overcome this difficulty, it was shown in~\cite{BFM2020a,BFM2020b} that, for vector fields of the form~\eqref{camp_vectorial_intro} with $d=0$, under suitable hypotheses, they admit expansions as sums of homogeneous functions of increasing order. Having in mind some applications to celestial mechanics (see section~\ref{introducio:mecanica_celest}), in the present work we extend the results in~\cite{BFM2020a,BFM2020b} to parabolic tori. 

\subsection{Degenerate tori and homogeneous functions}

The purpose of the present paper is twofold. On the one hand, we present a general theorem which, under suitable conditions, provides the existence of invariant manifolds of the invariant torus $\TTT$ for vector fields of the form~\eqref{camp_vectorial_intro} (and for maps with equivalent conditions).  On the other, we show the existence of new type of orbits in the $N$-body problem, defined for all time either in the future or in the past, with a prescribed final behaviour. We call these orbits \emph{double parabolic orbits to infinity}. See Section~\ref{introducio:mecanica_celest} for an accurate description of these motions.

The conditions we impose on the vector field~\eqref{camp_vectorial_intro} are placed in Section~\ref{sec:setupmap} (they are completely analogous for maps and for flows). Of course, since the linearization of the vector field at $\TTT$ vanishes identically, they have to involve several terms of the jet of the vector field at the torus. In fact, they only involve the \emph{first} non-vanishing terms of the jet of the $(x,y)$-components of the vector field at the torus, plus a very mild condition on the angular directions. In particular, they imply the existence of a weak contraction in the $x$-direction and a weak expansion in the $y$-direction, but some other requirements are also needed.

We apply the parametrization method~\cite{CabreFL03a,CabreFL05,HaroCFLM16} to find the invariant manifolds of $\TTT$ in~\eqref{camp_vectorial_intro}. 
The main differences among the results in the present paper and those in~\cite{BFM2020a,BFM2020b} are the following. 

First, instead of considering parabolic fixed points, here we consider parabolic tori. This is a non-trivial extension that widens the field of application of the results. We are interested in particular in the case where the dynamics on the manifold synchronizes with the one on~$\TTT$. This fact, that always happens in the hyperbolic case, may not occur in the parabolic one. Our theorem is also valid even when this synchronization does not take place, and we give conditions under which it happens. In this sense, we improve the results in~\cite{BFM20}, where only the cases where the synchronization occurs where considered. One of the consequences of synchronization is that then the invariant manifolds are foliated by the stable leaves of the points in the torus and this foliation is regular in the base.

Second, we do not require the vector field to be defined in a whole neighborhood of the torus, not even at a formal level. We only require some kind of regularity in sectorial domains with the torus at their vertex, expressed in terms of homogeneous functions. We do require the leading terms to be defined and regular around the torus, although we believe that this requirement may be relaxed and we impose it for convenience, since it holds in the examples we consider.

Third, we consider only the analytic case. The only reason is to simplify the proof. We believe that the arguments in~\cite{BFM2020a,BFM2020b} to deal with the $C^k$ case can be adapted here, but they are rather cumbersome and the applications we consider are analytic.

The existence of the manifolds is formulated as an \emph{a posteriori} result, that is, in Theorem~\ref{thm:posterioriresult}, for maps, or Theorem~\ref{thm:posterioriresultflow}, for flows, we show that, if the invariance equation~\eqref{invarianceequationmap}, in the case of maps, or~\eqref{inveqflows}, in the case of flows, admits an approximate solution as sum of homogeneous functions of increasing order up to some specified order, then it has a true analytic solution. Separately, Theorem~\ref{thm:approximationmaps} (Theorem~\ref{th:existenceflow}, in the case of flows) provides such approximation. We emphasize that, in general, there is no polynomial approximate solution of the invariance equations~\eqref{invarianceequationmap} or~\eqref{inveqflows} since formal obstructions appear. Obtaining this approximate solution is a non-trivial task. Finally, Theorems~\ref{th:existencemap} and~\ref{th:existenceflow} simply join the \emph{a posteriori} and the approximation results into an existence result, to ease their application in practice.

Theorems~\ref{thm:posterioriresult} and~\ref{thm:approximationmaps} apply in the case the involved maps have the form
\[
F_\lambda \begin{pmatrix} x \\ y \\ \th
\end{pmatrix}
=
\begin{pmatrix} x  +   f^{\geq N} (x,y,\th,\lambda)   \\
y  + g^{\geq M} (x,y,\th,\lambda) \\
\th +\omega  +  h^{\geq P}  (x,y,\th,\lambda ) 
\end{pmatrix}.
\]
We add Corollary~\ref{cor:rootsunity}, which applies to maps of the form
\[
G_\lambda(x,y,\th)= \left (\begin{array}{c} \A x + f^{\geq N}(x,y,\th,\lambda) \\ \B y + g^{\geq M}(x,y,\th,\lambda) \\ \th + \omega + h^{\geq P}(x,y,\th,\lambda) \end{array}\right ) ,\qquad  \Spec\A,\;\Spec \B \subset 
\bigcup_{k\in \Z} \{z\in \C \mid\, z^k=1\}.
\]
These kind of maps appear in~\cite{Lee2021,Fefferman2021}, where a certain economic model based on critical values is considered.  

\subsection{Double parabolic orbits to infinity in the $(n+2)$-body problem}
\label{introducio:mecanica_celest}

We present an application of Theorem~\ref{th:existenceflow} to celestial mechanics, more concretely, to obtain new types of solutions of the full planar $N$-body problem. In the present paper, by direct application of Theorem~\ref{th:existenceflow}, we show that the set of \emph{double parabolic orbits to infinity} contains manifolds of certain dimension. As far as we know, these solutions have not been previously found. They are defined either for all future or all past time, avoiding collision and non-collision singularities. Further analysis, completely beyond the scope of the present paper, could lead to the existence of solutions that combine both of them, from the past to the future. The existence of solutions of the $n$-body problem combining prescribed final motions in the past and the future is an important question that has been addressed with different techniques in different instances of the problem (see, amongst others,~\cite{Chazy22,Sitnikov60,Moser01,LlibreS80,GuardiaMS14,BarCT21,CapGMSZ22,GuardiaMPS22}).

In a precise way, here \emph{double parabolic orbits to infinity} means the following. Consider the planar $(n+2)$-body problem, with $n\ge 1$. Denote by $Q_0$ the cluster of the first $n$ masses and by $q_0$ the position of their center of mass in some inertial system of reference. Let $q_n$ and $q_{n+1}$ be the positions of the last two bodies. Let $p_0$, $p_n$ and $p_{n+1}$ be their corresponding momenta. Denote by $d_k$  the distance between $q_0$ and $q_k$, $k=n,n+1$, and by $d_0$, the distance between $p_n$ and $p_{n+1}$. Assume, for the moment, that these three distances are infinite, while their momenta $p_0 = p_n = p_{n+1} = 0$. We prove that, in some coordinates, the vector field describing the $(n+2)$-body problem is regular around this configuration. We remark that, in this configuration, the relative positions of $q_0$, $q_1$, and $q_2$ are not free. They are described in this section, below. When the three clusters are at infinity with zero momenta, the motion of the bodies in $Q_0$ is described by an $n$-body problem. It is well known that KAM tori exist in the $n$-body problem~\cite{Arnold63b,Fejoz04,ChierchiaP11}. We choose \emph{any} of those KAM tori. In these regularized variables, the configuration in which the chosen KAM tori and the other two masses are at infinity is a regular invariant torus with dynamics conjugated to a Diophantine rotation. The vector field has the form~\eqref{camp_vectorial_intro}. Our aim is to find invariant manifolds of solutions that tend either in the past or in the future to this invariant torus. 

It is well known, however, that any solution of the $(n+2)$-body problem in which the three clusters arrive to infinity with parabolic velocity must tend to a central configuration of the $3$-body problem for $q_0$, $q_1$ and $q_2$~\cite{MarS76} (see also~\cite{SaaH81,MadV09,BosDFT21}), that is, either the relative positions of the three clusters tend to an equilateral triangle or to a collinear configuration, which only depends on the masses of the bodies. See Figure~\ref{fig:configuracions_centrals}. This is not the case when the limit velocities are hyperbolic~\cite{MadV20}.

\begin{figure}[h]
\begin{center}
\includegraphics[width=.4\textwidth,bb=0in -1.5in 5in -1in]{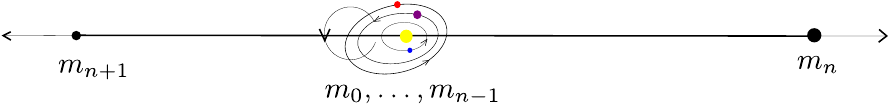}
\hspace{1.2cm}
\includegraphics[width=.4\textwidth]{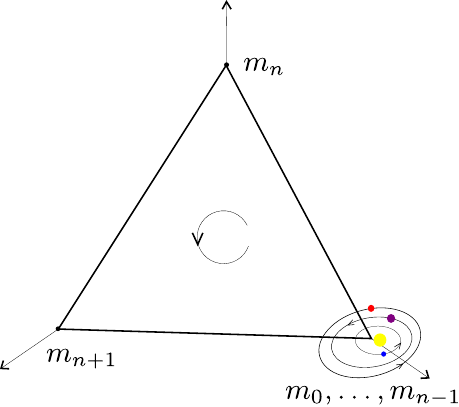}
\end{center}
\caption{Tending to collinear and equilateral  configurations.}
\label{fig:configuracions_centrals}
\end{figure}

Let $m_0, \dots, m_{n+1}$ be the (non-zero) masses  of the planar $(n+2)$-body problem. 
Let $m_j$, $0\le j\le n-1$, 
be fixed and assume that $ m_n,m_{n+1}$ are small enough.

We recall that the planar $(n+2)$-body problem admits a Hamiltonian formulation (see~\eqref{def:Hamiltoniancartesian} for the Hamiltonian formulation and, in general, Section~\ref{sec:coordenadesdeJacobi} for the actual description of the problem and the coordinates we use). It has three classical first integrals, besides the energy, namely, two corresponding to the total linear momentum and one to the total angular momentum. 
Fix any fixed value of the total linear momentum (that can be assumed to be 0), any value of the total angular momentum, and reduce the problem by these integrals. The reduced problem has $2n+1$ degrees of freedom. In the reduced system, we consider three clusters of masses: the first one, containing masses $m_0$ to $m_{n-1}$, and the second and third ones, containing  the masses $m_n$ and $m_{n+1}$, respectively.

Consider the following ``central configurations'' of the planar $3$-body problem:
\begin{enumerate}
    \item[(E)] an equilateral triangle, with a cluster in each vertex,
    \item[(C)] a collinear configuration, where the first, more massive, cluster lies between the two lighter ones.
\end{enumerate}
In the case of the first cluster, which involves several bodies, to be on a vertex means that the center of mass of the cluster lies on the vertex. In the case (E), there is only one of such configurations, modulo permutation of the vertices. The case (C), modulo permutation of the lighter bodies, there is also a single one.

Both in the cases (E) and (C), when the mutual distances of the clusters are infinite and the momenta of each cluster are $0$, the motion of the bodies in the first cluster is described by a $n$-body problem after reduction of the total linear momentum. Let $\TTT$ be a KAM torus of this $n$-body problem, with Diophantine frequency $\omega$. It has dimension $2(n-1)$. Observe that $\TTT$ does not depend on the masses $m_n$, $m_{n+1}$. We call $\TTT_E$ and $\TTT_C$ the invariant torus of the $(n+2)$-body problem where the first cluster evolves in $\TTT$, while the three clusters are in either (E) or (C) configuration, at infinity with zero momentum.

\begin{theorem}
\label{thm:solucionsdoblementparaboliqueainfinitv1}
If $m_n$ and $m_{n+1}$ are small enough but both different from $0$, with the smallness condition only depending on $M_0 = \sum_{k=0}^{n-1} m_k$, the following holds.
\begin{itemize}
    \item $\TTT_E$ possesses $3+2(n-1)$ dimensional stable and unstable manifolds, $W_E^{u,s}$, that can be parametrized by some variables $(u,\varphi) \in V\times \T^{2(n-1)} \subset \R^3 \times \T^{2(n-1)}$, $V$ being some sectorial domain in $\R^3$ with the origin in its vertex,  and such that the $\varphi$-dynamics is given by $\dot \varphi = \omega$.
    \item $\TTT_C$ possesses $2+2(n-1)$ dimensional stable and unstable manifolds, $W_C^{u,s}$, that can be parametrized by some variables $(u,\varphi) \in V\times \T^{2(n-1)} \subset \R^2 \times \T^{2(n-1)}$  and such that the $\varphi$-dynamics is given by $\dot \varphi = \omega$.
\end{itemize}
\end{theorem}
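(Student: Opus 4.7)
The plan is to deduce Theorem~\ref{thm:solucionsdoblementparaboliqueainfinitv1} directly from Theorem~\ref{th:existenceflow}; the bulk of the work is to cast the equations of motion of the $(n+2)$-body problem near the ``three clusters at infinity'' configuration into the normal form~\eqref{camp_vectorial_intro} and to verify the theorem's hypotheses.

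First, I would use the Jacobi-type coordinates of Section~\ref{sec:coordenadesdeJacobi}: internal Jacobi coordinates within the first cluster (which produce, after linear-momentum reduction, a self-contained $n$-body Hamiltonian whose phase space carries the chosen KAM torus $\TTT$), together with two outer Jacobi pairs encoding the positions of $q_n$ and $q_{n+1}$ relative to the center of mass of $Q_0$ and to each other. After reducing by the total linear and angular momentum, I would perform a McGehee-type blow-up of infinity, trading the three large inter-cluster distances $d_0,d_n,d_{n+1}$ for small variables (inverse fractional powers) and rescaling their conjugate momenta so that parabolic velocity corresponds to a zero of the new momentum variables. In these variables the ``three clusters infinitely far apart with zero momentum, first cluster on $\TTT$'' set becomes a regular invariant torus $\TTT_E$ or $\TTT_C$ of the flow, the induced dynamics on it being conjugate to the Diophantine rotation with frequency $\omega$.

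Second, I would expand the vector field around $\TTT_E$ (respectively $\TTT_C$) in the normal coordinates to identify the leading homogeneous parts $f^N, g^M, h^P$ in~\eqref{camp_vectorial_intro}. The Newtonian potential at a central configuration of the three clusters gives a normal Hessian whose signature is classical: real eigenvalues of definite sign, dictating the splitting into stable and unstable directions. The classical fact that any triple-parabolic escape must asymptote to one of the two central configurations of Figure~\ref{fig:configuracions_centrals} picks out exactly the cases (E) and (C), and the dimension counts $3$ and $2$ appearing in the statement come from counting the nondegenerate normal directions of the reduced potential at each configuration after modding out the obvious continuous symmetries: in the equilateral case three independent relative radial modes survive, while in the collinear case one degenerates because of the line symmetry along the axis of centers.

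Third, I would verify the hypotheses of Theorem~\ref{th:existenceflow}: the Diophantine condition on $\omega$ is inherited from the KAM construction; the spectral requirements on the leading homogeneous parts $f^N, g^M$ (as embodied in the matrices $A_s, A_u$ of~\eqref{eq:Problema_de_Simo}) follow from the previous step; and the non-resonance and angular hypotheses involving $h^P$ are automatic because the internal dynamics on $\TTT$ decouple from the outer McGehee variables at leading order. The smallness assumption on $m_n, m_{n+1}$ is used here to ensure that the leading homogeneous parts coming from the outer (3-body-of-clusters) interaction dominate the contribution of the internal cluster dynamics, so that the spectral/non-resonance structure computed at the pure three-cluster central configuration survives; at the same time it preserves $\TTT$ as a genuine KAM torus of the reduced internal $n$-body problem.

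The principal technical obstacle will be the detailed expansion of the full $(n+2)$-body Hamiltonian in the McGehee coordinates around each central configuration: checking that the leading outer terms match the structure required by Theorem~\ref{th:existenceflow}, that the coupling with the internal KAM torus enters only at sufficiently high order so that the approximate solution of the invariance equation can indeed be constructed as a sum of homogeneous functions of increasing order (as required by the approximation part of Theorem~\ref{th:existenceflow}), and that the resulting stable/unstable directions have exactly the claimed dimensions $3$ and $2$. Once these book-keeping computations are carried out, Theorem~\ref{th:existenceflow} applied with the forward time direction gives the stable manifold, and applied to the reversed vector field gives the unstable manifold, yielding $W_E^{u,s}$ and $W_C^{u,s}$ together with the claimed $\varphi$-dynamics $\dot\varphi=\omega$.
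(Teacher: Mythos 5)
Your high-level road map — Jacobi coordinates, reduction by linear/angular momentum, a symplectic conjugation to straighten the chosen KAM torus of the internal $n$-body problem, a double McGehee blow-up of infinity, and then applying Theorem~\ref{th:existenceflow} in forward and reversed time — is the same as the paper's. But two points in your sketch are wrong or missing, and they are precisely the places where the real work is done.

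First, your explanation of the dimension discrepancy ($3$ versus $2$) is incorrect. You attribute it to a ``degeneracy because of the line symmetry along the axis of centers'' that kills one relative radial mode in the collinear case. In fact the normal directions to the torus are the \emph{same} in both cases; what changes is whether the direction transverse to the central configuration in the angle $\theta_n$ (the angle between the two outer Jacobi vectors) is attracting or repelling. Concretely, after the blow-up one obtains an eigenvalue $-\gamma_2$ in that direction, with $\gamma_2$ proportional to $\partial^2_\theta V_0$ evaluated at the collinear or equilateral angle (Lemma~\ref{lem:fitestermeacoblament} and~\eqref{def:coeficientsdedottildegn}). For the collinear configuration $\partial^2_\theta V_0<0$ so $\gamma_2<0$ and this mode goes into the $y$-block (expanding), giving a $2$-dimensional $x$; for the equilateral one $\partial^2_\theta V_0>0$ so $\gamma_2>0$, the mode becomes contracting and joins the $x$-block, giving dimension $3$. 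This is not a symmetry count; it is the sign of a second-order coefficient of the coupling potential.

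Second, even after identifying the linearized normal structure correctly, the equilateral case cannot be fed straight into Theorem~\ref{th:existenceflow}: the contracting eigenvalue $-\gamma_2 = \OO(m_n,m_{n+1})$ in the $\theta_n$-direction is much slower than the remaining contraction rates $\nu, 2+\OO(m)$, and the ratio hypotheses $A_f>b_f\max\{1,N-P\}$ in Theorem~\ref{th:existenceflow} fail for the raw system~\eqref{eq:sistemacompletredressat}. The paper fixes this by an additional substitution $x_n=\wh x_n^\ell$ for a suitably large integer $\ell$, which raises the order $N$ of the leading homogeneous terms and at the same time renormalizes the fast contraction rate to $\nu/\ell<\gamma_2$; only after this scaling does the equilateral system fall into the form~\eqref{systemflow} with constants satisfying the hypotheses of the existence theorem (Proposition~\ref{prop:casequilater}). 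Your sketch omits this step entirely, and without it the verification of the hypotheses of Theorem~\ref{th:existenceflow} would simply fail in the equilateral case. Relatedly, you should also mention explicitly the averaging/normal-form steps (Proposition~\ref{prop:Hamiltoniapromitjat}) that remove the $\varphi$-dependence of several low-order coefficients of the Hamiltonian, since the existence theorem is formulated for a system whose averaged leading terms $\overline{f}^N,\overline{g}^M$ satisfy the contraction and cone conditions; the raw Hamiltonian after the McGehee blow-up does not yet have this structure.
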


Theorem~\ref{thm:movimentsdoblementparabolicsv2} is a rewording of Theorem~\ref{thm:solucionsdoblementparaboliqueainfinitv1}, expressed in appropriate coordinates, after the explicit reduction by the total linear momentum and the total angular momentum of the system is done. This reduction is performed in Section~\ref{sec:coordenadesdeJacobi}. Later on,
in Section~\ref{sec:tordiofantic}, we introduce the quasiperiodic solutions, which correspond to trajectories on invariant tori of the $n$-body problem.

Theorem~\ref{thm:solucionsdoblementparaboliqueainfinitv1} assumes that the masses of the last two bodies are small, 
but different from~$0$. It provides the existence of an invariant manifold of solutions tending to parabolic motions in a collinear  configuration where the cluster of more massive bodies is between the last two, and to an equilateral configuration, respectively. There is still another possible final configuration, the remaining collinear case,
in which the cluster of more massive bodies moves to infinity in one direction while the last ones go to infinity in the other one. Our current proof does not cover this case, although we believe it could be extended, with additional effort, to include it.

We assume that the masses of the last two bodies are small. In doing so, roughly speaking, the problem becomes 
perturbative, since the interaction between the large cluster with each of the small masses is $\OO(m_n,m_{n+1})$ while  the interaction between the last masses themselves is  $\OO(m_nm_{n+1})$. 
However, the coupling between the small masses is crucial and the existence of the manifolds strongly depends on the non-vanishing
of a coefficient of the perturbation. If the masses are small, this non-degeneracy can be easily checked. The sign of the coefficient is different for $\TTT_E$ and $\TTT_C$, the two configurations we consider, which is the reason why the corresponding invariant manifolds have different dimension. If the masses are taken larger, bifurcations may occur (as happens, for instance, for the Lagrange points $L_4$ and $L_5$ of the restricted $3$-body problem). We have not pursued in this direction, but we believe that Theorems~\ref{thm:posterioriresultflow}
and~\ref{thm:approximationflows} can be applied even if the masses $m_n$ and $m_{n+1}$ are not small. This seems feasible because there are only three clusters and the number of central configurations in the $3$-body problem is well established. One could also consider the problem of more than two masses going to infinity in a parabolic fashion.

It is also worth to remark that, since the existence of $W^{u,s}_E$ and $W^{u,s}_C$ is a consequence of Theorems~\ref{thm:posterioriresultflow}
and~\ref{thm:approximationflows}, parametrizations of them can be approximated by sums of analytic homogeneous functions of increasing order. In some instances
of the $3$-body problem (see~\cite{BFM2020a}), these homogeneous functions are indeed homogeneous polynomials. Then, the question of the Gevrey regularity of these expansions makes sense. This was studied in a lower dimensional problem in~\cite{BFM17}. We conjecture that the invariant manifolds in the present setting also admit polynomial approximations which are Gevrey of a certain class.

Finally we remark that, in the case of the planar $3$-body problem, that is, $n=1$ in our setting, $\TTT$ is a single parabolic point and the configurations $\TTT_E$ and $\TTT_C$ are the well known central configurations of the problem. After the reductions, the planar $3$-body problem is a $3$-degrees of freedom Hamiltonian. Then, our theorem implies that $\TTT_E$ possesses $3$-dimensional stable and unstable manifolds, which both lie in the same $5$-dimensional energy level. These manifolds intersect at least along a homoclinic orbit provided by the homographic solution given by the central configuration.

\subsection{Structure of the paper}

In Section~\ref{sec:teoremes_principals} we introduce the notations and definitions we will use along the paper, as well as the statements of the general theorems. We provide different statements for maps and flows to ease their application, although the claims for flows are deduced from the ones for maps.

Section~\ref{sec:resultats_a_posteriori} is devoted to the proof of the \emph{a posteriori} claims, that is, assuming that a suitable approximate solution of some invariance equation is known, we prove the existence of a true solution. The statements are proven through a fixed point scheme.

Section~\ref{sec:approximation} contains the construction of the approximate solutions of the corresponding invariance equation. As we have already mentioned, these solutions are not polynomial but sums of homogeneous functions of increasing order in certain variables. Notwithstanding, the solutions are given through explicit formulas.

Section~\ref{sec:problemadenmes2cossos} contains the proof of the existence of double parabolic motions to infinity in the $(n+2)$-body problem. It is done by finding suitable coordinates, which include a normal form procedure and blown-up, in which the general theorem applies.

\section{Invariant manifolds of normally parabolic invariant tori}
\label{sec:teoremes_principals}

The first goal of this section is to introduce the main notation and conventions we use along the work. This is done in Section~\ref{sec:notacio}. In Section~\ref{sect:Diophan} we enunciate the small divisors lemma we extensively use along the paper. 

The remaining sections are devoted to state the main results of this work. Section~\ref{sec:casdemaps} deals with the case of the existence of local stable manifolds associated to invariant normally parabolic tori
for analytic maps and Section~\ref{sec:casdefluxos} is devoted to the case of analytic vector fields depending quasiperiodically on time also having  an invariant normally parabolic tori. 

In both settings we present four types of results: the so-called \textit{a posteriori result} (Theorems~\ref{thm:posterioriresult} and~\ref{thm:posterioriresultflow}), an approximation result (Theorems~\ref{thm:approximationmaps} and~\ref{thm:approximationflows}), 
an existence result of local stable manifolds, which is a direct consequence of the previous ones (Theorems~\ref{th:existencemap} and~\ref{th:existenceflow}) 
and finally a conjugation result, Corollaries~\ref{cor:conjugationmaps} and~\ref{cor:conjugationflows}.

\subsection{Notation and a small divisors lemma} \label{sec:notation}

\subsubsection{Notation}\label{sec:notacio}
In this section we introduce the notations and conventions we will use without explicit mention along the paper. Most of them are widely used in the literature and were already used in the previous works~\cite{BFM17,BFM2020a,BFM2020b}. However, for the convenience of the reader, we reproduce them here.  

The general notation about the sets we will use is:
\begin{itemize}
\item 
We denote $B_r$ the open ball of a Banach space $E$ of radius $r$ centered at the origin. We will write $B_r\subset E$ to indicate that $B_r$ is a ball in the space $E$. Given a set $U\subset E$, we denote $\overline{U}$ its closure. 
\item 
When we write $\R^n\times \R^m$, and we have norms in $\R^n$ and $\R^m$, we consider the product norm in it, namely $\|(x,y)\| = \max\{\|x\|, \|y\|\}$. This determines the operator norms for linear maps in these spaces. All these norms will be denoted by $\| \cdot \|$.  
\item 
    Real and complex $d$-torus: we represent the real torus by $\mathbb{T}^d=\big (\mathbb{R}\setminus \mathbb{Z}\big )^d$. Given $\sigma>0$, a complex extension is
    $$
    \mathbb{T}^d_\sigma = \left \{ \theta =(\theta_1,\cdots, \theta_d) \in  (\mathbb{C}\setminus \mathbb{Z}\big )^d\mid\; |\Im \theta_j |< \sigma, \forall j\right \}.
    $$
    \item Given an open set $U\subset \R^k$, we denote by $U_\mathbb{C}$ an open complex extension of it. 
    \item Given a function $f:U\subset \R^k \to \R^l$ and $x\in U$, $Df(x)$ denotes its derivative (or differential)  and, for a function $f(x,y)$,  $f:U\subset \R^k \times \R^{k'}\to \R^l$  $\partial_x f(x,y)$ 
    or $D_x f(x,y)$ denote its partial derivative with respect to the variable $x\in \R^k$, etc. 
\end{itemize}
With respect to  averages, we introduce the following notation:
\begin{itemize}
    \item For a function $f:U\times \T^d  \subset \R^k \times \T^d  \to \R^l$, we denote by $\overline{f}$ its average with respect to $\th  \in \T^d$ and  $\widetilde{f} = f - \overline{f}$ its oscillatory (mean free) part. In Section \ref{sec:problemadenmes2cossos} we will also use the notation $[f]= \overline{f} $.
    \item We say that a function $f(x,\theta, t)$, $f: U\times \T^d\times \R \to \R^l$ is quasiperiodic with respect to $t\in \R$ if there exists a function     $\widehat{f} :U\times \T^d\times \T^{d'} \to \R^l$, for some $d'$ and a vector $\nu \in \R^{d'}$, such that 
    \begin{equation}\label{sec:notquasi}
    f(z,\th,t)= \widehat{f}(z,\th, \nu t).
    \end{equation}
    We say that $\nu$ is the time frequency of $f$. 
    \item If $f$ is a quasiperiodic function, and $\widehat{f}$ satisfies~\eqref{sec:notquasi}, the average of $f$, denoted by $\overline{f}$, is the average of $\widehat{f}(z,\th,\th')$ with respect to $(\th,\th')\in \T^d \times \T^{d'}$. In the same way, the oscillatory part is $\widetilde{f} = f - \overline{f}$. 
    \item We say that a quasiperiodic function $f$ is analytic if $\widehat{f}$ is.
\item We will use the analogous definitions if the functions depend on parameters, considering the corresponding functions defined on $U\times \T^d \times \Lambda $
or $U\times \T^d \times\T^{d'} \times \Lambda $, with $\Lambda \subset \R^p$. 
\item Also, we will use the analogous definitions for the complex extensions of the involved functions. 
\end{itemize}
Next, we enumerate some general conventions we will use:
\begin{itemize}  
\item We will denote $\Mg>0$ a generic constant, that can take different values at different places.  
    \item We will omit the dependence of the functions on some of the variables whenever there is no danger of confusion, mainly the dependence on parameters. 
    \item Given $f:U\times \T^d \times \Lambda \subset \R^k \times \T^d \times \R^p \to \R^l$  we will denote by $f^{(k)}$ its $k$-Fourier coefficient, namely 
    $$
    f(z,\th,\lambda)=\sum_{k\in \Z^d } f^{(k)}(z,\lambda) e^{2\pi ik \cdot \th}, \qquad k \cdot \th =k_1 \th_1 + \cdots + k_d \th_d.
    $$
   
    \item Given $f(z,w)$, $f:U \times W \to \R^l$, $0\in \overline U$, where $W$ is some set, we will write $f(z,w)=\mathcal{O}(\|z\|^k)$,  $f=\mathcal{O}(\|z\|^k)$ or simply $f=\mathcal{O}_k$ if and only if there exists a constant $\Mg$ such that $\|f(z,w)\|\leq \Mg \|z\|^k$ for all $w\in W$ and $z\in U\cap B_1$.
    
    \item  For functions $f(z,\theta, \lambda)$, 
    $z\in \R^k$, $\theta \in \T ^d$, $\lambda \in \R^p$, we use the convention that the superscript in the function, $f^{l}$, indicates that $f^l$ is a homogeneous function of degree $l$ with respect to $z$. We will write 
    $f^{\ge l}$ if $f^{\ge l}=\mathcal{O}_l$.
    \item If $(x,y,z) \in \R^k\times \R^l \times \R^m$ and $f$ is a function taking values on 
    $\R^k\times \R^l \times \R^m$, we will denote by $f_x,f_y,f_z$ the corresponding projections over the subspaces generated by the variables $x,y,z$, respectively. We will also use the notation $f_{x,y}=(f_x,f_y)$ and the analogous notation for other combinations of the variables.  
    \item When $\lambda$ is a parameter and the  composition $f(z,\lambda)= h(g(z,\lambda),\lambda)$
   makes sense,  we will write $f=h \circ g$. When dealing with time dependent vector fields, for notational purposes, the time $t$ will be considered as a parameter. 
 \item We will denote $\Phi_Z(t;t_0,z,\lambda)$ the solution  of the differential equation $\dot{z}=Z(z,t,\lambda)$. 
\end{itemize}

\subsubsection{Diophantine vectors and small divisors lemmas}
\label{sect:Diophan}
We recall the definition of Diophantine vector and the so-called small divisors equation in both the map and the differential equation contexts.

In the map setting, 
$\omega \in \R^d$ is Diophantine if there exist $c> 0$ and $\tau \geq d$ such that for all $k \in \Z^{d}\backslash \{0\}$ and $l \in \Z$
    $$|\omega \cdot k - l |\geq c |k |^{-\tau},$$
    where $|k|= |k_1|+ \cdots + |k_d|$ and $\omega \cdot k$ denotes the Euclidean scalar product. 

In the differential equations setting, $\omega \in \R^d$ is Diophantine if there exist $c> 0$ and $\tau \geq d+1$ such that for all $k \in \Z^{d}\backslash \{0\}$ 
    $$|\omega \cdot k |\geq c |k |^{-\tau}.$$
Given $U\subset \R^n$, $\Lambda \subset \R^p$ and $h: U \times \T^d \times \Lambda \to \R^m $,
the small divisors equation for maps is 
    \begin{equation}\label{not:smalldivisors}
\varphi(u,\th + \omega ,\lambda )-\varphi(u,\th,\lambda)=h(u,\th,\lambda) 
\end{equation}
and the corresponding small divisors equation for differential equations is
\begin{equation}\label{not:smalldivisorsflow}
\partial_\th \psi(u,\th,\lambda) \cdot \omega = h(u,\th,\lambda).
\end{equation}

The following version of the small divisors lemma,  depending on $u\in \C^n$ and on $\lambda\in \C^p$ can be readily adapted from the one in~\cite{Russmann75}.
  
\begin{theorem}\label{thm:smalldivisors}
Take $U\subset \C^n$, $0\in \overline{U}$, $\Lambda \subset \C^p$ and $\sigma>0$. 
Let $h:U \times \T^d_\sigma \times \Lambda \to \C^k$ be real analytic with zero average and let $\omega \in \R^d$  be a Diophantine vector.

Then, there exist unique solutions $\varphi,\psi:U \times \T^d_\sigma \times \Lambda\to \C^k$ of \eqref{not:smalldivisors} and \eqref{not:smalldivisorsflow}, respectively, real
analytic, with zero average, such that, for $(u,\lambda)\in U\times \Lambda$,
$$
\sup_{\th \in \T^d_{\sigma-\delta}} \|\varphi(u,\th,\lambda)\| ,\; \sup_{\th \in \T^d_{\sigma-\delta}} \|\psi(u,\th,\lambda)\|\leq \frac{\Mg}{\delta^\tau} \sup_{\th \in \T^d_\sigma} \|h(u,\th,\lambda)\|, \qquad 0<\delta <\sigma.
$$
Moreover, if $h$ is a homogeneous function of degree $k$ with respect to $u$, then $\varphi,\psi$ also  are homogeneous functions  of degree $k$ with respect to $u$. If $h=\OO(\|u\|^r)$, $r\ge 1$ then also $\varphi,\psi=\OO(\|u\|^r)$.
\end{theorem}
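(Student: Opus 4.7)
The plan is to solve both small divisors equations mode-by-mode via Fourier series in $\theta$, and to transfer Rüssmann's optimal bound on the sum of small denominators directly, treating $(u,\lambda) \in U \times \Lambda$ as inert parameters. First, expand
\[
h(u,\theta,\lambda) = \sum_{k \in \Z^d \setminus \{0\}} h^{(k)}(u,\lambda)\, e^{2\pi i k\cdot\theta},
\]
using the hypothesis $\overline{h}=0$. Looking for $\varphi$ and $\psi$ of zero average in the same Fourier form, the two equations become the uncoupled algebraic systems
\[
\varphi^{(k)}(u,\lambda) = \frac{h^{(k)}(u,\lambda)}{e^{2\pi i k\cdot\omega}-1}, \qquad \psi^{(k)}(u,\lambda) = \frac{h^{(k)}(u,\lambda)}{2\pi i\, k\cdot\omega},
\]
which determine $\varphi$ and $\psi$ uniquely mode-by-mode; in particular uniqueness among zero-average analytic solutions is immediate.

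The Diophantine condition gives $|k\cdot\omega| \geq c|k|^{-\tau}$, and the elementary estimate $|e^{2\pi i k\cdot\omega}-1| \geq 4\|k\cdot\omega\|_{\R/\Z} \geq c'|k|^{-\tau}$ yields the same qualitative bound in the map case. Combining with the Cauchy-type estimate
\[
\|h^{(k)}(u,\lambda)\| \leq e^{-2\pi|k|\sigma}\, \sup_{\theta \in \T^d_\sigma} \|h(u,\theta,\lambda)\|,
\]
valid pointwise in $(u,\lambda)$, one controls $\varphi$ and $\psi$ on the shrunk strip $\T^d_{\sigma-\delta}$ by
\[
\|\varphi(u,\theta,\lambda)\|, \|\psi(u,\theta,\lambda)\| \leq \Mg \Big(\sum_{k\neq 0} |k|^\tau e^{-2\pi|k|\delta}\Big)\, \sup_{\theta \in \T^d_\sigma}\|h(u,\theta,\lambda)\|.
\]
The main technical point I would import from Rüssmann is the sharp bound $\sum_{k\neq 0}|k|^\tau e^{-2\pi|k|\delta} \leq \Mg\, \delta^{-\tau}$, obtained by splitting the sum according to $|k|\delta \lessgtr 1$ rather than naively using $\int |k|^\tau e^{-2\pi|k|\delta}\, dk \sim \delta^{-\tau-d}$. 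This is the only delicate estimate; everything else is bookkeeping.

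The dependence on the parameters $(u,\lambda) \in U \times \Lambda$ requires no extra work because each $h^{(k)}$ inherits real analyticity in $(u,\lambda)$ from $h$ (as a uniform limit of the partial Fourier sums on compact subsets), and the mode-by-mode solutions $\varphi^{(k)}, \psi^{(k)}$ are rational functions of $h^{(k)}$, hence analytic in $(u,\lambda)$; the supremum bound is already stated pointwise in $(u,\lambda)$. The homogeneity and order statements follow by the same token: the maps $h \mapsto \varphi$ and $h \mapsto \psi$ are $\C$-linear operators commuting with multiplication by any function of $(u,\lambda)$, so they preserve homogeneity in $u$ of any degree and preserve the vanishing $\OO(\|u\|^r)$. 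I expect the only place that needs care is the interchange of summation and supremum on $\T^d_{\sigma-\delta}$, which is justified by the geometric convergence of the series with the Cauchy-Fourier bound above; after that, the proof is essentially a transcription of the classical one with parameters carried along.
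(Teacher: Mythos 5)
Your overall framework (Fourier expansion, mode--by--mode inversion, Cauchy decay of the coefficients, parameters carried along inertly, and the linearity argument for homogeneity and the $\OO(\|u\|^r)$ property) is indeed the right skeleton, and matches what one extracts from \cite{Russmann75}. However, the pivotal estimate you rely on is false as stated: one does \emph{not} have
$\sum_{k\neq 0}|k|^\tau e^{-2\pi|k|\delta}\leq \Mg\,\delta^{-\tau}$. Counting lattice points by the level sets of $|k|_1$ ($\#\{|k|_1=j\}\asymp j^{d-1}$), the sum behaves like $\sum_{j\geq 1}j^{\tau+d-1}e^{-2\pi j\delta}\asymp \delta^{-(\tau+d)}$, and the split at $|k|\delta\lessgtr 1$ does not change this: the terms with $|k|\leq 1/\delta$ already contribute $\sum_{j\leq 1/\delta}j^{\tau+d-1}\asymp\delta^{-(\tau+d)}$, and the tail contributes the same order. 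So the argument as written only yields $\delta^{-(\tau+d)}$, which is not what the theorem claims.

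The missing idea is that to reach the sharp power $\delta^{-\tau}$ one must \emph{not} replace $|k\cdot\omega|^{-1}$ (respectively $|e^{2\pi i k\cdot\omega}-1|^{-1}$) by $|k|^\tau/c$ term by term. What R\"ussmann's argument actually supplies is a \emph{cumulative} small--divisor estimate: for every $Q\geq 1$,
\[
\sum_{0<|k|\leq Q}\frac{1}{|k\cdot\omega|}\ \leq\ \frac{C}{c}\,Q^{\tau},
\]
which improves on the naive $Q^{\tau+d}$ by precisely the factor $Q^{d}$. Its proof is a pigeonhole/separation argument: if $|k_1\cdot\omega|,|k_2\cdot\omega|<\eta$ then $|(k_1-k_2)\cdot\omega|<2\eta$, so the Diophantine condition forces $|k_1-k_2|\geq (c/2\eta)^{1/\tau}$; dyadic decomposition in the size of $|k\cdot\omega|$ then yields the bound. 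Once one has this, Abel summation gives
\[
\sum_{k\neq 0}\frac{e^{-2\pi|k|\delta}}{|k\cdot\omega|}
=\sum_{j\geq 1}\bigl(S_j-S_{j-1}\bigr)e^{-2\pi j\delta}
\leq \frac{C}{c}\bigl(1-e^{-2\pi\delta}\bigr)\sum_{j\geq 1} j^{\tau}e^{-2\pi j\delta}\ \leq\ \frac{C'}{c}\,\delta^{-\tau},
\]
where $S_j=\sum_{0<|k|\leq j}|k\cdot\omega|^{-1}$ and the extra factor $1-e^{-2\pi\delta}=\OO(\delta)$ absorbs the surplus power, while $\sum_j j^\tau e^{-2\pi j\delta}$ is now a one--dimensional sum of order $\delta^{-\tau-1}$. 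The map case is identical with $|e^{2\pi i k\cdot\omega}-1|\geq 4\,\mathrm{dist}(k\cdot\omega,\Z)$ replacing $|k\cdot\omega|$. Everything else in your sketch (the uniqueness among zero--average solutions, the pointwise bound in $(u,\lambda)$, and the commutation of the solution operator with multiplication by functions of $(u,\lambda)$) is correct and carries through once this counting lemma is in place.
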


We will denote by $\mathcal{D}[h]$ the unique solution  with zero average of either~\eqref{not:smalldivisors} or~\eqref{not:smalldivisorsflow}. 
We note that, since
\[
\partial _u\varphi(u,\th + \omega ,\lambda )-\partial _u\varphi(u,\th,\lambda)=\partial _uh(u,\th,\lambda),  
\]
if $\partial_u h =\OO(\|u\|^{r-1})$ then $
\partial_u \varphi=\OO(\|u\|^{r-1})$.
\subsection{Results for maps} \label{sec:casdemaps}
This section is  devoted to state the claims concerning with the existence of invariant manifolds of tori  for families of maps with an invariant torus whose transversal dynamics is tangent to the identity. 
In Section~\ref{sec:setupmap} we describe the maps under consideration and the general  conditions we need to guarantee the existence of these invariant manifolds. 
Afterwards, in Section~\ref{sec:resultsmaps} we state the main results.
In the statements of the results, some extra conditions will be introduced.
\subsubsection{Set up and hypotheses}\label{sec:setupmap}

Let $\mathcal{U}\subset \R^n \times \R^m $ be an open set such that $0\in \overline{\mathcal{U}}$ and $\Lambda $ be an open subset of $\R^p$. We consider  families of maps 
$F_\lambda :\mathcal{U}\times \T^d \to \R^n \times \R^m\times \T^d$, $\lambda \in \Lambda$,
of the form
\begin{equation}\label{system}
F_\lambda \begin{pmatrix} x \\ y \\ \th
\end{pmatrix}
=
\begin{pmatrix} x  +   f^{\geq N} (x,y,\th,\lambda)   \\
y  + g^{\geq M} (x,y,\th,\lambda) \\
\th +\omega  +  h^{\geq P}  (x,y,\th,\lambda ) 
\end{pmatrix}\qquad 
\end{equation}
with $\omega \in \R^d$ and $f^{\geq N }=\mathcal{O}(\|(x,y)\|^N)$, $g^{\geq M}=\mathcal{O}(\|(x,y)\|^M)$ and $h^{\geq P}=\mathcal{O}(\|(x,y)\|^P)$ for $N, M\geq 2$ and $P\geq 1$.  

For such maps, the torus
$$
\mathcal{T}=\{ (0,0,\th)\in \mathbb{R}^n \times \mathbb{R}^m\times \mathbb{T}^d \}
$$
is invariant and \emph{normally parabolic}, that is, the dynamics in the transversal directions to the torus is parabolic.  

We are interested in describing the stable and unstable sets of a torus related to a given open set $A\subset \mathbb{R}^n \times \mathbb{R}^m \times \T^d$ such that $\mathcal{T}\in \overline{A}$. Hence, we introduce the stable set
\begin{equation}\label{def:localstablemanifold}
W^{\textrm{s}}_A = \{(x,y,\theta ) \in A \mid \, F^k_{\lambda}(x,y,\theta) \in A, \forall k\ge 0, \, \lim _{k\to \infty} 
(F^k_\lambda)_{x,y}(x,y,\theta) = (0,0) \}
\end{equation}
and the unstable one:  
$$
W^{\textrm{u}}_A = \{(x,y,\theta ) \in A \mid \, F^{-k}_{\lambda}(x,y,\theta) \in A, \forall k\ge 0, \, \lim _{k\to \infty} 
(F^{-k}_\lambda)_{x,y}(x,y,\theta) = (0,0) \}.
$$
Their local versions are defined changing $A$ by $A_\rho = \{ \xi \in A \mid \mathrm{dist}  ( \xi, \mathcal{T} ) <\rho \}$. We will see that these sets are manifolds.

More concretely, we look for invariant manifolds tangent to the $x$-subspace. Therefore, we consider sets $V\subset \R^n$, $0\in \overline{V}$ and their local versions $V_\rho = V\cap B_\rho$, where $ B_\rho$ is the ball of radius $\rho$ in $\R^n\times \R^m$. Moreover, for $\beta>0$, we define the sets   
\begin{equation}\label{def:Vrhobeta}
V_{\rho,\beta}= \{(x,y) \in V_\rho \times \R^m\mid\; \|y\|\leq \beta   \|x\|\}, \qquad 
\Vext_{\rho,\beta} = V_{\rho,\beta} \times \mathbb{T}^d.
\end{equation}
The set $\Vext_{\rho,\beta}$ will play the role of the set $A$ in  \eqref{def:localstablemanifold}.
In this paper, we concentrate on the study of the stable manifold
associated to a set of the form $\Vext_{\rho,\beta}=V_{\rho,\beta} \times \mathbb{T}^d$. The unstable one can be obtained considering $F^{-1}_\lambda$. 

We will provide conditions for the existence of the invariant manifolds using the parametrization method, see \cite{CabreFL03a,CabreFL03b,CabreFL05,HaroCFLM16}
for a general presentation of the method and \cite{BFdLM2007,BFM2020a,BFM2020b,BFM20} for the specific application of the method to parabolic objects. Summarizing, this method consists in looking for functions 
$K(u,\th,\lambda)$ and $R(u,\th,\lambda)$ satisfying the invariance condition
\begin{equation}\label{invarianceequationmap}
F_\lambda( K(u,\th,\lambda)) = K(R(u,\th,\lambda),\lambda),
\end{equation}
with $K(0,\th,\lambda)=0$, $R(0,\th,\lambda)=0$ together with extra conditions to have the manifold  tangent at $\TTT$ to be a suitable subspace. 

We assume the following general conditions on $F_\lambda$ and the domain $\mathcal{U}$:
\begin{enumerate}[label=(\roman*)] 
\item $\mathcal{U}$ is an open set that contains a set of the form
$V_{\rho_0, \beta_0}  \subset \R^n \times \R^m$ for some positive $\rho_0$ and $\beta_0$ (see \eqref{def:Vrhobeta}), where $V$ is a cone-like domain, namely $0\in \partial V$ and for all $x\in V$ and $s> 0$, $sx\in V$. 
We remark that the origin does not necessarily belong to $\mathcal{U}$.
\item $f^{\geq N},g^{\geq M}$ and $h^{\geq P}$ can be expressed as sums of analytic functions, homogeneous with respect to $(x,y)\in \U $ of integer positive degree up to some order $q-1\ge N$. More precisely, there exists $q\in\N$, $q > N$ and 
\begin{equation}\label{sumhomogeneousfunctions}
\begin{aligned}
    f^{\geq N}(x,y,\th,\lambda) & = \sum_{j=N }^{q-1} f^{j} (x,y,\th,\lambda) + f^{\geq q}(x,y,\th,\lambda), \\
    g^{\geq M}(x,y,\th,\lambda) & = \sum_{j=M }^{q-1} g^{j} (x,y,\th,\lambda) + g^{\geq q}(x,y,\th,\lambda),  \\
    h^{\geq P}(x,y,\th,\lambda) & = \sum_{j=P }^{q-1} h^{j} (x,y,\th,\lambda) + h^{\geq q}(x,y,\th,\lambda) ,
\end{aligned}
\end{equation}  
where $f^j,g^j,h^j$ are analytic functions, homogeneous  of degree $j$ in  $(x,y)\in \U$, and the remainders $f^{\geq q},g^{\geq q}, h^{\geq q}$ are analytic and of order $\OO(\|(x,y)\|^{q})$. 
Moreover, we ask that $\partial^j _{x,y}f^{\geq q},\partial^j _{x,y}g^{\geq q}, \partial^j _{x,y}h^{\geq q}=\OO_{q-j}$ for $j=1,2$.

Note that for homogeneous functions in $(x,y)$ this property is automatically satisfied and when we take derivatives with respect to $\theta$ we do not lose order.   
Note that the functions $f^j,g^j,h^j$ can be extended by homogeneity to the set $\U^e \times \T^d\times \Lambda$ where
$\U^e= \{ (x,y) \in \R^n\times \R^m \mid \exists t\in (0,1] \text{ such that } t(x,y) \in \U \}$.
\end{enumerate}

Next, we assume three conditions, (iii), (iv) and (v) below, on $\overline{f}^N$ and $g^M$. First, given $\rho>0$, we define the constant
\begin{equation}\label{defa}
a_f:=-\sup_{x\in V_\rho,\,   \lambda\in \Lambda} \frac{\|x+ \fNm(x,0,\lambda)\| - \|x\|}{\|x\|^N}.
\end{equation}
\begin{enumerate}[label=(\roman*), resume]
\item Let $\rho_0 $ be the radius introduced in (i). The constant $a_f$ with $\rho= \rho_0$ satisfies the \textit{weak contraction} condition 
$$a_f>0.$$ 
\end{enumerate}
Note that this implies 
$$
\|x+ \fNm(x,0,\lambda)\| \le  \|x\| - a_f \|x\|^N, \qquad x\in V_{\rho_0}, 
\quad \lambda \in \Lambda.
$$

\begin{enumerate}[label=(\roman*), resume]
\item 

We assume
$$
g^M(x,0,\th,\lambda)=0.
$$     
Moreover, we ask $\overline{f}^N(x,0,\lambda)$ and $\partial_y \overline{g}^M(x,0,\lambda)$ to be defined and analytic in
$\mathcal{U^*}\times \Lambda$, where $\mathcal{U^*}$ in an open set  of $\R^n$ containing $0$.
Note that, by the homogeneity property, the domain of $\overline{f}^N(x,0,\lambda)$ and $\partial_y \overline{g}^M(x,0,\lambda)$ with respect to $x$ can be extended to $\R^n$. 
\item   
\label{hipotesisv}
We assume that there exists a positive constant $\Cv>0
$ such that
$$
\dist(x+\fNm(x,0,\lambda), V_{\rho_0}^c)\geq \Cv\|x\|^N, \qquad x\in V_{\rho_0},  \quad \lambda\in \Lambda, 
$$
where $V_{\rho_0}^c$ is the complementary set of $V_{\rho_0} \subset \R^n$. As a consequence $V_{\rho_0}$ is an invariant set for the map $x\mapsto x+ \overline{f}^N(x,0,\lambda)$.
\end{enumerate}

\begin{remark} It is important to emphasize that, if  $\mathcal{U}$ is an open set that contains the origin, then condition  (ii) is automatically satisfied; 
the expansions in \eqref{sumhomogeneousfunctions} are the standard Taylor expansions with respect to $(x,y)\in \mathcal{U}$. 

For the sake of completeness and applicability we have preferred to allow the more general situation when the origin is not contained in the regularity domain of $F_\lambda$. In this context we work with decompositions as sums of homogeneous functions instead of the classical Taylor expansion.   
\end{remark}

\begin{remark}
The hypotheses are chosen to obtain local invariant manifolds tangent to the subspace $\{y=0\}$. When the invariant manifold we are looking for is not going to be tangent to $\{y=0\}$ but of the form $y=Lx + \OO(\|x\|^2)$ we can  perform the linear change $u=x, \, v=y-Lx$ and look for the invariant manifold tangent to ${y=0}$. 
\end{remark}

\begin{remark}\label{prop:formsystem}
Let ${\F}_\lambda:\mathcal{U}\times \T^d  \to \R^{n+m} \times \T^d$, $\lambda \in \Lambda$,  be a map satisfying (i)-(iii) 
with $N,\, M\geq 2$ and $P\geq 1$
having an invariant manifold associated to the origin tangent to $\{y=0\}$. That is, assume that the manifold can be represented as the graph,
$y= \KK (x,\th,\lambda) $, with $\KK$ analytic, $\mathcal{C}^1$ at $0$, $\KK(0,\th,\lambda) =0$ and $\partial_x \KK(0,\th,\lambda) =0$.  Then, after a close to the identity change of variables,  $\mathcal{F}_\lambda $ has to satisfy that $M\leq N$
and $g^{M}(x,0,\th,\lambda)=0$.
We prove this remark in Appendix \ref{app:proofprop}.
\end{remark}

\begin{remark} 
  We notice that we are not assuming any condition on $\hP$. Therefore, we can always assume that $P\leq N$ since the case $\hP=0$ is allowed. 
\end{remark}

To finish this section, given $\rho>0$ we define the auxiliary constants related to $\overline{f}^N$
\begin{equation}\label{defbCf}
\begin{aligned}
b_f&=\inf_{\lambda\in \Lambda} \sup_{x\in V_\rho}\frac{\|\fNm(x,0,\lambda)\|} {\|x\|^N}, \qquad 
A_f= -\sup_{x\in V_\rho,\,\lambda\in \Lambda} \frac{\|\Id+ D_x \fNm(x,0,\lambda)\| - 1}{\|x\|^{N-1}},\\ 
D_f&=-\sup_{x\in V_\rho,\,\lambda\in \Lambda} \frac{\|\Id- D_x \fNm(x,0,\lambda)\| - 1}{\|x\|^{N-1}}.
\end{aligned}
\end{equation}
and $B_g$ related to $\overline{g}^M$:
\begin{equation}\label{defABg}
B_g=-\sup_{x\in V_\rho,\,\lambda\in \Lambda} \frac{\|\Id- D_y \gNm(x,0,\lambda)\| - 1}{\|x\|^{M-1}}.
\end{equation}

We notice that the constant $b_f$ is independent on $\rho$ since $\overline{f}^N$ is a homogeneous function of degree $N$. 

\begin{remark}\label{remark:rhosmall} Notice that, if $\rho_1 \leq \rho_2$ then the corresponding constants $a_f^{1,2}$, $b_f^{1,2}$, $A_f^{1,2}$, $D_f^{1,2}$, and $B_g^{1,2}$, associated to $\rho_1$ and $\rho_2$, respectively, defined in~\eqref{defa}, \eqref{defbCf} and \eqref{defABg} satisfy $a_f^1\geq a_f^2$, $b_f^1=b_f^2$, $A_f^1 \geq A_f^2$, $D_f^1 \ge D_f^2$ and $B_g^1\geq B_g^2$.
See Lemma 3.7 in \cite{BFM2020b}. We also have $a_f \le b_f$.

This remark will allow us to take $\rho$ as small as we need. We will use this fact throughout the paper without mention it.
\end{remark}

\subsubsection{Main results}\label{sec:resultsmaps}
Let 
\begin{equation}\label{defEast}
E^*> \left \{\begin{array}{ll} 
\max\{ -B_g, -D_f,0\}, \qquad &\text{if}\quad  M=N, \\
\max\{ -B_g, 0\}, &\text{if}\quad  M<N.
\end{array}\right.
\end{equation} 
Denoting $[\cdot]$ the integer part of a real number, we introduce the required minimum order $q$:
\begin{equation}\label{def:regularitatminima}
q^* = \left [\max\left\{ 2N-P, 2N-M+1, N-1 +\frac{N-1}{N-5/3} \frac{E^*}{a_f}\right\}\right]
\end{equation} 
and the index
\begin{equation}\label{defju}
j^*_u= \left \{\begin{array}{ll} \left [ -\frac{D_f}{a_f}\right ], \qquad &\text{if}\quad  D_f <0, \\
1, &\text{if}\quad  D_f \ge 0.
\end{array}\right.
\end{equation}
The first result we state is an \textit{a posteriori} result. Roughly speaking, it says that, if we know a good enough \textit{approximate solution} of the invariance equation~\eqref{invarianceequationmap}, then there is a \textit{true solution} of this equation close to it. 

\begin{theorem}[\textit{A posteriori} result]\label{thm:posterioriresult}
Let $F_\lambda$ 
be  of the form~\eqref{system} satisfying conditions $(i)-(v)$ with $q\ge q^*$. Assume $\omega$ is Diophantine and 
  $A_f>b_f \max\{1,N-P\}$.

Moreover, assume there exist analytic maps $K^{\leq}:V_{\rho_0} \times \T^d \times \Lambda \to \R^n\times \R^m \times \T^d$ and $R:V_{\rho_0} \times \T^d \times \Lambda \to V_{\rho_0} \times \T^d$, being sums of homogeneous functions with respect to $u$,  of the form
$$
K^{\leq}_{x,y}(u, \v,\lambda) - (u,0)=\OO(\|u\|^2),
\qquad K^{\leq}_\th(u,\v,\lambda)- \v = \OO(\|u\|),
$$
$$
R_u(u,\v,\lambda)-(u+ \overline{f}^N(u,0,\lambda))=\OO(\|u\|^{N+1}), \qquad 
R_\v(u,\v,\lambda)-\v-\omega=\OO(\|u\|)
$$
such that 
\begin{equation}\label{condEth}
E^\leq:= F_\lambda\circ K^{\leq} - K^{\leq } \circ R =
\OO(\|u\|^{q}). 
\end{equation}
 
Then, there exist $0<\rho\leq \rho_0$ and a unique
analytic function
$$
\Delta :V_{\rho} \times \T^d  \times \Lambda \to \R^{n+m}\times \T^d 
$$
satisfying $\Delta_{x,y}=\mathcal{O}(\|\u\|^{q-N+1})$,
$\Delta_\theta = \mathcal{O}(\|\u\|^{q-2N+P+1})$ and
\begin{equation}\label{inveqtheoremmaps}
F_\lambda\circ (K^\leq +\Delta) - (K^{\leq }+ \Delta)\circ R=0.
\end{equation}
Moreover, 
the map $\Delta$ is real analytic in a complex extension of $ V_{\rho}\times \T^d \times \Lambda$.

Let $K=K^\leq + \Delta$. For $\rho, \beta$ small enough, $K(V_{\rho}\times \T^d,\lambda)\subset W^{\mathrm{s}}_{\Vext_{\rho,\beta} }$, with $\Vext_{\rho,\beta}$ defined in~\eqref{def:Vrhobeta},  and, when the constant $B_g>0$, for some slightly smaller cone set $\widehat V$, 
\begin{equation}
\label{darrerstatementdethm28}
K(\widehat V_{\rho}\times \T^d,\lambda) =W^{\mathrm{s}}_{\widehat \Vext_{\rho,\beta} }, \qquad \widehat \Vext_{\rho,\beta} = \widehat{V}_{\rho,\beta} \times \mathbb{T}^d .
\end{equation}
\end{theorem}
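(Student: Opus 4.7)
The plan is to reformulate~\eqref{inveqtheoremmaps} as a fixed-point equation for $\Delta$ on a weighted Banach space of real-analytic maps on a complex extension of $V_\rho\times \T^d_\sigma\times \Lambda$, and solve it by Banach's contraction principle. Writing
$F_\lambda\circ(K^\leq+\Delta) - (K^\leq+\Delta)\circ R = E^\leq + \L\Delta - \NN(\Delta)$,
with $\L\Delta := DF_\lambda(K^\leq)\,\Delta - \Delta\circ R$ linear in $\Delta$ and $\NN(\Delta)$ collecting the terms at least quadratic in $\Delta$, equation~\eqref{inveqtheoremmaps} becomes $\L\Delta = -E^\leq + \NN(\Delta)$. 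The target Banach space is $\{\Delta \mid \Delta_{x,y}=\OO(\|u\|^{q-N+1}),\ \Delta_\theta=\OO(\|u\|^{q-2N+P+1})\}$ with the corresponding weighted sup norm, and I would seek a fixed point of $\Psi(\Delta):=\L^{-1}(-E^\leq+\NN(\Delta))$.

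The heart of the argument is the invertibility of $\L$ on this space. I would approximate it by the ``diagonal'' operator $\widetilde\L$ obtained by replacing $R$ by its leading part $R^0(u,\theta) := (u+\fNm(u,0,\lambda),\theta+\omega)$ and $DF_\lambda\circ K^\leq$ by its value on $\TTT$; the difference $\L-\widetilde\L$ gains at least one power of $\|u\|$ and is absorbed into $\NN$. I would then invert $\widetilde\L$ block by block: in the $x$-block, iterate forward along $R^0$, whose weak contraction rate $a_f>0$ from~\eqref{defa}, combined with $A_f>b_f\max\{1,N-P\}$, makes the resulting telescoping sum converge in the weighted norm; in the $y$-block, the sign in~\eqref{defABg} makes $B_g>0$ equivalent to strict forward expansion of $y$ under $I+D_y\gNm$, so one inverts that matrix and iterates backward in $R^0$; in the $\theta$-block, split into zero-average and non-zero-average parts in $\theta$, the latter solved via Theorem~\ref{thm:smalldivisors} and the former by the same forward iteration as for $x$. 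The three terms in $q^*$ of~\eqref{def:regularitatminima} enter precisely here: $q\ge 2N-P$ accounts for the coupling of the $(x,y)$-components of $\Delta$ (at order $q-N+1$) into the $\theta$-block through $h^{\geq P}$ and the subsequent $N-1$ orders lost in the $\theta$-inversion; $q\ge 2N-M+1$ is the analogous bound for the $y$-block; and the last term involving $E^*/a_f$ ensures that the polynomial growth of the $y$-inversion along the orbit $u_k$ of $R^0$ (of length $j_u^*$ from~\eqref{defju}) remains dominated by the contraction $a_f$.

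With $\L^{-1}$ controlled, standard Picard-type estimates show that $\Psi$ preserves a small ball of the weighted space and is a contraction there, producing a unique $\Delta$; real-analyticity is automatic because the whole construction takes place on a complex extension. The main obstacle is the sharpness of the inverse estimates: because $\TTT$ is parabolic, the iterate $u_k$ of $R^0$ only decays polynomially as $\|u\|(1+k\|u\|^{N-1})^{-1/(N-1)}$, so one must sum infinitely many iterates to gain a single inverse power of $\|u\|^{N-1}$, and must carefully balance the small-divisors loss in the $\theta$-block with the coupling into $(x,y)$. Finally, for the characterization~\eqref{darrerstatementdethm28} under $B_g>0$: since $K_{x}(u,\vartheta)=u+\OO(\|u\|^2)$ and $K_\theta(u,\vartheta)=\vartheta+\OO(\|u\|)$, the map $(u,\vartheta)\mapsto(K_x(u,\vartheta),K_\theta(u,\vartheta))$ is a local analytic diffeomorphism near $\TTT$, so any $(x,\theta)$ close to $\TTT$ has a unique preimage $(u,\vartheta)$; given $(x,y,\theta)\in W^{\mathrm s}_{\widehat\Vext_{\rho,\beta}}$, if $y\neq K_y(u,\vartheta)$, the strict expansion $\|(I+D_y\gNm)\,y\|\ge(1+B_g\|x\|^{M-1})\|y\|$ forced by $B_g>0$ would make $\|F_\lambda^k(x,y,\theta)_y - F_\lambda^k(K(u,\vartheta))_y\|$ grow, contradicting that both orbits remain in $\widehat\Vext_{\rho,\beta}$ and tend to $\TTT$; hence $y=K_y(u,\vartheta)$ and $K$ surjects onto $W^{\mathrm s}_{\widehat\Vext_{\rho,\beta}}$.
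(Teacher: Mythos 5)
Your overall plan---normal-form reduction, Fourier analysis, weighted Banach spaces, a contraction---is the right shape, but several of the specific choices would not close, and one of them reflects a real misconception about the structure of the problem.

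First, you never perform the preliminary angle-averaging step. The paper's Lemmas~\ref{lem:Ftilde} and~\ref{lem:changeparameter} and Proposition~\ref{prop:preliminariesaproximationmap} remove the $\theta$-dependence of $F$, $K^\leq$, and $R$ up to order $q-1$ before the fixed-point scheme is set up. This is not a convenience: it is what makes the angular dynamics $\widecheck R_\psi(v,\psi)=\psi+\omega+R_\psi(v)$ have $R_\psi$ independent of $\psi$, so the Fourier modes of $\Delta$ decouple and each mode satisfies an autonomous linear equation \eqref{eq:coef-k-xi-eta}--\eqref{eq:coef-k-psi}. Without it, $\Delta\mapsto\Delta\circ R$ mixes all modes and there is no usable formula for $\mathcal L^{-1}$.

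Second, and more seriously: you propose to replace $DF_\lambda\circ K^\leq$ by ``its value on $\TTT$'', which on the torus is the identity. If you do this, the linear operator collapses to $\widetilde{\mathcal L}\Delta=\Delta-\Delta\circ R^0$, and the leading $(x,y)$-block $\mathbf M_1=\Id+\CC$, where $\CC$ contains $\partial_\xi\overline f^N$, $\partial_\eta\overline f^N$, $\partial_\xi\overline g^M$, $\partial_\eta\overline g^M$, ends up in the nonlinear remainder. But this term does not gain an order: $(\mathbf M_1-\Id)$ is $\OO(\|u\|^{N-1})$ as a multiplier, $\Delta_{\xi,\eta}$ is $\OO(\|u\|^{q-N+1})$, and $\mathcal S$ loses $N-1$ orders; the round trip returns exactly $\OO(\|u\|^{q-N+1})$ with a constant controlled only by $\|\mathcal S\|$ and the sizes $|D_f|$, $|B_g|$, which are $O(1)$, not $o(\rho)$. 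So the Lipschitz constant would not be small and the iteration need not converge. The paper instead keeps $\mathbf M(\widehat K^{\leq q-1})$ inside $\mathcal L$ (see \eqref{decompositionDF} and the definition of $\mathcal S$ in \eqref{def:Slinearanalytic}) and controls the growth of the matrix product $\prod_l \mathbf M_1^{-1}$ via the constants $E,E'$ in \eqref{defabACanalytic}--\eqref{fitadeEprima}: this is precisely where the third term of $q^*$ and the definition of $E^*$ come from.

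Third, the ``backward iteration in $y$'' step is not available here and is not what the paper does. The parametrization is a graph over the stable parameter $u$; there is no unstable parameter along which one could iterate backward without leaving the cone $V_\rho$. The paper's $\mathcal S_{\xi,\eta}$ is a single \emph{forward} sum along $\widecheck R_v$ for both $\xi$ and $\eta$, with the combined matrix $\mathbf M_1^{-1}$ controlling both directions via $D_f$ and $B_g$ simultaneously. Relatedly, you treat $B_g>0$ as needed for the existence of $\Delta$. It is not: hypotheses~\eqref{defEast} explicitly allow $B_g<0$ provided $2+B_g/a_f>0$ and $q$ is large. The condition $B_g>0$ is used only for the last claim \eqref{darrerstatementdethm28}, where it yields strict expansion of the $\eta$-coordinate and hence uniqueness of the graph---your sketch of that part is essentially correct and close to Section~\ref{sec:proofthexistence}.

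Finally, the condition $A_f>b_f\max\{1,N-P\}$ is not a ``telescoping sum'' condition. It is the condition in Lemma~\ref{lem:calR} that makes the complexified sectorial domain $\Gamma_\rho(\gamma,\sigma)$ (built so that $\sum_l\|\Im R_\psi(\widecheck R_v^l(v))\|<\sigma$) non-empty and invariant under $\widecheck R$, which is what makes $T\circ\widecheck R^j$ well defined and analytic on the domain where the sup norms are taken. Omitting this step means the functional framework itself is not set up, so the contraction estimates cannot even be stated. You should also confront the parabolic slow-decay issue more quantitatively: the iterates satisfy the two-sided bound \eqref{propcalRlema}, and the exponent comparison $s/(N-1)-E'/a^*>1$ is the exact convergence criterion, not a soft ``sum infinitely many iterates'' remark.
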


Theorem~\ref{thm:posterioriresult} is proven in Section~\ref{sec:resultats_a_posteriori}. The next result  gives conditions that guarantee the existence of approximations that fit the hypotheses of Theorem
\ref{thm:posterioriresult}. Later on, in Section~\ref{sec:approximation}, we provide a concrete algorithm to compute the approximations as sums of homogeneous functions of the variable $u$, depending on the angles and parameters.

\begin{theorem}[Construction of the approximations]\label{thm:approximationmaps}
 Assume that the map $F_\lambda$ is of the form~\eqref{system} satisfying conditions $(i)-(v)$ and $q\ge q^*$. 
Furthermore, assume $\omega$ is Diophantine, $A_f>b_f$ and 
\[
\begin{aligned}
&D_y\overline{g}^M(x,0,\lambda) \quad \text{is invertible for all $(x,\lambda) \in V_{\rho_0}\times \Lambda$},\quad   &\text{if}  \quad   M<N,
\\
&2+\frac{B_g}{a_f}>0, \qquad & \text{if}
\quad
M=N.
\end{aligned}
\]
Then, there exists $0<\rho \leq \rho_0$ such that for any $ j\leq q-N$, there exist analytic maps $K^{(j)}:V_\rho \times \T^d \times \Lambda \to \R^{n+m}\times \T^d$, and $R^{(j)}:V_\rho \times \T^d \times \Lambda \to V_\rho \times \T^d$, such that 
\begin{equation}\label{eq:invthapproximation}
E^{(j)}:=F_\lambda \circ K^{(j)} - K^{(j)}\circ R^{(j)} =\OO(\|u\|^{j+N}).
\end{equation}
Moreover, $K^{(j)}$ and $R^{(j)}$ can be represented as sums of analytic homogeneous functions, of the form 
\begin{align*}
    K_x^{(j)}(u,\v,\lambda)&=u + \sum_{l=2}^j  \overline{K}_x^l(u,\lambda) + \sum_{l=1}^j \widetilde{K}_x^{l+N-1}(u,\v,\lambda), \\ 
    K_y^{(j)}(u,\v,\lambda)&=\sum_{l=2}^{j+N-M}  \overline{K}_y^l(u,\lambda) + \sum_{l=2}^{j+N-M} \widetilde{K}_y^{l+M-1}(u,\v,\lambda), \\ 
    K_\th^{(j)}(u,\v,\lambda)&=\v + \sum_{l=1}^{j+N-P}  \overline{K}_\th^{l }(u,\lambda) + \sum_{l=1}^{j+N-P } \widetilde{K}_\th^{l+P-1}(u,\v,\lambda)
\end{align*}  
and 
$$
R_u^{(j)}(u,\v,\lambda)  = u + \overline{f}^N(u,0,\lambda) + \sum_{l=2}^{j^*_u } R_u^{l+N-1}(u,\lambda),
\qquad 
R_\v^{(j)}(u,\v,\lambda)  = \v + \omega +  \sum_{l=2}^{j} R_\v^{l+P-2}(u,\lambda)
$$  
for  $j>j^*_u$.
Furthermore, if $P=N$, we obtain $R_\v^{(j)}(u,\v,\lambda)  = \v + \omega$.
\end{theorem}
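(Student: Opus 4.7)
The proof proceeds by induction on $j$, starting from the natural base $K^{(0)}(u,\v,\lambda)=(u,0,\v)$, $R^{(0)}(u,\v,\lambda)=(u+\overline{f}^N(u,0,\lambda),\v+\omega)$, for which hypothesis (iv) (giving $g^M(u,0,\v,\lambda)=0$) together with $M\leq N$ (Remark~\ref{prop:formsystem}) and the decomposition (ii) yield directly $E^{(0)}=\mathcal{O}(\|u\|^N)$. At the inductive step, assume $K^{(j)}$, $R^{(j)}$ have been constructed with $E^{(j)}=\mathcal{O}(\|u\|^{j+N})$. I look for $K^{(j+1)}=K^{(j)}+\Delta K$, $R^{(j+1)}=R^{(j)}+\Delta R$, where $\Delta K$ and $\Delta R$ collect the new homogeneous summands of the degrees prescribed in the statement. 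Expanding
\begin{equation*}
E^{(j+1)}=E^{(j)}+DF_\lambda(K^{(j)})\,\Delta K-\Delta K\circ R^{(j)}-DK^{(j)}(R^{(j)})\,\Delta R+\text{h.o.t.}
\end{equation*}
and isolating the leading-order terms in $\|u\|$ produces a linear system whose coefficients are the dominant jets $\overline{f}^N$, $\partial_y\overline{g}^M$, $\overline{h}^P$ and $\omega$. I split this system componentwise in $(x,y,\v)$ and further into its $\v$-average and $\v$-oscillatory parts.

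The $\v$-oscillatory blocks reduce to small divisor equations of type~\eqref{not:smalldivisors}; by Theorem~\ref{thm:smalldivisors} and the Diophantine condition on $\omega$, they admit unique zero-average analytic solutions $\widetilde{K}_x^{j+N}$, $\widetilde{K}_y^{j+N}$, $\widetilde{K}_\theta^{j+N}$ in a slightly shrunken strip, the shifts $N-1$, $M-1$, $P-1$ in the indexing reflecting the orders at which $f^N$, $g^M$, $h^P$ first enter the composition. For the $\v$-average of the $K_y$-equation, the dominant linear operator acting on the new unknown $\overline{K}_y^{j+N-M+1}$ is simply $D_y\overline{g}^M(u,0,\lambda)$ when $M<N$, invertible by hypothesis; when $M=N$ it also involves composition with $u+\overline{f}^N(u,0,\lambda)$, and invertibility on the subspace of homogeneous functions of the relevant degree is ensured by the spectral condition $2+B_g/a_f>0$. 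For the $\v$-average of the $K_\theta$-equation, the leading operator is the translation $\phi\mapsto\phi(u+\overline{f}^N)-\phi(u)$, whose invertibility on degree-$l$ homogeneous functions is guaranteed by $A_f>b_f$; any residual resonance is absorbed into $R_\v^{j+N-1}$, and in the case $P=N$ no resonance appears, so that $R_\v^{(j)}=\v+\omega$ throughout.

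The most delicate equation is the $\v$-average of the $K_x$-equation, which modulo higher orders takes the cohomological form
\begin{equation*}
\overline{K}_x^{j+1}(u+\overline{f}^N(u,0,\lambda),\lambda)-(I+D\overline{f}^N(u,0,\lambda))\,\overline{K}_x^{j+1}(u,\lambda)+R_u^{j+N}(u,\lambda)=\Psi(u,\lambda)
\end{equation*}
on homogeneous functions of degree $j+1$. Its solvability on the cone $V_{\rho_0}$ is governed by $a_f$ and $D_f$: while the associated spectral quantity is non-positive, that is for $j+1\leq j^*_u=[-D_f/a_f]$, the operator on the left has a one-dimensional obstruction which I absorb into $R_u^{j+N}$, leaving a solvable equation for $\overline{K}_x^{j+1}$ in a chosen complement; once $j+1>j^*_u$, the spectrum is strictly positive and the operator is boundedly invertible on that subspace, so I set $R_u^{j+N}=0$, which explains why the $R_u$-sum in the statement terminates at $l=j^*_u$. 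The main technical obstacle is precisely this solvability analysis and the matching identification of the threshold $j^*_u$: it requires quantitative estimates on the iteration $x\mapsto x+\overline{f}^N(x,0,\lambda)$ acting by composition on degree-$l$ homogeneous functions over the cone, relying on the weak contraction and invariance bounds (iii) and (v), and is the reason $\rho$ may need to be shrunk below $\rho_0$. The construction is iterated up to $j=q-N$, beyond which the regularity of the remainders $f^{\geq q}$, $g^{\geq q}$, $h^{\geq q}$ is no longer assumed; at each step the parameters $(\v,\lambda)$ are handled uniformly, yielding analyticity in the extended domain.
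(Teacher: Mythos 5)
Your inductive scheme, with hypothesis $E^{(j)}=\OO(\|u\|^{j+N})$ in all three components and a single layer of new homogeneous summands added per step, breaks down whenever $M<N$ or $P<N$, and this is precisely where the technical work of the proof lies. Already the base case fails: with $K^{(0)}_y=0$, hypothesis (iv) only gives $g^M(u,0,\theta,\lambda)=0$, so $E^{(0)}_y = g^{\geq M}(u,0,\theta,\lambda)=\OO(\|u\|^{M+1})$ and $E^{(0)}_\theta = h^{\geq P}(u,0,\theta,\lambda)=\OO(\|u\|^{P})$, which are \emph{not} $\OO(\|u\|^N)$ when $M\le N-2$ resp.\ $P\le N-1$; indeed, the statement itself asserts $K^{(0)}_y$ must already contain $\overline K_y^2,\dots,\overline K_y^{N-M}$. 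More seriously, the inductive step leaks into lower degrees: once $E^{(j)}=\OO(\|u\|^{j+N})$ is assumed and you add $\overline K_x^{j+1}$ (degree $j+1$), the $y$-component acquires the term $\partial_x g^{\geq M}(K^{(j)})\cdot\overline K_x^{j+1}$, whose lowest degree is $j+M+1$ (the factor $\partial_x g^M$ still carries a $y$ because of (iv), but $\partial_x g^{M+1}(u,0,\theta)$ generically does not vanish), and the $\theta$-component acquires $\partial_x h^{\geq P}(K^{(j)})\cdot\overline K_x^{j+1}$ at degree $j+P$. For $M<N$ or $P<N$ these degrees are strictly below $j+1+N$. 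The only new unknowns at your disposal, $\overline K_y^{j+1+N-M}$ and $\overline K_\theta^{j+1+N-P}$, act at degree $j+N$ through the leading operators $\partial_y\overline g^M$ and $D(\cdot)\,\overline f^N$, so they cannot cancel the contamination; the terms that could ($\overline K_y^{j+2}$, $\overline K_\theta^{j+P-N+1}$, resp.\ $R^{j+P}_\v$) were already fixed at earlier steps, and you have no freedom left. Thus the claimed invariant $E^{(j+1)}=\OO(\|u\|^{j+1+N})$ is simply not propagated.

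The paper avoids this by \emph{not} proving the uniform bound directly. Its iteration (Section~\ref{sec:inductionalgorithm}) tracks the staggered bound $E^{(j)}=(\OO(\|u\|^{j+N}),\OO(\|u\|^{j+M}),\OO(\|u\|^{j+P-1}))$, in which the degree at which $\overline K_x^j$ is introduced ($j$) is precisely calibrated so its contribution to the $y$- and $\theta$-equations enters one degree \emph{above} the current target and is harmless. The uniform $\OO(\|u\|^{j+N})$ bound is then obtained in a separate ``catch-up'' phase (Section~\ref{sec:endoftheprooapproximation}) where only the $y$- and $\theta$-components are advanced with $\mathcal K^{(l)}_{x,\theta}=0$ (first $N-M$ steps) and then $\mathcal K^{(l)}_{x,y}=0$ (next $N-P+1$ steps), so that $\overline K_x$ is never touched again and cannot re-contaminate. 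This two-phase structure (staggered main iteration, then one-sided catch-up) is the missing idea in your proposal. Two smaller inaccuracies are also worth flagging: when $j\le j^*_u$ the paper does not split into ``a one-dimensional obstruction absorbed in $R_u$ plus a solvable complement''---it simply leaves $\overline K_x^j$ free (say, zero) and dumps the \emph{entire} residual into $\overline R_u^{j+N-1}$, since Theorem~\ref{thm:solutionlinearequation} is not applicable at those degrees; and the index on $R_\v$ you write ($j+N-1$) should be $j+P-2$, with the freedom in $R_\v$ used when $P<N$ (not only for ``residual resonances'').
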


\subsection{Consequences of Theorems~\ref{thm:posterioriresult} and~\ref{thm:approximationmaps}  for maps}

Combining Theorems~\ref{thm:posterioriresult} and~\ref{thm:approximationmaps}, we have the following claim.

\begin{theorem}[Existence of the stable manifold] \label{th:existencemap}
Let $F_\lambda$ be a map of the form~\eqref{system} satisfying conditions $(i)-(v)$ with $q\ge q^*$, where $q^*$ was introduced in~\eqref{def:regularitatminima}. 
Assume that $\omega$ is Diophantine, 
  $A_f>b_f \max\{1,N-P\}$ and   
\[
\begin{aligned}
&D_y\overline{g}^M(x,0,\lambda) \quad \text{is invertible for all} \quad (x,\lambda) \in V_{\rho_0}\times \Lambda,\quad   &\text{if}  \quad   M<N,
\\
&2+\frac{B_g}{a_f}>0, \qquad & \text{if}
\quad
M=N.
\end{aligned}
\]
Then, there exists $0<\rho \leq \rho_0$ such that the invariance equation
$$
F_\lambda \circ K = K \circ R
$$
has analytic solutions $K:V_{\rho} \times \T^d \times \Lambda \to \mathcal{U} \times \T^d$, $R:V_{\rho} \times \T^d  \times \Lambda \to V_{\rho} \times \T^d $ satisfying that, for $ \beta>0$ small enough and $\lambda\in \Lambda$
\begin{equation}\label{KcontainWsmap}
K(V_\rho \times \T^d,\lambda)  \subset   W^{\mathrm{s}}_{\Vext_{\rho,\beta}},  
\end{equation}
where $\Vext_{\rho,\beta}$ defined in~\eqref{def:Vrhobeta} and $W^{\mathrm{s}}_{\Vext_{\rho,\beta}}$ is the stable set of $F_\lambda$ (see~\eqref{def:localstablemanifold}).   

If we further assume that, if $M=N$, $B_g>0$
then,  
for some slightly smaller cone set $\widehat V$,
\begin{equation}\label{KequalWsmap}
K(\wh V_{\rho}\times \T^d,\lambda)  =  W^{\mathrm{s}}_{\wh \Vext_{\rho,\beta}} \qquad \mathrm{and} \qquad 
W^{\mathrm{s}}_{\wh \Vext_{\rho,\beta}}=  \bigcap_{k\geq 0} F_\lambda^{-k} (\wh V_{\rho,\beta} \times \T^d).
\end{equation}
\end{theorem}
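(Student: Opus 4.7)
The plan is essentially bookkeeping: the present theorem combines the construction of high-order approximations provided by Theorem~\ref{thm:approximationmaps} with the a posteriori result Theorem~\ref{thm:posterioriresult}, so nothing new has to be proved beyond matching the hypotheses, the order conditions, and the cone domains of the two statements.

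First, I would verify that the hypotheses here imply those of Theorem~\ref{thm:approximationmaps}. Conditions $(i)$--$(v)$, the Diophantine property of $\omega$, the lower bound $q \ge q^*$, and the dichotomy on whether $M < N$ or $M = N$ appear identically in both statements. The inequality $A_f > b_f \max\{1, N-P\}$ assumed here is strictly stronger than the $A_f > b_f$ required by Theorem~\ref{thm:approximationmaps}, so that hypothesis is also covered. Applying Theorem~\ref{thm:approximationmaps} with index $j = q - N$ on some $V_\rho \times \T^d \times \Lambda$ with $0 < \rho \le \rho_0$, I obtain analytic sums of homogeneous functions $K^{\leq} := K^{(q-N)}$ and $R := R^{(q-N)}$ whose error satisfies
\[
E^{\leq} = F_\lambda \circ K^{\leq} - K^{\leq} \circ R = \OO(\|u\|^q).
\]
From the explicit expressions for $K^{(j)}$ and $R^{(j)}$ given in Theorem~\ref{thm:approximationmaps} one reads off the structural properties required to invoke Theorem~\ref{thm:posterioriresult}, namely $K^{\leq}_{x,y}(u,\v,\lambda) - (u,0) = \OO(\|u\|^2)$, $K^{\leq}_\th(u,\v,\lambda) - \v = \OO(\|u\|)$, $R_u(u,\v,\lambda) - (u + \fNm(u,0,\lambda)) = \OO(\|u\|^{N+1})$ and $R_\v(u,\v,\lambda) - \v - \omega = \OO(\|u\|)$.

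Next, I would apply Theorem~\ref{thm:posterioriresult} to the pair $(K^{\leq}, R)$; all of its hypotheses are present in our assumptions. That theorem yields, after possibly shrinking $\rho$, a unique analytic correction $\Delta$ with $\Delta_{x,y} = \OO(\|u\|^{q-N+1})$ and $\Delta_\th = \OO(\|u\|^{q-2N+P+1})$, real analytic on the relevant complex extension, for which $K := K^{\leq} + \Delta$ satisfies $F_\lambda \circ K = K \circ R$. Taking $\rho$ and $\beta$ small enough, the inclusion \eqref{KcontainWsmap} is precisely the first set-theoretic conclusion in Theorem~\ref{thm:posterioriresult}.

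Finally, under the added hypothesis $B_g > 0$ in the case $M = N$, the last statement \eqref{darrerstatementdethm28} of Theorem~\ref{thm:posterioriresult} produces a slightly smaller cone $\wh V$ on which $K(\wh V_\rho \times \T^d, \lambda) = W^{\mathrm s}_{\wh \Vext_{\rho,\beta}}$. The identification
\[
W^{\mathrm s}_{\wh \Vext_{\rho,\beta}} = \bigcap_{k \ge 0} F_\lambda^{-k}(\wh V_{\rho,\beta} \times \T^d)
\]
then follows directly from the definition \eqref{def:localstablemanifold}: the inclusion $\supset$ is immediate, while the opposite inclusion uses the conjugacy $F_\lambda \circ K = K \circ R$ together with the invariance of $\wh V_\rho$ under $R_u(\cdot,\cdot,\lambda)$, which is inherited from condition $(v)$ applied to the leading term $u + \fNm(u,0,\lambda)$ of $R_u$. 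I do not foresee a genuine obstacle beyond carefully threading the order conditions relating $q$, $N$, $M$, $P$ and the compatibility of the cone constants $\rho$, $\beta$.
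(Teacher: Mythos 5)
Your overall strategy is the right one and matches the paper's: pipe the approximate parametrization $K^{\leq} = K^{(q-N)}$, $R = R^{(q-N)}$ produced by Theorem~\ref{thm:approximationmaps} into the a posteriori Theorem~\ref{thm:posterioriresult}, and check that all hypotheses and order conditions line up. The verifications you run through for the structure of $K^{\le}$ and $R$ are correct, and the inclusion~\eqref{KcontainWsmap} and the equality $K(\wh V_{\rho}\times \T^d,\lambda) = W^{\mathrm{s}}_{\wh\Vext_{\rho,\beta}}$ under $B_g>0$ are indeed read off directly from Theorem~\ref{thm:posterioriresult}.

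There is, however, a genuine gap in your treatment of the last identity in~\eqref{KequalWsmap}. You have the two inclusions reversed. The direction $W^{\mathrm s}_{\wh\Vext_{\rho,\beta}} \subset \bigcap_{k\ge 0}F_\lambda^{-k}(\wh V_{\rho,\beta}\times\T^d)$ is the one that is immediate from the very definition~\eqref{def:localstablemanifold} of the stable set; the conjugacy $F_\lambda\circ K = K\circ R$ and the $R_u$-invariance of the cone from condition~$(v)$ are not needed for it and, in any case, would only speak about points already known to lie on the manifold. The hard direction is the opposite one, $\bigcap_{k\ge 0}F_\lambda^{-k}(\wh V_{\rho,\beta}\times\T^d)\subset W^{\mathrm s}_{\wh\Vext_{\rho,\beta}}$, and it is \emph{not} immediate: a priori, a point whose full forward orbit remains in $\wh V_{\rho,\beta}\times\T^d$ need not converge to the torus. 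What forces convergence is the weak contraction hypothesis $a_f>0$ combined with the cone inequality $\|y\|\le\beta\|x\|$. After removing the angular dependence up to order $N$ (Lemma~\ref{lem:Ftilde}), one has $\|\wh F_\xi(\xi,\eta,\varphi)\|\le\|\xi\|\bigl(1-(a_f-\Mg(\beta+\rho))\|\xi\|^{N-1}\bigr)$ on the cone, so that the $\xi$-iterates go to zero, and then $\|\eta^k\|\le\beta\|\xi^k\|\to 0$; this places the orbit in $W^{\mathrm s}$ and, combined with the expansion estimate $\|\eta^k\|\ge\|\eta\|$ driven by $B_g>0$ (Section~\ref{sec:proofthexistence}), forces $\eta=0$, i.e.\ membership on the parametrized manifold. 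Calling this step \emph{immediate} and trying to prove the trivial direction instead is a real misstep that would have to be repaired.
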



\subsubsection{A conjugation result for attracting parabolic tori}

A direct consequence of the previous results is that if the transversal dynamics to the torus is parabolic and (weak) attracting for $x$ belonging to a cone set $V_\rho$, then it is conjugate to a map that can be expressed as a finite sum of homogeneous functions in $x\in V_\rho$, depending trivially on the angles.  

\begin{corollary}\label{cor:conjugationmaps}
Let $F_\lambda$ be a family of maps of the form~\eqref{system}   independent of the $y$-variable, namely
$$
F_\lambda(x,\th)= \big (x+f^{\geq N}(x,\th,\lambda),\th + \omega + h^{\geq P}(x,\th, \lambda)\big ).
$$
Assume that $f^{\geq N}, h^{\geq P}$ satisfy the corresponding conditions in (i)-(v) for some $q_1\ge q^*$,
$\omega$ is Diophantine and  $A_f>b_f\max\{1,N-P\}$. Then, the map $F_\lambda$ is conjugate to a map $R$ of the form
$$
R (u,\th,\lambda)=\left  (u+\overline{f}^N(u,\lambda)+ \sum_{l=2}^{j_u^*} R_x^{l+N-1}(u,\lambda ), \th+\omega+ \sum_{l=2}^{q_1-N} R_\th^{l+P-2}(u,\lambda) \right),  
$$
with $(u, \theta, \lambda) \in V_{\rho} \times \T\times \Lambda$, for some $0<\rho \leq \rho_0$ and $j_u^*$ is defined in~\eqref{defju}. Let $H$ be the conjugation.  
Then, $H$ and $R$ are real analytic in a complex extension of $V_\rho \times \T^d \times \Lambda$. 
\end{corollary}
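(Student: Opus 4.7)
The plan is to apply Theorem~\ref{th:existencemap} directly, viewing $F_\lambda$ as an instance of~\eqref{system} in which the $y$-coordinate is absent ($m=0$). In this degenerate setting every hypothesis involving $g^{\geq M}$ becomes vacuous: part of~(iv) drops out, the dichotomy ``$M<N$'' vs.\ ``$M=N$'' (with its conditions on $D_y\overline{g}^M$ and on $B_g$) is not needed, and the contribution of $B_g$ to $E^*$ in~\eqref{defEast} and to $q^*$ in~\eqref{def:regularitatminima} simply disappears. The remaining hypotheses — analyticity and homogeneous decomposition up to order $q_1-1$, weak contraction $a_f>0$, invariance of $V_{\rho_0}$, Diophantine $\omega$, $A_f>b_f\max\{1,N-P\}$, and $q_1\geq q^*$ — are exactly those we assume. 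Theorem~\ref{th:existencemap} therefore yields $0<\rho\leq \rho_0$ and real-analytic maps $K:V_\rho\times \T^d\times \Lambda \to V_{\rho_0}\times \T^d$ and $R:V_\rho\times \T^d\times \Lambda \to V_\rho\times \T^d$ such that $F_\lambda\circ K = K\circ R$.

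The structure of $R$ claimed in the corollary follows from inspecting the explicit construction in Theorem~\ref{thm:approximationmaps} with approximation index $j=q_1-N$. In the $y$-less setting the formulas there collapse to
\[
R_u^{(j)}(u,\v,\lambda) = u + \overline{f}^N(u,\lambda) + \sum_{l=2}^{j_u^*} R_u^{l+N-1}(u,\lambda), \qquad R_\v^{(j)}(u,\v,\lambda) = \v + \omega + \sum_{l=2}^{q_1-N} R_\v^{l+P-2}(u,\lambda),
\]
so the $\v$-dependence in $R$ is purely affine. Because the a~posteriori refinement supplied by Theorem~\ref{thm:posterioriresult} adjusts only $K^{(j)}$ and leaves $R = R^{(j)}$ unchanged, the final $R$ has exactly the advertised shape.

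It remains to promote the semi-conjugation $F_\lambda\circ K = K\circ R$ into a genuine conjugation. Combining the expansion of $K^{(j)}$ from Theorem~\ref{thm:approximationmaps} with the order estimates $\Delta_x=\OO(\|u\|^{q_1-N+1})$ and $\Delta_\th=\OO(\|u\|^{q_1-2N+P+1})$, one reads $K_x(u,\v,\lambda) = u + \OO(\|u\|^2)$ and $K_\th(u,\v,\lambda) = \v + \OO(\|u\|)$. After possibly shrinking $\rho$, the differential $DK$ is uniformly close to the identity on $V_\rho\times \T^d\times \Lambda$, and a standard ``close-to-identity implies diffeomorphism'' argument furnishes a real-analytic inverse of $K$ on an open neighbourhood of $\TTT$. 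Setting $H=K$ then yields the conjugation $F_\lambda\circ H = H\circ R$, with analyticity on a complex extension inherited from the cited theorems.

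The main obstacle, in my view, is precisely this last step: since $K_\th-\v$ is only $\OO(\|u\|)$ and not $\OO(\|u\|^2)$, global injectivity of $K$ cannot rely on a higher-order tangency at the torus and must instead be obtained via uniform smallness of $\rho$ — with quantitative constants drawn from the previous theorems — together with the cone geometry of $V_\rho$. Everything else reduces to bookkeeping the specialization of Theorems~\ref{thm:posterioriresult} and~\ref{thm:approximationmaps} to the $y$-free case.
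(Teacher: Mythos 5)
Your overall plan — specialize Theorems~\ref{thm:posterioriresult}, \ref{thm:approximationmaps} and~\ref{th:existencemap} to the $m=0$ case, read off the form of $R$ from the approximation theorem (unchanged by the \emph{a posteriori} step), and upgrade the semi-conjugacy $F_\lambda\circ K=K\circ R$ to a conjugacy by showing $K$ is a local diffeomorphism — is the route the paper intends (the corollary is stated without an explicit proof). The first two steps are correct. The gap is in the third: the assertion that ``$DK$ is uniformly close to the identity'' on $V_\rho\times\T^d\times\Lambda$ does not follow from the order estimates and is false in general. The bound $K_\th-\v=\OO(\|u\|)$ allows a term of homogeneous degree one in $u$ (for instance $\overline{K}_\th^1$ in the algorithm of Section~\ref{sec:inductionalgorithm}, or the oscillatory part $\widetilde K_\th^1$ when $P=1$), so $\partial_u K_\th$ is bounded but of order one as $u\to 0$ along rays of $V$, and shrinking $\rho$ does not make $\|DK-\Id\|$ small. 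Your closing paragraph correctly senses the difficulty, but the proposed remedy — quantitative smallness of $\rho$ — does not resolve it.

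The argument is repaired by exploiting the block structure of $DK$ rather than its proximity to the identity. From the orders of $K$ one reads $\partial_u K_x=\Id+\OO(\|u\|)$, $\partial_\v K_x=\OO(\|u\|^N)$, $\partial_u K_\th=\OO(1)$, $\partial_\v K_\th=\Id+\OO(\|u\|^P)$; the Schur complement of $\partial_u K_x$ in $DK$ is then $\Id+\OO(\|u\|)$, so $DK$ is invertible for $\rho$ small. For injectivity, first invert the near-identity map $\Phi(u,\v)=(u,K_\th(u,\v))$ — a diffeomorphism onto its image because $\partial_\v K_\th$ is a near-identity — and then note that $K\circ\Phi^{-1}$ perturbs only the $u$-slot by $\OO(\|u\|^2)$, hence is a diffeomorphism, and so is $K$. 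This block-wise argument is in fact what the paper is implicitly appealing to in Section~\ref{sec:proofthexistence} when it asserts local invertibility of $K_{x,\th}$ directly from~\eqref{ordres-K}.
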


\subsubsection{The case when all eigenvalues of  the linearization of the transversal dynamics to the torus are roots of 1}

In this section we explain how to apply the previous results to maps, $G_\lambda$ satisfying that for some $\ell \in \N$, $F_\lambda:=G_\lambda^{\ell}$ has the form~\eqref{system}. Namely we assume that  
\begin{equation}\label{defGlmaps}
G_\lambda(x,y,\th)= \left (\begin{array}{c} \A x + f^{\geq N}(x,y,\th,\lambda) \\ \B y + g^{\geq M}(x,y,\th,\lambda) \\ \th + \omega + h^{\geq P}(x,y,\th,\lambda) \end{array}\right ) ,\qquad  \Spec\A,\;\Spec \B \subset 
\bigcup_{k\in \Z} \{z\in \C \mid\, z^k=1\}.
\end{equation}
We notice that in this case the torus 
$\mathcal{T}=\{ (0,0,\th)\in \mathbb{R}^n \times \mathbb{R}^m\times \mathbb{T}^d \}
$ is also invariant and normally parabolic. We define, $W^{\mathrm{s}}_{A}$, the stable  set of $G_\lambda$ associated to the parabolic torus $\mathcal{T}$ as in~\eqref{def:localstablemanifold}, simply by changing $F_\lambda$ by $G_\lambda$.

We have the following result. 
\begin{corollary}\label{cor:rootsunity}
Let $G_\lambda$ be of the form~\eqref{defGlmaps} and $\ell \in \N$ be the minimum integer such that  $F_\lambda:=G_\lambda^\ell$ satisfies that $DF_\lambda(0)=\Id$. 

Assume that $F_\lambda$ is under the conditions in Theorem~\ref{th:existencemap}. Denote by $V$ a cone, $\rho, \beta>0$ constants and $K,R$ functions satisfying the
conclusions of Theorem~\ref{th:existencemap}, that is  
$K(V_\rho \times \mathbb{T}^d,\lambda) \subset W_{\Vext_{\rho,\beta}}^{\mathrm{s}} (F_\lambda)$ with $\Vext_{\rho,\beta} = V_{\rho,\beta} \times \mathbb{T}^d$ being the set defined in~\eqref{def:Vrhobeta} and $W_{\Vext_{\rho,\beta}}^{\mathrm{s}} (F_\lambda)$ the stable set of $F_\lambda$ associated to $\mathcal{T}$. 

Then,  
$$
\mathcal{W}:=\bigcup_{j=0}^{\ell -1}G^j_\lambda \left ( K(V_{\rho}\times \mathbb{T}^d,\lambda)  \right)\subset W^{\mathrm{s}}_{\Bext_{\rho,\beta}}, \qquad \text{with} \qquad 
\Bext_{\rho,\beta}=\bigcup_{j=0}^{\ell -1} 
G_\lambda^j(\Vext_{\rho,\beta} )
$$
and $W^{\mathrm{s}}_{\Bext_{\rho,\beta}}$ being the stable set defined in~\eqref{def:localstablemanifold} with respect to $G_\lambda$.

Assuming further that $B_g>0$ (the constant defined in~\eqref{defABg} for $F_\lambda= G_\lambda^\ell$), we have that $  W^{\mathrm{s}}_{\widehat{\Bext}_{\rho,\beta}} = \widehat{\mathcal{W}}$, where the notation $\,\wh{\,}\,$ means that the sets are related to a slightly smaller cone $\wh V \subset V$.  
\end{corollary}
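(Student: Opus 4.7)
The strategy is to reduce Corollary~\ref{cor:rootsunity} to Theorem~\ref{th:existencemap} applied to $F_\lambda=G_\lambda^\ell$, exploiting two elementary facts: $G_\lambda F_\lambda=F_\lambda G_\lambda$ (both equal $G_\lambda^{\ell+1}$), and Euclidean division $k=p\ell+r$ with $0\le r<\ell$ factorizes $G_\lambda^{k}=G_\lambda^{r}\circ F_\lambda^{p}$. Let $K$ and $R$ be the analytic maps furnished by Theorem~\ref{th:existencemap}, so that $F_\lambda\circ K=K\circ R$, $R$ sends $V_\rho\times\T^d$ into itself, and $K(V_\rho\times\T^d,\lambda)\subset W^{\mathrm{s}}_{\Vext_{\rho,\beta}}(F_\lambda)$.

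\textbf{Forward inclusion.} Pick $z=G_\lambda^{j}(K(u,\theta,\lambda))$ with $0\le j<\ell$ and $(u,\theta)\in V_\rho\times\T^d$. For each $k\ge 0$, write $k+j=p\ell+r$ with $0\le r<\ell$; using commutativity and the invariance $F_\lambda\circ K=K\circ R$,
\[
G_\lambda^{k}(z)=G_\lambda^{r}\circ F_\lambda^{p}(K(u,\theta,\lambda))=G_\lambda^{r}\bigl(K(R^{p}(u,\theta,\lambda),\lambda)\bigr)\in G_\lambda^{r}(\Vext_{\rho,\beta})\subset\Bext_{\rho,\beta}.
\]
As $k\to\infty$, $p\to\infty$, so $K(R^{p}(u,\theta,\lambda),\lambda)\to\TTT$ in the $(x,y)$-components; since $\{G_\lambda^{r}\}_{0\le r<\ell}$ is a finite family of continuous maps fixing $\TTT$, we get $G_\lambda^{k}(z)\to\TTT$. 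This gives $\mathcal{W}\subset W^{\mathrm{s}}_{\Bext_{\rho,\beta}}(G_\lambda)$, and, under $B_g>0$, the same argument with $\widehat V$ in place of $V$ produces $\widehat{\mathcal{W}}\subset W^{\mathrm{s}}_{\widehat\Bext_{\rho,\beta}}(G_\lambda)$.

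\textbf{Reverse inclusion.} Assume $B_g>0$, so Theorem~\ref{th:existencemap} also gives $W^{\mathrm{s}}_{\widehat\Vext_{\rho,\beta}}(F_\lambda)=K(\widehat V_\rho\times\T^d,\lambda)$. The key geometric ingredient is the following claim: upon shrinking the cone $\widehat V\subset V$ and the radius $\rho$, the sets $G_\lambda^{j}(\widehat\Vext_{\rho,\beta})$ for $j=0,\dots,\ell-1$ are pairwise disjoint outside $\TTT$, each is $F_\lambda$-invariant, and $G_\lambda$ is locally invertible around~$\TTT$ (the spectra of $\A$ and $\B$, consisting of roots of unity, contain no zero). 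Disjointness uses that $\A$ and $\B$ are semisimple of finite order, so their non-identity powers move $\widehat V$ to genuinely different cones that can be separated near~$\TTT$ in view of Remark~\ref{remark:rhosmall}; $F_\lambda$-invariance of $\widehat\Vext_{\rho,\beta}$ follows from hypotheses~(iii) and~(v) applied to $\widehat V$, and extends to each $G_\lambda^{j}(\widehat\Vext_{\rho,\beta})$ by commutativity. Given $z\in W^{\mathrm{s}}_{\widehat\Bext_{\rho,\beta}}(G_\lambda)\setminus\TTT$, the claim selects a unique pair $(j_0,w_0)$ with $0\le j_0<\ell$, $w_0\in\widehat\Vext_{\rho,\beta}$, and $z=G_\lambda^{j_0}(w_0)$. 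Commutativity yields
\[
F_\lambda^{k}(w_0)=G_\lambda^{-j_0}\bigl(F_\lambda^{k}(z)\bigr)\in\widehat\Vext_{\rho,\beta},\qquad k\ge 0,
\]
using the $F_\lambda$-invariance of the component $G_\lambda^{j_0}(\widehat\Vext_{\rho,\beta})$, and continuity of $G_\lambda^{-j_0}$ combined with $F_\lambda^{k}(z)\to\TTT$ delivers $F_\lambda^{k}(w_0)\to\TTT$. Hence $w_0\in W^{\mathrm{s}}_{\widehat\Vext_{\rho,\beta}}(F_\lambda)=K(\widehat V_\rho\times\T^d,\lambda)$ and $z\in G_\lambda^{j_0}(K(\widehat V_\rho\times\T^d,\lambda))\subset\widehat{\mathcal{W}}$; the case $z\in\TTT$ is trivial since $\TTT\subset\widehat{\mathcal{W}}$.

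\textbf{Main obstacle.} The main nontrivial step is the geometric claim above, namely separating the $\ell$ cone-like components $G_\lambda^{j}(\widehat\Vext_{\rho,\beta})$ outside~$\TTT$ while preserving their individual $F_\lambda$-invariance. This requires a careful shrinking of $\widehat V$ tailored to the finite-order linear action of $(\A,\B)$ and a uniform use of the weak-contraction hypotheses. Once the claim is in place, everything else is a formal unpacking of the commutation $F_\lambda G_\lambda=G_\lambda F_\lambda$ and of the conclusions of Theorem~\ref{th:existencemap}.
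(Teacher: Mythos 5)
Your forward inclusion is sound and is essentially the same reduction the paper uses (Euclidean division $k+j=p\ell+r$, commutativity of $F_\lambda=G_\lambda^\ell$ with $G_\lambda$, and the invariance $F_\lambda\circ K=K\circ R$), differing only in that you contract directly through $K\circ R^p$ rather than passing through $W^{\mathrm{s}}_{\Vext_{\rho,\beta}}(F_\lambda)$ and estimating $\|(F_\lambda^q)_{x,y}\|$; both are fine.

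The reverse inclusion, however, hinges on a claim that is false: after shrinking $\wh V$ and $\rho$, the sets $G_\lambda^{j}(\wh\Vext_{\rho,\beta})$, $j=0,\dots,\ell-1$, are \emph{not} in general pairwise disjoint away from $\TTT$. Consider $\A=\Id_n$ and $\B$ a rotation of $\R^m$ by angle $2\pi/\ell$: then $\ell$ is the minimum integer with $DF_\lambda(0)=\Id$, yet at leading order $(\A^j,\B^j)V_{\rho,\beta}=V_{\rho,\beta}$ for every $j$ (rotation preserves $\|y\|$, so the condition $\|y\|\le\beta\|x\|$ is invariant, and $\A^j V=V$). Hence the $\ell$ sets $G_\lambda^j(\wh\Vext_{\rho,\beta})$ coincide to leading order, and no shrinking of the cone will separate them. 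The step where you ``select a unique pair $(j_0,w_0)$'' and the step deducing $F_\lambda^{k}(w_0)\in\wh\Vext_{\rho,\beta}$ from $F_\lambda^{k}(z)\in\wh\Bext_{\rho,\beta}$ both break down without disjointness. The paper circumvents this by not asking for disjointness of the $G_\lambda^j(\wh\Vext_{\rho,\beta})$ themselves: it splits into the degenerate case where some $G_\lambda^{j}(\wh\Vext_{\rho,\beta})$ equals the union of all the others (so $\wh\Bext_{\rho,\beta}=G_\lambda^{j}(\wh\Vext_{\rho,\beta})$ and the characterisation $W^{\mathrm{s}}_{\wh\Vext}(F_\lambda)=\bigcap_k F_\lambda^{-k}(\wh\Vext)$ from~\eqref{KequalWsmap} finishes immediately), and otherwise works with the disjoint-by-construction sets $B_j=G_\lambda^{j}(\wh\Vext_{\rho,\beta})\setminus\bigcup_{i\neq j}G_\lambda^{i}(\wh\Vext_{\rho,\beta})$, tracking how $G_\lambda$ permutes the $B_j$. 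You would need to replace your geometric claim with this (or an equivalent) decomposition; as written, the reverse inclusion has a genuine gap.
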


Roughly speaking, this result asserts that the stable set of $G_\lambda$ is composed by $\ell$ different branches, each of them being the image by some iterate of $G_\lambda$ of the stable set of $F_\lambda= G_\lambda^\ell$.  
The proof of this claim is postponed to Appendix~\ref{proof_corollary}. 

\begin{remark}
    The maps considered in Corollary~\ref{cor:rootsunity} appear in ~\cite{Lee2021,Fefferman2021} when a certain economic model based on critical values is considered.  
\end{remark}

\subsection{Results for differential equations} 
\label{sec:casdefluxos}
Now we consider parametric families of non autonomous vector fields, depending quasi periodically on time, of the form
\begin{equation}\label{systemflow}
X(x,y,\theta,t,\lambda)=\big (f^{\geq N}(x,y,\theta,t,\lambda), g^{\geq M}(x,y,\theta,t,\lambda),\omega + h^{\geq P}(x,y,\theta,t,\lambda) \big ) 
\end{equation}
with  $(x,y,\theta, t,\lambda)\in \mathcal{U}\times \T^{\mathrm{d}} \times \R \times \Lambda\subset \R^{n+m}\times \T^{\mathrm{d}} \times \R\times \R^p$, $0\in \overline{\mathcal{U}} $, $\omega\in \R^{\mathrm{d}}$ and satisfying $f^{\geq N}=\OO(\|(x,y)\|^N)$, $g^{\geq M}=\OO(\|(x,y)\|^M)$, $h^{\geq P}=\OO(\|(x,y)\|^P)$
for some $2 \le M\le N$ and $1\leq P\leq N$. 
 
As in the case of maps, for any fixed value of the parameter, the torus $\mathcal{T}=\{(0,0,\theta)\mid \, \theta \in \T^{\mathrm{d}}\}$ is invariant by the flow having all transversal directions parabolic. We consider the following local stable manifold, which depends on a set $A \subset \R^{n+m} \times \mathbb{T}^d$, $\mathcal{T}\in \overline A$, which is defined by
\begin{align*}
W^{\textrm{s}}_{A} = \{(x,y,\theta ,t_0) \in A \times \R \mid \, &\Phi_X(t;t_0,x,y,\theta,\lambda) \in A, \; \forall t\ge t_0, \\ & \lim _{t\to \infty} 
(\Phi_X)_{x,y}(t;t_0,x,y,\theta,\lambda) = (0,0) \},
\end{align*}
where, according to the notation in Section~\ref{sec:notation}, $\Phi_X(t;t_0,x,y,\theta,\lambda)$ is the flow of the differential equation associated to \eqref{systemflow}. 
The sets $A$ will be of the form $\Vext_{\rho,\beta}=V_{\rho,\beta} \times \mathbb{T}^d$, 
introduced in~\eqref{def:Vrhobeta} or containing it.

We want to provide conditions that guarantee the existence and regularity of the local stable manifold. We will use the parametrization method. In the case of differential equations consists in solving the invariance equation
$$
\Phi_X(t;s,K(u,\vf,s,\lambda),\lambda)=K(\Psi(t;s,u,\vf,\lambda),t,\lambda)
$$
for $K$ and $\Psi$, where $\Psi$ is the solution of the equation restricted to the stable manifold (which is also unknown).  
The equivalent infinitesimal version of the invariance equation is
\begin{equation}\label{inveqflows}
X (K(u,\vf,t,\lambda),t,\lambda) - \partial_{u,\vf} K(u,\vf,t,\lambda) Y(u,\vf,t,\lambda) - \partial_t K(u,\vf,t,\lambda) =0,
\end{equation}
where $Y$ is the vector field associated to the flow $\Psi$ which describes the dynamics on $W^{\textrm{s}}_{\Vext_{\rho,\beta}}$.

By the definition of quasi periodicity we write $X (x,y,\theta,t,\lambda)=\widehat{X} (x,y,\theta,\nu t,\lambda)$ for some $\widehat{X}: \mathcal{U} \times \T^{d} \times \T^{ d'} \times \Lambda\to  \R^{n+m}\times \R^{d}$ (see~\eqref{sec:notquasi} in Section \ref{sec:notacio}) and some $\nu\in \R^{d'}$ independent on $\lambda$ that we call the time frequency of $X$. We introduce  $\widecheck{X}:\mathcal{U} \times \T^{d+d'} \times \Lambda\to  \R^{n+m}\times \T^d\times \T^{d'}$ by
$$
\widecheck{X}(x,y,\vartheta,\lambda) =\left (\begin{array}{cc} \widehat{X}(x,y,\vartheta,\lambda) \\ \nu \end{array}\right ), \qquad \vartheta=(\theta, \tau) \in \R^{d+d'},
$$
and the extended frequency
$$
\widecheck {\omega}=(\omega,\nu).
$$

The following elementary lemma allows us to relate the results for 
vector fields with the ones for maps.  
\begin{lemma}\label{lem:frommapstoflows} Let $F_\lambda: \mathcal{U} \times \T^d \times \T^{d'} \to \R^{n+m} \times \T^d\times \T^{d'}$ be the time $1$ map of \, $\widecheck{X}$, i.e.  $F_\lambda(x,y,\vartheta)=\Phi_{\widecheck{X}}(1;x,y,\vartheta,
\lambda)$. We have that if $f^{\geq N}, g^{\geq M}, h^{\geq P}$ in \eqref{systemflow}  satisfy hypotheses (i)-(v) and $\widecheck{\omega}$ Diophantine, then the map $F_\lambda$ has the form~\eqref{system} with slightly different $f^{\geq N}, g^{\geq M}, h^{\geq P}$ but with the same constants $a_f,b_f,A_f,D_f, B_g,a_V$. 
\end{lemma}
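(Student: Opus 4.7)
The plan is to compute $F_\lambda = \Phi_{\widecheck X}(1;\cdot,\lambda)$ by integrating the autonomous system
\[
\dot x = \widehat f^{\geq N}(x,y,\vartheta,\lambda),\quad \dot y = \widehat g^{\geq M}(x,y,\vartheta,\lambda),\quad \dot \vartheta = \widecheck \omega + (\widehat h^{\geq P}(x,y,\vartheta,\lambda),0),
\]
and to verify that the resulting time-$1$ map has the form \eqref{system} with averaged leading terms identical to those of $X$. Writing
\[
F_\lambda(x,y,\vartheta) - (x,y,\vartheta+\widecheck\omega) = \int_0^1 \widecheck X\big(\Phi_{\widecheck X}(t;x,y,\vartheta,\lambda),\lambda\big)\,dt,
\]
a standard Picard-iteration argument on $t\in[0,1]$ (after shrinking $\rho_0$ so that trajectories remain in $\mathcal U$) shows $\Phi_{\widecheck X}(t;x,y,\vartheta,\lambda) = (x, y, \vartheta + t\widecheck\omega) + \OO(\|(x,y)\|^{\min\{N,M\}})$ uniformly in $t$, and hence that the perturbation $F_\lambda(x,y,\vartheta) - (x,y,\vartheta+\widecheck\omega)$ has components of orders at least $N$, $M$ and $P$ respectively, exhibiting the form \eqref{system} with new analytic functions $\tilde f^{\geq N}, \tilde g^{\geq M}, \tilde h^{\geq P}$.

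To see that the constants are preserved, I would extract the leading $N$-, $M$- and $P$-homogeneous parts of these new functions. Plugging the above approximation for $\Phi_{\widecheck X}$ into the integrand and Taylor-expanding $\widecheck X$ in $(x,y)$ around $(x,y,\vartheta+t\widecheck\omega)$ yields
\[
\tilde f^N(x,y,\vartheta,\lambda) = \int_0^1 \widehat f^N(x,y,\vartheta + t\widecheck\omega,\lambda)\,dt,
\]
and analogously for $\tilde g^M$ and $\tilde h^P$. Averaging over $\vartheta\in \T^{d+d'}$ and using Fubini together with the translation invariance of Haar measure on $\T^{d+d'}$ gives $\overline{\tilde f}^N = \overline f^N$, $\overline{\tilde g}^M = \overline g^M$ and in particular $\partial_y\overline{\tilde g}^M(x,0,\lambda) = \partial_y\overline g^M(x,0,\lambda)$. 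Condition (iv) also transfers because the integrand defining $\tilde g^M$ vanishes identically when $y=0$. Since each of $a_f, b_f, A_f, D_f, B_g$ and $a_V$, defined in \eqref{defa}, \eqref{defbCf}, \eqref{defABg} and hypothesis~(v), is determined solely by $\overline f^N(x,0,\lambda)$, $D_x\overline f^N(x,0,\lambda)$ and $D_y\overline g^M(x,0,\lambda)$, all six constants are unchanged.

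The only mildly technical point is condition~(ii) for $F_\lambda$: namely, that $\tilde f^{\geq N}, \tilde g^{\geq M}, \tilde h^{\geq P}$ decompose as finite sums of analytic homogeneous functions in $(x,y)$ up to order $q-1$ plus a controlled $\OO(\|(x,y)\|^q)$ remainder. I would obtain this by solving the flow order by order: substituting the expansion \eqref{sumhomogeneousfunctions} of the vector field into the Picard iteration produces $\Phi_{\widecheck X}(t;\cdot,\lambda)$ as a sum of analytic $(x,y)$-homogeneous functions plus a remainder, and integration in $t\in[0,1]$ preserves this structure. Analyticity in a complex extension of $V_{\rho_0,\beta_0}\times \T^{d+d'}$ and cone invariance for $t\in[0,1]$ then follow, again after possibly shrinking $\rho_0$, because the flow displaces $(x,y)$ by only $\OO(\|(x,y)\|^N)$ in unit time. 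I expect this order-bookkeeping to be the only obstacle; the remainder of the argument is a direct Fubini and translation-invariance computation.
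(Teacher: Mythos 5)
Your proof is correct in substance, but it takes a different route from the one the paper intends. The paper's hint (``Theorem~\ref{thm:smalldivisors}, performing a finite averaging procedure, Gronwall's lemma and easy estimates'') points to first normalising the vector field $\widecheck X$ by solving cohomological equations $\partial_\vartheta\psi\cdot\widecheck\omega=\widetilde h$ via the small divisors theorem, so that the leading jets become $\vartheta$-independent, and only then integrating the normalised field over time $1$ with Gronwall controlling the tail. You instead integrate the \emph{original} field and identify the leading homogeneous parts of $F_\lambda-\Id$ as time averages $\int_0^1 \widehat f^N(x,y,\vartheta+t\widecheck\omega,\lambda)\,dt$ (and likewise for $\widehat g^M$, $\widehat h^P$), then invoke Fubini plus translation invariance of Haar measure on $\T^{d+d'}$ to conclude $\overline{\tilde f}^N=\overline f^N$, $\partial_y\overline{\tilde g}^M(\cdot,0,\cdot)=\partial_y\overline g^M(\cdot,0,\cdot)$, from which $a_f,b_f,A_f,D_f,B_g,a_V$ are literally unchanged. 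This is cleaner for the specific claim of the lemma and, notably, does not actually use the Diophantine hypothesis on $\widecheck\omega$ at all (that hypothesis is consumed by the paper's averaging route, and of course by the downstream application). Two small inaccuracies you should fix: the Duhamel identity should read $F_\lambda(x,y,\vartheta)-(x,y,\vartheta+\widecheck\omega)=\int_0^1\bigl[\widecheck X(\Phi_{\widecheck X}(t;\cdot),\lambda)-(0,0,\widecheck\omega)\bigr]\,dt$, since the constant rotation part of the $\vartheta$-component must be subtracted from the integrand; and the order estimate for $\Phi_{\widecheck X}$ should be stated componentwise, with the $\vartheta$-deviation of order $\OO(\|(x,y)\|^P)$ (which may be as low as $1$) while the $(x,y)$-deviation is $\OO(\|(x,y)\|^{\min\{N,M\}})$ --- your final error bookkeeping for $\tilde f^N$ must use both, as you implicitly do: the $\vartheta$-error contributes $\OO(\|(x,y)\|^{N+P})$ via $D_\vartheta\widehat f^N$ and the $(x,y)$-error contributes $\OO(\|(x,y)\|^{N-1+\min\{N,M\}})$ via $D_{x,y}\widehat f^N$, both of order at least $N+1$ since $M,N\geq 2$, $P\geq 1$.
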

The proof of this lemma is straightforward from Theorem~\ref{thm:smalldivisors}, performing a finite averaging procedure, Gronwall's lemma and easy estimates, see~\cite{BFM20} for the case $n=1$, $N=M=P$. We skip the details of the proof.  

\begin{theorem}[\textit{A posteriori} result for flows]\label{thm:posterioriresultflow}
Let $X$ be a vector field of the form~\eqref{systemflow} with $f^{\geq N}, g^{\geq M}, h^{\geq P}$ satisfying conditions $(i)-(v)$ for some $q\ge q^*$ with $q^*$ given in~\eqref{def:regularitatminima}. Assume that $\widecheck {\omega}=(\omega,\nu)$ is Diophantine  and
$A_f>b_f \max\{1,N-P\}$. 
Assume further that there exist analytic maps $K^{\leq}:V_{\rho_0} \times \T^{\mathrm{d}} \times \R \times \Lambda \to \mathcal{U}\times \T^{\mathrm{d}}$ and $Y:V_{\rho_0} \times \T^{\mathrm{d}} \times \R \times \Lambda \to \R^n \times \R^d$ quasiperiodic with respect to $t$ with time frequency $\nu$, which are sums of homogeneous functions with respect to $u$, of the form
$$
K^{\leq}_{x,y}(u, \vf,t,\lambda) - (u,0)=\OO(\|u\|^2),
\qquad K^{\leq}_\theta(u,\vf,t,\lambda)- \vf = \OO(\|u\|), 
$$
$$
Y_u(u,\vf,t,\lambda)-  \overline{f}^N(u,0,\lambda) =\OO(\|u\|^{N+1}), \quad 
Y_{\vf}(u,\vf,t,\lambda)-\omega=\OO(\|u\|)
$$
such that  
\begin{equation*}
X (K^\leq(u,\vf,t,\lambda),t,\lambda) - \partial_{u,\vf} K^\leq (u,\vf,t,\lambda) Y(u,\vf,t,\lambda) - \partial_t K^\leq (u,\vf,t,\lambda) =
\mathcal{O}(\|u\|^{q}). 
\end{equation*}
 
Then, writing $\v=(\vf,\tau)$, the parametrization  $\widecheck{K}^{\leq}(u,\v,\lambda)= (\widehat{K}^\leq (u, \v,\lambda ), \tau)$ and $\widecheck{R}(u,\v,\lambda)=\Phi_{\widecheck{Y}}(1;u,\v,\lambda)$, the time $1$-map of 
$\widecheck{Y}(u,\v,\lambda)=(\widehat{Y}(u,\v,\lambda),\nu)$, satisfy all the hypotheses in Theorem~\ref{thm:posterioriresult} for the map $F_\lambda(x,y,\th)=\Phi_{\widehat{X}}(1;x,y,\th,\lambda)$. 

Let $\widecheck{\Delta}:V_{\rho} \times \T^{d+d'} \times \Lambda \to \R^{n+m} \times \T^{d+d'}$ be the analytic function provided by Theorem~\ref{thm:posterioriresult}. Then,  the quasiperiodic function $\Delta(u,\vf,t)= \widecheck{\Delta}_{x,y,\theta} (u, \vf, \nu t)$ satisfies the invariance equation
$$
 X  \circ (K^\leq + \Delta) - \partial_{u,\vf} (K^\leq + \Delta )  Y  - \partial_t (K^\leq + \Delta ) =0.
$$

If $ \rho,\beta$ are small enough, $K:=K^\leq + \Delta$ satisfies that  $K(V_{\rho}\times \T^{\mathrm{d}}\times \R,\lambda) \subset W^{\mathrm{s}}_{\Vext_{\rho,\beta} }$ and, when $B_g>0$, 
for some slightly smaller cone set $\widehat V$,
$$
 K(\wh V_{\rho}\times \T^{\mathrm{d}} \times \R,\lambda) =W^{\mathrm{s}}_{\wh \Vext_{\rho,\beta} }.
$$
\end{theorem}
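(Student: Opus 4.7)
The plan is to reduce the infinitesimal invariance equation~\eqref{inveqflows} to the time-$1$ invariance equation~\eqref{invarianceequationmap} of the autonomous suspension of $X$, apply Theorem~\ref{thm:posterioriresult} on the suspension, and then promote the map-level solution to a flow-level solution by uniqueness. The suspension is already fixed in the excerpt: $\widecheck{X}=(\widehat{X},\nu)$ on $\mathcal{U}\times\T^{d+d'}$, whose time-$1$ map $F_\lambda$ has, by Lemma~\ref{lem:frommapstoflows}, the form~\eqref{system} with the same constants $a_f,b_f,A_f,D_f,B_g,\Cv$ as $X$ and with Diophantine frequency $\widecheck{\omega}$. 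Thus every quantitative hypothesis of Theorem~\ref{thm:posterioriresult} (the bound $A_f>b_f\max\{1,N-P\}$, the order condition $q\geq q^*$, and the Diophantine assumption) is inherited for free.

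Next I would check that $\widecheck{K}^{\leq}$ and $\widecheck{R}=\Phi_{\widecheck{Y}}(1;\cdot,\lambda)$ form an approximate solution of~\eqref{invarianceequationmap} of the required order. The order hypotheses on $\widecheck{K}^{\leq}-(u,0,\v)$ and $\widecheck{R}-(u,\v+\widecheck{\omega})$ follow from those on $K^{\leq}$ and $Y$ together with smooth dependence of $\Phi_{\widecheck{Y}}$ on initial data. The nontrivial estimate is for the map error $E^{\leq}=F_\lambda\circ \widecheck{K}^{\leq}-\widecheck{K}^{\leq}\circ\widecheck{R}$, which by the fundamental theorem of calculus equals
\[
-\int_0^1\frac{d}{ds}\,\Phi_{\widecheck{X}}\bigl(1-s;\,\widecheck{K}^{\leq}(\Phi_{\widecheck{Y}}(s;u,\v,\lambda),\lambda),\lambda\bigr)\,ds
=\int_0^1 D_z\Phi_{\widecheck{X}}(1-s;\cdot)\cdot\mathcal{E}(\Phi_{\widecheck{Y}}(s;u,\v,\lambda))\,ds,
\]
where $\mathcal{E}:=\widecheck{X}\circ\widecheck{K}^{\leq}-D\widecheck{K}^{\leq}\cdot\widecheck{Y}=\OO(\|u\|^{q})$ by hypothesis. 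A Gronwall bound on $D_z\Phi_{\widecheck{X}}$ near $\TTT$, combined with the fact that $\Phi_{\widecheck{Y}}(s;\cdot)$ preserves the order in $u$ (its leading $u$-part is $u+s\,\overline{f}^{N}(u,0,\lambda)$, and hypotheses~(iii) and~(v) keep iterates inside the cone $V_{\rho,\beta}$), yields $E^{\leq}=\OO(\|u\|^{q})$, as demanded by~\eqref{condEth}.

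Theorem~\ref{thm:posterioriresult} then produces a unique analytic $\widecheck{\Delta}$ of the prescribed order with $F_\lambda\circ (\widecheck{K}^{\leq}+\widecheck{\Delta})=(\widecheck{K}^{\leq}+\widecheck{\Delta})\circ\widecheck{R}$. To pass from this time-$1$ invariance to~\eqref{inveqflows} I exploit the autonomy of $\widecheck{X}$: for each $s\in\R$ set
\[
\widecheck{K}^{(s)}(u,\v,\lambda):=\Phi_{\widecheck{X}}\bigl(-s;\widecheck{K}(\Phi_{\widecheck{Y}}(s;u,\v,\lambda),\lambda),\lambda\bigr),\qquad \widecheck{K}:=\widecheck{K}^{\leq}+\widecheck{\Delta}.
\]
Since $\Phi_{\widecheck{X}}(1;\cdot)$ commutes with $\Phi_{\widecheck{X}}(-s;\cdot)$, and similarly $\widecheck{R}$ commutes with $\Phi_{\widecheck{Y}}(s;\cdot)$, the family $\widecheck{K}^{(s)}$ also solves the time-$1$ invariance equation for $F_\lambda$ and $\widecheck{R}$. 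A further Gronwall estimate shows that $\widecheck{K}^{(s)}-\widecheck{K}^{\leq}$ has the same order in $u$ as $\widecheck{\Delta}$, uniformly for $s\in[0,1]$, so the uniqueness clause of Theorem~\ref{thm:posterioriresult} forces $\widecheck{K}^{(s)}=\widecheck{K}$ for every such $s$. Differentiating at $s=0$ delivers the full infinitesimal invariance equation for $\widecheck{K}$; projecting onto the $(x,y,\theta)$-block and writing $\Delta(u,\vf,t):=\widecheck{\Delta}_{x,y,\theta}(u,\vf,\nu t)$ recovers precisely~\eqref{inveqflows} for $K=K^{\leq}+\Delta$.

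The main obstacle is this uniqueness-promotion step, because applying the uniqueness clause of Theorem~\ref{thm:posterioriresult} to $\widecheck{K}^{(s)}$ requires uniform-in-$s$ order estimates on $\widecheck{K}^{(s)}-\widecheck{K}^{\leq}$ in a regime where the dynamics is merely parabolic and the Jacobian of $\Phi_{\widecheck{X}}$ may grow polynomially; the exponents packed into $q^*$ have been arranged precisely to absorb this growth. Once the upgrade is achieved, the stable-manifold statements $K(V_\rho\times\T^{\mathrm{d}}\times\R,\lambda)\subset W^{\mathrm{s}}_{\Vext_{\rho,\beta}}$ and, under $B_g>0$, $K(\widehat{V}_\rho\times\T^{\mathrm{d}}\times\R,\lambda)=W^{\mathrm{s}}_{\widehat{\Vext}_{\rho,\beta}}$, follow by transferring~\eqref{darrerstatementdethm28}: a point $(x,y,\theta,t_0)$ lies in the flow-stable set exactly when $(x,y,\theta,\nu t_0)$ lies in the map-stable set of $F_\lambda$.
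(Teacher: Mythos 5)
Your proof is correct and follows exactly the approach the paper intends: reduce to the time-$1$ map of the autonomous suspension via Lemma~\ref{lem:frommapstoflows}, solve the map-level invariance equation via Theorem~\ref{thm:posterioriresult}, then promote to the infinitesimal flow-level invariance by applying the uniqueness clause of Theorem~\ref{thm:posterioriresult} to the conjugated family $\widecheck K^{(s)}=\Phi_{\widecheck X}(-s;\cdot)\circ\widecheck K\circ\Phi_{\widecheck Y}(s;\cdot)$ and differentiating at $s=0$. The paper defers the details to the analogous Section~5 of~\cite{BFM20}, so your proposal is a fully spelled-out version of the proof the paper cites, with correct handling of both the FTC representation of the error $E^{\leq}$ and the component-wise order bounds needed for the uniqueness step.
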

 
Theorem~\ref{thm:posterioriresultflow} can be proven from Theorem~\ref{thm:posterioriresult} and Lemma~\ref{lem:frommapstoflows} following exactly the same lines as the ones showed in Section~5 in~\cite{BFM20} (see also~\cite{BFM2020a}). The details are left to the reader. 

Concerning the approximate solution, we have the analogous result to Theorem~\ref{thm:approximationmaps}.
Even that, using Lemma~\ref{lem:frommapstoflows} we could compute the approximate solution by means of the approximate solution given by Theorem~\ref{thm:approximationmaps} for the time $1$-map of the vector field $X$, in Section~\ref{sec:cohomologicalflow} we provide an  algorithm to compute $K^{(j)}$ and $Y^{(j)}$ directly from the vector field $X$ itself. 

\begin{theorem}[Approximation result for flows]\label{thm:approximationflows}
Assume that $X$ is an analytic vector field of the form~\eqref{systemflow}, satisfying conditions $(i)-(v)$ with $q\ge q^*$ and $q^*$ defined in~\eqref{def:regularitatminima}, that $\widecheck {\omega}=(\omega,\nu)$ is Diophantine and
\[
\begin{aligned}
&D_y\overline{g}^M(x,0,\lambda) \quad \text{is invertible for all} \quad (x,\lambda) \in V_{\rho_0}\times \Lambda,\quad   &\text{if}  \quad   M<N,
\\
&2+\frac{B_g}{a_f}>0, \qquad & \text{if}
\quad
M=N.
\end{aligned}
\]
Then, there exists $0<\rho \leq \rho_0$ such that for any $j\leq q-N$, there exist an analytic map $K^{(j)}:V_\rho \times \T^{d} \times \R \times \Lambda \to \R^{n+m}\times \T^{d}$ and an analytic vector field $Y^{(j)}:V_\rho \times \T^{d} \times \R\times \Lambda \to \R^n \times \R^{d}$,  depending quasiperiodically on $t$ with time frequency $\nu$, such that 
\begin{equation}\label{def:Ejflow}
\begin{aligned}
E^{(j)}&:=X (K^{(j)}(u,\vf,t,\lambda),t,\lambda) - \partial_{u,\vf} K^{(j)}(u,\vf,t,\lambda)Y(u,\vf,t,\lambda)- \partial_t K^{(j)}(u,\vf,t,\lambda) \\ &= \mathcal{O}(\|u\|^{j+N}). 
\end{aligned}
\end{equation}
In addition, $K^{(j)}$ and $Y^{(j)}$ can be expressed as sum of homogeneous functions of the form
\begin{align*}
    K_x^{(j)}(u,\vf,t,\lambda)&=u + \sum_{l=2}^j  \overline{K}_x^l(u,\lambda) + \sum_{l=1}^j \widetilde{K}_x^{l+N-1}(u,\vf,t,\lambda), \\ 
    K_y^{(j)}(u,\vf,t,\lambda)&=\sum_{l=2}^{j+N-M} \overline{K}_y^l(u,\lambda) + \sum_{l=1}^{j+N-M} \widetilde{K}_y^{l+M-1}(u,\vf,t,\lambda), \\ 
    K_\theta^{(j)}(u,\vf,t,\lambda)&=\vf + \sum_{l=2}^{j+N-P}  \overline{K}_\theta^{l-1}(u,\lambda) + \sum_{l=1}^{j+N-P} \widetilde{K}_\theta^{l+P-2}(u,\vf,t,\lambda) 
\end{align*}
and, for $j > j^*_u$ (see~\eqref{defju} for the precise value of $j^*_u$),
$$
Y_u^{(j)}(u,\vf,t,\lambda)  = \overline{f}^N(u,0,\lambda) + \sum_{l=2}^{j^*_u} Y_u^{l+N-1}(u,\lambda),
\qquad 
Y_{\vf}^{(j)}(u,\vf,t,\lambda)  =  \omega+  \sum_{l=2}^{j} Y_{\vf}^{l+P-2}(u,\lambda).
$$  
Moreover, if $P = N$, we obtain $Y^{(j)}_{\vf}(u,\vf,t,\lambda)  =  \omega$.
\end{theorem}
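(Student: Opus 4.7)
The plan is to construct $K^{(j)}$ and $Y^{(j)}$ by induction on $j$. The base case $j=1$ is taken with $K^{(1)}(u,\vf,t,\lambda)=(u,0,\vf)$, $Y^{(1)}_u=\overline{f}^N(u,0,\lambda)$ and $Y^{(1)}_\vf=\omega$; hypothesis~(iv), $g^M(x,0,\theta,t,\lambda)=0$, together with $\widetilde{f}^N=\OO(\|u\|^N)$ and $h^{\ge P}=\OO(\|u\|^P)$, yields directly $E^{(1)}=\OO(\|u\|^{N+1})$. Assuming $K^{(j-1)},Y^{(j-1)}$ have already been constructed with $E^{(j-1)}=\OO(\|u\|^{j-1+N})$, one adds corrections of exactly the homogeneity degrees prescribed in the statement and chooses them so that the new error gains one extra order in $u$. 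The fact that the ansatz for $K^{(j)}$ and $Y^{(j)}$ is a sum of homogeneous functions (with the specific dependence on the angles and $t$ indicated) is preserved at each step, since every unknown solves a cohomological equation whose right-hand side is itself homogeneous.

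Linearising~\eqref{inveqflows} around the current approximation and collecting the terms of the new homogeneity, one obtains at each step a system of cohomological equations. Splitting them with respect to $(\vf,\tau)$, where $\tau=\nu t$, Theorem~\ref{thm:smalldivisors} applied to the Diophantine vector $\widecheck{\omega}=(\omega,\nu)$ produces the oscillatory unknowns $\widetilde{K}_x^{l+N-1}$, $\widetilde{K}_y^{l+M-1}$, $\widetilde{K}_\theta^{l+P-2}$ uniquely, with zero average, and with their degrees of homogeneity preserved by the lemma. The averaged equations reduce to relations along the flow of $\overline{f}^N(u,0,\lambda)\cdot\partial_u$: when $M<N$, the leading operator on $\overline{K}_y^l$ is multiplication by $D_y\overline{g}^M(u,0,\lambda)$, invertible by hypothesis, so $\overline{K}_y^l$ is obtained algebraically; when $M=N$, the $x$- and $y$-corrections couple at a common order and the condition $2+B_g/a_f>0$ is precisely the non-resonance ensuring that the combined operator acting on $(\overline{K}_x^l,\overline{K}_y^l)$ is invertible on homogeneous functions of the appropriate degree. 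The averaged equations for $\overline{K}_x^l$ and $\overline{K}_\theta^{l-1}$ are ODEs along the same flow; the coefficients $Y_u^{l+N-1}$ and $Y_\vf^{l+P-2}$ are chosen to cancel the obstruction to solvability in the averaged direction, and the weak contraction $a_f>0$ combined with the bound encoded by $D_f$ explains why no modification of $Y_u$ is needed for $l>j^*_u$, while the homogeneity accounting shows that $Y_\vf^{(j)}=\omega$ when $P=N$.

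\textbf{Main obstacle.} The technical heart of the argument is the bookkeeping of orders when a sum of homogeneous functions is substituted into a vector field with three distinct leading orders $N$, $M$, $P$: one must verify that every cohomological equation that arises has the stated homogeneity on its right-hand side, and that the losses coming from the small divisors lemma are compensated by the weak contraction, so that exactly one extra order of $u$ is gained per step. The coupled case $M=N$ is the most delicate point, since the invertibility of the averaged operator on the pair $(\overline{K}_x^l,\overline{K}_y^l)$ rests on the sharp condition $2+B_g/a_f>0$; outside this situation, the algebraic inversion via $D_y\overline{g}^M$ and standard small divisors estimates take over, and the induction can be iterated up to $j=q-N$, consistently with the regularity threshold $q^*$ fixed in~\eqref{def:regularitatminima}.
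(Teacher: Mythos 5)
Your overall strategy --- induction on $j$, linearising the invariance equation, splitting the resulting cohomological equations into oscillatory and averaged parts, using the small divisors theorem (Theorem~\ref{thm:smalldivisors}) for the oscillatory unknowns and a first-order PDE along the flow of $\overline{f}^N(\cdot,0,\lambda)$ for the averaged ones, with Theorem~\ref{thm:solutionlinearequation} as the key tool --- is the one the paper uses in Section~\ref{sec:cohomologicalflow}, which mirrors the map case of Section~\ref{sec:inductionalgorithm}. Your description of the role of $D_f$ and $j^*_u$ for the averaged $\overline{K}_x$-equation, and your observation that $Y^{(j)}_\vf=\omega$ when $P=N$, are also correct. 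However, there are two genuine errors.

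First, the base case. With your choice $K^{(1)}(u,\vf,t,\lambda)=(u,0,\vf)$, the $x$-component of the error is
$E^{(1)}_x = f^{\geq N}(u,0,\vf,t,\lambda) - \overline{f}^N(u,0,\lambda) = \widetilde{f}^N(u,0,\vf,t,\lambda) + \OO(\|u\|^{N+1})$,
which is only $\OO(\|u\|^N)$, since the oscillatory part of the degree-$N$ term does not cancel: $\widetilde{f}^N=\OO(\|u\|^N)$ is a hindrance here, not a help. The paper instead takes $K^{(1)}_x = u + \widetilde{K}_x^N$ with $\widetilde{K}_x^N=\mathcal{D}[\widetilde{f}^N]$ solving $\partial_\vf\widetilde{K}_x^N\cdot\omega + \partial_t\widetilde{K}_x^N = \widetilde{f}^N(u,0,\vf,t)$, exactly the $l=1$ term in the ansatz $\sum_{l=1}^j\widetilde{K}_x^{l+N-1}$ of the statement that your base case drops. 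Moreover, even with this correction the $y$- and $\theta$-components of $E^{(1)}$ are only $\OO(\|u\|^{M+1})$ and $\OO(\|u\|^P)$, and since $M,P\le N$ these need not be $\OO(\|u\|^{N+1})$. The paper carries the three distinct orders $(\OO(\|u\|^{j+N}),\OO(\|u\|^{j+M}),\OO(\|u\|^{j+P-1}))$ throughout the induction, and only at the end performs extra $y$-only and $\theta$-only sweeps (cf.\ the map version in Section~\ref{sec:endoftheprooapproximation}) to upgrade all components to $\OO(\|u\|^{j+N})$. Your uniform inductive hypothesis $E^{(j-1)}=\OO(\|u\|^{j-1+N})$ already presupposes this boost, so the induction as written does not close --- and this is exactly the bookkeeping you flagged as the technical heart, so it cannot be waved away.

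Second, the role of $2+B_g/a_f>0$. You attribute it to invertibility of a \emph{coupled} operator acting on the pair $(\overline{K}_x^l,\overline{K}_y^l)$. That is not the structure: hypothesis (iv), $g^M(x,0,\theta,t,\lambda)=0$, forces $\partial_x g^M(x,0)=\partial_\theta g^M(x,0)=0$, so the leading linear part in the cohomological system is block triangular and the averaged equations always decouple, to be solved in the order $y$, then $\theta$, then $x$. When $M=N$, the $y$-averaged equation becomes equation~\eqref{eq:DKy} with $\pa=\overline{f}^N(\cdot,0)$ and $\Qa=\partial_y\overline{g}^N(\cdot,0)$; the condition $2+B_g/a_f>0$ is hypothesis (d) of Theorem~\ref{thm:solutionlinearequation}, with $\m=j-1$ and worst case $j=2$, and it guarantees solvability of \emph{that equation alone}. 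Misreading it as a non-resonance for a coupled $(\overline{K}_x,\overline{K}_y)$ system misrepresents both the triangular structure supplied by (iv) and the content of the hypothesis.
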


As a consequence of these results we obtain the existence theorem, Theorem~\ref{th:existenceflow} and a conjugation result, Corollary~\ref{cor:conjugationflows}.
\begin{theorem}[Existence of the stable manifold for flows] \label{th:existenceflow}
Let $X $ be an analytic vector field of the form~\eqref{systemflow} satisfying conditions $(i)-(v)$ with $q\ge q^*$. Assume that $\widecheck {\omega}=(\omega,\nu)$ is Diophantine, $A_f>b_f \max\{1,N-P\}$
and
\[
\begin{aligned}
&D_y\overline{g}^M(x,0,\lambda) \quad \text{is invertible for all} \quad (x,\lambda) \in V_{\rho_0}\times \Lambda,\quad   &\text{if}  \quad   M<N,
\\
&2+\frac{B_g}{a_f}>0, \qquad & \text{if}
\quad
M=N.
\end{aligned}
\]

Then, there exists  $0<\rho\leq \rho_0$ such that the invariance equation~\eqref{inveqflows} 
has analytic solutions $K:V_\rho \times \T^{\mathrm{d}}   \times \R \times \Lambda \to \mathcal{U} \times \T^{\mathrm{d}}$ and $Y:V_\rho  \times \Lambda \to \R^n \times \R^d $. If $ \rho,\beta$ are small enough, $K$ satisfies that  $K(V_{\rho}\times \T^{\mathrm{d}}\times \R,\lambda) \subset W^{\mathrm{s}}_{\Vext_{\rho,\beta} }$. Moreover, if $B_g>0$, for $\lambda\in \Lambda$ and for some slightly smaller cone set $\widehat V$,
$$
K(V_{\rho}\times \T^{\mathrm{d}}\times \R,\lambda) =  W^{\mathrm{s}}_{\wh \Vext_{\rho,\beta}}.
$$
\end{theorem}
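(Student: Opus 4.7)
The plan is to obtain Theorem~\ref{th:existenceflow} as a direct combination of the approximation result, Theorem~\ref{thm:approximationflows}, with the \emph{a posteriori} result, Theorem~\ref{thm:posterioriresultflow}. First I would check that the hypotheses of Theorem~\ref{th:existenceflow} imply those of both constituent theorems. Since $P\le N$ one has $\max\{1,N-P\}\ge 1$, so $A_f>b_f\max\{1,N-P\}$ yields in particular $A_f>b_f$, which is the form of the non-resonance condition required by the approximation theorem. The Diophantine condition on $\widecheck\omega=(\omega,\nu)$, the order condition $q\ge q^*$, and the dichotomy between $M<N$ (invertibility of $D_y\overline{g}^M$) and $M=N$ (strict inequality $2+B_g/a_f>0$) appear verbatim in both statements.

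Next I would apply Theorem~\ref{thm:approximationflows} with index $j=q-N$, which is the maximal value admitted by that theorem. This produces analytic maps $K^{(q-N)}:V_\rho\times \T^{\mathrm{d}}\times\R\times\Lambda\to \R^{n+m}\times\T^{\mathrm{d}}$ and $Y^{(q-N)}$, quasiperiodic in $t$ with frequency $\nu$, together with an error $E^{(q-N)}=\mathcal{O}(\|u\|^q)$, all represented as finite sums of homogeneous functions in $u$ with the explicit shape displayed in that theorem. Reading off from those expansions, one verifies by inspection that
\[
K^{(q-N)}_{x,y}(u,\vf,t,\lambda)-(u,0)=\mathcal{O}(\|u\|^2),\qquad K^{(q-N)}_\theta(u,\vf,t,\lambda)-\vf=\mathcal{O}(\|u\|),
\]
and
\[
Y^{(q-N)}_u(u,\vf,t,\lambda)-\overline{f}^N(u,0,\lambda)=\mathcal{O}(\|u\|^{N+1}),\qquad Y^{(q-N)}_{\vf}(u,\vf,t,\lambda)-\omega=\mathcal{O}(\|u\|).
\]
These are exactly the hypotheses of Theorem~\ref{thm:posterioriresultflow} on an approximate parametrization with tail of order $q$.

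I would then set $K^\leq:=K^{(q-N)}$ and $Y:=Y^{(q-N)}$ and feed them into Theorem~\ref{thm:posterioriresultflow}. That theorem provides (possibly after shrinking $\rho$) a unique analytic quasiperiodic correction $\Delta$ such that $K:=K^\leq+\Delta$ solves the invariance equation~\eqref{inveqflows}; the inclusion $K(V_\rho\times\T^{\mathrm{d}}\times\R,\lambda)\subset W^{\mathrm{s}}_{\Vext_{\rho,\beta}}$ and, under the additional assumption $B_g>0$, the equality $K(\widehat V_\rho\times\T^{\mathrm{d}}\times\R,\lambda)=W^{\mathrm{s}}_{\widehat\Vext_{\rho,\beta}}$ are read off from the final conclusion of Theorem~\ref{thm:posterioriresultflow}. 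Since the two substantive theorems have already been established independently (the \emph{a posteriori} part by a fixed-point contraction, the approximation part by an explicit algorithmic construction), there is no real obstacle here: the argument reduces to verifying that the templates produced by the approximation theorem match those demanded by the \emph{a posteriori} theorem, which they do by direct inspection. The only minor care is to take $\rho$ small enough so that the conclusions of both theorems hold simultaneously, which is automatic since each of them allows $\rho$ to be shrunk.
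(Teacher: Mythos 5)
Your proposal is correct and coincides with what the paper intends: the paper itself states that Theorem~\ref{th:existenceflow} is a direct consequence of Theorems~\ref{thm:posterioriresultflow} and~\ref{thm:approximationflows} obtained by applying the approximation theorem at the maximal index $j=q-N$ to produce the data $(K^\leq,Y)$ and then invoking the \emph{a posteriori} theorem. The only content is the bookkeeping you spell out (that $A_f>b_f\max\{1,N-P\}$ gives the $A_f>b_f$ needed for the approximation theorem, and that the homogeneous expansions in Theorem~\ref{thm:approximationflows} deliver exactly the low-order normalization required by Theorem~\ref{thm:posterioriresultflow}), which is exactly the verification the paper leaves implicit.
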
  
The conjugation result, analogous to Corollary~\ref{cor:conjugationmaps}, is: 
\begin{corollary}\label{cor:conjugationflows}
Let $X $ be an analytic vector field of the form~\eqref{systemflow} 
without the $y$-component and independent of the $y$-variable. Assume that $f^{\geq N}$ and $h^{\geq P}$ satisfy the corresponding conditions in (i)-(v) with $q\ge q^*$. We also assume that $\widecheck {\omega}=(\omega,\nu)$ is Diophantine  and 
$A_f>b_f\max\{1,N-P\}$. Then, there exists $0<\rho< \rho_0$ such that the vector field $X $ 
restricted to $V_{\rho, \beta} \times \T^d \times \R  $ is conjugate to  
$$
Y (u,\lambda)=\left (\overline{f}^N(u,\lambda)+ \sum_{l=2}^{j_x^*} Y_x^{l+N-1}(u,\lambda ), \ \omega+ \sum_{l=2}^{q-N} Y_\theta^{l+P-2}(u,\lambda) \right ).
$$
Moreover, both the conjugation and the vector field $Y$ are real analytic in a complex extension of $V_\rho \times \T^d \times \R\times \Lambda$. 
\end{corollary}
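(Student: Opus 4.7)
The strategy is to specialise Theorems~\ref{thm:approximationflows} and~\ref{thm:posterioriresultflow} (equivalently, Theorem~\ref{th:existenceflow}) to the degenerate setting $m=0$, treat the resulting parametrisation $K$ as a time-dependent change of coordinates, and read off the announced normal form for the reduced vector field from Theorem~\ref{thm:approximationflows}. Since $X$ has no $y$-component and does not depend on $y$, hypotheses (iv)-(v) concerning $\overline{g}^M$ become vacuous and the remaining conditions (i)-(iii), together with the Diophantine assumption on $\widecheck\omega$ and the bound $A_f>b_f\max\{1,N-P\}$, are precisely what is asked in the corollary. Choosing $j=q-N$ in Theorem~\ref{thm:approximationflows} we obtain analytic maps $K^{(j)}$ and $Y^{(j)}$ with $K^{(j)}_x(u,\vf,t,\lambda)=u+\OO(\|u\|^2)$, $K^{(j)}_\theta(u,\vf,t,\lambda)=\vf+\OO(\|u\|)$, and with $Y^{(j)}$ already in the form
\[
Y^{(j)}(u,\lambda)=\left(\overline{f}^N(u,\lambda)+\sum_{l=2}^{j^*_u}Y_u^{l+N-1}(u,\lambda),\ \omega+\sum_{l=2}^{q-N}Y_\vf^{l+P-2}(u,\lambda)\right),
\]
independent of $\vf$ and of $t$. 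Theorem~\ref{thm:posterioriresultflow} then provides an analytic correction $\Delta$ such that $K:=K^{(j)}+\Delta$ satisfies the exact invariance equation
\[
X\circ K-\partial_{(u,\vf)}K\cdot Y-\partial_tK=0,\qquad Y:=Y^{(j)}.
\]

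This identity is precisely the infinitesimal statement that the time-dependent change of variables $(u,\vf)\mapsto K(u,\vf,t,\lambda)$ conjugates the non-autonomous $X$ to the autonomous $Y$. To finish, I must check that $K(\cdot,\cdot,t,\lambda)$ is indeed an analytic diffeomorphism onto $V_{\rho,\beta}\times\T^d$ (which, since $m=0$, coincides with $V_\rho\times\T^d$ after possibly shrinking the cone $V$). Because $K^{(j)}$ is tangent to the identity in $(u,\vf)$ at $u=0$ and $\Delta$ vanishes to strictly higher order there (the orders $q-N+1$ and $q-2N+P+1$ stated in Theorem~\ref{thm:posterioriresultflow} are positive under our hypothesis $q\ge q^*$), the Jacobian $D_{(u,\vf)}K$ is a small perturbation of the identity uniformly in $(t,\lambda)$ after shrinking $\rho$. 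Hence $K(\cdot,\cdot,t,\lambda)$ is a local analytic diffeomorphism and the inverse $K^{-1}(\cdot,\cdot,t,\lambda)$ exists and is analytic on its image.

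The delicate point, and the main obstacle I anticipate, is to upgrade this local diffeomorphism into a genuine conjugation on the whole cone neighbourhood: in Theorem~\ref{th:existenceflow} the parametrisation $K$ only maps onto the stable set $W^{\mathrm{s}}_{\Vext_{\rho,\beta}}$, which in the general setting is a proper submanifold. Here, however, the weak contraction hypothesis (iii) with $a_f>0$, together with the fact that there is no transverse $y$ direction, implies that every trajectory of $X$ starting in $V_{\rho,\beta}\times\T^d$ remains there and tends to $\mathcal{T}$ in forward time, so that the stable set $W^{\mathrm{s}}_{\wh\Vext_{\rho,\beta}}$ described in~\eqref{darrerstatementdethm28} coincides with the whole cone neighbourhood $\wh\Vext_{\rho,\beta}$ (for a slightly shrunk cone $\wh V\subset V$). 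Combined with the uniform non-degeneracy of $D_{(u,\vf)}K$, this yields the surjectivity of $K(\cdot,\cdot,t,\lambda)$ onto $V_{\rho,\beta}\times\T^d$ and therefore the desired conjugation. Real analyticity of $K$ and $Y$ on a complex extension of $V_\rho\times\T^d\times\R\times\Lambda$ is inherited directly from the corresponding clauses in Theorems~\ref{thm:approximationflows} and~\ref{thm:posterioriresultflow}.
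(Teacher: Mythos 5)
Your overall strategy is exactly the one the paper intends (it does not actually spell out a proof of this corollary, presenting it as a direct consequence of Theorems~\ref{thm:posterioriresultflow} and~\ref{thm:approximationflows}): apply the approximation theorem with $j=q-N$ to produce $K^{(q-N)}$ and $Y$ in the claimed finite-sum form, apply the a posteriori theorem to correct $K^{(q-N)}$ to an exact solution $K$ of the invariance equation, read the invariance equation as the infinitesimal statement that $K$ conjugates $X$ to $Y$, and then argue that in the absence of a $y$-direction the stable set is the whole cone so that $K$ is a genuine diffeomorphism onto it.

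There is one slip you should fix, and it points at a real ingredient of the argument rather than a mere bookkeeping detail. You write that ``hypotheses (iv)-(v) concerning $\overline g^M$ become vacuous,'' leaving only (i)-(iii). In fact only condition (iv) is about $\overline g^M$ (together with the invertibility/$B_g$ hypotheses in the approximation theorem); condition (v) is the cone-invariance condition on $\overline f^N$, namely $\dist\bigl(x+\overline f^N(x,0,\lambda),V_{\rho_0}^c\bigr)\ge a_V\|x\|^N$, and it is \emph{not} vacuous when $m=0$. Indeed it is precisely what makes the cone $\widehat V_\rho$ forward-invariant for the (averaged) flow. In your surjectivity paragraph you appeal only to the weak contraction (iii) to conclude that every trajectory starting in $V_\rho\times\T^d$ stays there; but weak contraction alone controls $\|x\|$, not whether $x$ remains inside the cone $V_\rho$. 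The forward invariance you need is exactly condition (v) (used, as in the paper's Section~\ref{sec:proofthexistence}, to conclude $F_x(\cdot)\in V_\rho$). Replacing ``(iii)'' by ``(iii) together with (v)'' at that step, and correcting ``conditions (iv)-(v) concerning $\overline g^M$'' to ``condition (iv) and the $B_g$/invertibility hypotheses,'' makes the argument complete and matches the paper's logic.
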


\section{Proof of the \textit{a posteriori} result for maps }
\label{sec:resultats_a_posteriori}

We start by explaining the strategy we use to prove Theorem~\ref{thm:posterioriresult}. First, in Section~\ref{sec:preliminariespost} we prove that, using  Theorem~\ref{thm:smalldivisors} in an appropriate way, we can remove the dependence on the angle  up to order $q-1$ in all the functions involved in equation~\eqref{condEth}. Second, in Section~\ref{sec:fixedpointpost} we provide the operator we will deal with  to prove the result, solving a related fixed point equation. This is done using the Fourier expansion (with respect to $\th$) of the  involved functions. In Section~\ref{sec:fixedpointana}, extending the technology developed in~\cite{BFM20,BFM2020a}, we prove that the above mentioned fixed point operator is a contraction. 
Finally, in Section \ref{sec:proofthexistence} we prove 
\eqref{darrerstatementdethm28}.
 
Along this section we will omit the dependence on the parameter $\lambda$ in the notation. 

We assume that the family of maps $F$ satisfy conditions (i)-(v) with  $q\ge q^*$, where $q^*$ is defined in~\eqref{def:regularitatminima}.


\subsection{Preliminaries }\label{sec:preliminariespost}

The purpose of this section is to rewrite Theorem~\ref{thm:posterioriresult} in a more suitable form to apply functional analysis techniques. 
Actually, Theorem~\ref{thm:posterioriresult} will be a consequence of 
Proposition \ref {prop:existencepartialmap} below which will be proved in Sections \ref{sec:fixedpointpost}-\ref{sec:fixedpointana}.

In this section, to be able to apply Theorem~\ref{thm:smalldivisors},
taking into account that $F$ is analytic, we will consider its extension to a complex domain $\U_\C \times \T_\sigma \times \Lambda_\C$.

\begin{proposition}\label{prop:preliminariesaproximationmap}
Assume we are under the hypotheses of Theorem~\ref{thm:posterioriresult}. Then,  
there exist a change of variables $\mathcal{C}(\xi,\eta,\varphi)=(x,y,\th)$ and a reparametrization $\mathcal{P}(v,\psi)=(u,\v)$ such that equation~\eqref{inveqtheoremmaps} becomes
\begin{equation}\label{inveqprop}
\widehat{F}\circ (\widehat{K}^\leq + \widehat{\Delta}) - (\widehat{K}^\leq +\widehat{\Delta}) \circ \widehat{R} =0
\end{equation}
with 
$$
\widehat{F}= (\widehat{F}_\xi, \widehat{F}_\eta, \widehat{F}_\varphi)
= 
\mathcal{C}^{-1} \circ F \circ \mathcal{C}
$$  
and 
$$
\widehat{K}^\leq
= (\widehat{K}^\leq_\xi, \widehat{K}^\leq_\eta, \widehat{K}^\leq_\varphi)
= \mathcal{C}^{-1} \circ K^\leq \circ \mathcal{P},\qquad 
\widehat{R}
= (\widehat{R}_v, \widehat{R}_\psi)
=\mathcal{P}^{-1} \circ R\circ \mathcal{P}
$$
satisfying the corresponding conditions in Theorem~\ref{thm:posterioriresult}
and 
\begin{equation}\label{eq:approximatedinvariancewidehat}
\widehat{E}^\leq(v,\psi) := \widehat{F} \circ \widehat{K}^{\leq} (v,\psi)- \widehat{K}^{\leq } \circ \widehat{R} (v,\psi)=\OO(\|v\|^q).
\end{equation}
Moreover, 
$$
\partial_\varphi \widehat{F}_{\xi,\eta}, \;\partial_\varphi \widehat{F}_\varphi-\mathrm{Id}=\OO(\|(\xi,\eta)\|^q) 
$$
and
$$
\partial_{\psi}\widehat{K}^\le_{\xi,\eta}, \; \partial_\psi \widehat{K}^\le_\varphi - \mathrm{Id}, \; \partial_\psi \widehat{R}_v, \; \partial_\psi \widehat{R}_\psi - \mathrm{Id} = \OO(\|v\|^q).
$$
\end{proposition}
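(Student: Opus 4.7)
The plan is to build $\mathcal{C}$ and $\mathcal{P}$ through a finite normal form / averaging procedure that successively kills the $\theta$-dependence of the homogeneous terms of $F$ and $R$, from the lowest order up to $q-1$, using the Diophantine frequency $\omega$ through Theorem \ref{thm:smalldivisors}.

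For $\mathcal{C}$, I would construct it as a composition $\mathcal{C} = \mathcal{C}_{j_0} \circ \cdots \circ \mathcal{C}_{q-1}$, with $j_0 = \min(M,P)$, of near-identity changes
\[
\mathcal{C}_j(\xi,\eta,\varphi) = \bigl(\xi + \alpha^{j}(\xi,\eta,\varphi),\ \eta + \beta^{j}(\xi,\eta,\varphi),\ \varphi + \gamma^{j}(\xi,\eta,\varphi)\bigr),
\]
in which $\alpha^j$, $\beta^j$, $\gamma^j$ are zero-mean (in $\varphi$) and homogeneous of degree $j$ in $(\xi,\eta)$ — set to zero whenever the corresponding component is not yet oscillatory at that order. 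Conjugation by $\mathcal{C}_j$ contributes the coboundary $\alpha^j(\xi,\eta,\varphi+\omega) - \alpha^j(\xi,\eta,\varphi)$ to the degree-$j$ part of the $\xi$-component (and analogues for $\beta^j$, $\gamma^j$), while leaving the lower-order terms, already in normal form, untouched. Choosing $\alpha^j$, $\beta^j$, $\gamma^j$ to be $-\mathcal{D}$ applied to the corresponding oscillatory degree-$j$ parts of the currently-normalized map (where $\mathcal{D}$ is the solution operator introduced just after Theorem \ref{thm:smalldivisors}) removes the remaining oscillation at order $j$. Each step costs a tiny loss $\delta_j$ in the $\theta$-strip, but since the procedure is finite ($j_0 \le j \le q-1$) the cumulative loss stays inside the initial analytic strip.

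An entirely parallel averaging applied to $R$ in the $(u,\vartheta)$ variables yields $\mathcal{P}$, preserving the averaged leading term $u + \overline{f}^N(u,0)$ of $R_u$. I would then define $\widehat{K}^{\leq} := \mathcal{C}^{-1} \circ K^{\leq} \circ \mathcal{P}$; the stated leading-order forms of $\widehat{K}^{\leq}_{\xi,\eta}$ and $\widehat{K}^{\leq}_{\varphi}$ are then immediate because both $\mathcal{C}$ and $\mathcal{P}$ are near-identity and $K^{\leq}$ satisfies the hypotheses listed in Theorem \ref{thm:posterioriresult}. From the intertwinings $F \circ \mathcal{C} = \mathcal{C} \circ \widehat{F}$ and $R \circ \mathcal{P} = \mathcal{P} \circ \widehat{R}$ one gets
\[
\widehat{E}^{\leq}(v,\psi) \;=\; \mathcal{C}^{-1}\bigl(F(K^{\leq}(\mathcal{P}(v,\psi)))\bigr) - \mathcal{C}^{-1}\bigl(K^{\leq}(R(\mathcal{P}(v,\psi)))\bigr),
\]
so that the mean value theorem applied to $\mathcal{C}^{-1}$, combined with the hypothesis $E^{\leq} = \mathcal{O}(\|u\|^q)$ from \eqref{condEth} and $\mathcal{P}(v,\psi) - (v,\psi) = \mathcal{O}(\|v\|^2)$, propagates the order-$q$ bound to $\widehat{E}^{\leq}$. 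The six stated derivative estimates are automatic, since after the normalization no oscillatory term of degree less than $q$ survives in $\widehat{F}$, $\widehat{K}^{\leq}$ or $\widehat{R}$.

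The main obstacle I expect is purely combinatorial bookkeeping: one has to process the orders in increasing $j$ so that each cohomological step does not reintroduce oscillation at already-normalized lower orders, and one must check that the structural conditions of Theorem \ref{thm:posterioriresult} — notably $g^M(x,0,\theta) = 0$ and the values of the constants $a_f$, $b_f$, $A_f$, $D_f$, $B_g$ attached to $\overline{f}^N$ and $\overline{g}^M$ — are preserved along the finite sequence of near-identity changes. Once this is verified, Theorem \ref{thm:smalldivisors} supplies all the needed analytic estimates on a slightly shrunken complex domain $\mathcal{U}_{\mathbb{C}} \times \mathbb{T}^d_{\sigma'} \times \Lambda_{\mathbb{C}}$.
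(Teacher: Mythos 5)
Your finite-averaging construction of $\mathcal{C}$ and $\mathcal{P}$ matches the paper's Lemmas~\ref{lem:Ftilde} and~\ref{lem:changeparameter} essentially verbatim, and the mean-value argument propagating the order-$q$ bound from $E^{\le}$ to $\widehat{E}^{\le}$ is also the paper's. The gap is your final sentence, that ``the six stated derivative estimates are automatic, since after the normalization no oscillatory term of degree less than $q$ survives in $\widehat{F}$, $\widehat{K}^{\le}$ or $\widehat{R}$.'' This is indeed automatic for $\widehat{F}$ and $\widehat{R}$, since they are the objects directly normalized by $\mathcal{C}$ and $\mathcal{P}$. But $\widehat{K}^{\le} = \mathcal{C}^{-1} \circ K^{\le} \circ \mathcal{P}$ is merely $K^{\le}$ expressed in the new coordinates: the changes of variables were chosen to kill the $\varphi$-dependence of $F$ and $R$, and nothing in the construction forces the passive object $K^{\le}$ to lose its $\psi$-dependence as well. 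So the two estimates $\partial_\psi \widehat{K}^{\le}_{\xi,\eta} = \mathcal{O}(\|v\|^q)$ and $\partial_\psi \widehat{K}^{\le}_{\varphi} - \mathrm{Id} = \mathcal{O}(\|v\|^q)$ do not come for free and must be proved.

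In the paper this is actually the substantive half of the proof of Proposition~\ref{prop:preliminariesaproximationmap}. After applying $\mathcal{C}$ and $\mathcal{P}$, one decomposes $\widehat{F} = \widehat{F}^{\le q-1} + \widehat{F}^{\ge q}$ and $\widehat{R} = \widecheck{R} + \mathcal{O}(\|v\|^q)$, with $\widehat{F}^{\le q-1} - (0,0,\varphi)$ and $\widecheck{R} - (v, \psi + \omega)$ independent of the angle, and rewrites the approximate invariance equation as
\[
\widehat{F}^{\le q-1}\bigl(\widehat{K}^{\le}_{\xi,\eta}(v,\psi), \widehat{K}^{\le}_{\varphi}(v,\psi)\bigr) - \widehat{K}^{\le} \circ \widecheck{R}(v,\psi) = \mathcal{O}(\|v\|^q).
\]
Differentiating this in $\psi$ and Taylor-expanding $\partial_\psi \widehat{K}^{\le} \circ \widecheck{R}$ around $(v, \psi + \omega)$ produces a cohomological difference equation for $\partial_\psi \widehat{K}^{\le}(v,\psi) - \partial_\psi \widehat{K}^{\le}(v, \psi + \omega)$ whose right-hand side has order one higher than $\partial_\psi \widehat{K}^{\le}_{\xi,\eta}$, thanks to the extra factors of order $N-1$, $M-1$, $P-1$ in $D\widehat{F}^{\le q-1} - \mathrm{Id}$ and of order $N$, $P$ in $\widecheck{R} - (v, \psi + \omega)$. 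Since the left-hand side has zero average in $\psi$, Theorem~\ref{thm:smalldivisors} applies and gains one order for $\partial_\psi \widehat{K}^{\le}$. Iterating this bootstrap starting from the a priori bound $\partial_\psi \widehat{K}^{\le}_{\xi,\eta} = \mathcal{O}(\|v\|^2)$ raises the order step by step until reaching $\mathcal{O}(\|v\|^q)$. You need to carry out (or at least cite) this iteration to complete the proof; as written your proposal asserts the conclusion without the argument.
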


The main part of this section is devoted to prove this proposition. 
After the proof is complete we state Proposition \ref {prop:existencepartialmap} 
and deduce Theorem~\ref{thm:posterioriresult}   from it.
First, we perform several steps of  averaging to remove the dependence of $F$ on the angles up to order $q-1$.
 
\begin{lemma}\label{lem:Ftilde}
Let $F$ be a map of the form~\eqref{system} satisfying conditions (i)-(v), with $\omega $ Diophantine. 
Then, there exists a near to the identity change of variables $\mathcal{C}:\mathcal{U}'\times \T^d \to \mathcal{U}\times  \T^d$,
where $\U' $ is a domain slightly smaller
than $\U$ such that $0\in \overline{\U}'$,
which transforms $F$ into 
\begin{equation*}
\widehat{F} (\xi,\eta,\varphi)
=
\begin{pmatrix} \xi  +   \fNm (\xi,\eta) + \widehat{f}^{\geq N+1} (\xi,\eta) \\
\eta  + \gNm(\xi,\eta)+ \widehat{g}^{\geq M+1}(\xi,\eta) \\
\varphi +\omega  +  \hPm(\xi,\eta )+ \widehat{h}^{\geq P+1} (\xi,\eta)  
\end{pmatrix} + \widehat{F}^{\geq q}(\xi,\eta,\varphi)
\end{equation*}
with $\widehat{F}^{\geq q} (\xi,\eta,\varphi)=\OO(\|(\xi,\eta)\|^{q})$.
The change has
the form
$$
(x,y,\th)=\mathcal{C}(\xi,\eta,\varphi)=(\xi,\eta,\varphi) + \widehat{\mathcal{C}}(\xi,\eta,\varphi)
$$
with  
$$
\widehat{\mathcal{C}} (\xi,\eta,\varphi)= \left ( \sum_{j=N}^{q-1} \mathcal{C}^j_x(\xi,\eta,\varphi),  
 \sum_{j=M}^{q-1} \mathcal{C}^j_y(\xi,\eta,\varphi),\sum_{j=P}^{q-1} \mathcal{C}^j_\theta(\xi,\eta,\varphi)
\right )
$$
and the terms $\mathcal{C}^j$ are homogeneous functions of degree $j$ in the $(\xi,\eta)$-variables. 

Moreover, $\mathcal{C}$ and $\widehat{F}$ are analytic.
\end{lemma}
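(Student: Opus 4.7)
The plan is to construct $\mathcal{C}$ as a finite composition of near-identity, homogeneous-degree changes of variables, each designed to remove the $\varphi$-dependent (oscillatory) part of one homogeneous block of $F$. The induction is on the degree $j$ in $(\xi,\eta)$, running from $j_0=\min\{N,M,P\}$ up to $q-1$, and at each step the cohomological equation that must be solved is precisely of small-divisors type, hence treatable by Theorem~\ref{thm:smalldivisors}.

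\textbf{Setup and inductive step.} Suppose that after some steps we have reached a map $F_j$ whose truncation up to order $j-1$ is already $\varphi$-independent in each component, i.e.\ $F_j$ has the structure claimed in the lemma with $q$ replaced by $j$. Writing $F_j = (x+f_j,\,y+g_j,\,\theta+\omega+h_j)$ and extracting from $f_j,g_j,h_j$ their homogeneous degree-$j$ parts $\phi_j,\gamma_j,\eta_j$ (in $(x,y)$), we decompose each into its average and oscillatory parts, $\phi_j=\overline{\phi}_j+\widetilde{\phi}_j$, etc. We look for a homogeneous near-identity change
\[
\mathcal{C}_j(\xi,\eta,\varphi) = (\xi,\eta,\varphi) + \bigl(A^j(\xi,\eta,\varphi),\,B^j(\xi,\eta,\varphi),\,C^j(\xi,\eta,\varphi)\bigr),
\]
with $A^j,B^j,C^j$ homogeneous of degree $j$ in $(\xi,\eta)$. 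Imposing $F_j\circ\mathcal{C}_j = \mathcal{C}_j\circ F_{j+1}$ and comparing homogeneous terms of degree $j$, the conditions that remove the oscillatory part collapse to three independent small-divisors equations
\[
A^j(\xi,\eta,\varphi+\omega)-A^j(\xi,\eta,\varphi)=\widetilde{\phi}_j,\qquad
B^j(\xi,\eta,\varphi+\omega)-B^j(\xi,\eta,\varphi)=\widetilde{\gamma}_j,
\]
\[
C^j(\xi,\eta,\varphi+\omega)-C^j(\xi,\eta,\varphi)=\widetilde{\eta}_j,
\]
(contributions from $A^j,B^j,C^j$ to other components at degree $j$ vanish because translations by $\omega$ in $\varphi$ commute with homogeneous evaluation in $(\xi,\eta)$ at the linearized level). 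Since $\omega$ is Diophantine and each right-hand side is homogeneous of degree $j$, Theorem~\ref{thm:smalldivisors} furnishes unique zero-average real-analytic solutions $A^j,B^j,C^j$, themselves homogeneous of degree $j$, defined on any slightly shrunk complex domain.

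\textbf{Iteration and assembly.} Set $\mathcal{C}=\mathcal{C}_{j_0}\circ\mathcal{C}_{j_0+1}\circ\cdots\circ\mathcal{C}_{q-1}$. Because each $\mathcal{C}_j=\mathrm{Id}+O(\|(\xi,\eta)\|^j)$ is a near-identity change that is inverted by the usual fixed point argument on a slightly smaller domain, and there are only finitely many factors (at most $q-j_0$), the composition is a well-defined real-analytic near-identity change on a domain $\mathcal{U}'\times\T^d_\sigma$ with $\mathcal{U}'$ slightly smaller than $\mathcal{U}$. Collecting the homogeneous pieces obtained along the way yields the stated decomposition of $\widehat{\mathcal{C}}$ into $\sum_{j\ge N}\mathcal{C}^j_x$, $\sum_{j\ge M}\mathcal{C}^j_y$, $\sum_{j\ge P}\mathcal{C}^j_\theta$ (the lowest degree in each component reflects the order at which the respective component of $F$ starts being oscillatory). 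By construction, after step $q-1$ the oscillatory parts in each component have been pushed into the remainder $\widehat{F}^{\ge q}$, which inherits its $O(\|(\xi,\eta)\|^q)$ size from property (ii) and from the bounds on the generating functions provided by Theorem~\ref{thm:smalldivisors}.

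\textbf{Main obstacle.} The delicate point, in my view, is the bookkeeping: one must verify that the change $\mathcal{C}_j$ designed to clean up the degree-$j$ oscillation does not reintroduce new oscillations at lower degrees, and that the interaction of $C^j$ with the $\omega$-translation in the $\varphi$-component does not spoil what was already averaged at degrees $<j$. This is handled by a degree count: each component $A^j,B^j,C^j$ is homogeneous of degree $j\ge j_0$ in $(\xi,\eta)$, and the $(\xi,\eta)$-components of $\mathcal{C}_j$ do not affect the leading $\varphi$-translation, so the lower degree terms are untouched. The remaining claim that $\widehat{F}$ can be written exactly in the form stated, together with the derivative bounds the paper will use (implicit in the statement), follows by retaining the Fourier-side estimates from Theorem~\ref{thm:smalldivisors} at each step, which give the required control of the composition on a complex strip $\T^d_{\sigma'}$ with $\sigma'<\sigma$, and noting that analyticity is preserved under finitely many such steps.
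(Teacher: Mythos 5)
Your proposal is correct and follows essentially the same strategy as the paper: build $\mathcal{C}$ as a finite composition of near-identity changes that are homogeneous in $(\xi,\eta)$, and at each degree remove the angular oscillation by solving a cohomological (small-divisors) equation via Theorem~\ref{thm:smalldivisors}; finiteness of the number of steps preserves analyticity on a slightly shrunk domain. The only structural difference is organizational: the paper first applies $y$-only changes at degrees $M,\dots,N-1$, then $\theta$-only changes at degrees $P,\dots,N-1$, and only from degree $N$ on moves all three components, whereas you iterate degree by degree and move all components at once. That is a fine reorganization, but it forces you to confront a point your parenthetical justification glosses over: the three cohomological equations at degree $j$ are not \emph{independent} but merely \emph{triangular}. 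Because $N,M\ge 2$, the $\xi$- and $\eta$-equations at degree $j$ are indeed decoupled (the cross-terms $D_{x,y}f^{\ge N}\cdot(A^j,B^j)$ and $D_{x,y}g^{\ge M}\cdot(A^j,B^j)$ land in degree $\ge j+1$). However, when $P=1$ the term $D_{x,y}\overline{h}^{P}\cdot(A^j,B^j)$ is homogeneous of degree $P-1+j=j$, so the right-hand side of the $\varphi$-cohomological equation acquires an oscillatory source depending on the already-chosen $A^j,B^j$; the claim that ``contributions from $A^j,B^j,C^j$ to other components at degree $j$ vanish'' fails for that one component. This does not affect the outcome — solve the $\xi$- and $\eta$-equations first and then the $\varphi$-equation with the corrected source, exactly as the paper's $y$-then-$\theta$ ordering implicitly does at lower degrees — but it should be stated as a triangular scheme rather than a decoupled one.
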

\begin{proof}
If $M<N$, first we perform a change of variables of the form
$$
(x,y,\th)=(\xi,   \eta + \mathcal{C}_y^M(\xi,\eta,\varphi),\varphi),
$$
where  $\mathcal{C}_y^M$ is a homogeneous function of degree $M$ in $(\xi, \eta)$ to be determined. 
The transformed map, denoted by $F^{(1)}$,  keeps the same form as $F$ for the $x,\theta$ components since 
$$
f^N(\xi,\eta+\mathcal{C}_y^M(\xi,\eta,\varphi),\varphi)=
f^N(\xi,\eta,\varphi)+ \OO(\|(\xi,\eta)\|^{N-1+M})
=f^N(\xi,\eta,\varphi)+\OO(\|(\xi,\eta)\|^{N+1}),
$$
$$
h^P(\xi,\eta+\mathcal{C}_y^M(\xi,\eta,\varphi),\varphi)=
h^P(\xi,\eta,\varphi)+ \OO(\|(\xi,\eta)\|^{P-1+M})
=h^P(\xi,\eta,\varphi)+\OO(\|(\xi,\eta)\|^{P+1}).
$$
With respect to the $\eta$ component we obtain
$$
F_\eta^{(1)}=\eta+\mathcal{C}^M_y(\xi,\eta,\varphi) - \mathcal{C}^M_y (\xi,\eta,\varphi+\omega) + \overline{g}^{M}(\xi,\eta) + \widetilde{g}^M(\xi,\eta,\varphi) + \OO(\|(\xi,\eta)\|^{M+1}).
$$
We consider the equation 
$$
\mathcal{C}_y ^M(\xi,\eta,\varphi+\omega) - \mathcal{C}^M_y (\xi,\eta,\varphi)= \widetilde g^M(\xi,\eta,\varphi)
$$
and, by Theorem \ref{thm:smalldivisors}, we take $\mathcal{C}_y ^M=\mathcal{D}[\widetilde{g}^M]$ to have
$$
F_\eta^{(1)} = \eta + \overline{g}^{M}(\xi,\eta) + \widehat g^{\geq M+1}(\xi,\eta,\varphi).
$$
Then, $F^{(1)}$ still satisfies conditions (i)-(v). In particular, now conditions (iii), (v) only depend on 
$\fNm (\xi,0)$ which remains unchanged.
Clearly, condition (iv) is satisfied by $\gNm(\xi,0)$. 
We repeat this procedure $(N-M)$-times to get a new map (renaming the variables by $(x,y,\th)$ and the map by $F$) 
such that 
$$
F_y (x,y,\th)=y + \sum_{j=M}^{N-1} \overline{g}^{j}(x,y) + g^{\geq N}(x,y,\th),
$$
where $g^j$ are homogeneous functions of degree $j$, $g^{\geq N}=\OO(\|(x,y)\|^{N})$ and $F$ satisfies conditions (i)-(v). 

Now, if $P<N$, we deal with the $\th$ component and we consider a change of coordinates
$$
(x,y,\th)=(\xi,\eta,\varphi+\mathcal{C}^P_\theta(\xi,\eta,\varphi)),
$$
where $\mathcal{C}^P_\theta$ is a homogeneous function of degree $P$ in $(\xi,\eta)$. 
The components $(\xi,\eta)$ of the transformed map, denoted again by $F^{(1)}$, satisfy conditions (i)-(ii) and  
\begin{align*}
F_\xi^{(1)}&=\xi+  {f}^{N}(\xi,\eta,\varphi) + \OO(\|(\xi,\eta)\|^{N+1}), \\ 
    F_\eta^{(1)}&=\eta+ \sum_{j=M}^{N-1} \overline{g}^{j}(\xi,\eta) + \widehat{g}^{\geq N}(\xi,\eta,\varphi), \\ 
    F_\varphi^{(1)}&= \varphi+\omega+\mathcal{C}^P_\theta(\xi,\eta,\varphi) - \mathcal{C}^P_\theta (\xi,\eta,\varphi+\omega) + \overline{h}^{P}(\xi,\eta) + \widetilde{h}^P(\xi,\eta,\varphi) + \OO(\|(\xi,\eta)\|^{P+1}).
\end{align*}
Therefore, choosing $\mathcal{C}^P_\theta=\mathcal{D}[\widetilde{h}^P]$, we have that 
$$
F_\varphi^{(1)}=\varphi+\omega+ \overline{h}^P(\xi,\eta) + \OO(\|(\xi,\eta)\|^{P+1}).
$$
Repeating this procedure $(N-P)$-times we obtain a map $G^{(0)}(x,y,\th)$ such that 
$$
\partial_\th G^{(0)}-\Id=\OO(\|(x,y)\|^{N}). 
$$

Next, we look for  a change of variables of the form
$$
(x,y,\th)=(\xi + \mathcal{C}_x^N(\xi,\eta,\varphi), \eta + \mathcal{C}_y^N(\xi,\eta,\varphi), \varphi+ \mathcal{C}_\theta^N(\xi,\eta,\varphi)),
$$
where $\mathcal{C}_x^N$, $\mathcal{C}_y^N$ and $\mathcal{C}_\theta^N$ are homogeneous functions with respect to $\xi,\eta$ of degree $N$,
to transform $G$ to $G^{(1)}$. We impose the conditions
\begin{align*}
& 
\mathcal{C}^N_x(\xi,\eta,\varphi) - \mathcal{C}^N_x (\xi,\eta,\varphi+\omega) + \overline{f}^{N}(\xi,\eta) + \widetilde{f}^N(\xi,\eta,\varphi) = \OO(\|(\xi,\eta)\|^{N+1}), \\ &  
\mathcal{C}^N_y(\xi,\eta,\varphi) - \mathcal{C}^N_y (\xi,\eta,\varphi+\omega) + \sum_{j=M}^{N}\overline{g}^{j}(\xi,\eta) + \widetilde{g}^N(\xi,\eta,\varphi) = \OO(\|(\xi,\eta)\|^{N+1}),
 \\&
\mathcal{C}^P_\theta(\xi,\eta,\varphi) - \mathcal{C}^P_\theta (\xi,\eta,\varphi+\omega) + \sum_{j=P}^{N}\overline{h}^{j}(\xi,\eta) + \widetilde{h}^N(\xi,\eta,\varphi) = \OO(\|(\xi,\eta)\|^{N+1}).
\end{align*}
As before, taking $\mathcal{C}^N_{x}=\mathcal{D}[\widetilde{f}^N]$, $\mathcal{C}^N_y=\mathcal{D}[\widetilde{g}^N]$ and $\mathcal{C}^N_\theta=\mathcal{D}[\widetilde{h}^N]$ which are analytic and have the right order, we obtain 
$$
\partial_\varphi G^{(1)}_{\xi,\eta} =\OO(\|(\xi,\eta)\|^{N+1}),
\qquad \partial_\varphi G^{(1)}_\varphi-\Id=\OO(\|(\xi,\eta)\|^{N+1}) . 
$$
Since $G^{(0)}$ is a sum of homogeneous functions up to degree $q-1$ plus a remainder of order $q$, we can repeat this procedure $(q-N)$-times obtaining that $\widehat{F}:=G^{(q-N)}$ satisfies
$$
\partial_\varphi \widehat{F}_{\xi,\eta}=\OO(\|(\xi,\eta)\|^{q}),   \qquad 
\partial_\varphi \widehat{F}_{\varphi}-\Id=\OO(\|(\xi,\eta)\|^{q}).
$$ 
The change $\mathcal{C}$ in the statement is the composition of all previous changes. Since $\mathcal{C}$ is close to the identity, it sends $ \U'  \times \T^d_{\sigma'}  $ to
$ \U \times \T^d_{\sigma} $, where $ \U'$
is a slightly smaller domain than $\U$ and $\sigma' <\sigma$.
\end{proof}  

In the following lemma, which is a straightforward consequence of Lemma~\ref{lem:Ftilde}, we make a better choice of the parameters $(u,\v)$ which will allow us to find a new reparametrization $R$ such that its terms of order less than $q$ do not depend on the angular variables.
\begin{lemma}\label{lem:changeparameter}  
Assume that $R$ is analytic and satisfies the conditions for $F$ in Theorem~\ref{thm:posterioriresult} for some $\rho_0>0$. Then, there exist $\rho>0$ and an analytic change of parameters  $\mathcal{P}:\wh V_{\rho'}\times \T^d_{\sigma'}  \to V_{\rho_0}\times \T^d_\sigma$ of the form $(u,\v)= \mathcal{P}(v,\psi)= (v,\psi) + \widehat{\mathcal{P}}(v,\psi)$ with
$$
\widehat{\mathcal{P}}_u(v,\psi)=\sum_{j=N}^{q-1} \mathcal{P}^j_u(v,\psi) , \qquad 
\widehat{\mathcal{P}}_\Theta (v,\psi)=\sum_{j=P}^{q-1} \mathcal{P}^j_{\Theta}(v,\psi),
$$
where $\widehat V_{\rho'}$ is a slightly smaller cone, 
$\sigma'< \sigma$ and $\mathcal{P}^j$ are homogeneous functions of degree $j$,  with respect to $v$,
such that 
$$
\widehat{R}(v,\psi):=\mathcal{P}^{-1} \circ R \circ \mathcal{P}(v,\psi) = \widecheck{R}(v,\psi)+ \OO(\|v\|^q)
$$
with 
$$
\widecheck{R}(v,\psi):=(v +  {R}_v(v),\psi + \omega
+ {R}_\psi(v)) = (v+ \overline{f}^N(v,0) + \OO(\|v\|^{N+1}), \psi+\omega+ \OO(\|v\|^P)).
$$
In addition, both $\widecheck{R}$ and $\widehat{R}$ are analytic and $\widecheck{R}$ is a sum of homogeneous functions in $v$ up to order $q-1$.
\end{lemma}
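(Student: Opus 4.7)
The plan is to mimic the averaging scheme used in the proof of Lemma~\ref{lem:Ftilde}, now applied to $R$ viewed as a map on $V_{\rho_0}\times \T^d$: we iteratively kill the oscillatory part (with respect to $\v$) of the homogeneous terms of $R$ up to order $q-1$, by composing finitely many near-identity homogeneous changes of parameters, each one produced by the small divisors lemma (Theorem~\ref{thm:smalldivisors}).

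Concretely, the $u$-component satisfies $R_u - (u+\overline f^N(u,0)) = \OO(\|u\|^{N+1})$ (so already has no oscillatory piece at order $N$), while the $\v$-component is assumed to have its lowest-order correction at order $P$. First, I treat the $\v$-component at order $P$: write the leading oscillatory part as $\widetilde h_R^P(u,\v)$, apply a change of the form $(u,\v)=(v,\psi+\mathcal P_\Theta^P(v,\psi))$ with $\mathcal P_\Theta^P$ homogeneous of degree $P$ in $v$, and compute the transformed map to leading order. The resulting cohomological equation
\[
\mathcal P_\Theta^P(v,\psi+\omega)-\mathcal P_\Theta^P(v,\psi)=\widetilde h_R^P(v,\psi)
\]
is solved by $\mathcal P_\Theta^P=\mathcal D[\widetilde h_R^P]$, which exists, is analytic, and preserves the homogeneity, thanks to Theorem~\ref{thm:smalldivisors} and the Diophantine hypothesis on $\omega$. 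I then iterate this for orders $P+1,\dots,q-1$ on the $\psi$-component, and analogously for orders $N+1,\dots,q-1$ on the $v$-component, using at each step a near-identity homogeneous change $(u,\v)=(v+\mathcal P_v^j(v,\psi),\psi)$ or $(v,\psi+\mathcal P_\Theta^j(v,\psi))$. By composing all these $\OO(q-P)$ elementary changes I obtain the desired $\mathcal{P}$ as a sum of homogeneous terms with the structure stated in the lemma.

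The main obstacle is the bookkeeping: one must check that each elementary change only contaminates orders strictly higher than the one being treated, so previously-averaged pieces remain averaged, and that the tails pushed into order $\geq q$ assemble into the global error $\OO(\|v\|^q)$. This is routine because the composition of a homogeneous-$j$ change with a function whose leading nontrivial part starts at order $N$ only creates corrections at order $\geq j+N-1 > j$. It remains to verify the domain statement: $\mathcal P$ is analytic and near-identity on the cone $V_{\rho_0}\times \T^d_\sigma$, so by a standard estimate there exist $0<\rho'\leq \rho_0$, $\sigma'<\sigma$ and a slightly narrower cone $\widehat V\subset V$ with $\mathcal P(\widehat V_{\rho'}\times \T^d_{\sigma'})\subset V_{\rho_0}\times \T^d_\sigma$. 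By construction $\widehat R=\mathcal P^{-1}\circ R\circ \mathcal P$ coincides with the $\psi$-independent polynomial $\widecheck R$ modulo $\OO(\|v\|^q)$, and $\widecheck R$ is analytic because it is a finite sum of homogeneous analytic terms.
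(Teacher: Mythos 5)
Your plan is the same as the paper's, which simply invokes Lemma~\ref{lem:Ftilde} for $R$ (viewed as a map with $m=0$, i.e.\ no $y$-component); you re-derive that averaging procedure explicitly, which is fine in spirit.

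However, the bookkeeping justification has a genuine gap. You claim that ``each elementary change only contaminates orders strictly higher than the one being treated,'' because ``the composition of a homogeneous-$j$ change with a function whose leading nontrivial part starts at order $N$ only creates corrections at order $\geq j+N-1 > j$.'' That argument is correct for the $v$-component (whose correction $R_u-u$ starts at order $N\geq2$), but it does not cover the $\psi$-component $R_\v-\v-\omega$, which starts at order $P$ and the setup allows $P$ as low as $1$. A change $(u,\v)=(v+\mathcal P_v^j(v,\psi),\psi)$ with $\mathcal P_v^j$ of degree $j$ and $\psi$-dependent feeds into $\widehat R_\psi$ the term $\partial_v R_\psi(v)\cdot\mathcal P_v^j(v,\psi)$, whose lowest order is $j+P-1$, which equals $j$ when $P=1$. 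So a $v$-change at order $j$ can re-introduce angular dependence in $\widehat R_\psi$ at the very order $j$. Consequently, the ordering your proposal reads as suggesting --- kill the oscillatory part of $\widehat R_\psi$ at all orders $P,\dots,q-1$ first, and only afterwards treat $\widehat R_v$ at orders $N+1,\dots,q-1$ --- would un-average the $\psi$-component at every order $\geq N+P$, and the conclusion $\widehat R=\widecheck R+\OO(\|v\|^q)$ would fail.

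The fix is the ordering already built into Lemma~\ref{lem:Ftilde}: first remove the $\psi$-dependence of $\widehat R_\psi$ only up to order $N-1$ (the $v$-component has nothing to average below order $N+1$ since its order-$N$ part is the $\psi$-independent $\overline f^N(v,0)$), and then, for each $j=N,\dots,q-1$, perform a \emph{simultaneous} homogeneous change in both components at order $j$ --- equivalently, fix $\mathcal P_v^j$ first and then choose $\mathcal P_\Theta^j$ so that the cohomological right-hand side in the $\psi$-equation absorbs the order-$j$, $\psi$-dependent contamination $D\overline R_\psi(v)\cdot\mathcal P_v^j(v,\psi)$ created by the $v$-change. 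With this ordering the assertion ``previously-averaged pieces remain averaged'' is correct, and the rest of your argument (small-divisors solvability via Theorem~\ref{thm:smalldivisors}, the domain shrinking to $\widehat V_{\rho'}\times\T^d_{\sigma'}$) goes through as you describe.
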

\begin{proof}
The claim follows applying Lemma~\ref{lem:Ftilde} to $R$ instead of $F$, taking into account that  $R$ is a map  independent of $y$ and that does not have $y$-component (that is, $m=0$). 
\end{proof}

Now we  apply Lemmas~\ref{lem:Ftilde} and~\ref{lem:changeparameter} to prove Proposition~\ref{prop:preliminariesaproximationmap}.  

\begin{proof}[Proof of  Proposition~\ref{prop:preliminariesaproximationmap}]
We set $F,K^\leq$ and $R$ satisfying all the conditions of Theorem~\ref{thm:posterioriresult}. 
Let $\mathcal{C}$ and $\widehat{F}$ those provided by Lemma~\ref{lem:Ftilde}. We notice that, since 
$$
\mathcal{C} \circ \widehat{F} \circ \mathcal{C}^{-1}\circ K^\leq
- K^\leq \circ R= E^{\leq}
$$
we have that  
$$
\widehat{F} \circ \mathcal{C}^{-1} \circ K^\leq =\mathcal{C}^{-1} (K^\leq \circ R + E^{\leq }).
$$
Then,  $\widecheck{K}^{\leq}  := \mathcal{C}^{-1}  \circ K^\leq $
satisfies the conditions in Theorem~\ref{thm:posterioriresult}:
$$
\widecheck{K}^{\leq}_{\xi,\eta} (u,\v) - (u,0) =\OO(\|u\|^2), \qquad \widecheck{K}^{\leq}_\varphi(u,\v) - \v = \OO (\|u\|))
$$ 
and, by the mean value theorem, the new remainder 
\begin{equation*}
\widecheck{E}^{\leq}  :=\widehat{F} \circ \widecheck{K}^{\leq}  - \widecheck{K}^{\leq} \circ R  = \mathcal{C}^{-1} \circ (K^\leq \circ R + E^{\leq }) - \mathcal{C}^{-1} \circ K^\leq \circ R =  \OO(\|u\|^{q}),
\end{equation*}
(see~\eqref{condEth}).

Next, we consider the close to the identity change of parameters in Lemma~\ref{lem:changeparameter}, $(u,\v)=\mathcal{P}(v,\psi)$. We have that 
$$
\widehat{F}\circ \widecheck{K}^{\leq} (\mathcal{P}(v,\psi)) - 
\widecheck{K}^{\leq} \circ R(\mathcal{P}(v,\psi)) =\widecheck{E}^{\leq } (\mathcal{P}(v,\psi)).
$$
We define $\widehat{K}^\leq  =\widecheck{K}^\leq \circ \mathcal{P} $ and $\widehat{E}^\leq = \widecheck{E}^{\leq} \circ \mathcal{P}$. Then, the above equality reads  
\begin{equation}\label{prop:Fhattilde}
\widehat{F} \circ \widehat{K}^\leq    -  \widehat{K}^\leq  \circ \widehat{R}  = \widehat{E}^{\leq}  
\end{equation}
with $\widehat{R}=\mathcal{P}^{-1} \circ R \circ \mathcal{P}$ defined in Lemma~\ref{lem:changeparameter} and $\widehat{E}^{\leq}   = \OO(\|v\|^{q})$. We emphasize that $\widehat{K}^\leq $ also satisfies the conditions in Theorem~\ref{thm:posterioriresult}, namely it is a sum of homogeneous functions in $v$ and
\begin{equation}\label{propwidehatK}
\widehat{K}^\leq_{\xi,\eta} (v,\psi) - (v,0) =\OO(\|v\|^2), \qquad \widehat{K}^\leq_\varphi(v,\psi) - \psi = \OO (\|v\|).
\end{equation} 

It only remains to check that $\partial_\psi \widehat{K}^\leq_{\xi,\eta} =\OO(\|v\|^q)$ and $\partial_\psi \widehat{K}^\leq_\varphi -\mathrm{Id} = \OO(\|v\|^q)$.
To do so we write $\widehat{F}= \widehat{F}^{\leq q -1} + \widehat{F}^{\geq q}$ and then~\eqref{prop:Fhattilde} becomes
\begin{equation}\label{propFhatl1}
\widehat{F}^{\leq q -1} \circ \widehat{K}^\leq  - \widehat{K}^\leq 
\circ  \widehat{R} =\widehat{E}^\leq  -\widehat{F}^{\geq q} \circ \widehat{K}^{\leq }.
\end{equation}
Since by Lemma~\ref{lem:changeparameter}, 
$\widehat{R} = \widecheck{R} + \OO(\|v\|^q)$ and by Lemma~\ref{lem:Ftilde}, $\widehat{F}^{\leq q-1}-(0,0,\varphi)$ does not depend on the angle $\varphi$,  equation~\eqref{propFhatl1} can be written as:
\begin{equation}\label{propFhatl1better}
 \widehat{F}^{\leq q -1}(\widehat{K}^\leq _{\xi,\eta}(v,\psi),\widehat{K}^\leq _{\varphi}(v,\psi))-
\widehat{K}^\leq \circ \widecheck{R}(v,\psi) =\OO(\|v\|^q),
\end{equation}
where  $\widecheck{R}(v,\psi)=(v+ {R}_v(v), \psi+\omega + {R}_\psi(v))$. 

Now, taking the derivative with respect to $\psi$ in both sides of~\eqref{propFhatl1better}, using that 
$D \widehat{F}^{\leq q -1} = \Id + (\OO(\|v\|^{N-1}),\; \OO(\|v\|^{M-1}),\; \OO(\|v\|^{P-1}))$ and that $\partial _\psi \widecheck{R} = (0,\; \Id)$, 
we obtain  
\begin{equation}\label{partialK1}
\partial_\psi \widehat{K}^\leq (v,\psi) - \partial_\psi \widehat{K}^\leq (\widecheck{R}(v,\psi))
= 
\left (\begin{array}{c}
\OO(\|v\|^q) + \OO(\|v\|^{N-1}\|\partial_{\psi}\widehat{K}^\leq_{\xi,\eta}(v,\psi)\|) 
\\ 
\OO(\|v\|^q) + \OO(\|v\|^{M-1}\|\partial_{\psi}\widehat{K}^\leq_{\xi,\eta}(v,\psi)\|)
\\
\OO(\|v\|^q) + \OO(\|v\|^{P-1}\|\partial_{\psi}\widehat{K}^\leq_{\xi,\eta}(v,\psi)\|)
\end{array} \right ).
\end{equation}
On the other hand, using the properties of $\widehat{K}^\leq$ in~\eqref{propwidehatK} and the ones of $\widecheck{R}$ in Lemma~\ref{lem:changeparameter}, by Taylor's theorem we have that 
\begin{equation}\label{partialK2}
\begin{aligned}
\partial_\psi \widehat{K}^\leq (v+ {R}_v(v),\,     &\psi + \omega +  {R}_\psi(v)) = 
\partial_{\psi}\widehat{K}^\leq (v,\psi +\omega)  \\ &+ 
\left (\begin{array}{c} \OO(\|v\|^{N} \|\partial_{\psi,v}^2 \widehat{K}^\leq_{\xi}(v,\psi)\|) + 
\OO(\|v\|^P \|\partial_{\psi,\psi}^2 \widehat{K}^\leq_{\xi}(\u,\psi)\|) \\ 
\OO(\|v\|^{N}\|\partial_{\psi,v}^2 \widehat{K}^\leq_{\eta}(v,\psi)\|) +
\OO(\|v\|^P \|\partial_{\psi,\psi}^2 \widehat{K}^\leq_{\eta}(v,\psi)\|) \\
\OO(\|v\|^N \|\partial_{\psi,v}^2 \widehat{K}^\leq_{\varphi}(v,\psi)\|) + 
\OO(\|v\|^P\|\partial_{\psi,\psi}^2 \widehat{K}^\leq_{\varphi}(v,\psi)\|)
\end{array}\right ).
\end{aligned}
\end{equation}
Notice that, using $N\geq P\geq 1$, $N\ge M \ge 2$,  properties~\eqref{propwidehatK} of $\widehat{K}$, that 
$ {R}_v=\OO(\|u\|^N)$ and $ {R}_\psi=\OO(\|v\|^P)$, at least, we obtain that
$$
\partial_\psi \widehat{K}^\leq(v,\psi)-\partial_\psi \widehat{K}^{\leq}(v,\psi+\omega)=\big (\OO(\|v\|^3), \OO(\|v\|^3) , \OO(\|v\|^2)\big ).
$$ 
Here, to estimate the orders of 
the first and second derivatives of $\widehat{K}^\leq$ we have used that $\widehat{K}^\leq$ satisfies $\mathcal{C}\circ \widehat{K}^\leq = K^{\leq}$ with $\mathcal{C},K^\leq$ being sums of homogeneous functions with respect to $(\xi, \eta)$ and $u$, respectively, and that all of them are analytic in $\U \times \T^d_\sigma$ for some $\U$ and  $\sigma>0$. 

Therefore, since $\partial_\psi \widehat{K}_{\xi,\eta}^\leq $ and $\partial_\psi \widehat{K}_{\varphi}^\leq-\mathrm{Id}$ have zero average, by Theorem~\ref{thm:smalldivisors},
$$
\partial_\psi \widehat{K}^\leq(v,\psi)-(0,0,\Id)=\big (\OO(\|v\|^3), \OO(\|v\|^3) , \OO(\|v\|^{2})\big ).
$$
Assume by induction that 
$$
\partial_\psi \widehat{K}^\leq(v,\psi)-(0,0,\Id)=\big (\OO(\|v\|^l), \OO(\|v\|^l) , \OO(\|v\|^{l-1})\big )
$$  
for $l=2,\cdots, q-1$. Then, using~\eqref{partialK1} and~\eqref{partialK2} we have that
$$
\partial_\psi \widehat{K}^\leq(v,\psi) - \partial_{\psi}\widehat{K}^\leq(v,\psi + \omega) = 
\left ( \begin{array}{c} \OO(\|v\|^q) + \OO(\|v\|^{N-1+l})+
\OO(\|v\|^{P+l}) \\
\OO(\|v\|^q) + \OO(\|v\|^{M-1+l})+\OO(\|v\|^{P+l}) \\
\OO(\|v\|^q) + \OO(\|v\|^{P-1+l})+ \OO(\|v\|^{P+l-1})
\end{array}\right ).
$$
Now, using Theorem~\ref{thm:smalldivisors} and that $N\geq M\ge 2$, $N\ge P\geq 1$, we conclude that
$$
\partial_\psi \widehat{K}^\leq(v,\psi)-(0,0,\mathrm{Id})=\big (\OO(\|v\|^{l+1}), \OO(\|v\|^{l+1}) , \OO(\|v\|^{l})\big ).
$$ 
Therefore, when $l=q-1$,   
$$
\partial_\psi \widehat{K}^\leq(v,\psi)-(0,0,\mathrm{Id})=\big (\OO(\|v\|^{q}), \OO(\|v\|^{q}) , \OO(\|v\|^{q-1})\big ).
$$ 
To finish, we notice that, applying once more~\eqref{partialK1} and~\eqref{partialK2}, we obtain that in fact, 
$\partial_\psi \widehat{K}^\leq_{\varphi}(v,\psi)-\Id=\mathcal{O}(\|v\|^q)$. 

By construction of $\wh F$ and Remark~\ref{remark:rhosmall} the constants   $\widehat{a}_f $, $\widehat{A}_f$ and $\widehat{b}_f$ of $\widehat{F}$ for $\rho'\leq \rho$ satisfy condition (iii) and $\widehat{A}_f> A_f > b_f \max\{1,N-P\}  =\wh b_f \max\{1,N-P\} $.
Now, Proposition \ref{prop:preliminariesaproximationmap}
follows from Lemmas \ref{lem:Ftilde} and \ref{lem:changeparameter}.
\end{proof}
 
We state an intermediate result, whose technical proof is postposed to Sections~\ref{sec:fixedpointpost} and~\ref{sec:fixedpointana}, that implies the existence part of Theorem~\ref{thm:posterioriresult} as a corollary. Indeed, formula~\eqref{propFhatl1better} suggests that we can use a simpler $R$ to prove the result. 
\begin{proposition}\label{prop:existencepartialmap}
Let $\widehat{F}$, $\widehat{K}$ and $\widehat{R}$  satisfy the conditions on Proposition~\ref{prop:preliminariesaproximationmap}. Let $\widecheck{R}(v,\psi)=(\widecheck{R}_v(v), \widecheck{R}_\psi(v,\psi))=(v+{R}_v(v),\psi+\omega + {R}_\psi(v))$, introduced in Lemma \ref{lem:changeparameter}, that satisfies
$$
\widehat{R}(v,\psi)-\widecheck{R}(v,\psi)=\OO(\|v\|^q).
$$
Then, the invariance equation
\begin{equation}\label{eqinvfinal}
\widehat{F} \circ (\widehat{K}^\leq + \widecheck{\Delta}) - (\widehat{K}^\leq + \widecheck{\Delta}) \circ \widecheck{R}=0 
\end{equation}
has an analytic solution $\widecheck \Delta$ such that $\widecheck\Delta_{\xi,\eta}=\OO(\|v\|^{q-N+1})$, $\widecheck\Delta_{\varphi} = \OO(\|v\|^{q-2N+P+1})$.
\end{proposition}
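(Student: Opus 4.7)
The plan is to recast \eqref{eqinvfinal} as a fixed point equation for $\widecheck{\Delta}$ in a Banach space of analytic functions on a complex extension of $V_\rho \times \mathbb{T}^d_\sigma$, with weighted sup-norms that encode the expected decay rates at the origin. Expanding
$$\widehat{F}(\widehat{K}^\leq + \widecheck{\Delta}) = \widehat{F}\circ\widehat{K}^\leq + D\widehat{F}(\widehat{K}^\leq)\,\widecheck{\Delta} + \mathcal{N}(\widecheck{\Delta}),$$
with $\mathcal{N}$ collecting quadratic-and-higher terms in $\widecheck{\Delta}$, and using $\widehat{E}^\leq = \widehat{F}\circ\widehat{K}^\leq - \widehat{K}^\leq\circ\widecheck{R} = \OO(\|v\|^q)$, equation~\eqref{eqinvfinal} is equivalent to
$$\mathcal{S}[\widecheck{\Delta}](v,\psi) := (\Id+L(v,\psi))\widecheck{\Delta}(v,\psi) - \widecheck{\Delta}(\widecheck{R}(v,\psi)) = -\widehat{E}^\leq(v,\psi) - \mathcal{N}(\widecheck{\Delta})(v,\psi),$$
where $L := D\widehat{F}(\widehat{K}^\leq) - \Id$ has blocks of sizes $\OO(\|v\|^{N-1})$, $\OO(\|v\|^{M-1})$, $\OO(\|v\|^{P-1})$ in the $\xi$, $\eta$, $\varphi$ directions because $\widehat{F}$ is tangent to the identity at $\TTT$. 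Crucially, by Proposition~\ref{prop:preliminariesaproximationmap}, $\widehat{F}$ depends on $\varphi$ only through a remainder of order $\|v\|^q$ and $\widecheck{R}$ is independent of $\psi$, so the angle-dependence of $L(v,\psi)$ is concentrated in terms of order $\|v\|^q$.

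The next step is to invert $\mathcal{S}$ by iterating forward along orbits of $\widecheck{R}$. Writing $(v_n,\psi_n) := \widecheck{R}^n(v,\psi)$, hypothesis (v) keeps the orbit inside $V_{\rho_0}$, while hypothesis (iii) yields the weak contraction $\|v_{n+1}\| \le \|v_n\| - a_f\|v_n\|^N$ and hence the standard parabolic-orbit asymptotics $\|v_n\| \sim (a_f(N-1)n)^{-1/(N-1)}$. Unwrapping $\mathcal{S}[\widecheck{\Delta}] = G$ by $\widecheck{\Delta}(v,\psi) = (\Id+L(v,\psi))^{-1}[\widecheck{\Delta}(\widecheck{R}(v,\psi)) + G(v,\psi)]$ gives the candidate formula
$$\mathcal{S}^{-1}[G](v,\psi) = \sum_{n=0}^{\infty}\Biggl(\prod_{j=0}^{n}(\Id+L(v_j,\psi_j))^{-1}\Biggr) G(v_n,\psi_n),$$
so \eqref{eqinvfinal} is equivalent to the fixed point equation $\widecheck{\Delta} = \mathcal{T}[\widecheck{\Delta}] := \mathcal{S}^{-1}[-\widehat{E}^\leq - \mathcal{N}(\widecheck{\Delta})]$.

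The core technical work is to control the operator norm of $\prod_{j\le n}(\Id+L(v_j,\psi_j))^{-1}$ blockwise and to pair the resulting bound with $\|G(v_n,\psi_n)\| \lesssim \|v_n\|^q$. Using the constants $A_f$, $D_f$ of \eqref{defbCf} and $B_g$ of \eqref{defABg}, a Gronwall-type estimate along the orbit produces growth of the form $(\|v_0\|/\|v_n\|)^{A_f/a_f}$ in the $\xi$-block and analogous expressions in the other two, with worst case parameter $E^*$ from \eqref{defEast}. The summability of the resulting series, together with the target orders $\widecheck{\Delta}_{\xi,\eta} = \OO(\|v\|^{q-N+1})$ and $\widecheck{\Delta}_\varphi = \OO(\|v\|^{q-2N+P+1})$, is precisely what the threshold $q \ge q^*$ in \eqref{def:regularitatminima} is engineered to guarantee; the extra $N-P$ powers lost in the $\varphi$-component reflect the gap between the $\|v\|^{P-1}$ size of $L_\varphi$ and the $\|v\|^{N-1}$ size driving the contraction, while the hypothesis $A_f > b_f\max\{1,N-P\}$ prevents the source terms in the $\varphi$-direction from outgrowing the summability threshold set by the $\xi$-block. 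In the weighted Banach space built from these exponents, $\mathcal{T}$ is shown to preserve a small ball and to be a contraction, with constant controlled by taking $\rho$ small (using Remark~\ref{remark:rhosmall}), since the Lipschitz constant of $\mathcal{N}$ on such a ball is again $\OO(\|v\|^{q-N+1})$.

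The main obstacle is the careful bookkeeping across the three blocks, in particular the coupling produced by the cross-derivatives inside $L$ and $\mathcal{N}$; a naive blockwise estimate does not yield the claimed orders, and one must exploit the block-triangular structure of the leading linearization together with the fact that $\partial_\psi \widehat{K}^\leq$ and $\partial_\psi \widehat{F}$ are $\OO(\|v\|^q)$ by Proposition~\ref{prop:preliminariesaproximationmap}, so that a Fourier expansion in $\psi$ decouples the modes at leading order and small divisors appear only against tails already controlled by high powers of $\|v\|$. Once the fixed point of $\mathcal{T}$ is produced inside the weighted space of analytic functions on the chosen complex extension of $V_\rho\times\mathbb{T}^d_\sigma$, analyticity of $\widecheck{\Delta}$ is automatic as the uniform limit of analytic iterates, completing the proof.
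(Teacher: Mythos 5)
Your plan follows the same architecture as the paper's proof: rewrite the invariance equation as a fixed point problem, invert the dominant linearization by summing along $\widecheck{R}$-orbits, control the products of inverses by Gronwall-type estimates driven by the weak contraction, and close with the contraction mapping principle in weighted spaces of analytic functions. The difference lies in which terms are called ``linear.'' You take the full Jacobian $\Id + L = D\widehat{F}(\widehat{K}^\leq)$ as the linear operator and keep only quadratic-and-higher terms in $\mathcal{N}$. The paper instead sets $\mathcal{L}[\widecheck{\Delta}] = \mathbf{M}(\widehat{K}^{\leq q-1})\widecheck{\Delta} - \widecheck{\Delta}\circ\widecheck{R}$ where $\mathbf{M}$ is the \emph{block-diagonal} part of $D\widehat{F}^{\leq q-1}$, evaluated at the $\psi$-independent truncation $\widehat{K}^{\leq q-1}$, and shoves two linear-in-$\widecheck{\Delta}$ pieces into $\mathcal{N}$: the lower-triangular coupling $\mathbf{c}(\zeta)$ from $\partial_{\xi,\eta}\widehat{h}$, and the discrepancy $D\widehat{F}^{\leq q-1}(\widehat{K}^{\leq})-\mathbf{M}(\widehat{K}^{\leq q-1})$. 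This makes $\mathcal{L}$ genuinely block-diagonal and $\psi$-independent, so the $(\xi,\eta)$ and $\varphi$ equations decouple, the Fourier expansion is clean, and Lemma~3.8 only has to estimate powers of $(\Id+\mathbf{C})^{-1}$. Your choice leaves a block-lower-triangular $\Id+L$ whose iterated inverses accumulate off-diagonal terms of the form $\sum_k \mathbf{c}(v_k)\prod_{j>k}(\Id+\mathbf{C}_j)^{-1}$; these can in fact be summed (the hypothesis $A_f>b_f\max\{1,N-P\}$ is exactly what controls them) and the exponent bookkeeping does reproduce $\widecheck{\Delta}_\varphi=\OO(\|v\|^{q-2N+P+1})$, but you neither carry this out nor observe that putting $\mathbf{c}$ and the $\psi$-dependent tail into the nonlinearity removes the difficulty altogether. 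One further imprecision: your mention of ``small divisors'' in the fixed-point step is a red herring; the resolvent sum $\sum_j G\circ\widecheck{R}^j$ converges absolutely because of the geometric decay of $\|v_j\|$, and no Diophantine estimate is needed there (the small divisors lemma is only used in the preliminary Proposition~3.2 to make $\widehat{F}^{\leq q-1}$ and $\widehat{K}^{\leq q-1}$ angle-independent).
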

\begin{proof}[Proof of the claim on the existence of the parametrization in Theorem~\ref{thm:posterioriresult} from Proposition \ref{prop:existencepartialmap}] We first note that, by Proposition~\ref{prop:preliminariesaproximationmap}, to prove Theorem~\ref{thm:posterioriresult} we only need to solve the invariance equation~\eqref{inveqprop}. 
We note that the difference between the invariance equation~\eqref{eqinvfinal} in Proposition~\ref{prop:existencepartialmap} and~\eqref{inveqprop} is just the dynamics on the invariant manifold, namely in the latter is $\widecheck{R}$ while in the former is $\widehat{R}$. To overcome this  issue we apply Proposition~\ref{prop:existencepartialmap} to $\widehat{R}(v,\psi)$
considered as a map $U\times \T^d\subset \R^n\times \T^d \to \R^n\times \T^d$ instead of the map 
$\widehat F: \U\times \T^d\subset \R^n\times \R^m\times\T^d \to \R^n\times\R^m \times \T^d$ taking $\widehat{K}^\le=\Id $. Then,  $\widehat{R}$ satisfies the hypotheses of Proposition \ref{prop:preliminariesaproximationmap} with $m=0$. Note that in particular, $R_v(v) = \overline{f}(u,0)$.
Indeed, since $\widehat{R}(v,\psi) - \widecheck{R}(v,\psi) = \OO(\|v\|^q)$, we have that $\widehat{R} \circ \mathrm{Id}- \mathrm{Id} \circ \widecheck{R} = \OO(\|v\|^q)$. Then,  there exists   $\Delta_R$ such that  
$$
\widehat{R} \circ (\mathrm{Id} + {\Delta_R})- (\mathrm{Id} + {\Delta_R})\circ \widecheck{R}=0
$$
and 
$$
 {\Delta_R}(v,\psi)=\big (\mathcal{O}(\|v\|^{q-N+1}), \mathcal{O}(\|v\|^{q-2N+P+1})\big ).
$$
We define $\Psi = \mathrm{Id} +  {\Delta_R}$ and 
we have that $\widehat{R} \circ \Psi = \Psi \circ \widecheck{R}$, so that $\widehat{R}$ and $\widecheck{R}$ are conjugate. Now, let $\widecheck{\Delta}$ be a solution of~\eqref{eqinvfinal}. 
We introduce
$$
\widehat{\Delta}=\widehat{K}^\leq \circ \Psi^{-1} - \widehat{K}^\leq + \widecheck{\Delta}\circ \Psi^{-1}= \big (\mathcal{O}(\|v\|^{q-N+1}),\mathcal{O}(\|v\|^{q-N+1}), \mathcal{O}(\|v\|^{q-2N+P+1})\big ).
$$ 
Then,  
\begin{align*}
0&=\widehat{F}\circ (\widehat{K}^\leq+ \widecheck{\Delta})\circ \Psi^{-1}  - (\widehat{K}^\leq+\widecheck{\Delta}) \circ \widecheck{R} \circ \Psi^{-1} \\& =\widehat{F}\circ (\widehat{K}^\leq+ \widecheck{\Delta}) \circ \Psi^{-1}  - (\widehat{K}^\leq+\widecheck{\Delta}) \circ \Psi^{-1} \circ \widehat{R}  \\
&=\widehat{F}\circ (\widehat{K}^{\leq} + \widehat{\Delta}) - (\widehat{K}^\leq + \widehat{\Delta}) \circ \widehat{R}.
\end{align*}
This implies the existence result in Theorem~\ref{thm:posterioriresult}, since all changes of variables and parameters are analytic.
\end{proof}
\begin{remark}
We postpose the proof of the characterization of the local stable invariant manifold $W^{\mathrm{s}}_{\Vext_{\rho,\beta}}$ in~\eqref{darrerstatementdethm28} to Section~\ref{sec:proofthexistence}.
\end{remark}

\subsection{The functional equation for $K$}\label{sec:fixedpointpost}
 We will prove Proposition~\ref{prop:existencepartialmap} solving  a fixed point
equation derived from \eqref{inveqprop}.
 
The first (non-trivial) step is to find the appropriate fixed point equation. 
As we did previously, we decompose $\widehat{F}= \widehat{F}^{\leq q-1} + \widehat{F}^{\geq q}$ with $\widehat{F}^{\leq q-1} -\Id$ independent on $\varphi$. Denoting $\zeta=(\xi,\eta)$, $D\widehat{F}^{\leq q-1}$ has the form
$$
D\widehat{F}^{\leq q-1}(\zeta ) = \left (\begin{array}{ccc} 
\mathrm{Id}+ \partial_{\xi}\fNm(\zeta)+ \partial_{\xi}\widehat{f}^{\geq N+1} (\zeta)& \partial_{\eta}\fNm(\zeta)+ \partial_{\eta }\widehat{f}^{\geq N+1}(\zeta) & 0 \\ 
\partial_{\xi}\gNm(\zeta)  + \partial_{\xi}\widehat{g}^{\geq M+1}(\zeta) & \mathrm{Id}+ \partial_{\eta}\gNm(\zeta)+ \partial_{\eta }\widehat{g}^{\geq M+1}(\zeta) & 0 \\
\partial_{\xi}\hPm(\zeta)+ \partial_{\xi}\widehat{h}^{\geq P+1}(\zeta) & 
\partial_{\eta}\hPm(\zeta)+ \partial_{\eta}\widehat{h}^{\geq P+1}(\zeta) & \mathrm{Id}
\end{array}\right ).
$$
Therefore, we can write
\begin{equation}\label{decompositionDF}
D\widehat{F}^{\leq q-1} (\zeta )
= \mathbf{M}(\zeta) + \mathbf{N}(\zeta ):=\left (\begin{array}{cc}
\mathrm{Id}+ \CC(\zeta) & 0 \\   
0 &  \mathrm{Id}
\end{array}\right ) + \left (\begin{array}{cc}
0 & 0 \\ 
\cbf(\zeta )  &  0
\end{array}\right ).
\end{equation}
Notice that \eqref{decompositionDF} defines implicitly $\mathbf{M}(\zeta)$, $\mathbf{N}(\zeta)$, $\CC(\zeta)$ and $\mathbf{c}(\zeta)$. 
We also decompose 
$$
\widehat{K}^\leq(v,\psi) = \widehat{K}^{\leq q-1}(v,\psi) +  \widehat{K}^{\geq q}(v,\psi), \qquad 
\widehat{K}^{\geq q} (v,\psi) = \OO(\|v\|^q)
$$
with $\widehat{K}^{\leq q-1}(v,\psi)$ of degree $q-1$ with respect to $v$ and  $\widehat{K}^{\leq q-1}(v,\psi) -(v,0,\psi)$ independent of $\psi$. 
We decompose the condition for $\widecheck{\Delta}$ as
\begin{align*}
    0=& \widehat{F}\circ(\widehat{K}^\leq+ \widecheck{\Delta}) - (\widehat{K}^\leq+\widecheck{\Delta })\circ \widecheck{R} \\
     =& \widehat{F}^{\leq q -1} \circ \widehat{K}^\leq - \widehat{K}^\leq\circ  \widecheck{R}
     \\ &+ \widehat{F}^{\geq q} (\widehat{K}^\leq+ \widecheck{\Delta}) \\
     & + \widehat{F}^{\leq q -1} \circ (\widehat{K}^\leq+ \widecheck{\Delta }) - \widehat{F}^{\leq q -1} \circ \widehat{K}^\leq - D\widehat{F}^{\leq q -1}(\widehat{K}^\leq)\widecheck{\Delta }
     \\ &+D\widehat{F}^{\le q-1}(\widehat{K}^{\leq})\widecheck{\Delta } - \mathbf{M}(\widehat{K}^{\leq q-1} ) \widecheck{\Delta } \\
     &+ \mathbf{M}(\widehat{K}^{\leq q-1}  )\widecheck{\Delta } - \widecheck{\Delta } \circ \widecheck{R}.
\end{align*}
We introduce the operators  
\begin{align}
\mathcal{L}[\widecheck{\Delta }]  = & \mathbf{M}(\widehat{K}^{\leq q-1} )\widecheck{\Delta}  - \widecheck{\Delta }  \circ \widecheck{R} , \notag \\ 
\mathcal{N}[\widecheck{\Delta }] = &\widehat{F}^{\leq q -1} \circ \widehat{K}^\leq - \widehat{K}^\leq\circ  \widecheck{R}
   + \widehat{F}^{\geq q} \circ (\widehat{K}^\leq+ \widecheck{\Delta}) 
        +D\widehat{F}^{\le q-1}(\widehat{K}^{\leq})\widecheck{\Delta} - \mathbf{M}(\widehat{K}^{\leq q-1} ) \widecheck{\Delta } \notag 
        \\ &+ \widehat{F}^{\leq q -1} \circ (\widehat{K}^\leq+ \widecheck{\Delta }) - \widehat{F}^{\leq q -1} \circ \widehat{K}^\leq - D\widehat{F}^{\leq q -1}(\widehat{K}^\leq)\widecheck{\Delta } ,
\label{def:Nonlinearmap} 
\end{align}
and we rewrite the condition for $\widecheck{\Delta }$ as
\begin{equation}\label{eq:fixedpoint1} 
\mathcal{L} [\widecheck{\Delta }] = - 
\mathcal{N}[\widecheck{\Delta }].
\end{equation}
In order to express the above equation as a fixed point equation we need to invert the linear operator $\mathcal{L}$ in an appropriate Banach space. Actually, we will find a right inverse of it. In this section we proceed formally. In the next one we provide the necessary estimates. We have to solve the equation
\begin{equation} \label{eq:inversaL}
\mathcal{L} [\widecheck{\Delta}] = T
\end{equation}
with $T$ in some functional space. First, we expand $\widecheck{\Delta}$ and $T$ in Fourier series:
$$\widecheck{\Delta}(v,\psi) = \sum_{k\in \mathbb{Z}^d} \Delta^{(k)}(v) e^{2\pi ik \cdot \psi},
\qquad T(v,\psi) = \sum_{k\in \mathbb{Z}^d} T^{(k)}(v) e^{ 2\pi ik \cdot \psi}.
$$ 
We recall that $\widehat{F}^{\leq q-1}- \mathrm{Id}$ does not depend on $\varphi$ (Lemma~\ref{lem:Ftilde}) and that $\widehat{K}^{\leq q-1} (v,\psi)-(v,0,\psi)$ does not depend on $\psi$ (Proposition \ref{prop:preliminariesaproximationmap}). 
The block structure of the matrix $\mathbf{M}$ permits to uncouple equation \eqref{eq:inversaL} into two equations, one for the $(\xi,\eta)$-components,
$\widecheck{\Delta}_{\xi,\eta}$, $T_{\xi,\eta}$,  
and the other for the $\varphi$-components, $\widecheck{\Delta}_{\varphi}$, $T_{\varphi}$.
Therefore, we have to solve
\begin{align*}
[\Id + \CC(\widehat{K}^{\leq q-1}_{\xi,\eta} )] \widecheck{\Delta}_{\xi,\eta}  - \widecheck{\Delta}_{\xi,\eta}  \circ \widecheck{R} & = T_{\xi,\eta}, \\
\widecheck{\Delta}_{\varphi}  - \widecheck{\Delta}_\varphi  \circ \widecheck{R} & = T_{\varphi}. 
\end{align*}
For the Fourier coefficients, since $\widecheck{R}(v,\psi)=(v+ {R}_v(v), \psi + \omega+  {R}_\psi(v))$, we have
\begin{align}
[\Id + \CC(\widehat{K}^{\leq q-1}_{\xi,\eta} )] {\Delta}^{(k)}_{\xi,\eta}  - e^{2\pi ik (\omega + {R}_\psi(v))}   \Delta_{\xi,\eta}^{(k)} \circ \widecheck{R}_v & = T^{(k)}_{\xi,\eta} \label{eq:coef-k-xi-eta}, \qquad k\in \Z^d, \\ \Delta^{(k)}_\varphi  - e^{2\pi ik (\omega + {R}_\psi(v))} {\Delta}_\varphi^{(k)}  \circ \widecheck{R}_v & = T_{\varphi}^{(k)}, \qquad k\in \Z^d. \label{eq:coef-k-psi}
\end{align}
We denote $\mathcal{L}^{(k)}_{\xi,\eta}\big [\Delta^{(k)}_{\xi,\eta}\big ]$ and 
$\mathcal{L}^{(k)}_{\varphi}\big [\Delta^{(k)}_{\varphi}\big ]$ the left hand sides of \eqref{eq:coef-k-xi-eta} and \eqref{eq:coef-k-psi}, respectively.
The corresponding (formal) inverses $\mathcal{S}^{(k)}_{\xi,\eta}$ and $\mathcal{S}^{(k)}_{\varphi}$ 
are
\begin{align*}
\mathcal{S}^{(k)}_{\xi,\eta}\big [T_{\xi,\eta}^{(k)}\big ](v)  & =\sum_{j=0}^\infty \left [ \prod_{l=0}^j \big ((\mathrm{Id}+\CC\circ \widehat{K}_{\xi,\eta}^{\leq q-1} \circ \widecheck{R}_v^l\big )^{-1}\right ] e^{2\pi ik \big (j\omega + \sum_{l=0}^{j-1}{R}_\psi \circ \widecheck{R}_v^l\big )} T_{\xi,\eta}^{(k)}\circ \widecheck{R}_v^j, 
\\
\mathcal{S}^{(k)}_{\varphi} \big [T_{\varphi}^{(k)}\big ] (v) &= 
\sum_{j=0}^\infty  e^{2\pi ik \big (j\omega + \sum_{l=0}^{j-1}{R}_\psi \circ \widecheck{R}_v^l\big )} T_{\varphi}^{(k)}\circ \widecheck{R}_v^j , 
\end{align*}
where
$\widecheck{R}_v^l = \widecheck{R}_v \circ \dots {\hspace{-0.3cm}^{{\vspace{-0.4cm}l)}}} \circ\widecheck{R}_v$. 
Let
$$
\mathcal{S}^{(k)}\big [T^{(k)} \big ] =\big (\mathcal{S}^{(k)}_{\xi,\eta} \big [T^{(k)}_{\xi,\eta}\big ], \mathcal{S}^{(k)}_{\varphi} \big [T^{(k)}_\varphi\big ]\big ).
$$
Then, the operator $\mathcal{L}$ has a formal right inverse given by
\begin{align}\label{def:Slinear}
\mathcal{S}[T](v,\psi) &= \sum_{k\in \mathbb{Z}^d}  e^{2 \pi ik \cdot \psi} \mathcal{S}^{(k)}\big [T^{(k)}\big ](v) \\ 
&= \sum_{j=0}^\infty \left [ \prod_{l=0}^j \big (\mathbf{M} ( \widehat{K}_{\xi,\eta}^{\leq q-1}( \widecheck{R}_v^l (v)))\big )^{-1}\right ]   T\Big(\widecheck{R}_v^j(v),j\omega +\psi + \sum_{l=0}^{j-1}{R}_\psi ( \widecheck{R}_v^l(v))\Big)  \notag
\end{align}
or equivalently
\begin{equation}\label{def:Slinearanalytic}
 \mathcal{S}[T](v,\psi) =  \sum_{j=0}^\infty \left [ \prod_{l=0}^j \big (\mathbf{M} ( \widehat{K}_{\xi,\eta}^{\leq q-1}( \widecheck{R}_v^l (v)))\big )^{-1}\right ]   T  \circ \widecheck{R}^j(v,\psi) .
\end{equation}
Having defined $\mathcal{S}$, we can consider the equation 
\begin{equation}\label{eq:fixedpoint2} 
\widecheck{\Delta} = \mathcal{F}[\widecheck{\Delta} ]:=
-\mathcal{S} \circ \mathcal{N}[\widecheck{\Delta}].  
\end{equation}
Clearly, if $\widecheck{\Delta}$ is solution of 
\eqref{eq:fixedpoint2}, it is also solution of \eqref{eq:fixedpoint1}.


\subsection{Solution of the fixed point equation} \label{sec:fixedpointana}
To prove that the fixed point equation~\eqref{eq:fixedpoint2} has a solution in a suitable Banach space we need to study both the linear 
operator $\mathcal{S}$, defined in~\eqref{def:Slinear},  and the nonlinear operator $\mathcal{N}$, defined in \eqref{def:Nonlinearmap}.
This will be done in Sections~\ref{sec:linearoperatordif} and~\ref{sec:nonlinearoperatordif}, respectively.

We recall that  the operator $\mathcal{S}$ depends on    
\begin{equation}\label{defwidecheckRanalitic}
\widecheck{R}(v,\psi)=(\widecheck{R}_v(v,\psi), \widecheck{R}_\psi (v,\psi))=(v+R_v(v),\psi+\omega +R_\psi(v)),
\end{equation}
where 
$$
R_v(v)=\overline{f}^N(v,0)+ w^{\ge N+1} (v),
\qquad \text{with}\qquad w^{\ge N+1} (v) = \OO(\|v\|^{N+1}),
$$
and $R_\psi(v) = \OO(\|v\|^P)$ are sums of homogeneous functions in $v$ of degree at most $q-1$. 

For positive $\rho$, $\gamma$ and $\sigma$ we define the sets
$$
\Omega_\rho(\gamma)=\{ v\in \C^n \mid\,v =(\Re v, \Im v)\in \R^n\times \R^n ,\, \Re v\in V_\rho,\,
\|\Im v\|\leq  \gamma \|\Re v\|\}
$$
and
$$
\Gamma_\rho(\gamma,\sigma)=\left \{ (v,\psi) \in \Omega_\rho(\gamma) \times \T_\sigma^d\mid \, \|\Im \psi \| + \sum_{l=0}^\infty \|\Im R_\psi(\widecheck{R}_v^l(v)) \| < \sigma\right \}.
$$

From now on we fix constants 
$a,b$ and $A,B,D$ such that
\begin{equation}\label{defabACanalytic}
0<a<a_f,\quad b>b_f,\quad   B<B_g, \quad D<D_f,\quad A<A_f
,\quad   A > b \max\{1,N-P\} 
\end{equation}
and, if $E' > E > \max \{-B,-D, E^*\} $,
\begin{equation} \label{fitadeEprima}
E' < \frac{N-4/3}{N-5/3} E^*  .  
\end{equation}
with $a_f,b_f, A_f, D_f, B_g, E^*$ defined in~\eqref{defa}, \eqref{defbCf} \eqref{defABg} and \eqref{defEast},  respectively. 
Taking the norm $\|z\| = \max \{ \|\Re z \|, \|\Im z \|\}$ in $\C^n$, 
we have that if $\A$ is a complex $n\times n$ matrix and $\A= \A_1+i \A_2$ with 
$\A_1, \A_2 $ real matrices, then $\|\A\| \le \|\A_1\|+\|\A_2\|$. 

By definition~\eqref{defbCf} of $D_f$, 
 for $\zeta \in V_{\rho,\beta}$ we have that
\begin{equation}\label{bound:textbfM1}
\left \| \big (\Id + D_\xi \overline{f}^N (\zeta) + D_\xi \widehat{f}^{\geq N+1} (\zeta) \big )^{-1}  
\right \| \leq  1 - (D_f - (\beta +\rho ) \Mg )\|\xi\|^{N-1}  
\leq  1 - D\|\xi\|^{N-1}  
\end{equation}
and, by definition~\eqref{defABg} of $B_g$,
\begin{equation}\label{bound:textbfM2}
\begin{aligned}
\left \|   \big (\Id + D_\eta \overline{g}^M (\zeta) + D_\eta \widehat{g}^{\geq M+1} (\zeta) \big )^{-1} \right \| & \leq 
\|\Id - D_\eta \overline{g}^M(\zeta) \|
+ \Mg \|\xi \|^M 
\\
& \leq \| \Id - D_\eta \overline{g}^M(\xi,0)\| + \Mg \beta \|\xi\|^{M-1} + 
\Mg \|\xi \|^M \\
& \leq 1 - \big (B_g  -  (\beta +\rho )\Mg \big ) \|\xi\|^{M-1}
\\
& \leq 1 - B \|\xi\|^{M-1}.
\end{aligned}
\end{equation}
Moreover, $\| D_\eta \overline{g}^M(\zeta)+ D_\eta \widehat{g}^{\geq M+1} (\zeta) \| \le \beta \Mg \|\xi\|^{M-1}$.

Then, by definition~\eqref{decompositionDF} of $\mathbf{M}$, and bounds~\eqref{bound:textbfM1} and~\eqref{bound:textbfM2}, we obtain that, if $\rho,\beta$ are small enough (depending on $B$ and $D$),
\[
\| \big (\mathbf{M}(\xi,\eta)\big )^{-1} \|\leq 1+ E \|\xi\|^{N-1}, \qquad (\xi,\eta) \in V_{\rho, \beta}.
\]
Also, a computation shows that if 
$\xi \in \Omega_\rho(\gamma), \, \|\eta\| \leq \beta \| \xi \|$, and
$\gamma $ is small we have
$\| \Re \big (\mathbf{M}(\xi,\eta)\big )^{-1} \|\leq 1+ (E +\OO(\gamma))\|\xi\|^{N-1}$
and 
$\| \Im \big (\mathbf{M}(\xi,\eta)\big )^{-1} \|\leq \Mg \gamma  \|\Re \big (\mathbf{M}(\xi,\eta)\big )^{-1} -\Id \|$.

Therefore
\begin{equation}\label{propietatCanalytic}
\| \big (\mathbf{M}(\xi,\eta)\big )^{-1} \|\leq 1+ E' \|\xi\|^{N-1}, \qquad \xi \in \Omega_\rho(\gamma), \quad  \|\eta\| \leq \beta \| \xi \|.
\end{equation}
 
The next result is the key to control the iterates of $\widecheck{R}$. Its proof is deferred to Appendix~\ref{apendixA}.  
 
\begin{lemma}\label{lem:calR} 
Assume $A_f>b_f$.
Let $\widecheck{R}$ be as~\eqref{defwidecheckRanalitic} and $a,b,A$ be constants satisfying~\eqref{defabACanalytic} with $A>b$. Fix constants
$a^*  <a(N-1)$ and $b^*  >b(N-1)$. Then,
there exist positive $\rho, \gamma$ small enough such that 
\begin{itemize}
\item[(1)] The set $\Omega_\rho(\Gamma)$ is invariant by $\widecheck{R}_v$, that is
 $$
    \widecheck{R}_v (\Omega_\rho(\gamma) ) \subset \Omega_\rho(\gamma).
    $$
\item[(2)]
For $k\geq 0$ and $v\in \Omega_\rho(\gamma)$:   
\begin{equation}\label{propcalRlema}
\frac{\|v\|}{\big [ 1+ kb^*   \|v\|^{N-1}\big ]^{\frac{1}{N-1}}}\leq \|\widecheck{R}_v^k(v)\| \leq \frac{\|v\|}{\big [ 1+ ka^*   \|v\|^{N-1}\big ]^{\frac{1}{N-1}}}.
\end{equation}
\item [(3)]
If $A/b>\max\{1,N-P\}$, then there exists some constant $\Mg>0$
such that
\begin{equation}\label{propImRlema} 
\sum_{l=0}^\infty \|\Im {R}_\psi (\widecheck{R}_v^{l}(v))\|\leq \Mg \frac{\|\Im v\|}{\|v\|^{N-P}}, \qquad v\in \Omega_\rho(\gamma).
\end{equation}
\item [(4)]
If $A/b>\max\{1,N-P\}$, then
there exists $\sigma>0$ such that 
\begin{equation}\label{prop:Wcontingut} 
\widecheck{R}(\Gamma_\rho(\gamma,\sigma)) \subset \Gamma_\rho(\gamma,\sigma).
\end{equation}
\end{itemize}
\end{lemma}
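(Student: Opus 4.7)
The plan is to prove the four statements in order, since each subsequent one builds on the previous ones, with the quantitative contraction/expansion estimates from (2) driving both (3) and (4).

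For part (1), I would write $v=x+iy$ with $x=\Re v\in V_\rho$ and $\|y\|\leq \gamma\|x\|$, and Taylor expand the real-analytic $R_v$ around the real point $x$:
\[
R_v(x+iy)=R_v(x)+iDR_v(x)\cdot y+\OO(\|y\|^2\|x\|^{N-2}).
\]
The real part of $\widecheck R_v(v)$ is then $x+R_v(x)+\OO(\gamma^2\|x\|^N)$. Hypothesis (v) gives $\dist(x+\overline f^N(x,0),V_{\rho_0}^c)\geq a_V\|x\|^N$, and the terms $w^{\geq N+1}$ together with the $\OO(\gamma^2)$ error are absorbed by this gap provided $\rho,\gamma$ are small; this puts $\Re\widecheck R_v(v)$ in $V_\rho$. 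For the cone condition, the definition of $A_f$ in \eqref{defbCf} gives $\|\mathrm{Id}+D\overline f^N(x,0)\|\leq 1-A_f\|x\|^{N-1}$, hence $\|\Im\widecheck R_v(v)\|\leq\|y\|(1-A\|x\|^{N-1})$, while the definition of $b_f$ yields $\|\Re\widecheck R_v(v)\|\geq\|x\|(1-b\|x\|^{N-1})$. The assumption $A>b$ makes the ratio remain $\leq\gamma$.

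For part (2), I set $r_k=\|\widecheck R_v^k(v)\|$ and observe, by the estimates above combined with the definitions of $a_f,b_f$, that
\[
r_k(1-b r_k^{N-1})\leq r_{k+1}\leq r_k(1-a r_k^{N-1}).
\]
Applying $t\mapsto t^{-(N-1)}$ and expanding gives $r_{k+1}^{-(N-1)}\geq r_k^{-(N-1)}+(N-1)a+\OO(r_k^{N-1})$, with the reverse inequality involving $b$. Choosing $\rho$ small enough that the $\OO(r_k^{N-1})$ corrections are absorbed by the gap between $a^*$ and $a(N-1)$ (respectively $b^*$ and $b(N-1)$), induction on $k$ yields
\[
r_0^{-(N-1)}+k a^*\leq r_k^{-(N-1)}\leq r_0^{-(N-1)}+k b^*,
\]
which is exactly \eqref{propcalRlema}.

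Part (3) is the main analytic obstacle. Since $R_\psi$ is real analytic with $R_\psi=\OO(\|v\|^P)$, Taylor expansion gives $\|\Im R_\psi(x+iy)\|\leq \Mg\|x\|^{P-1}\|y\|$, so setting $y_l=\|\Im\widecheck R_v^l(v)\|$ it suffices to bound $\sum_l r_l^{P-1}y_l$. The derivation in (1) produces $y_{l+1}\leq y_l(1-A r_l^{N-1}+\ldots)$; telescoping and using $\sum_{j=0}^{l-1}r_j^{N-1}\geq\frac{1}{b^*}\log(1+l b^* r_0^{N-1})$ from part (2) yields $y_l\leq y_0(1+l b^* r_0^{N-1})^{-A/b^*}$. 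Combining with the upper bound $r_l\leq r_0(1+l a^* r_0^{N-1})^{-1/(N-1)}$ from (2), the discrete sum is comparable with the continuous integral $\int_0^\infty r(t)^{P-1}y(t)\,dt$; the change of variables $\dot r=-ar^N$ reduces this to $\int_0^{r_0}r^{P-N-1+A/a}\,dr/\text{const}$, which converges precisely because $A/a>A/b>N-P$ and whose evaluation gives the claimed bound $\Mg\|\Im v\|/\|v\|^{N-P}$. The delicate part is translating the continuous estimate to the discrete sum while absorbing the $\OO(r^{N-1})$ corrections by taking $\rho$ small and the strict inequality $A>b(N-P)$ into a slight loss of exponent.

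Part (4) is then almost immediate: if $(v,\psi)\in\Gamma_\rho(\gamma,\sigma)$, (1) gives $\widecheck R_v(v)\in\Omega_\rho(\gamma)$, and since $\widecheck R_\psi(v,\psi)=\psi+\omega+R_\psi(v)$ with $\omega\in\R^d$, one has
\[
\|\Im\widecheck R_\psi(v,\psi)\|+\sum_{l=0}^\infty\|\Im R_\psi(\widecheck R_v^l(\widecheck R_v(v)))\|\leq\|\Im\psi\|+\sum_{l=0}^\infty\|\Im R_\psi(\widecheck R_v^l(v))\|<\sigma,
\]
after re-indexing, which is exactly the defining condition of $\Gamma_\rho(\gamma,\sigma)$.
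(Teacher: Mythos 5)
Your proposal follows the same overall strategy as the paper's proof in Appendix~C: Taylor expansion of the real--analytic map around the real point to control $\Re\widecheck R_v(v)$ and $\Im\widecheck R_v(v)$ separately via the constants $a_V$, $b_f$, $A_f$, a one--dimensional comparison argument for the iterates, a telescoping bound on the imaginary parts, and re-indexing for (4). Parts (1), (2) and (4) are correct; for (2) your $t\mapsto t^{-(N-1)}$ transformation with absorption of the $\OO(r_k^{N-1})$ error into the gaps $a^*<(N-1)a$, $b^*>(N-1)b$ is an elementary variant of the paper's comparison with the explicit flow $\mathcal R_{\mathbf c}(\xi)=\xi[1+\mathbf c\,\xi^{N-1}]^{-1/(N-1)}$.

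In part (3) the continuous heuristic is slightly misaligned with the discrete estimate it is meant to capture. To bound $\sum_l r_l^{P-1}y_l$ from above you need an \emph{upper} bound on $y_l$, and that requires a \emph{lower} bound on $\sum_{j<l}r_j^{N-1}$, i.e.\ the \emph{slowest} allowable decay of $\|\Re v\|$, which is governed by $b$ (equivalently $b^*$), not by $a$. The correct model is therefore $\dot r=-br^N$, giving $y\lesssim r^{A/b}$, and the resulting convergence condition is $A/b>N-P$. Your change of variables $\dot r=-ar^N$ produces the exponent $A/a$, which, since $a<b$, would suggest convergence under the weaker (and insufficient) condition $A/a>N-P$. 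You do correctly invoke $A>b(N-P)$ at the end, so the conclusion is right, but the intermediate identification of the exponent is inconsistent with it. The paper avoids this by working directly with the discrete products: it fixes $\ell$ with $\max\{1,N-P\}<\ell<A/b$, takes $b^*=(N-1)A/\ell$, and obtains $y_l\le\|\Im v\|[1+lb^*\|v\|^{N-1}]^{-\ell/(N-1)}$ (this is the content of the auxiliary Lemma~B.2), after which summability reduces to $\ell+P-1>N-1$.
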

 
\begin{remark}
We notice that if $A_f>b_f$, then we can always take $A<A_f$ and $b>b_f$ satisfying $A>b$. 

We emphasize that when $n=1$, $A_f=Nb_f>b_f$ (this does not happen, in general,  when $n>1$, see~\cite{BFM2020a,BFM2020b} for examples). Then, ~\eqref{propImRlema} and~\eqref{prop:Wcontingut} always hold true in the one dimensional case, since we can choose the values of $A$  and $b$ satisfying the hypotheses of Lemma~\ref{lem:calR}. 
\end{remark}

\begin{remark}  
If $P\geq N-1$, the set $\Gamma_\rho(\gamma,\sigma)$ contains $\Omega_{\rho}(\gamma') \times \T^d_{\sigma'}$ for some
$\gamma' \leq \gamma$ and $\sigma'\leq \sigma$.

When $1\leq P<N-1$, the set $\Gamma_\rho(\gamma,\sigma)$ contains the points
$(v,\psi)$ satisfying $\Re v\in V_{\rho}$, $\|\Im v\|\leq \gamma' \|v\|^{N-P}$, $\psi \in \T_{\sigma'}^d$, for some
$\gamma' \leq \gamma$ and $\sigma'\leq \sigma$.

The previous work~\cite{BFM20} deals with the case $n=1$ and $P=N$. Lemma~\ref{lem:calR} is the main tool to generalize the results in~\cite{BFM20} to the case $1\le P<N$. 
\end{remark}

\begin{remark}
Lemma~\ref{lem:calR} holds true uniformly in $\lambda\in \C$ in compact subsets of $ \Lambda_\C$ where $\Lambda_\C$ is a suitable complex extension of $\Lambda$ to an open subset of 
$\C^p$. 
\end{remark}
 
We introduce the spaces $\mathcal{X}_s$, 
$s\in \Z$, we will deal with below.  Given  $0< \rho, \gamma\le 1$, and $ \sigma>0$  we define
$$
\mathcal{X}_s=\left\{ h:\Gamma_\rho(\gamma,\sigma) \to \C^{\ell}\;|\;  h \; \text{real analytic }, \, \|h\|_s:= \sup_{(v,\psi) \in \Gamma_\rho(\gamma,\sigma)} \frac{\|h(v,\psi )\| }{\|v\|^s}<\infty\right\}
$$
with $\ell\ge 1$ 
(if some component  $h_\varphi$ of $h$ takes values on $\T^d$, we will assume that the component $h_\varphi$ of $h$ considered as an element of $\mathcal{X}_s$, takes values on the universal covering $\C^d$ of $\T^d$). 
With the above introduced norms $\mathcal{X}_s$ are Banach spaces.
It is immediate to see that if $s<t$, then $\mathcal{X}_t\subset \mathcal{X}_s$ and if $h\in \mathcal{X}_t$ then
$\| h\|_s \le \rho^{t-s} \| h\|_t $. Furthermore, if $h\in \XX_t$ and $g\in \XX_s$ then $h\cdot g \in \XX_{t+s}$
and $\|h\cdot g \|_{t+s} \le \|h \|_{t} \| g \|_{s} $.

Moreover, given $r\in \Z$ and $\nu>0$, we introduce the product Banach space 
$$\mathcal{X}^\times_r :=
\mathcal{X}_{r-N+1}  \times \mathcal{X}_{r-N+1} \times \mathcal{X}_{r-2N+P+1}
$$
endowed with the norm
$$
\| h\|^\times_r:=
\| h_\xi\|_{r-N+1}  +\| h_\eta\|_{r-N+1}   + \nu\| h_\varphi\|_{r-2N+P+1}
$$
for some $\nu>0$. To make this norm more flexible we keep $\nu$ as a parameter. 
Below we will use a value of $\nu$  satisfying a certain smallness condition.

\subsubsection{The linear operator $\mathcal{S}$}\label{sec:linearoperatordif}

\begin{lemma} \label{lem:linearoperatordif}
Assume that $A_f>b_f\max\{1,N-P\}$. 
Then, if 
$$
s>\max\left \{ N-1 +\frac{N-1}{N-5/3}\frac{E^*}{a_f}, 2N-P-1\right \},
$$ 
the linear operator $\mathcal{S} :\mathcal{X}^\times_{s+N-1} \to \mathcal{X}^\times_{s}$ formally  
introduced in~\eqref{def:Slinearanalytic} is well defined and bounded. 
\end{lemma}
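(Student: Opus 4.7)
The plan is to estimate the defining series \eqref{def:Slinearanalytic} termwise, exploiting the block structure of $\mathbf{M}$ from \eqref{decompositionDF} and the sharp iterate estimates of Lemma~\ref{lem:calR}. Real analyticity of $\mathcal{S}[T]$ on $\Gamma_\rho(\gamma,\sigma)$ will follow automatically once uniform absolute convergence on this domain is established, since each summand is real analytic by the invariance $\widecheck{R}(\Gamma_\rho(\gamma,\sigma))\subset \Gamma_\rho(\gamma,\sigma)$ of Lemma~\ref{lem:calR}(4).

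I would handle the angular block first, because the $\varphi$-block of $\mathbf{M}^{-1}$ is the identity, so only the decay of $T_\varphi$ along the forward orbit enters. The bound $\|T_\varphi(\widecheck{R}^j(v,\psi))\|\le\|T_\varphi\|_{s-N+P}\|\widecheck{R}_v^j(v)\|^{s-N+P}$ together with \eqref{propcalRlema} shows the $j$-th term is majorised by $\|v\|^{s-N+P}(1+ja^*\|v\|^{N-1})^{-(s-N+P)/(N-1)}$. The hypothesis $s>2N-P-1$ is exactly $(s-N+P)/(N-1)>1$, and the elementary integral comparison $\sum_{j\ge 0}(1+j\alpha)^{-\beta}\le C/((\beta-1)\alpha)$ produces a total of order $\|v\|^{s-2N+P+1}$, placing $\mathcal{S}[T]_\varphi$ in $\mathcal{X}_{s-2N+P+1}$ with norm linear in $\|T_\varphi\|_{s-N+P}$.

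The $(\xi,\eta)$-block is the delicate case, because the matrix product $\prod_{l=0}^{j}(\mathbf{M}(\widehat{K}^{\le q-1}_{\xi,\eta}(\widecheck{R}_v^l(v))))^{-1}$ grows with $j$. The bound \eqref{propietatCanalytic} combined with $\log(1+x)\le x$ and Lemma~\ref{lem:calR}(2) majorises the logarithm of this product by
\begin{equation*}
E'\sum_{l=0}^{j}\frac{\|v\|^{N-1}}{1+la^*\|v\|^{N-1}} \le E'\|v\|^{N-1} + \frac{E'}{a^*}\log\bigl(1+ja^*\|v\|^{N-1}\bigr),
\end{equation*}
so that the product itself is at most $C(1+ja^*\|v\|^{N-1})^{E'/a^*}$. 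Multiplying by the decay $\|T_{\xi,\eta}\|_s\|v\|^s(1+ja^*\|v\|^{N-1})^{-s/(N-1)}$ and summing gives convergence precisely when $s/(N-1)-E'/a^*>1$, after which the same integral comparison yields $\mathcal{S}[T]_{\xi,\eta}\in\mathcal{X}_{s-N+1}$ with an operator bound linear in $\|T\|^\times_{s+N-1}$.

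The main obstacle, and the sole conceptual step, is matching the convergence exponent $(N-1)E'/a^*$ with the quantitative hypothesis of the statement. The freedom granted by \eqref{defabACanalytic} and \eqref{fitadeEprima} allows $E'$ to be taken close to $\frac{N-4/3}{N-5/3}E^*$ and $a^*$ close to $(N-1)a_f$, and choosing $a^*\in\bigl((N-4/3)a_f,(N-1)a_f\bigr)$, a non-empty interval since $N-4/3<N-1$, yields $(N-1)E'/a^*<\frac{N-1}{N-5/3}\frac{E^*}{a_f}$, exactly the content of the lower bound on $s$ in the statement. Uniformity in the parameter $\lambda$ on compact subsets of $\Lambda_\C$ is inherited from the uniform version of Lemma~\ref{lem:calR} mentioned in its subsequent remark, and the remaining verifications (linearity and aggregation of the two block bounds into the product-norm estimate for $\mathcal{X}^\times_{s+N-1}\to\mathcal{X}^\times_s$) are mechanical.
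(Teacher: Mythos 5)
Your proof is correct and follows essentially the same strategy as the paper: exploit the block-diagonal structure of $\mathbf{M}^{-1}$, use the iterate decay from Lemma~\ref{lem:calR}(2), control the matrix products in the $(\xi,\eta)$ block by the $\log$/$\exp$ trick together with~\eqref{propietatCanalytic}, and conclude via the integral comparison $\sum_{j\ge0}(1+j\alpha)^{-\beta}\le C/((\beta-1)\alpha)$ when $\beta>1$. Your reconciliation of the convergence exponent $(N-1)E'/a^*$ with the hypothesis on $s$, via the choice $a^*\in((N-4/3)a_f,(N-1)a_f)$ and $E'<\frac{N-4/3}{N-5/3}E^*$, is exactly the chain of inequalities the paper performs (phrased there as $s/(N-1)-E'/a^*>1+E^*/((N-5/3)a_f)-E'/((N-4/3)a_f)>1$). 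The only presentational difference is the order of the two blocks; and for the record, the target space of the angular block should read $\mathcal{X}_{s-2N+P+1}$, which is what your computation yields (the paper's stated exponent $s-2N+P-1$ in the final display appears to be a typo).
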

\begin{proof} 
Let $0<a< a_f$.
We fix  $a^*  , b^*  ,\rho,\gamma$ satisfying the conditions in Lemma~\ref{lem:calR}, and moreover, $(N-4/3) a_f < a^*  $ and $\beta$ small.  Then, $\Gamma_\rho(\gamma,\sigma)$ is invariant by $\widecheck{R}$. 
Given $T\in \mathcal{X}_{r}$ for some $r$  we have 
\begin{equation*}
 \big \|T\big (\widecheck{R}^j(v,\psi)) \big \|  \leq \|T\|_{r}  \|\widecheck{R}_v^j(v,\psi)\| ^{r} \leq \|T\|_r \frac{\|v\|^{r }}{(1+ja^*   \|v\|^{N-1})^{\frac{r}{N-1}}}.
\end{equation*}  
From \eqref{decompositionDF} we also introduce $\mathbf{M}_1=\mathrm{Id}+ \CC(\zeta)$. Then, 
$$
\mathbf{M}^{-1}=
\left (\begin{array}{cc}
\mathbf{M}_1^{-1} & 0 \\   
0 &  \mathrm{Id}
\end{array}\right ).
$$
Now let $T=(T_{\xi},T_{\eta},T_\varphi)\in \XX^\times_{s+N-1} $.
From the definition of $\mathcal{S}$ in \eqref{def:Slinearanalytic}
we also have
\begin{align*}
(\mathcal{S}[T])_{\xi,\eta}(v,\psi) & =  \sum_{j=0}^\infty \left [ \prod_{l=0}^j \big (\mathbf{M}_1 ( \widehat{K}_{\xi,\eta}^{\leq q-1}( \widecheck{R}_v^l (v)))\big )^{-1}\right ]   T_{\xi,\eta}  \circ \widecheck{R}^j(v,\psi) , 
\\
(\mathcal{S}[T])_{\varphi}(v,\psi) & =  \sum_{j=0}^\infty    T_{\varphi}  \circ \widecheck{R}^j(v,\psi).
\end{align*}
Now, using~\eqref{propietatCanalytic} and that 
$\widehat{K}^{\leq q-1}_{\xi,\eta}(v)-(v,0)=\OO(\|v\|^2)$, we bound 
\begin{equation}\label{bound:Manalytic}
\left \|\prod_{l=0}^j \big (\mathbf{M}_1\circ \widehat{K}^{\leq q-1}_{\xi,\eta} \circ \widecheck{R}_v^l (v)\big )^{-1} \right \| \leq \prod_{l=0}^j(1+E'\|\widecheck{R}_v^l(v)\|^{N-1}) .
\end{equation}  
To bound \eqref{bound:Manalytic} we will use the formal identity
$ \prod r_j = \exp \sum \log  r_j$, for $r_j>0$.
Again by Lemma~\ref{lem:calR},  
\begin{align*}
\sum_{l=0}^j\log (1+E'\|\widecheck{R}_v^l(v)\|^{N-1}) &\leq E'\sum_{l=0}^j \|\widecheck{R}_v^l(v)\|^{N-1} 
\leq E'\|v\|^{N-1} \sum_{l=0}^j \frac{1}{1+la^*   \|v\|^{N-1}}
\\& \leq E'\|v\|^{N-1} + \frac{E'}{a^*  } \log \big (1+ j a^*   \|v\|^{N-1}).
\end{align*}
Therefore,
$$
\left \|\prod_{l=0}^j \big (\mathbf{M}_1\circ \widehat{K}^{\leq q-1}_{\xi,\eta} \circ R_v^l (v)\big )^{-1}\right \| 
\leq \exp(E'\rho^{N-1}) (1+ja^*   \|v\|^{N-1})^{E'/a^*  }.
$$
Then, using that 
$s/(N-1) -E'/a^*   >1+E^*/((N-5/3)a_f) -E'/((N-4/3)a_f) >1 $, 
by Lemma \ref{fitadeEprima}, 
and that  $T_\xi,\, T_\eta \in \XX_{s}$, 
we obtain 
\begin{align*}
\|(\mathcal{S }[T])_{\xi,\eta}(v,\psi)\| \leq   \|T_{\xi,\eta}\|_{s}  \|v\|^{s} \sum_{j=0}^\infty 
 \frac{\exp(E'\rho^{N-1})}{(1+ja^*   \|v\|^{N-1})^{\frac{s}{N-1}-\frac{E}{a^*  }}} \leq 
 \Mg \|T_{\xi,\eta}\|_{s+N-1}  \|v\|^{s-N+1},
\end{align*}
and similarly, since  $T_\varphi \in \XX_{s-N+P}$,
\begin{align*}
\|(\mathcal{S }[T])_{\varphi}(v,\psi)\| \leq  \Mg \|T_{\varphi}\|_{s-N+P}  \|v\|^{s-2N+P-1 }. 
\end{align*}
Then, we immediately get 
$$
\|\mathcal{S }[T]\|^\times_{s}  \leq  \Mg \|T\|^\times_{s+N-1} .
$$
\end{proof}


\subsubsection{The nonlinear operator $\mathcal{N}$} \label{sec:nonlinearoperatordif}

We denote by $\overline{B}^\times_{r,\delta}$  the closed ball of radius $\delta$ of $\mathcal{X}^\times _r$.
\begin{lemma}\label{lem:nonlinearpart_fixedpoint}
Assume $q\ge N$ and $\delta$ is so small that if $h \in \overline{B}^\times_{q,\delta} $ the range of
$\widehat{K}^\leq+ {h}$ is contained in the domain of  $\widehat{F}$.
Then, if $\delta$ is small, the operator $\NN$ sends the ball
$\overline{B}^\times_{q,\delta}\subset \mathcal{X}^\times _{q}$ into $\mathcal{X}^\times _{q+N-1}$. 
Moreover, if $q\ge \max \{ 2N-P, 2N-M+1\}$ and $\nu =\sqrt{\rho}$, 
$$\Lip \mathcal{N}_{ \mid \overline{B}^\times _{q,\delta}} \le \MM (\rho^{1/2} + \rho). $$
\end{lemma}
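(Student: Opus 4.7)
The plan is to split $\mathcal{N}$ into four natural blocks,
\begin{equation*}
\mathcal{N}[\widecheck{\Delta}] = \mathcal{N}_0 + \mathcal{N}_1[\widecheck{\Delta}] + \mathcal{N}_2[\widecheck{\Delta}] + \mathcal{N}_3[\widecheck{\Delta}],
\end{equation*}
where $\mathcal{N}_0 := \widehat{F}^{\leq q-1}\circ\widehat{K}^\leq - \widehat{K}^\leq\circ\widecheck{R}$ is the constant approximate-invariance error, $\mathcal{N}_1[\widecheck{\Delta}] := \widehat{F}^{\geq q}\circ(\widehat{K}^\leq+\widecheck{\Delta})$ collects the high-order remainder of $\widehat{F}$, $\mathcal{N}_2[\widecheck{\Delta}] := [D\widehat{F}^{\leq q-1}(\widehat{K}^\leq) - \mathbf{M}(\widehat{K}^{\leq q-1})]\widecheck{\Delta}$ is the residual linear part after factoring out $\mathbf{M}$, and $\mathcal{N}_3[\widecheck{\Delta}]$ is the quadratic Taylor remainder of $\widehat{F}^{\leq q-1}$ at $\widehat{K}^\leq$. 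Each block is then treated by a direct order computation in the spaces $\XX_s$.

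For the mapping property $\mathcal{N}[\overline{B}^\times_{q,\delta}]\subset\XX^\times_{q+N-1}$, I would rewrite $\mathcal{N}_0 = \widehat{E}^\leq - \widehat{F}^{\geq q}\circ\widehat{K}^\leq + (\widehat{K}^\leq\circ\widehat{R}-\widehat{K}^\leq\circ\widecheck{R})$; the hypothesis $\widehat{E}^\leq=\OO(\|v\|^q)$, the bound $\widehat{F}^{\geq q}=\OO(\|\zeta\|^q)$, and $\widehat{R}-\widecheck{R}=\OO(\|v\|^q)$ combined with the derivative estimates on $\widehat{K}^\leq$ from Proposition~\ref{prop:preliminariesaproximationmap} yield $\mathcal{N}_0=\OO(\|v\|^q)$ componentwise, which is enough since the $\varphi$-slot of $\XX^\times_{q+N-1}$ only requires order $\|v\|^{q-N+P}$. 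The block $\mathcal{N}_1$ satisfies the same bound because $\widehat{K}^\leq+\widecheck{\Delta}=\OO(\|v\|)$ on $\overline{B}^\times_{q,\delta}$. For $\mathcal{N}_2$ I further decompose as $[D\widehat{F}^{\leq q-1}(\widehat{K}^\leq)-D\widehat{F}^{\leq q-1}(\widehat{K}^{\leq q-1})]\widecheck{\Delta}+\mathbf{N}(\widehat{K}^{\leq q-1})\widecheck{\Delta}$: the first summand inherits the factor $\widehat{K}^{\geq q}=\OO(\|v\|^q)$, and the second, whose $(\xi,\eta)$-block vanishes by construction of $\mathbf{M}$, transfers $\widecheck{\Delta}_{\xi,\eta}$ into the $\varphi$-slot at the exact order $\|v\|^{P-1+q-N+1}=\|v\|^{q-N+P}$, matching $\XX_{q-N+P}$. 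For $\mathcal{N}_3$, the dominant blocks of $D^2\widehat{F}^{\leq q-1}$ have orders $\OO(\|v\|^{N-2})$, $\OO(\|v\|^{M-2})$, $\OO(\|v\|^{P-2})$ in the $\xi,\eta,\varphi$ rows (and the mixed $\varphi$-derivatives vanish by Lemma~\ref{lem:Ftilde}), so applying them to two copies of $\widecheck{\Delta}$ produces the required orders precisely when $q\geq\max\{2N-M+1, 2N-P\}$.

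For the Lipschitz estimate, $\mathcal{N}_0$ contributes nothing. Fr\'echet-differentiating $\mathcal{N}_1$, the first summand of $\mathcal{N}_2$, and $\mathcal{N}_3$ each leaves at least one factor with extra powers of $\|v\|$ beyond the minimal requirement, so their contribution is $\OO(\rho)$ (the remaining $\widecheck{\Delta}$ factor in $\mathcal{N}_3$ being absorbed by the smallness of $\delta$). The $\OO(\rho^{1/2})$ factor arises exclusively from the linear piece $\mathbf{N}(\widehat{K}^{\leq q-1})\widecheck{\Delta}$ of $\mathcal{N}_2$: it sends its input $\widecheck{\Delta}_{\xi,\eta}$ (weighted by $1$ in the source norm $\|\cdot\|^\times_q$) into the $\varphi$-slot of the output (weighted by $\nu=\sqrt\rho$ in the target norm), with exact order matching, giving the bound $\sqrt\rho\,\Mg\|\widecheck{\Delta}_1-\widecheck{\Delta}_2\|^\times_q$. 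The reverse transfer from the $\varphi$-slot back to the $(\xi,\eta)$-slots, which would be amplified by division by $\nu$, is negligible because $\widehat{F}^{\leq q-1}$ is $\varphi$-independent (Lemma~\ref{lem:Ftilde}) and the relevant $\partial_\varphi$-derivatives of $\widehat{F}^{\geq q}$ are of order $\|v\|^q$, small enough to absorb $1/\nu$.

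The main obstacle is the careful bookkeeping of $\mathcal{N}_3$: the second-derivative tensor $D^2\widehat{F}^{\leq q-1}$ has different polynomial orders on each block and must be matched against the correspondingly different target exponents in $\XX^\times_{q+N-1}$, which is where the thresholds $q\geq 2N-M+1$ and $q\geq 2N-P$ become sharp. The choice $\nu=\sqrt\rho$ is the unique scaling at which the $(\xi,\eta)$-to-$\varphi$ transfer via $\mathbf{N}$ and the inverse $\varphi$-to-$(\xi,\eta)$ transfer are simultaneously controlled by the required $\rho^{1/2}+\rho$: a smaller $\nu$ would ruin the $\mathbf{N}$-piece bound, a larger one would spoil the dual estimate.
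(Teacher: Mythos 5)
Your proposal is correct and takes essentially the same approach as the paper: the decomposition $\mathcal{N}_0,\dots,\mathcal{N}_3$ matches the paper's $N_1,\dots,N_4$, the order counting on each block is the same, and the Lipschitz estimate proceeds by the same split into the constant, high-order, residual-linear, and quadratic Taylor pieces. One small heuristic slip at the end: you have the direction of the $\nu$-balancing reversed — a \emph{smaller} $\nu$ would spoil the $\varphi$-to-$(\xi,\eta)$ transfer in $\mathcal{N}_1$ (which carries $\nu^{-1}\rho^{q-2N+P+1}$), while a \emph{larger} $\nu$ would spoil the $\mathbf{N}$-piece bound (which carries the factor $\nu$); also, at the borderline $q=2N-P$ the $\rho^{1/2}$ is not produced exclusively by $\mathbf{N}$ but also by that $\nu^{-1}$ term in $\mathcal{N}_1$, though this does not affect the stated bound $\MM(\rho^{1/2}+\rho)$.
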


\begin{proof}
Let $h\in \XX^\times_q$.
Note that the condition on $q$ implies $q\ge N+1$.
Taking into account the definition of $\NN$ in \eqref{def:Nonlinearmap} we decompose $\NN(h) = N_1+N_2+N_3+N_4$ with 
\begin{align*}
N_1 &= \widehat{F}^{\leq q -1} \circ \widehat{K}^\leq - \widehat{K}^\leq\circ  \widecheck{R}, \\
N_2 &=   \widehat{F}^{\geq q} \circ (\widehat{K}^\leq+ {h}),\\
N_3 & = D\widehat{F}^{\le q-1}(\widehat{K}^{\leq})h - \mathbf{M}(\widehat{K}^{\leq q-1} ) h, \\
N_4 & = \widehat{F}^{\leq q -1} \circ (\widehat{K}^\leq+ h) - \widehat{F}^{\leq q -1} \circ \widehat{K}^\leq - D\widehat{F}^{\leq q -1}(\widehat{K}^\leq)h .  
\end{align*} 
Since $\widehat{R}$  and $\widehat{K}^\leq$ satisfy the approximate invariance equation~\eqref{eq:approximatedinvariancewidehat} and  $\widehat{R}-\widecheck{R}=\OO(\|v\|^q)$, $N_1 \in \mathcal{X}_{q} \times \mathcal{X}_{q} \times \mathcal{X}_{q}\subset \mathcal{X}^\times _{q+N-1}$. 
Clearly, we also have
 $N_2 \in \mathcal{X}_{q} \times \mathcal{X}_{q} \times \mathcal{X}_{q}$.
On the other hand, from \eqref{decompositionDF} 
$$
N_3=  \left (\begin{array}{cc}
0 & 0 \\ 
\cbf(\widehat{K}^{\leq }_{\xi,\eta})  &  0
\end{array}\right ) \left (\begin{array}{c}
h_{\xi,\eta}  \\ 
h_{\varphi}
\end{array}\right ) 
+
\left (\begin{array}{cc}
[\CC (\widehat{K}_{\xi,\eta}^\leq)  - 
\CC (\widehat{K}^{\leq q -1}_{\xi,\eta})]
& 0
\\
0 & 0 \end{array}\right ) 
\left (\begin{array}{c}
h_{\xi,\eta}  \\ 
h_{\varphi}
\end{array}\right ) .
$$
Since
\begin{equation} \label{ordreincrementC}
\CC (\widehat{K}_{\xi,\eta})  - 
\CC (\widehat{K}^{\leq q -1}_{\xi,\eta})
= 
\left (\begin{array}{cc}
\OO_{2q-1}& \OO_{2q-1}  
\\
\OO_{2q-N+M-1} & \OO_{2q-N+M-1} 
\end{array}\right )
\end{equation}
and using the conditions on $q$, we have
$N_3\in \mathcal{X}_{q} \times \mathcal{X}_{q} \times \mathcal{X}_{P-1+q-N+1} = \mathcal{X}^\times _{q+N-1} $.

For $N_4$, we write
$$
(N_4)_{\xi,\eta,\varphi } = \frac12 \int_0^1 (1-t) D^2\wh F^{\leq q -1} _{\xi,\eta,\varphi }
(\widehat{K}^\leq+ t\, h) h^{\otimes 2} \, dt.
$$
Using that $\wh F^{\leq q -1} -(0,0,\varphi)$ does not depend on 
$\varphi$, we have $(N_4)_{\xi}\in \XX_{q+(q-N)}$, 
$(N_4)_{\eta}\in \XX_{q+(q+M-2N)}$
and 
$(N_4)_{\varphi}\in \XX_{q-N+P+(q-N)}$.
Then,  $\NN(h) \in \mathcal{X}^\times _{q+N-1}$.

Now we look for the Lipschitz constant of $\mathcal{N}$ restricted to $\overline{B}^\times _{q,\delta}$. 
Given $h,g\in \overline{B}^\times _{q,\delta}\subset \XX^\times _q $
we decompose $\NN (h) -\NN (g) = T_1 + T_2 + T_3 +T_4+T_5$ with 
\begin{align*}
T_1 
& = \widehat{F}^{\geq q} \circ (\widehat{K}^\leq+ {h})-\widehat{F}^{\geq q} \circ (\widehat{K}^\leq+ g),
\\
T_2 
&= D\widehat{F}^{\le q-1}(\widehat{K}^{\leq})h - \mathbf{M}(\widehat{K}^{\leq q-1} ) h -[D\widehat{F}^{\le q-1}(\widehat{K}^{\leq})g - \mathbf{M}(\widehat{K}^{\leq q-1} )g], 
\\
T_3 &= 
\frac12 \int_0^1 (1-t) [D^2\wh F^{\leq q -1} _{\xi,\eta,\varphi }
(\widehat{K}^\leq+ t\, h)
- D^2\wh F^{\leq q -1} _{\xi,\eta,\varphi }
(\widehat{K}^\leq+ t\, g)] h^{\otimes 2} \, dt,
\\ 
T_4 &=
\frac12 \int_0^1 (1-t) D^2\wh F^{\leq q -1} _{\xi,\eta,\varphi }
(\widehat{K}^\leq+ t\, g)  (h,h - g  )\, dt,
\\ 
T_5 &=
\frac12 \int_0^1 (1-t) D^2\wh F^{\leq q -1} _{\xi,\eta,\varphi }
(\widehat{K}^\leq+ t\, g)  (h - g,g )\, dt.
\end{align*}
We have
$$
T_1 = 
\left (\begin{array}{ccc}
\OO _{q-1} & \OO_{q-1} &  \OO_{q}\\
\OO _{q-1} & \OO_{q-1} &  \OO_{q}\\
\OO _{q-1} & \OO_{q-1} &  \OO_{q}
\end{array}\right )  
\left (\begin{array}{c}
h_\xi -g_\xi\\
h_\eta -g_\eta\\
h_\varphi -g_\varphi
\end{array}\right ),
$$
where $\OO_k$ stands for terms of order $k$ in $v$.
Therefore,
\begin{align*}
\|(T_1)_{\xi,\eta,\varphi }(v,\psi)\|\le & \MM \|v\|^{q-1}(\|h_\xi -g_\xi\|_{q-N+1}+\|h_\eta -g_\eta\|_{q-N+1}) \|v\|^{q-N+1}
\\
& + \MM \|v\|^{q}\|h_\varphi -g_\varphi\|_{q-2N+P+1} \|v\|^{q-2N+P+1}
\end{align*}
and hence 
$$
\|T_1\|^\times_{q+N-1} \le  \MM (\rho^{q-N}+\nu^{-1}\rho^{q-2N+P+1}+\nu \rho^{q-P}+\rho^{q-N+1})  \|h -g\|^\times_{q}. 
$$

From the definition of $\mathbf{M}$ we have 
$$
T_2= T_2^1+T_2^2 := 
\left (\begin{array}{l}
0 \\
0 \\
\cbf(\widehat{K}^{\leq}_{\xi,\eta}) 
\big (h_{\xi,\eta} -g_{\xi,\eta} \big )
\end{array}\right )  
+
\left (\begin{array}{l}
[\CC (\widehat{K}_{\xi,\eta})  - 
\CC (\widehat{K}^{\leq q -1}_{\xi,\eta})]
\big (h_{\xi,\eta} -g_{\xi,\eta}\big  )
\\
0 \end{array}\right )  .
$$
Therefore,
 $$
\|(T_2^1)_\varphi (v,\psi)\| \le \MM \|v\|^{P-1}(\|h_\xi -g_\xi\|_{q-N+1}+\|h_\eta -g_\eta\|_{q-N+1}) \|v\|^{q-N+1}  
 $$
and then $\|T_2^1\|^\times_{q+N-1} \le \MM \nu  \|h -g\|^\times_{q} $.
Taking into account \eqref{ordreincrementC} 
we also have
$$
\|(T_2^2)_\xi (v,\psi)\|  \le \MM \|v\|^{q+N-2} (\|h_\xi -g_\xi\|_{q-N+1} +\|h_\eta -g_\eta\|_{q-N+1}) \|v\|^{q-N+1}  
$$
and then 
$
\|(T_2^2)_\xi\|_{q} \le \MM \rho^{q-1} \|h -g\|^\times_q.
$
Analogously, 
$\|(T_2^2)_\eta\|_{q} \le \MM \rho^{q+M-N-1} \|h -g\|^\times_q$.
Then,
$$
\|T_2^2\|^\times_{q+N-1} \le \MM \rho^{q+M-N-1} \|h -g\|^\times_q.
$$
Next, we recall that $\wh F^{\leq q -1} - (0,0,\varphi)$ does not depend on 
$\varphi$.  
Concerning $T_3$, using the third derivatives of $\widehat F^{\leq q -1}$ and the conditions on $q$, 
\begin{align*}
\|(T_3)_{\xi}(v,\psi)\| \le &   \MM \|v\|^{N-3}  
 (\|h_\xi -g_\xi\|_{q-N+1} \\ &+\|h_\eta -g_\eta\|_{q-N+1}) \|v\|^{q-N+1} 
 \|h_{\xi,\eta} \|^2_{q-N+1}  \|v\|^{2(q-N+1)} 
 \end{align*}
 and $\|(T_3)_{\xi}\|_q \le    \MM  \rho^{2(q-N)} \delta^2
\|h  -g \|^\times_{q}  $.
Analogously, 
$
\|(T_3)_{\eta}\|_q    \le    \MM \rho^{2q+M-3N} \delta^2
\|h  -g \|^\times_{q} 
$
and 
$
\|(T_3)_{\varphi}\|_{q+P-N}    \le    \MM \rho^{2(q-N)} \delta^2
\|h  -g \|^\times_{q} 
$
so that 
$$
\|T_3\|^\times_{q+N+1} \le \MM (\rho^{2q+M-3N}+\nu \rho^{2(q-N)}  )\delta ^2 \|h -g\|^\times_q.
$$
For $T_4$, 
$$
\|(T_4)_{\xi}(v,\psi)\| \le   \MM \|v\|^{N-2}   
 (\|h_\xi -g_\xi\|_{q-N+1} +\|h_\eta -g_\eta\|_{q-N+1}) \|v\|^{q-N+1} 
 \|h_{\xi,\eta} \|_{q-N+1}  \|v\|^{q-N+1} 
$$
and 
$
\|(T_4)_{\xi}\|_q    \le    \MM \rho^{q-N}  \delta
\|h  -g \|^\times_{q}. 
$
Analogously, 
$
\|(T_4)_{\eta}\|_q \le    \MM \rho^{q+M-2N} \delta
\|h  -g \|^\times_{q} 
$ and 
$
\|(T_4)_{\varphi}\|_{q+P-N} \le  \MM \rho^{q-N} \delta
\|h  -g \|^\times_{q} 
$
and
$$
\|T_4\|^\times_{q+N-1} \le \MM  ((1+\nu)\rho^{q-N}+ \rho^{q+M-2N} )\delta  \|h -g\|^\times_q.
$$
For $T_5$ we have the same estimate as for $T_4$. Taking into account the conditions on $q$ and that $\nu=\sqrt{\rho}$ we get the bound for the Lipschitz constant of $\NN$.

\end{proof}

\subsubsection{End of the proof of Proposition \ref{prop:existencepartialmap} }

Our goal is to prove that the fixed point equation~\eqref{eq:fixedpoint2} has  a solution belonging to $\mathcal{X}^\times_q$. For that we start by estimating  the first iterate of the operator $\F=-\mathcal{S} \circ \mathcal{N}$, starting with $\widecheck{\Delta}_0=0$, namely
$$
\widecheck{\Delta}_1=\F(0) = -\mathcal{S} \circ \mathcal{N}[0]. 
$$
We recall that $\widehat{R}$  and $\widehat{K}^\leq$ satisfy the approximate invariance equation~\eqref{eq:approximatedinvariancewidehat} and that $\widehat{R}-\widecheck{R}=\OO(\|v\|^q)$.
Therefore, using  definition  of $\mathcal{N}$ in~\eqref{def:Nonlinearmap} we have that
$$
 \mathcal{N}[0] =  \widehat{F}^{\leq q-1} \circ \widehat{K}^{\leq } - \widehat{K}^{\leq}\circ \widecheck{R} + \widehat{F}^{\geq q} \circ  \widehat{K}^\leq = \widehat{F}\circ \widehat{K}^\leq - \widehat{K}^\leq \circ \widehat{R} 
 = \OO(\|v\|^q)
$$
and as a consequence $\mathcal{N}[0]\in \mathcal{X}_{q} \times \mathcal{X}_{q} \times \mathcal{X}_{q}$. By Lemmas~\ref{lem:linearoperatordif} and \ref{lem:nonlinearpart_fixedpoint},
$\widecheck{\Delta}_1 \in \mathcal{X}^\times_q$.
We introduce the radius 
$$
\delta_0 :=2 \|\widecheck{\Delta}_1\|^\times_q
$$ 
and the closed ball $\overline{B}^\times _{q,\delta_0}$ of $\mathcal{X}^\times_q$ of radius $\delta_0$.

A standard argument shows that if $\rho $ is small, 
$\F (\overline{B}^\times _{q,\delta_0} ) \subset \overline{B}^\times _{q,\delta_0}$.
Indeed, let $\widecheck{\Delta} \in \overline{B}^\times _{q,\delta_0} $. Then, by Lemma~\ref{lem:nonlinearpart_fixedpoint},
\begin{align*}
\| \F (\widecheck{\Delta})\|^\times _q 
& \le \| \F (\widecheck{\Delta})-\F(0)\|^\times _q   +\|\F(0)\|^\times _q \\
& \le \Lip \F \|\widecheck{\Delta} \|^\times_q + \delta_0/2 
\le \MM \|\mathcal{S}\| (\sqrt{\rho} + \rho) \delta_0 +  \delta_0/2 \le \delta_0,
\end{align*}
if  $\rho $ is small.
Therefore we have a unique fixed point $\widecheck{\Delta}$
of $\F$ in $\overline{B}^\times _{q,\delta_0} \subset \mathcal{X}^\times_q $.

 
\subsection{Characterization of the stable manifold}
\label{sec:proofthexistence}
 
To finish the proof of Theorem~\ref{thm:posterioriresult} it remains to relate the parametrization $K(u,\v)$ with  $W^{\mathrm{s}}_{V_{\rho,\,\beta}}$. 
Assume that $K$ and $R$ are solutions of
$$
F\circ K - K\circ R=0
$$
with 
\begin{equation}
\label{ordres-K}
K_x(u,\v)-u = \mathcal{O}(\|u\|^2), \qquad
K_y(u,\v) = \mathcal{O}(\|u\|^2), 
\qquad K_\th(u,\v)- \v = \mathcal{O}(\|u\|),
\end{equation}
and
\begin{equation*}
R_u(u,\v)=u+\overline{f}^N(u,0)+\OO(\|u\|^{N+1}),
\qquad 
R_\v(u,\v)=\v+ \omega + \OO(\|u\|).
\end{equation*}

We first recall that by Proposition~\ref{prop:preliminariesaproximationmap},  performing several steps of averaging and changes of variables we can remove the dependence on $\th$ of $F$ up to any order. Moreover, we also have that the parametrization $K(u,\Theta)-(u,0,\Theta)$ and $R$ do not depend on $\Theta$ up to any order. 
We assume that we have removed this dependence up to order smaller or equal than $N$. In particular, after the corresponding change of variables, the new map $\widehat{F}$ reads as~\eqref{system} with $\widehat{f}^{\geq N}, \widehat{g}^{\geq M}$ as in~\eqref{sumhomogeneousfunctions} satisfying that 
$\widehat{f}^N=\overline{f}^N$ (the average of $f^N$) and $\widehat{g}^M= \overline{g}^M$ are functions independent of ${\th}$. Then, the new map has the same constants $a_f,b_f,A_f,B_g$ as the initial one.

We prove \eqref{darrerstatementdethm28} for $\widehat F$.
From now on, we remove the symbol $\;\widehat{\,}\;$ in the notation. Then, undoing the changes of variables, we have the claim for $F$. We first prove that, if $\rho,\beta$ are small,
$$
K \big (V_\rho \times \T^d\big ) \subset W^s_{\Vext_{\rho,\beta}},
$$
with $V_{\rho,\beta}$ defined in~\eqref{def:Vrhobeta}. 
We claim that, for $u\in V_\rho$ and $\v \in \T^d$,
\begin{equation}
\label{condicionsDSM}
F_{x}(K(u,\v))\in V_\rho \qquad \text{and} \qquad 
\| F_{y}(K(u,\v)) \| \le \beta \| F_{x}(K(u,\v))\| .
\end{equation}
Indeed, since $F_x(x,y,\theta) = x+\overline{f}^N(x,0) + 
\int_0^1 D_y\overline{f}^N(x,s y) y\, ds + f^{\ge N+1}(x,y,\th) $, using that $\overline{f}^N$ satisfies condition (v), namely 
$ \dist (x+\overline{f}^N(x,0), V_\rho^c) \ge a_V\|x\| ^N $, 
and \eqref{ordres-K} we easily obtain 
$$
\mathrm{dist} (F_x(K(u,\v)), V_\rho^c) \geq a_V |u|^N - \mathcal{M}   |u|^{N+1}
$$
and we conclude that $F_{x}(K(u,\v))\in V_\rho$. 
Also,
$$F_{y}(K(u,\v)) = K_y(u,\v)+ \overline{g}^M(K_{x,y}(u,\v))
+g^{\ge M+1}(K(u,\v)) = \OO(\|u\|^2)
$$
and 
$$\|F_{x}(K(u,\v))\| = \|K_x(u,\v)+ \overline{f}^N(K_{x,y}(u,\v))
+ {f}^{\ge N+1}(K(u,\v))\| \ge  \|u \| (1-\MM \|u\|)
$$
give the second condition in \eqref{condicionsDSM}.
Next we notice that, by the definition of $a_f$ in~\eqref{defa},
\begin{align*}
\|R_u(u,\v)\| &\leq \|u + \overline{f}^N(u,0)\| + \MM \|u\|^{N+1} \leq \|u\|- a_f\|u\|^{N} +\M\|u\|^{N+1} \\ & \leq \|u\| \left (1 - \frac{a_f}{2}\|u\|^{N-1}\right ) < \|u\|, 
\end{align*}
for $(u,\v) \in \big (V_\rho\setminus \{0\}\big ) \times \T^d$, 
if $\rho$ small. Using Lemma \ref{lem:calR} and an induction argument we get $\|R^{k+1}_u(u,\v)\|<\|R^k_u(u,\v)\| < \|u \|$ for $k\geq 1$ and thus $R^k_u(u,\v) \to 0$ as $k\to \infty$. Therefore, since $F^k \circ K = K\circ R^k$, for all $k\in \mathbb{N}$, we have that 
$$
\lim_{k\to \infty} F^k_{x,y}(K(u,\v))=0
$$
and this implies that $
K\big ( V_\rho \times \T^d \big ) \subset W_{\Vext_{\rho,\beta}}^{\mathrm{s}}. 
$ 

Now, assuming $B_g>0$ we will prove that, for a cone set $\widehat V$ close to $V$,  
$$
W_{\widehat \Vext_{\rho,\beta}}^{\mathrm{s}} \subset K\big (\widehat  V_\rho \times \T^d \big ). 
$$
To simplify the arguments we first check that the image of $K$ is (locally) the graph of a function $\mathcal{K}$ and then we will change variables to put the graph of $\mathcal{K}$ on the horizontal subspace.   
To check  that $\{K(u,\v)\}$ can be expressed as a graph we note that \eqref{ordres-K} implies that 
the map $K_{x,\theta}:(u, \v)\mapsto (K_x(u, \v), K_\theta(u, \v)) $
is locally invertible and hence we can write $(u,\v)= (K_{x,\th})^{-1} (x,\th)$ for $x$ in a slightly smaller cone set $\widehat V$. Therefore 
$\{K(u,\v)\mid\,(u,\v) \in \widehat V \times \T^d \}$, can be expressed as the graph  $y=K_{y} \circ K^{-1}_{x,\th}(x,\th)$ which has the same regularity as $K$. 
We define $\mathcal{K}=K_y \circ K^{-1}_{x,\th}$.
We emphasize that $\mathcal{K}$ does not depend on $\th$ till terms of order $N+1$.
Since $K_y(u,\v)=\mathcal{O}(\|u\|^2)$, it is also clear that $\|\KK (x,\th)\|= \mathcal{O}(\|x\|^2)$ so that, for $\rho$ small enough,  
$(x,\KK (x,\th),\th) \in \widehat \Vext_{\rho,\beta}=\widehat V_{\rho,\beta} \times \T^d$. 

We perform the close to the identity change of variables
$$
(x,y,\th)=(\xi,\eta +\mathcal{K}(\xi,\vartheta), \vartheta). 
$$
We have that $\mathcal{K}(\xi,\vartheta)=\mathcal{O}(\|\xi\|^2)$ and   $ D_\th \mathcal{K} (\xi,\vartheta)= \mathcal{O}(\|\xi\|^{N+1})$. The $\eta $-component of the transformed map $\mathcal{F}$ is given by
$$
\mathcal{F}_\eta (\xi,\eta,\vartheta)=\eta + \overline{g}^{M}(\xi,\eta)  + \widehat g^{\geq M+1}(\xi,\eta,\vartheta)
$$
for some $\widehat g^{\geq M+1}$. We have $\overline{g}^M(\xi,0)=0$ and $\widehat g^{\geq M+1}(\xi,0,\vartheta)=0$. Therefore, 
$$
\overline{g}^{M}(\xi,\eta) = G(\xi,\eta) \eta , \quad G(\xi,\eta)=\int_{0}^1 D_y \overline{g}^M(\xi, s \eta) \, ds,
\quad 
\widehat{g}^{\ge M+1}(\xi,\eta,\vartheta) = \widehat G(\xi,\eta,\vartheta) \eta.
$$

It is clear that $\eta =0$ corresponds to $y=\mathcal{K}(x,\th)$.
Therefore, it only remains to be checked that, if $(\xi,\eta,\vartheta)$ are such that 
$\mathcal{F}_{\xi,\eta}^k(\xi,\eta ,\vartheta) \in \widehat{V}_{\rho,\beta}$ for all $k\in \mathbb{N}$ and $\mathcal{F}_{\xi,\eta}^k(\xi,\eta ,\vartheta) \to 0$ then $\eta=0$. Indeed, by the definition of $B_g$ in~\eqref{defABg}, we have that 
\begin{align*}
\|\mathcal{F}_\eta^{-1} (\xi,\eta,\vartheta) \| & \leq \| \eta - G(\xi,\eta) \eta \| + \Mg\|\eta\| \|\xi\|^{M} \\ &\leq \|\Id - G(\xi,0) \|  \|\eta\| + \Mg\|\xi\|^{M-2} \|\eta\|^2 + \Mg \|\xi \|^{M} \|\eta\|
\\ & \leq \|\eta \| \big (1- B_g  \|\xi\|^{M-1} + \Mg(\rho + \beta) \|\xi\|^{M-1} \big ) \leq \|\eta\|,
\end{align*}
if $\rho,\beta$ are small enough. 
Therefore, $\|\mathcal{F}_\eta (\xi,\eta,\vartheta)\| \geq \|\eta\|$ if 
$(\xi,\eta,\vartheta) \in \widehat V_{\rho,\beta} \times \T^d$. Applying  this property in a  iterative way we have that, when
$\mathcal{F}_{\xi,\eta}^k(\xi,\eta,\vartheta) \in \widehat V_{\rho,\beta}$ for all $k\in \N$, then 
$$
\|\eta\| \leq \|\mathcal{F}_\eta ^k(\xi,\eta,\vartheta) \| \to 0 \qquad \text{as }\quad k\to \infty
$$
and, consequently, $\|\eta\|=0$.
 
 
\section{Approximation of the invariant manifolds}  \label{sec:approximation}
This section contains the proof of Theorems~\ref{thm:approximationmaps} and~\ref{thm:approximationflows}. First, in Section~\ref{subsectionstep2}, we will consider a first order partial differential equation which we will encounter as a cohomological equation in the inductive step to find the terms of the expansion of the parametrization $K$, and the function $R$ (for maps) or the vector field $Y$ (for differential equations). Then, in Sections~\ref{sec:inductionalgorithm} and~\ref{sec:cohomologicalflow} we prove the approximation results for maps and flows, respectively. We emphasize that we provide an explicit inductive algorithm for computing such approximations as finite sums of homogeneous functions in $u$.

 
\subsection{A first order partial differential equation with homogeneous coefficients}\label{subsectionstep2}

In this section we  recall Theorem 3.2 of~\cite{BFM2020b} which will be  a key result to solve the so-called cohomological equations. In that paper the result is stated for the differentiable and analytical cases. Here we reword it for the analytical case. 

Let $V\subset \R^n$ be a cone-like set, $0\in \partial V$,  $\m\in \Z$,
$\m\ge 1$ and $\Lambda \subset \R^p $. Let 
$\pa:\R^n\times \Lambda\to \R^k$, $\Qa:\R^n \times \Lambda\to \L(\R^k,\R^k)$ and
$\TT:V\times \Lambda\to \R^k$, and consider the equation
\begin{equation}\label{modellinearequation}
D h(\xu,\lambda) \cdot \pa(\xu,\lambda) - \Qa(\xu,\lambda) \cdot   h(\xu,\lambda) = \TT(\xu,\lambda)
\end{equation}
for $h:V\times \Lambda\to \R^k $.
Given $\rho>0$, we define the constants~$\apa, \bpa,\Apa$ and $\BB_\Qa$
by
\begin{equation}
\label{defconstantspa}
\begin{aligned}
\apa &:=-\sup_{ \xu \in V_\rho,\, \lambda\in \Lambda} \frac{\Vert  \xu+ \pa (\xu,\lambda)
\Vert -\Vert  \xu \Vert}{\Vert  \xu \Vert^{N}}, \quad 
&\bpa := \inf_{\lambda \in \Lambda} \sup_{ \xu \in V_{\rho}} \frac{\Vert \pa(\xu,\lambda) \Vert}{\Vert \xu\Vert^{N}},\\
\Apa &:= -\sup_{ \xu \in V_{\rho},\,\lambda \in \Lambda} \frac{\Vert \mathrm{Id}+ D\pa (\xu,\lambda)\Vert-1}{\Vert \xu \Vert^{N-1}}, 
& \BB_\Qa := -\sup_{ \xu\in V_{\rho},\,\lambda \in \Lambda } \frac{\Vert \mathrm{Id}- \Qa (\xu,\lambda)\Vert-1}{\Vert  \xu \Vert^{N-1}}.
\end{aligned}
\end{equation}

We assume there exists $\rho>0$  such that the following conditions hold
\begin{enumerate}[label=(\alph*)] 
    \item   $\pa,\Qa$ and $\TT$ are analytic homogeneous functions
in $V_\rho \times \Lambda$ of degrees $N, N-1$ and $\m+N$,  respectively. 
\item The constants $\apa, \Apa,\bpa$ satisfy
\begin{equation*}
\apa >0, \qquad \Apa > \bpa .
\end{equation*}
\item There exists a constant $ \CIP>0$ such that
\begin{equation*}
 \text{dist}\big( \xu + \pa( \xu,\lambda), \big (V_{\rho}\big )^c\big )\geq \CIP
\Vert
\xu\Vert^{N},\qquad \forall \xu \in V_\rho, \quad \forall \lambda \in \Lambda.
\end{equation*}
\item If $\BB_\Qa <0$ we assume that
\begin{equation}\label{hipgraus}
\m+1+ \frac{\BB_\Qa}{\apa} > 0.
\end{equation}
We will apply the next theorem 
for different $\Qa$'s and in some cases we may have $\BB_{\Qa}< 0$.
\end{enumerate}
We will have to consider complex extensions of $\Omega:=V\times \Lambda$ of the form  
\begin{align*}
\Omega_\C(\gamma)&:=\{(\xu,\lambda) \in \C^n\times \C^p\mid\,  (\Re \xu,\Re \lambda) \in V \times \Lambda, \;\; \Vert \Im \xu
\Vert < \gamma \Vert \Re u \Vert,\; \Vert \Im \lambda \|<\gamma^2 \}.
\end{align*}
Finally, let $\varphi_\pa$ be the flow of $\dot{u}=\pa(u,\lambda)$ and $\Psi_\Qa$ be the fundamental matrix solution of $\dot{z} = \Qa(\varphi_\pa(t;u,\lambda),\lambda) z$ such that $\Psi(0;u,\lambda)=\Id$. 
 
\begin{theorem}[Theorem 3.2 of~\cite{BFM2020b}]\label{thm:solutionlinearequation}
Assume that $\pa,\Qa,\TT$ satisfy hypotheses (a)-(d).
Then, equation \eqref{modellinearequation}
has a unique homogeneous solution of degree $\m+1$ given by 
\begin{equation}\label{defh0}
  h(\xu,\lambda) = \mathcal{H}_{\pa,\Qa}[h] := \int_{\infty}^0 \Psi^{-1}_\Qa( t; \xu,\lambda) \TT(\varphi_\pa( t; \xu,\lambda), \lambda) \,
dt,\qquad  \xu \in V, \quad \lambda \in \Lambda.
\end{equation}

If $\pa,\Qa,\TT$ are real analytic functions defined on the extended set $\Omega_\C(\gamma_0)$ for some $\gamma_0>0$, then there exists $0< \gamma\leq \gamma_0$ such that $h$ is a real analytic function on $\Omega_\C(\gamma) $.  
\end{theorem}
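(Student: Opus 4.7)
I would treat equation~\eqref{modellinearequation} as a linear transport equation along the flow of $\pa$ twisted by the linear cocycle $\Qa$, and solve it by the method of characteristics. Let $\varphi_\pa(t;u,\lambda)$ denote the flow of $\dot u = \pa(u,\lambda)$ and $\Psi_\Qa(t;u,\lambda)$ the fundamental matrix solution of $\dot z = \Qa(\varphi_\pa(t;u,\lambda),\lambda)\,z$ with $\Psi_\Qa(0;u,\lambda)=\Id$. Direct differentiation shows that, if $h$ solves~\eqref{modellinearequation}, then $t\mapsto \Psi_\Qa^{-1}(t;u,\lambda)\,h(\varphi_\pa(t;u,\lambda),\lambda)$ has derivative $\Psi_\Qa^{-1}(t;u,\lambda)\,\TT(\varphi_\pa(t;u,\lambda),\lambda)$, so integrating from $t$ down to $0$ yields
\begin{equation*}
h(u,\lambda) = \Psi_\Qa^{-1}(t;u,\lambda)\,h(\varphi_\pa(t;u,\lambda),\lambda) + \int_t^0 \Psi_\Qa^{-1}(s;u,\lambda)\,\TT(\varphi_\pa(s;u,\lambda),\lambda)\,ds.
\end{equation*}
Formula~\eqref{defh0} is the $t\to\infty$ limit, assuming the boundary term vanishes. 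The plan is therefore: (1) prove convergence of the integral in~\eqref{defh0} and define $h$ by it; (2) verify by differentiation under the integral that this $h$ solves~\eqref{modellinearequation}; (3) show $h$ is homogeneous of degree $\m+1$; (4) prove uniqueness; (5) extend everything analytically to the complex wedge $\Omega_\C(\gamma)$.

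\textbf{Core estimates.} Conditions~(b) and~(c) imply that $V_\rho$ is positively invariant under $\varphi_\pa$ and that, for any $a<\apa$, the infinitesimal version of $\|u+\pa(u,\lambda)\|\le \|u\|-\apa\|u\|^N$ yields $\frac{d}{dt}\|\varphi_\pa\| \le -a\|\varphi_\pa\|^N$, hence $\|\varphi_\pa(t;u,\lambda)\| \le \|u\|\bigl(1+a(N-1)t\|u\|^{N-1}\bigr)^{-1/(N-1)}$ for $t\ge 0$. For $\Psi_\Qa^{-1}$, the bound $\|\Id-\Qa(v,\lambda)\|\le 1-\BB_\Qa\|v\|^{N-1}$ (from the definition of $\BB_\Qa$) implies via the convexity identity $\Id-\epsilon\Qa=(1-\epsilon)\Id+\epsilon(\Id-\Qa)$ that $\|\Id-\epsilon\Qa(v,\lambda)\|\le 1-\epsilon\BB_\Qa\|v\|^{N-1}$ for $\epsilon\in[0,1]$, and combined with Gronwall applied to $\dot\Psi_\Qa^{-1}=-\Psi_\Qa^{-1}\Qa(\varphi_\pa)$ gives $\|\Psi_\Qa^{-1}(t;u,\lambda)\|\le C\bigl(1+a(N-1)t\|u\|^{N-1}\bigr)^{-\BB_\Qa/(a(N-1))}$. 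Using homogeneity of degree $\m+N$ of $\TT$, the integrand in~\eqref{defh0} is bounded by $C\|u\|^{\m+N}\bigl(1+a(N-1)t\|u\|^{N-1}\bigr)^{-(\m+N+\BB_\Qa/a)/(N-1)}$, which is integrable in $t$ exactly when $\m+1+\BB_\Qa/a>0$. Taking $a$ sufficiently close to $\apa$ this matches hypothesis~(d); the second part $\Apa>\bpa$ of~(b) furnishes the control on $D_u\varphi_\pa$ needed to justify differentiation under the integral.

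\textbf{Homogeneity, uniqueness, and analytic extension.} Homogeneity of $\pa$, $\Qa$ gives the self-similarities $\varphi_\pa(t;su,\lambda) = s\,\varphi_\pa(s^{N-1}t;u,\lambda)$ and $\Psi_\Qa(t;su,\lambda) = \Psi_\Qa(s^{N-1}t;u,\lambda)$; substituting $\tau = s^{N-1}t$ in~\eqref{defh0} together with $\TT$ being of degree $\m+N$ yields $h(su,\lambda) = s^{\m+1}h(u,\lambda)$. For uniqueness, the difference $g$ of two homogeneous solutions of degree $\m+1$ solves the homogeneous equation, so $g(u,\lambda) = \Psi_\Qa^{-1}(t;u,\lambda)\,g(\varphi_\pa(t;u,\lambda),\lambda)$, and the above estimates force the right-hand side to tend to $0$ as $t\to\infty$, so $g\equiv 0$. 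The main technical obstacle will be the analytic extension: I would show that, for $\gamma>0$ sufficiently small, $\Omega_\C(\gamma)$ is positively invariant under $\varphi_\pa(t;\cdot,\lambda)$ for $t\ge 0$ and the real decay estimates for $\|\varphi_\pa\|$ and $\|\Psi_\Qa^{-1}\|$ persist on the wedge (with slightly worse constants), by checking that $\pa$ points slightly inward on the boundary of the wedge when $\gamma$ is small --- an argument in the spirit of Lemma~\ref{lem:calR}. Once this is in place, the integrand is jointly analytic on $\Omega_\C(\gamma)$ and uniformly dominated, so $h$ is analytic on $\Omega_\C(\gamma)$ by Morera's theorem and dominated convergence.
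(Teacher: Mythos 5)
This theorem is stated in the paper purely as a citation of Theorem~3.2 of~\cite{BFM2020b}; no proof of it appears here, so there is no paper proof to compare against. Your reconstruction follows the only natural route --- the method of characteristics along $\varphi_\pa$ twisted by the cocycle $\Psi_\Qa$, which the integral formula $\mathcal{H}_{\pa,\Qa}$ in the statement itself forces --- and the logical skeleton is sound: the roles you assign to~(b)--(d) (flow decay from $\apa>0$, cone invariance from~(c), derivative control from $\Apa>\bpa$, convergence of the twist factor from $\m+1+\BB_\Qa/\apa>0$), the homogeneity argument via the self-similarities $\varphi_\pa(t;su,\lambda)=s\varphi_\pa(s^{N-1}t;u,\lambda)$ and $\Psi_\Qa(t;su,\lambda)=\Psi_\Qa(s^{N-1}t;u,\lambda)$, and the uniqueness argument via the vanishing of the boundary term $\Psi_\Qa^{-1}(t;u,\lambda)g(\varphi_\pa(t;u,\lambda),\lambda)$ are all the right ingredients.

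Two small remarks. First, the Gronwall bound $\|\Psi_\Qa^{-1}(t;u,\lambda)\|\le C\bigl(1+a(N-1)t\|u\|^{N-1}\bigr)^{-\BB_\Qa/(a(N-1))}$ uses the \emph{upper} bound on $\int_0^t\|\varphi_\pa(s;u,\lambda)\|^{N-1}\,ds$, which is the correct direction only when $\BB_\Qa<0$ (the only case where~(d) is actually needed); for $\BB_\Qa\ge 0$ one must use the \emph{lower} bound on the integral, and the exponent becomes $-\BB_\Qa/(b(N-1))$ with $b>\bpa$. This does not affect the conclusion (for $\BB_\Qa\ge 0$ the integrand already decays fast enough from $\TT\circ\varphi_\pa$ alone since $\m+N>N-1$), but the two cases should not be conflated. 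Second, the step that $h$ defined by~\eqref{defh0} actually solves~\eqref{modellinearequation} is cleanest not via differentiation under the integral sign but via the cocycle identity $\Psi_\Qa(t;\varphi_\pa(s;u,\lambda),\lambda)=\Psi_\Qa(t+s;u,\lambda)\Psi_\Qa^{-1}(s;u,\lambda)$, which gives $h(\varphi_\pa(s;u,\lambda),\lambda)=\Psi_\Qa(s;u,\lambda)\int_\infty^s\Psi_\Qa^{-1}(\tau;u,\lambda)\TT(\varphi_\pa(\tau;u,\lambda),\lambda)\,d\tau$; differentiating at $s=0$ yields $Dh\cdot\pa=\Qa\,h+\TT$ directly and sidesteps bounding $D_u\Psi_\Qa^{-1}$ at this stage.
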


\subsection{The approximate solution of the invariance equation for maps}\label{sec:inductionalgorithm}

To simplify the notation, throughout this section we will not make explicit the dependence of the considered objects on $\lambda$. 
Also, at some places, we skip the dependence on their variables of some functions when it will not be possible confusion. We recall that the superscript in a function, for instance $G^{j}$,  indicates that $G^j$ is a homogeneous function of degree $j$ with respect to $u$ or $(x,y)$, i.e. with respect to all its variables except the angles and parameters. However,
when we use parentheses, $G^{(j)}$ indicates the expression of $G$ at the $j$ step of some iterative procedure.  
In this section we prove Theorem~\ref{thm:approximationmaps} by finding approximations, $K^{(j)},R^{(j)}$, as sums of homogeneous functions that can be  determined. The specific way to do so is precisely described. By an induction procedure, we prove that indeed the functions obtained with the 
proposed algorithm satisfy the approximate invariance equation~\eqref{eq:invthapproximation}.


\subsubsection{Iterative procedure: the cohomological equations}\label{sec:induction}

Although there is some freedom, we look for an approximation   $K^{(j)}=(K_x^{(j)},K_y^{(j)},K_\th^{(j)})$ of the parametrization of the invariant manifold and $R=(R_u^{(j)},R_\v^{(j)})$ of the form:
$$
K^{(1)}(u,\v )=\big (u+\widetilde{K}_x^N(u,\v),0,\v\big ),
\qquad
R^{(1)}(u,\v )=\big (u+\overline{R}_u^N(u),\v+\omega\big ),
$$
and
for $j\geq 2,$ 
$$
K^{(j)}(u,\v )=K^{(j-1)}(u,\v) + \K^{(j)}(u,\v),
\qquad
R^{(j)}(u,\v)= R^{(j-1)}(u,\v) + \RR^{(j)}(u,\v)
$$
with $\K^{(j)}$, $j\geq 2$, decomposed as the sum of an average and an oscillatory part (of different degrees):
\begin{align*}
\K^{(j)}_{x}(u,\v)&=\overline{K}_{x }^j(u)
+\widetilde{K}_{x }^{j+N-1}(u,\v),\\
\K^{(j)}_{y}(u,\v)&=\overline{K}_{y}^j(u)
+\widetilde{K}_{y }^{j+M-1}(u,\v),\\
\K^{(j)}_{\th}(u,\v)&=\overline{K}_{\th}^{j-1}(u)
+\widetilde{K}_{\th}^{j+P-2}(u,\v)    
\end{align*}
and similarly $\RR^{(j)} $ decomposed  as:
$$
\RR^{(j)}_u(u,\v)=\overline{R}^{j+N-1}_u(u) +
\widetilde{R}^{j+N-1}_u(u,\v),
\qquad
\RR^{(j)}_\v(u,\v)=\overline{R}^{j+P-2}_\v(u)
+\widetilde{R}^{j+P-2}_\v(u,\v),
$$
such that,  
\begin{equation}\label{eq_invj}
E^{(j)} := F \circ K^{(j)} - K^{(j)} \circ R^{(j)}
=\big (\OO(\|u\|^{j+N}) , \OO(\|u\|^{j+M}), \OO(\|u\|^{j+P-1})\big ).
\end{equation}
\begin{remark}\label{rem:jNMP}
Notice that property~\eqref{eq_invj} is not~\eqref{eq:invthapproximation} in the statement of Theorem~\ref{thm:approximationmaps}. We will obtain~\eqref{eq:invthapproximation} in Section \ref{sec:endoftheprooapproximation}. 
\end{remark}
 
First we check that the choices of $K^{(1)}$ and $R^{(1)}$ are such that~\eqref{eq_invj} becomes true for $j=1$. Indeed, we write
$$ 
F\circ K^{(1)}  -K^{(1)}\circ R^{(1)} 
=  \left (\begin{array}{c} \widetilde{K}_x^N + \fN(u+\widetilde{K}_x^N,0,\v) - \overline{R}_u^N
-\widetilde{K}_x^N(u+\overline{R}_u^N(u), \v+\omega)
 + \OO(\|u\|^{N+1})\\
\OO(\|u\|^{M+1})
\\
\OO(\|u\|^P)
\end{array}
\right) .
$$
Comparing the average and the oscillatory parts, we are lead to take
$
\overline{R}_u^N(u)=\fNm(u,0)
$
and $\widetilde{K}_x^N$ to be the zero average solution  $\mathcal{D}[\widetilde{f}^N]$ of the small divisors equation
$$
\widetilde{K}_x^N(u,\v+\omega) - \widetilde{K}_x^N(u,\v)
=\fNt(u,0,\v),
$$
and \eqref{eq_invj} holds true for $j=1$.
Assume, by induction, that we have determined $K^{(l)}$ and $ R^{(l)}$ for $1\leq l \leq j-1$, with $j\geq 2$
such that $E^{(j-1)}$ defined in~\eqref{eq_invj} satisfies
$$
E^{(j-1)} = \big(\OO(\|u\|^{j+N-1}) , \OO(\|u\|^{j+M-1}), \OO(\|u\|^{j+P-2})\big ).
$$
We decompose
\begin{align*}
E^{(j)}= & F\circ K^{(j)}-K^{(j)} \circ R^{(j)} \\
=& F\circ K^{(j-1)} - K^{(j-1)} \circ R^{(j-1)} \\
&+ F \circ K^{(j)} - F\circ K^{(j-1)} - DF\circ K^{(j-1)}\cdot \K^{(j)} \\
&+ DF \circ K^{(j-1)} \cdot \K^{(j)} - \K^{(j)} \circ R^{(j-1)} \\
&- K^{(j)}\circ R^{(j)} + K^{(j)}\circ R^{(j-1)},
\end{align*}
and we define
\begin{align*}
\mathcal{T}_1^{(j)} &= F \circ K^{(j)} - F\circ K^{(j-1)} - DF\circ K^{(j-1)}\cdot \K^{(j)}, \\
\mathcal{T}_2^{(j)} &=DF \circ K^{(j-1)} \cdot \K^{(j)} - \K^{(j)} \circ R^{(j-1)}, \\
\mathcal{T}_3^{(j)} &= K^{(j)}\circ R^{(j)}- K^{(j)}\circ R^{(j-1)}.
\end{align*}

Since $F$ can be expressed as a sum of homogeneous functions with respect to $(x,y)$ (condition (ii)), it is not difficult to check that  
$$
\mathcal{T}_1^{(j)} = \big (\OO(\|u\|^{N+j}),\OO( \|u\|^{M+j}), \OO(\|u\|^{P+j})\big ).
$$

Now we deal with $\mathcal{T}_2^{(j)}$. 
Using that $K^{(j-1)}(u,\v)=(u + \OO(\|u\|^2),\OO(\|u\|^2),\v + \OO(\|u\|))$, decomposition~\eqref{sumhomogeneousfunctions} in condition (ii) and condition (iv), we write
\begin{align*}
 DF \circ K^{(j-1)} =& \begin{pmatrix}
\Id + \partial_x (\fN+\fNg) & \partial_y (\fN+\fNg)  &
\partial_\th (\fN+\fNg) \\
 \partial_x (\gN+\gNg) & \Id +\partial_y (\gN+\gNg)  &
\partial_\th (\gN+\gNg) \\
\partial_x (\hP+\hPg) & \partial_y (\hP+\hPg)  &
\Id +\partial_\th (\hP+\hPg)
\end{pmatrix} \circ K^{(j-1)} \\  
=& \begin{pmatrix}
\Id + \partial_x \fN(u,0,\v) & \partial_ y \fN(u,0,\v) & 
\partial_\th \fN (u,0,\v) 
\\ 0 & \Id + \partial_y \gN(u,0,\v) & 0 \\
0 & 0 & \Id 
\end{pmatrix} + \mathbf{N}(u,\v)
\end{align*}
with 
$$
\mathbf{N}(u,\v)=\begin{pmatrix} \OO(\|u\|^{N}) & \OO(\|u\|^N ) & \OO(\|u\|^{N+1})\\ 
 \OO(\|u\|^{M}) & \OO(\|u\|^M ) & \OO(\|u\|^{M+1}) \\ 
\OO(\|u\|^{P-1} ) & \OO(\|u\|^{P-1}) & \OO(\|u\|^{P}) 
\end{pmatrix}.
$$
We note that, by (iv), $\partial_\th \gN \circ K^{(j-1)}= \OO(\|u\|^{M+1})$. 
Then,  
\begin{align*}
\mathcal{T}_2^{(j)} = & 
\begin{pmatrix}
\Id + \partial_x \fN(u,0,\v) & \partial_ y \fN(u,0,\v) & 
\partial_\th \fN (u,0,\v) 
\\ 0 & \Id + \partial_y \gN(u,0,\v) & 0 \\
0 & 0 & \Id 
\end{pmatrix}
\begin{pmatrix}
\overline{K}_x^j  + \widetilde{K}_x^{j+N-1} \\
\overline{K}_y^j  + \widetilde{K}_y^{j+M-1} \\
\overline{K}_\th^{j-1}  + \widetilde{K}_\th^{j+P-2}
\end{pmatrix}
\\
&-
\begin{pmatrix}
\overline{K}_x^j \circ R^{(j-1)}  + \widetilde{K}_x^{j+N-1}\circ R^{(j-1)} \\
\overline{K}_y^j \circ R^{(j-1)}  + \widetilde{K}_y^{j+M-1}\circ R^{(j-1)} \\
\overline{K}_\th^{j-1} \circ R^{(j-1)}  + \widetilde{K}_\th^{j+P-2}\circ R^{(j-1)} \\
\end{pmatrix} + \mathbf{N}(u,\v)\cdot \K^{(j)}.
\end{align*}
Notice that, since $R^{(j-1)}_u (u,\v)= u + \overline{R}^N_u(u) + \OO(\|u\|^{N+1})$ and 
$R^{(j-1)}_\v (u,\v)= \v + \omega + \OO(\|u\|)$, we have
\begin{align*}
\overline{K}_{x,y}^j \circ R^{(j-1)}(u,\v) &= \overline{K}_{x,y}^j (u) + D\overline{K}^j_{x,y}(u) \overline{R}^N_u(u) + \OO(\|u\|^{j+2N-2}), \\
\overline{K}_{\th}^{j-1} \circ R^{(j-1)}(u,\v) &= 
\overline{K}_{\th}^{j-1} (u)+
D \overline{K}_{\th}^{j-1} (u) \overline{R}^N_u(u) + \OO(\|u\|^{j+2N-3}),
\end{align*}
and, writing $\widetilde{\mathcal{K}}^{(j)}= (\widetilde{K}_x^{j+N-1}, \widetilde{K}_y^{j+M-1},\widetilde{K}_\th^{j+P-2})$, 
$$
\widetilde{\mathcal{K}}^{(j)}\circ R^{(j-1)}(u,\v) =
\widetilde{\mathcal{K}}^{(j)}(u,\v + \omega)+\big (\OO(\|u\|^{j+2N-1},\OO (\|u\|^{j+N+M-1}), \OO(\|u\|^{j+N+P-2})\big ).
$$
 
Therefore  
\begin{align*}
\mathcal{T}_{2,x}^{(j)} =& -D \overline{K}_x^j (u) \overline{R}^N_u(u)  + \partial_x \fN(u,0,\v) \overline{K}_x^j(u)
+\partial_y \fN(u,0,\v) \overline{K}_y^j(u) \\ &
+\partial_\th \fN(u,0,\v) (\overline{K}_\th^{j-1}(u) +\widetilde{K}_\th^{j+P-2} (u,\v))\\
&+ \widetilde{K}_x^{j+N-1}(u,\v) -\widetilde{K}_x^{j+N-1} (u,\v+\omega)+  
\OO(\|u\|^{j+N}), \\
\mathcal{T}_{2,y}^{(j)} =& -D \overline{K}_y^j (u) \overline{R}^N_u(u) 
+\partial_y \gN(u,0,\v) \overline{K}_y^j(u)
 + \widetilde{K}_y^{j+M-1}(u,\v) -\widetilde{K}_y^{j+M-1} (u,\v+\omega)  \\ &+  
\OO(\|u\|^{j+M}), \\
\mathcal{T}_{2,\th}^{(j)} =& -D \overline{K}_\th^{j-1} (u) \overline{R}^N_u(u)
 + \widetilde{K}_\th^{j+P-2}(u,\v) -\widetilde{K}_\th^{j+P-2} (u,\v+\omega)+
\OO(\|u\|^{j+P-1}).
\end{align*}
Finally, we write $\mathcal{T}_3^{(j)}$
\begin{align*}
\mathcal{T}_3^{(j)}= &K^{(j)} \circ R^{(j)} - K^{(j)} \circ R^{(j-1)} =
\int_{0}^1 DK^{(j)} \big(R^{(j-1)} + s \RR^{(j)}\big ) \RR^{(j)} \dd s
\\ = &\int_{0}^1
\begin{pmatrix}
1 + \OO(\|u\|) & \partial_\v \widetilde{K}^N_x + \OO(\|u\|^{N+1}) \\
\OO(\|u\|) &   \OO(\|u\|^{M+1}) \\
\OO(1)  & 1 + \OO(\|u\|)
\end{pmatrix}
\begin{pmatrix}
\RR_u^{(j)} \\ \RR_\v^{(j)}
\end{pmatrix} \dd s \\   
 = &\begin{pmatrix}
\overline{R}_u^{j+N-1} +\widetilde{R}_u^{j+N-1}+ 
\partial_\v \widetilde{K}^N_x(\overline{R}_\v^{j+P-2}+\widetilde{R}_\v^{j+P-2})+ \OO(\|u\|^{j+N}) \\  \OO(\|u\|^{j+M}) \\
\overline{R}_\v^{j+P-2}+\widetilde{R}_\v^{j+P-2}  +\OO(\|u\|^{j+P-1})
\end{pmatrix}.
\end{align*}

Since $F$ is expressed as a sum of homogeneous functions until degree $q-1$, we write 
\begin{equation}
\label{Ej-1}    
E^{(j-1)} = \big (E_{x}^{j+N-1}, E_y^{j+M-1}, E_\th^{j+P-2}\big )+ \big (\OO(\|u\|^{j+N}), \OO(\|u\|^{j+M}), \OO(\|u\|^{j+P-1})\big ).
\end{equation}
Therefore, since $E^{(j)}$ has to satisfy~\eqref{eq_invj}, namely:
$$
E^{(j)} =  \big (\OO(\|u\|^{j+N}), \OO(\|u\|^{j+M}), \OO(\|u\|^{j+P-1})\big ),
$$
from the previous estimates,
we impose the corresponding conditions on $E_x^{(j)}, E_y^{(j)}, E_\theta^{(j)}$. That is, for the $x$-component
\begin{equation}\label{firstcohomologicalx}
\begin{aligned}
    E_x^{j+N-1}(u,\v)&
-D \overline{K}_x^j (u) \overline{R}^N_u(u)
  - \overline{R}_u^{j+N-1}(u) - \widetilde{R}_u^{j+N-1}(u,\v)   \\ &+ \partial_x \fN(u,0,\v) \overline{K}_x^j(u)
+\partial_y \fN(u,0,\v) \overline{K}_y^j(u)
+\partial_\th \fN(u,0,\v) \overline{K}_\th^{j-1}(u)  \\
&+ \widetilde{K}_x^{j+N-1}(u,\v) -\widetilde{K}_x^{j+N-1} (u,\v+\omega) + \partial_\th \fN(u,0,\v) \widetilde{K}_\th^{j+P-2} (u,\v)  \\ &
-\partial_\v\widetilde{K}_x^N (u,\v) (\overline{R}_\v^{j+P-2}(u)+\widetilde{R}_\v^{j+P-2}(u,\v))
=\OO(\|u\|^{j+N}).
\end{aligned}
\end{equation}
Concerning the $y$-component
\begin{equation}\label{firstcohomologicaly}
\begin{aligned}
E_y^{j+M-1}(u,\v) 
-D \overline{K}_y^j (u) \overline{R}^N_u(u)
&+\partial_y \gN(u,0,\v) \overline{K}_y^j(u)
 \\
&  +\widetilde{K}_y^{j+M-1}(u,\v) - \widetilde{K}_y^{j+M-1}(u,\v+\omega)
=\OO(\|u\|^{j+M}).        
    \end{aligned}
\end{equation}
And finally, for the $\theta$-component
\begin{equation}\label{firstcohomologicaltheta}
\begin{aligned}
E_\th^{j+P-2}(u,\v)
&-D \overline{K}_\th^{j-1} (u) \overline{R}^N_u(u)
- \overline{R}_\v^{j+P-2}(u)-\widetilde{R}_\v^{j+P-2}(u,\v) 
 \\
  &+\widetilde{K}_\th^{j+P-2}(u,\v) - \widetilde{K}_\th^{j+P-2}(u,\v+\omega)
=\OO(\|u\|^{j+P-1}).        
\end{aligned}
\end{equation}

Now, we explain how to deal with equations~\eqref{firstcohomologicalx}, \eqref{firstcohomologicaly} and \eqref{firstcohomologicaltheta} to obtain the terms $\mathcal{K}^{(j)} $ and $\RR^{(j)}$. 
We introduce some notation. Given a function $G(u,\v) = \OO(\|u\|^\ell)$ that can be expressed as sum of homogeneous functions of integer degree, we write
\begin{equation} \label{Gparentesis}
G(u,\v)=\{G\}^{\ell}(u,\v) + \OO(\|u\|^{\ell+1}),
\end{equation}
where $\{G\}^\ell$ is the homogeneous part of $G$ of its lowest degree. For practical purposes we do not assume $\{G\}^{\ell}$ to be different from zero. We also introduce
\begin{align*}
\mathcal{G}\big [K^{(j)}_{y,\th},R_\v^{(j)}\big ]=&\partial_y \fN(u,0,\v) \overline{K}_y^j(u)
+\partial_\th \fN(u,0,\v) \overline{K}_\th^{j-1}(u) 
+ \partial_\th \fN(u,0,\v) \widetilde{K}_\th^{j+P-2}(u,\v) \\
& 
-\partial_\v\widetilde{K}_x^N (u,\v) (\overline{R}_\v^{j+P-2}(u)+\widetilde{R}_\v^{j+P-2}(u,\v))
\end{align*}
that satisfies $\mathcal{G}\big [K_{u,\th}^{(j)},R_\v^{(j)}\big ] = \OO (\|u\|^{j+N-1})$ since $P\geq 1$ and $\partial_{\v}\widetilde{K}_x^N = \OO (\|u\|^N)$.

Therefore, using that $\overline{R}^N_u(u)= \overline{f}^N(u,0)$, equations~\eqref{firstcohomologicalx}, \eqref{firstcohomologicaly} and \eqref{firstcohomologicaltheta} 
decouple into the triangular system:
\begin{align}
E_y^{j+M-1}(u,\v) +& \left \{
-D \overline{K}_y^j (u) \overline{f}^N(u,0) \right \}^{j+M-1}
+\partial_y \gN(u,0,\v) \overline{K}_y^j(u)
 \notag \\
 +&\widetilde{K}_y^{j+M-1}(u,\v) - \widetilde{K}_y^{j+M-1}(u,\v+\omega)=0,  \label{secondcohomologicaly}\\
E_\th^{j+P-2}(u,\v) +& \left \{
-D \overline{K}_\th^{j-1} (u) \overline{f}^N(u,0)
 \right \}^{j+P-2} - \overline{R}_\v^{j+P-2}(u)-\widetilde{R}_\v^{j+P-2}(u,\v) \notag\\ 
  +&\widetilde{K}_\th^{j+P-2}(u,\v) - \widetilde{K}_\th^{j+P-2}(u,\v+\omega)=0, \label{secondcohomologicaltheta} \\ 
  E_x^{j+N-1}(u,\v) &
-D \overline{K}_x^j (u) \overline{f}^N(u,0)+ \partial_x \fN(u,0,\v) \overline{K}_x^j(u)
  \notag \\& - \overline{R}_u^{j+N-1}(u) - \widetilde{R}_u^{j+N-1}(u,\v)
\notag \\
&+ \widetilde{K}_x^{j+N-1}(u,\v) -\widetilde{K}_x^{j+N-1} (u,\v+\omega) + \left \{\mathcal{G}[K^{(j)}_{y,\th},R_\v^{(j)}]\right \}^{j+N-1}=0. \label{secondcohomologicalx}
\end{align}
These are the so-called cohomological equations. 
To solve these equations we deal separately with the average and the oscillatory parts.
We first deal with~\eqref{secondcohomologicaly}. We distinguish the cases $M<N$ and  $M=N$.
\begin{itemize}
    \item Case $M<N$. Averaging~\eqref{secondcohomologicaly} we obtain 
    $
    \partial_y \overline{g}^M(u,0) \overline{K}_y^j(u)=-\overline{E}^{j+M-1}(u)
    $
    and therefore, since by the hypotheses of Theorem~\ref{thm:approximationmaps},  
    $\partial_y \overline{g}^M(u,0)$ is invertible, 
    \begin{equation}\label{defKyj}
    \overline{K}^{j}_y (u)= -\big (\partial_y \overline{g}^M(u,0) \big )^{-1} \overline{E}_y^{j+M-1}(u).
    \end{equation}
    \item Case $M=N$, the average part of  equation~\eqref{secondcohomologicaly} is 
   \begin{equation} \label{eq:DKy}
D\overline{K}^j_y(u) \overline{f}^N(u,0) - \partial_y \overline{g}^N(u,0) \overline{K}^j_y(u) =  \overline{E}_y^{j+N-1}(u).       
\end{equation}
    This equation is of the form \eqref{modellinearequation}, therefore
we apply Theorem~\ref{thm:solutionlinearequation} with $\Qa(u)=\partial_y \overline{g}^N(u,0)$ and $\pa(u)=\overline{f}^N(u,0)$ with the associated constants 
$a_\pa=a_f$, $b_\pa=b_f$, $A_\pa=A_f$ and 
$\BB_\Qa=B_g$ defined in~\eqref{defa}, \eqref{defbCf} and~\eqref{defABg}, respectively.
Note that, by (iv) the domain with respect to $u$ of $\overline{f}^N(u,0) $ and $\partial_y \overline{g}^N(u,0) $ can be extended to $\R^n $ by homogeneity.

    By condition~\eqref{Ej-1} and Theorem~\ref{thm:solutionlinearequation} with $ \m= j-1$ the solution of \eqref{eq:DKy} is
    $$
    \overline{K}^{j}_y = \mathcal{H}_{\pa,\Qa} \left [ \overline{E}_y^{j+N-1}\right ],\qquad \text{with } \quad \pa= \overline {f}^N(u,0),  \quad \Qa= \partial_y \overline{g}^N(u,0),
    $$
    where $\mathcal{H}_{\pa,\Qa}$ is defined in \eqref{defh0}.
\end{itemize}
In both cases the oscillatory part of~\eqref{secondcohomologicaly} is solved as a small divisors equation, using Theorem \ref{thm:smalldivisors} to an extension of the involved functions to a complex neighbourhood of their domain. 
We have
    \begin{equation}\label{defKtildey}
    \widetilde{K}_y^{j+M-1} = \mathcal{D} \big [ \widetilde{E}_y^{j+M-1}  + \partial_y \widetilde{g}^M \overline{K}^{j}_y \big ], 
    \end{equation}
where $\mathcal{D}$ is introduced in Section \ref{sect:Diophan}.
\begin{remark}  
A remarkable fact is that, once $\overline{K}^j_y$ and  $\widetilde{K}^{j+M-1}_y$ are found,  equations~\eqref{secondcohomologicaltheta} and~\eqref{secondcohomologicalx} always have solution. For instance we can choose   
\begin{equation}\label{solnotrivial1}
\overline{K}_\th^{j-1}, \; \overline{K}_x^j =0, \qquad \widetilde{R}_\v^{j+P-2}, \; \widetilde{R}_u^{j+N-1}=0, 
\end{equation}
\begin{equation}\label{solnotrivial2}
\begin{aligned}
\overline{R}^{j+N-1}_u &= \overline{E}_x^{j+N-1}+ \left \{\overline{\mathcal{G}}[K^{(j)}_{y,\th},R_\v^{(j)}]\right \}^{j+N-1} , \\ 
\overline{R}^{j+P-2}_{\v} &= \overline{E}^{j+P-2}_\th 
 \end{aligned}
\end{equation}
and
\begin{equation*}
 \widetilde{K}_\th^{j+P-2} = \mathcal{D}[\widetilde{E}_\th^{j+P-2}],\qquad  \widetilde{K}_x^{j+N-1} = \mathcal{D}[\widetilde{E}_x ^{j+N-1}
+ \left \{\widetilde{\mathcal{G}}[K^{(j)}_{y,\th},R_\v^{(j)}]\right \}^{j+N-1}].
\end{equation*}
We notice that with this choice all the involved functions keep the same regularity as $F$, $\overline{K}_y^j$, and  $ \widetilde{K}_y^{j+M-1}$. 

However, we want to go further and keep $R$ as simple as possible. That is, we want to take, whenever possible, $R^{j+P-2}_\v$ and $R_u^{j+N-1}$ equal to $0$.
\end{remark}

Before starting solving~\eqref{secondcohomologicaltheta} and~\eqref{secondcohomologicalx} let us  say some words about the regularity of $K^{(j)}_{x,\theta}$ and $R^{(j)}_{u,\v}$.
\begin{remark}
In  Theorem~\ref{thm:solutionlinearequation}, if instead of condition (b) we have $A_\pa< b_\pa$, we cannot conclude that the solution of equation~\eqref{modellinearequation} has the same regularity as $\pa$ and $\Qa$. This is an optimal general condition as it was shown in Section 6 of~\cite{BFM2020b},  where some examples showing the loss of regularity were provided. 

However, when $M<N$, the functions $\overline{K}_y^j$ and  $ \widetilde{K}_y^{j+M-1}$ defined in~\eqref{defKyj} and~\eqref{defKtildey} are analytic. Therefore, in this case, when solving~\eqref{secondcohomologicaltheta} and~\eqref{secondcohomologicalx}, if $A_f\leq b_f$, 
to have analytic solutions of~\eqref{secondcohomologicaltheta} and~\eqref{secondcohomologicalx}
we  use the expressions \eqref{solnotrivial1} and~\eqref{solnotrivial2},
\end{remark}
After this remark we continue with the assumption that $A_f>b_f$.

The following analysis discusses how to obtain solutions with the simplest possible $R$. 
We solve first~\eqref{secondcohomologicaltheta}. We take
$$
\widetilde{K}_\th^{j+P-2} =  \mathcal{D}\big [\widetilde{E}_\th^{j+P-2} \big], \qquad \widetilde{R}_\v^{j+P-2}= 0.
$$
Then,  equation~\eqref{secondcohomologicaltheta} becomes
$$
\overline{E}_\th^{j+P-2}(u) = \left \{
D \overline{K}_\th^{j-1} (u) \overline{f}^N(u,0)
 \right \}^{j+P-2} + \overline{R}_\v^{j+P-2}(u).
$$
We distinguish the cases $P<N$ and $P=N$. 
\begin{itemize}
    \item Case $P<N$. The expression  $\left \{
D \overline{K}_\th^{j-1} (u) \overline{f}^N(u,0)
 \right \}^{j+P-2} =0$ and we take
$$
\overline{R}_\v^{j+P-2} = \overline{E}_\th^{j+P-2}, \qquad \overline{K}^{j-1}_\th \text{ free}.
$$
\item Case $P=N$. We have that $\overline{K}_\th^{j-1}$ and $\overline{R}_\v^{j+N-2}$ must satisfy
$$
D\overline{K}_\th^{j-1}(u) \overline{f}^N(u,0)+ \overline{R}_\v^{j+N-2}(u)
=\overline{E}_\th^{j+N-2}(u). 
$$
We take $\Qa= 0$ and $\pa(u)=\overline{f}^N(u,0)$ in Theorem~\ref{thm:solutionlinearequation} and the corresponding constants $B_\Qa=0, a_\pa=a_f, A_\pa=A_f$ and $ b_\pa=b_f$ defined in~\eqref{defABg}, \eqref{defa} and~\eqref{defbCf}. 
We take 
    $$
    \overline{K}_\th^{j-1}= \mathcal{H}_{\overline{f}^N,0} \big [\overline{E}_\th^{j+P-2}\big ], \qquad \overline{R}_\v^{j+P-2}= 0.
    $$
\end{itemize}

In both cases, the solution of the oscillatory part of \eqref{secondcohomologicaltheta} can be given by 
$$
 \widetilde{K}_\th^{j+P-2}= \mathcal{D} [\widetilde{E}_\th^{j+P-2}], \qquad \widetilde{R}_\v^{j+P-2}= 0.
$$

We finally solve equation~\eqref{secondcohomologicalx}. We notice that after having solved~\eqref{secondcohomologicaly} and~\eqref{secondcohomologicaltheta}, the function $\mathcal{G}[K_{y,\th}^{(j)},R_\v^{(j)}]$ is already a known function. To simplify the notation, we introduce
$$
G^{j+N-1}:=\left \{ \mathcal{G}[K_{y,\th}^{(j)},R_\v^{(j)}] \right \}^{j+N-1},
$$
where the notation $\{\cdot \}^k$ has been introduced in
\eqref{Gparentesis}.
We first deal with the average part of~\eqref{secondcohomologicalx} which is
$$
D\overline{K}_x^{j}(u) \overline{f}^N(u,0) - \partial_x \overline{f}^N(u,0) \overline{K}_x^j(u)  +\overline{R}^{j+N-1}_u =  \overline{E}^{j+N-1}_x+ \overline{G}^{j+N-1}.
$$ 
We use again  Theorem~\ref{thm:solutionlinearequation}, now taking $\Qa(u)=\partial_x \overline{f}^{N}(u,0)$ and $\pa(u)= \overline{f}^N(u,0)$. 
Let $\BB_{\partial_x \overline{f}^N}=D_f$ be the corresponding constant (see~\eqref{defconstantspa} and~\eqref{defABg}) and $j^*_u =\left [-\frac{D_f}{a_{f}}\right ]$ if $D_f<0$ and $j^*_u =1$ if $D_f\ge0$ as defined in \eqref{defju}. 
Condition~\eqref{hipgraus} in Theorem~\ref{thm:solutionlinearequation} 
is satisfied when $j+ \frac{D_f}{a_f}>0$. Therefore, 
\begin{itemize}  
    \item When $j \leq j^*_u$
we take $\overline{K}_x^j$ free and  
$$
 \overline{R}_u^{j+N-1}(u)= \overline{E}^{j+N-1}_x(u)+ \overline{G}^{j+N-1}(u) -
 D\overline{K}_x^{j}(u) \overline{f}^N(u,0) + \partial_x \overline{f}^N(u,0) \overline{K}_x^j(u).
$$
\item When $j>j^*_u$ we apply Theorem~\ref{thm:solutionlinearequation} and we take
$$
\overline{K}^j_x = \mathcal{H}_{\overline{f}^N, \partial_x \overline{f}^N} \big [\overline{E}^{j+N-1}_x+ \overline{G}^{j+N-1}\big ],\qquad \overline{R}_u^{j+N-1}= 0. 
$$
\end{itemize}

The oscillatory part of~\eqref{secondcohomologicalx} is then solved by setting
$$
\widetilde{K}_x^{j+N-1} =  \mathcal{D}[\widetilde E_x^{j+N-1}+\widetilde{G}^{j+N-1} + \partial_x \widetilde{f}^N \overline{K}_x^{j}\big ], \qquad \widetilde{R}^{j+N-1}_u = 0. 
$$

This arguments show we can take ${R}^{(j)}=0$ if $j> j^*_{u}$.
We emphasize that when $n=1$, then $\overline{f}^N(x,0) = -a_fx^N$ and therefore $b_f=a_f$, $A_f= Na_f,\BB_{\partial_x \overline{f}^N} = -Na_f$. As a consequence,  $j^*_u= N$ and 
$(\overline{R}_u^{j+N-1}, \widetilde{R}_u^{j+N-1})=0$ 
if $j>N$.   


\subsubsection{End of the proof of Theorem~\ref{thm:approximationmaps}}\label{sec:endoftheprooapproximation}

As we pointed out in Remark~\ref{rem:jNMP}, with the procedure described in the previous section, we have obtained that there exist $K^{(j)}$ and $R^{(j)}$ satisfying
$$
E^{(j)}=F\circ K^{(j)} - K^{(j)} \circ R^{(j)}= \big (\OO(\|u\|^{j+N}), \OO(\|u\|^{j+M}), \OO(\|u\|^{j+P-1})
$$
instead of the stated result $E^{(j)}=\OO(\|u\|^{j+N})$. We need then to work further.  

When $M<N$, we look for $K^{(l)}$, $l=j+1,\cdots,j+N-M$ of the form 
$K^{(l)}=  K^{(l-1)} + \mathcal{K}^{(l)}$, 
with 
$$
\mathcal{K}_{x,\th}^{(l)}=0, \qquad \mathcal{K}_y^{(l)}(u,\v) =
\overline{K}_y^{l}(u) + \widetilde{K}_y^{l+M-1}(u,\v), 
$$
and we keep $\RR^{(l)}=0$.
Assume that, for $j+1\leq l \leq j+N-M$
$$
E^{(l-1)}=F\circ K^{(l-1)} - K^{(l-1)} \circ R^{(l-1)}=\big (\OO(\|u\|^{l+N-1}),\OO(\|u\|^{l+M-1}), \OO(\|u\|^{l+P-2})\big ).
$$
Since the map $F$ can be expressed as a sum of homogeneous functions up to degree $j+N\leq q-1$, we can apply the procedure described before setting $K^{(l)}_{x,\th}= 0$. The equation corresponding to~\eqref{secondcohomologicaly} is 
$$ 
E_y^{l+M-1} (u,\v)+ \partial_y g^M(u,0,\v) \overline{K}_y^{l}(u) + \widetilde{K}_y^{l+M-1}(u,\v) - 
\widetilde{K}_y^{l+M-1}(u,\v+\omega) =0,
$$
which can be solved as described in the previous section. In addition, since $K^{(l)}_{x,\th}=0$ and $ R^{(l)}= 0$, then $E^{(l)}_x=\OO(\|u\|^{j+N})$ and $E^{(l)}_\th = \OO(\|u\|^{j+P-1})$ (see equations~\eqref{firstcohomologicalx} and~\eqref{firstcohomologicaltheta}).
 
We repeat this procedure until $l=j+N-M$ and  we obtain that 
$$
E^{(j+N-M)}_{x,y} = \OO(\|u\|^{j+N}), \qquad E^{(j+N-M)}_{\th} = \OO(\|u\|^{j+P-1}). 
$$

Finally, we look for 
$K^{(l+N-M)}, R^{(l+N-M)}$ for $l=j+1, \cdots, j+N-P+1$ of the form 
$K^{(l+N-M)}=  K^{(l+N-M-1)} + \mathcal{K}^{(l+N-M)}$, 
with 
$$
\mathcal{K}_{x,y}^{(l+N-M)}=0, \qquad \mathcal{K}_\th^{(l+N-M)}(u,\v) =
\overline{K}_\th^{l-1}(u) + \widetilde{K}_\th^{l+P-2}(u,\v)
$$
and $R^{(l+N-M)}=R^{(l+N-M-1)} + \mathcal{R}^{(l+N-M-1)}$ with 
$$
\mathcal{R}_u^{(l+N-M-1)}=0, \qquad \mathcal{R}_\v=\overline{R}_\v^{l+P-2}+\widetilde{R}_\v^{l+P-2}.
$$
Assume that, for $j+1\leq l \leq j+N-P+1$
\begin{align*}
E^{(l+N-M-1)}&=F\circ K^{(l+N-M-1)} - K^{(l+N-M-1)} \circ R^{(l+N-M-1)} \\ & =\big (\OO(\|u\|^{j+N},\OO(\|u\|^{j+N}), \OO(\|u\|^{l+P-1})\big ).
\end{align*}
Similarly as before,  now the equation corresponding to~\eqref{secondcohomologicaltheta}  is
\begin{align*}
    E_\th^{l+P-2}(u,\v) +& \left \{
-D \overline{K}_\th^{l-1} (u) \overline{f}^N(u,0)
 \right \}^{l+P-2} - \overline{R}_\v^{l+P-2}(u)-\widetilde{R}_\v^{l+P-2}(u,\v) \notag\\ 
  &+\widetilde{K}_\th^{l+P-2}(u,\v) - \widetilde{K}_\th^{l+P-2}(u,\v+\omega)=0
\end{align*}
and it is solved as described previously. 
Note that we can always take $\widetilde{R}_\v^{l+P-2}=0$ and, if $P=N$ we can also take 
$\overline{R}_\v^{l+P-2}=0$. 
Looking at equations~\eqref{firstcohomologicalx} and~\eqref{firstcohomologicaly}, it can be easily deduced that $E^{(l+N-M)}_{x,y}=\OO(\|u\|^{j+N})$. In the last step of this new induction procedure we obtain that the corresponding remainder $E^{(j+2N-M-P)} = \OO(\|u\|^{j+N})$.


\subsection{Approximation of the invariant manifolds for differential equations}\label{sec:cohomologicalflow} 

Let $X$ be a vector field of the form~\eqref{systemflow} depending quasiperiodically on time with time frequency $\nu$. We briefly describe the procedure which is analogous to the one for maps explained in detail in Section~\ref{sec:inductionalgorithm}. Indeed, first we set 
$$
K^{(1)}(u,\vf,t)=(u+\widetilde{K}_x^N(u,\vf,t),0,\vf), \qquad Y^{(1)}(u,\vf,t)=(\overline{f}^N(u,0),  \vf+\omega)
$$
and we check that $E^{(1)}$ defined by~\eqref{def:Ejflow} satisfies
$$
E^{(1)}=\big (\OO(\|u\|^{1+N}), \OO(\|u\|^{1+M}), \OO(\|u\|^{P}) \big ).
$$
Then,  we define $K^{(j)}=K^{(j-1)} + \mathcal{K}^{(j)}$, $Y^{(j)} = Y^{(j-1)} + \mathcal{Y}^{(j)}$ with 
$$
\K^{(j)}_{x}(u,\vf,t)=\overline{K}_{x }^j(u)
+\widetilde{K}_{x }^{j+N-1}(u,\vf,t),
\qquad \K^{(j)}_{y}(u,\vf,t)=\overline{K}_{y}^j(u)
+\widetilde{K}_{y }^{j+M-1}(u,\vf,t),
$$
$$
\K^{(j)}_{\th}(u,\vf,t)=\overline{K}_{\th}^{j-1}(u)
+\widetilde{K}_{\th}^{j+P-2}(u,\vf,t)
$$
and $\mathcal{Y}^{(j)} $ as:
$$
\mathcal{Y}^{(j)}_u(u,\vf,t)=\overline{Y}^{j+N-1}_u(u) +
\widetilde{Y}^{j+N-1}_u(u,\vf,t),
\qquad
\mathcal{Y}^{(j)}_{\vf}(u,\v)=\overline{Y}^{j+P-2}_{\vf}(u)
+\widetilde{Y}^{j+P-2}_{\vf}(u,\vf,t).
$$

We prove by induction, reproducing the same arguments as the ones in Section~\ref{sec:induction}, that if $E^{(j-1)}$ 
defined by~\eqref{def:Ejflow} is such that 
\begin{align*}
E^{(j-1)}(u,\vf,t)  = & (E^{j+N-1}_x(u,\vf,t), E^{j+M-1}_y(u,\vf,t), E^{j+P-2}_\theta(u,\vf,t)) \\ 
& + \big (\OO(\|u\|^{j+N}), \OO(\|u\|^{j+M}), \OO(\|u\|^{j+P-1}) \big )
\end{align*}
with $E^l_{x,y,\theta}$ homogeneous functions of degree $l$, then $E^{(j)}$ satisfies 
$$
E^{(j)}=(\OO(\|u\|^{j+N}), \OO(\|u\|^{j+M}), \OO(\|u\|^{j+P-1})), 
$$
if $\K^{(j)},\mathcal{Y}^{(j)}$ are solutions of the cohomological equations
\begin{align}
E_y^{j+M-1}(u,\vf,t) =& \left \{
D \overline{K}_y^j (u) \overline{f}^N(u,0) \right \}^{j+M-1}
+\partial_y \gN(u,0,\vf,t) \overline{K}_y^j(u)
 \notag \\
& - \partial_{\vf} \widetilde{K}_y^{j+M-1}(u,\vf,t) \omega - \partial_t \widetilde{K}_y^{j+M-1}(u,\vf,t),  \label{secondcohomologicalyflow}\\
E_\theta^{j+P-2}(u,\vf,t) =& \left \{
D \overline{K}_\theta^{j-1} (u) \overline{f}^N(u,0)
 \right \}^{j+P-2} + \overline{Y}_{\vf}^{j+P-2}(u)+\widetilde{Y}_{\vf}^{j+P-2}(u,\vf,t) \notag\\ 
  &- \partial_{\vf}\widetilde{K}_\theta^{j+P-2}(u,\vf,t) \omega - \partial_t \widetilde{K}_\th^{j+P-2}(u,\vf,t), \label{secondcohomologicalthetaflow} \\ 
  E_x^{j+N-1}(u,\vf,t) =&
D \overline{K}_x^j (u) \overline{f}^N(u,0)- \partial_x \fN(u,0,\v) \overline{K}_x^j(u)
  \notag \\& + \overline{Y}_u^{j+N-1}(u) + \widetilde{Y}_u^{j+N-1}(u,\vf,t)
\label{secondcohomologicalxflow} \\
&-\partial_{\vf} \widetilde{K}_x^{j+N-1}(u,\vf,t) \omega -\partial_t \widetilde{K}_x^{j+N-1} (u,\vf,t) - \left \{\mathcal{G}[K^{(j)}_{y,\th},R_{\vf}^{(j)}]\right \}^{j+N-1}. \notag 
\end{align}

Equations~\eqref{secondcohomologicalyflow}, 
\eqref{secondcohomologicalthetaflow} and~\eqref{secondcohomologicalxflow} are the corresponding ones to equations~\eqref{secondcohomologicaly}, 
\eqref{secondcohomologicaltheta} and~\eqref{secondcohomologicalx} for the case of maps. As expected, the difference between them is that the difference term in the map setting 
$$
\widetilde{K}(u,\v+\omega) - \widetilde{K}(u,\v)
$$
now becomes the  term 
$$
\partial_{\vf} \widetilde{K}(u,\vf,t) \omega + \partial_t \widetilde{K}(u,\vf,t)  .
$$
Here, to solve the corresponding equations 
\begin{equation}\label{small_divisors_formal}
\partial_{\vf} \widetilde{K}(u,\vf,t) \omega + \partial_t \widetilde{K}(u,\vf,t) = \widetilde{h}(u,\vf,t)
\end{equation}
with $\widetilde{h}$ a known function with zero average, 
we use the small divisors theorem (Theorem~\ref{thm:smalldivisors}) for differential equations instead of the one for maps. Indeed, consider $\widehat{h}(u,\vf,\tau)$ be such that 
$\widetilde{h}(u,\vf,t)= \widehat{h}(u,\vf, \nu t)$ (as explained in Section~\ref{sec:notacio}) and 
the small divisor equation
$$
\partial_{\vf} \widehat{K}(u,\vf,\tau) \omega + \partial_\tau \widehat{K}(u,\vf,\tau) \nu = \widehat{h}(u,\vf,\tau).
$$
Let $\widehat{K}:=\mathcal{D}[\widehat{h}]$ be 
its unique solution with zero average (we recall that we use the same notation, $\mathcal{D}$, for both settings: flows and maps).  It is then clear that $\widetilde{K}(u,\vf,t)= \mathcal{D}[\widehat{h}](u,\vf,\nu t)$ is the solution of~\eqref{small_divisors_formal}. Then, with this interpretation, the algorithm described in  Section~\ref{sec:induction} applies in the same way.   


\section{Double parabolic orbits to infinity in the $n+2$-body problem}
\label{sec:problemadenmes2cossos}

\subsection{The $n+2$-body problem and Jacobi coordinates}
\label{sec:coordenadesdeJacobi}

We consider $n+2$ point masses, $m_i$, $i=0,\dots,n+1$, evolving in the plane under their mutual Newtonian gravitational attraction. We denote by $q_i \in \R^2$, $i=0,\dots,n+1$, the coordinates of the $i$-th mass in an inertial frame of reference. Their motion is described by the Hamiltonian
\begin{equation}
\label{def:Hamiltoniancartesian}
H(q,p) = T(p)-U(q),
\end{equation}
where $p = (p_0,\dots, p_{n+1}) \in \R^{2(n+2)}$ are the conjugate momenta and
\begin{equation*}
\begin{aligned}
T(p_0,\dots,p_{n+1}) & = \sum_{j=0}^{n+1} \frac{1}{2m_j} \|p_j\|^2, \\
U(q_0,\dots,q_{n+1}) & = \sum_{0\le i < j \le n+1} \frac{m_i m_j}{\|q_i-q_j\|}.
\end{aligned}
\end{equation*}
Well known first integrals of this system, besides the energy, are the total linear momentum, $\sum_{j=0}^{n+1}p_j$,
and the total angular momentum, $\sum_{j=0}^{n+1} \det(q_j,p_j)$.

We devote next sections to prove Theorem~\ref{thm:solucionsdoblementparaboliqueainfinitv1}. It will be an immediate consequence of Theorems~\ref{thm:posterioriresultflow},
\ref{thm:approximationflows}, once the Hamiltonian~\eqref{def:Hamiltoniancartesian} is written in the appropriate variables.

We want to show that there are solutions in which the first $n$ bodies evolve in a bounded motion while the last two arrive to infinity as time goes to infinity.
For this reason, we use the classical Jacobi coordinates, in which the position of the $j$-th body is measured with respect the center of mass of the  bodies $0$ to $j-1$, for $1\le j \le n+1$. More concretely, we consider the new set of coordinates $(\wt  q_0, \dots, \wt q_{n+1})$ defined by
\begin{equation*}
\begin{aligned}
\wt q_0 & = q_0, \\
\wt q_j & = q_j - \frac{1}{M_j} \sum_{0\le \ell \le j-1} m_\ell q_\ell, \qquad j=1,\dots, n+1,
\end{aligned}
\end{equation*}
where $M_j = \sum_{\ell=0}^{j-1} m_\ell$, $j\ge 1$.
 The inverse change is given by
\begin{equation}
\label{def:inverseJacobi}
\begin{aligned}
q_0 & = \wt q_0, \\
q_j & = \wt q_j +  \sum_{0\le \ell \le j-1} \frac{m_\ell}{M_{\ell+1}} \wt q_\ell, \qquad j=1,\dots, n+1.
\end{aligned}
\end{equation}
Denoting  by $A$ the matrix such that $\wt q = A q$, the change in the momenta given by $\wt p = A^{-\top} p$ makes the whole transformation symplectic. Let
\begin{equation*}
\wt H(\wt q,\wt p) = T(A^\top \wt p)-U(A^{-1} \wt q)
\end{equation*}
be the Hamiltonian $H$ in the new variables.
Notice that, in the $(\wt q,\wt p)$
variables, the total linear momentum is simply $\wt p_0$. In particular, this implies that $\wt H$ does not depend on $\wt q_0$. We can also assume that $\wt p_0 = 0$. 
Then,  $\wt H$ does not depend on $(\wt q_0,\wt p_0)$.
With this choice and defining
$\MM = \textrm{diag}\,(m_0^{-1}, \dots, m_{n+1}^{-1})$, a computation gives
\[
T(A^\top \wt p) = \frac{1}{2} \wt p^\top A \MM A^\top \wt p =
\sum_{j=1}^{n+1} \frac{1}{2\mu_j} \|\wt p_j\|^2 ,
\]
where $\mu_j^{-1} = M_j^{-1}+m_j^{-1}$.
Also, in view of~\eqref{def:inverseJacobi}, we have that
\[
\begin{aligned}
q_j -q_0 & = \wt q_j + \sum_{1 \le \ell \le j-1} \frac{m_\ell}{M_{\ell+1}} \wt q_\ell, &     1 \le j \le n+1,\\
q_j-q_i &  = \wt q_j -\wt q_i + \sum_{i \le \ell \le j-1} \frac{m_\ell}{M_{\ell+1}} \wt q_\ell
= \wt q_j -\frac{M_i}{M_{i+1}}\wt q_i + \sum_{i+1 \le \ell \le j-1} \frac{m_\ell}{M_{\ell+1}} \wt q_\ell, &    1 \le i < j \le n+1.
\end{aligned}
\]
Then,
\[
\begin{aligned}
U(A^{-1} \wt q)
=& 
\sum_{0\le i < j \le n-1} \frac{ m_j m_i}{\|q_j-q_i\|}+\sum_{0\le i \le n-1} \frac{m_n m_i}{\|q_n-q_i\|}
+\sum_{0\le i \le n-1} \frac{ m_{n+1} m_i}{\|q_{n+1}-q_i\|}+\frac{m_{n+1} m_n}{\|q_{n+1}-q_n\|} \\
= & \sum_{1\le  j \le n-1} \frac{ m_j m_0}{\left\|\wt q_j + \sum_{1 \le \ell \le j-1} \frac{m_\ell}{M_{\ell+1}} \wt q_\ell\right\|}+
\sum_{1\le i < j \le n-1} \frac{m_j m_i}{\left\|\wt q_j- \wt q_i + \sum_{i\le \ell \le j-1} \frac{m_\ell}{M_{\ell+1}} \wt q_{\ell}\right\|} \\
& + \frac{m_n m_0}{\left\|\wt q_n + \sum_{1 \le \ell \le n-1} \frac{m_\ell}{M_{\ell+1}} \wt q_\ell\right\|} +\sum_{1\le i \le n-1} \frac{m_n m_i}{\left\|\wt q_n -\wt q_i + \sum_{i \le \ell \le n-1} \frac{m_\ell}{M_{\ell+1}} \wt q_\ell\right\|} \\
& + \frac{m_{n+1} m_0}{\left\|\wt q_{n+1} + \sum_{1 \le \ell \le n} \frac{m_\ell}{M_{\ell+1}} \wt q_\ell\right\|} +\sum_{1\le i \le n-1} \frac{m_{n+1} m_i}{\left\|\wt q_{n+1} -\wt q_i + \sum_{i \le \ell \le n} \frac{m_\ell}{M_{\ell+1}} \wt q_\ell\right\|}\\
&+
\frac{m_{n+1} m_n}{\|\wt q_{n+1} -\wt q_n +  \frac{m_n}{M_{n+1}} \wt q_n\|} ,
\end{aligned}
\]
where in the first line of the formula $q= A^{-1} \wt q$.
Now we introduce symplectic polar coordinates in each subspace generated by 
$(\wt q_j,\wt p_j)$:
\[
\left\{
\begin{aligned}
\wt q_j & = r_j e^{i\theta_j}, \\
\wt p_j & = y_j e^{i\theta_j} + i \frac{G_j}{r_j} e^{i\theta_j},
\end{aligned}
\right. \qquad j = 1, \dots, n+1,
\]
and denote by $H(\wh r,\wh y, \wh \theta, \wh G, r_n, y_n, r_{n+1},y_{n+1}, \theta_n, G_n, \theta_{n+1}, G_{n+1})$ the Hamiltonian in these new variables, where
$\wh r = (r_1,\dots,r_{n-1})$ and, analogously, the same notation applies to $\wh y$, $\wh \theta$, $\wh G$.

We will be interested in the region of the phase space where $r_{n+1} , r_n \gg r_i$, $i=1,\dots, n-1$. However, since the final motions we are looking for are parabolic, it will happen that $r_n/r_{n+1}$ will be of order $1$. Hence, we will be able to expand several magnitudes in $r_i/r_n$, $r_i/r_{n+1}$, $i=1,\dots,n-1$, but not in $r_n/r_{n+1}$.

In the new variables the potential  is
\begin{equation*}
U(r,\theta)
= \wh U (\wh r, \wh \theta)
 + U_n (\wh r, r_n, \theta_1-\theta_n, \dots, \theta_{n-1}-\theta_n) + U_{n+1} (\wh r, r_n, r_{n+1},\theta_1-\theta_{n+1}, \dots, \theta_{n}-\theta_{n+1}),
\end{equation*}
where
\begin{equation}
\label{def:whU}
\begin{aligned}
\wh U (\wh r, \wh \theta)  =& \sum_{1\le  j \le n-1} \frac{ m_j m_0}{\left|r_j + \sum_{1 \le \ell \le j-1} \frac{m_\ell}{M_{\ell+1}} r_\ell e^{i(\theta_\ell-\theta_j)}\right|} \\& +
\sum_{1\le i < j \le n-1} \frac{m_j m_i}{\left|r_j- r_ie^{i(\theta_i-\theta_j)} + \sum_{i\le \ell \le j-1} \frac{m_\ell}{M_{\ell+1}} r_{\ell}e^{i(\theta_\ell-\theta_j)}\right|}, \\
U_n (\wh r, r_n, \phi_1, \dots, \phi_{n-1})  = & \frac{m_n m_0}{r_n \left|1
  + \sum_{1 \le \ell \le n-1} \frac{m_\ell}{M_{\ell+1}} \frac{r_\ell}{r_n} e^{i \phi_\ell} \right|} \\
  & +\sum_{1\le j \le n-1} \frac{m_n m_j}{r_n\left|1 -\frac{r_j}{r_n}e^{i\phi_j} + \sum_{j \le \ell \le n-1} \frac{m_\ell}{M_{\ell+1}} \frac{r_\ell}{r_n} e^{i\phi_\ell}\right|}, \\
U_{n+1} (\wh r, r_n, r_{n+1},\phi_1, \dots, \phi_n)  =
& \frac{m_{n+1} m_0}{r_{n+1} \left|1 + \sum_{1 \le \ell \le n} \frac{m_\ell}{M_{\ell+1}} \frac{r_\ell}{r_{n+1}} e^{i\phi_\ell} \right|} \\
 & +\sum_{1\le j \le n} \frac{m_{n+1} m_j}{r_{n+1}\left|1 -\frac{r_j}{r_{n+1}}e^{i\phi_j}
 + \sum_{j \le \ell \le n} \frac{m_\ell}{M_{\ell+1}} \frac{r_\ell}{r_{n+1}} e^{i\phi_\ell}\right|}.
\end{aligned}
\end{equation}

\begin{proposition}
\label{prop:descomposicio_i_expansio_del_potencial}
Let $m_0,\dots, m_{n-1} \in \R^+$ be fixed. The functions $U_n$ and $U_{n+1}$ can be written as
\begin{equation}
\label{def:UntildeUn}
U_n (\wh r, r_n, \phi_1, \dots, \phi_{n-1})
= \frac{m_n M_n}{r_n} +  \frac{m_n}{r_n}\wt U_n (\wh r, r_n, \phi_1, \dots, \phi_{n-1})
\end{equation}
and
\begin{equation}
\label{def:UnmesutildeUmesu}
U_{n+1} (\wh r, r_n, r_{n+1},\phi_1, \dots, \phi_n)  = \frac{m_{n+1} M_{n+1}}{r_{n+1}} + \frac{m_{n+1}}{r_{n+1}}
\wt U_{n+1} (\wh r, r_n, r_{n+1},\phi_1, \dots, \phi_n) ,
\end{equation}
with
\begin{equation}
\label{def:AnAnmesu}
\begin{aligned}
\wt U_n (\wh r, r_n, \phi_1, \dots, \phi_{n-1}) & = \A_n \left( \frac{r_1}{r_n} e^{i\phi_1}, \frac{r_1}{r_n} e^{-i\phi_1}\dots,
\frac{r_{n-1}}{r_n} e^{i\phi_{n-1}},\frac{r_{n-1}}{r_n} e^{-i\phi_{n-1}}\right), \\
\wt U_{n+1}(\wh r, r_n, r_{n+1},\phi_1, \dots, \phi_n) & = \A_{n+1} \left( \frac{r_1}{r_{n+1}} e^{i\phi_1},\frac{r_1}{r_{n+1}} e^{-i\phi_1}, \dots, \frac{r_{n}}{r_{n+1}} e^{i\phi_n}, \frac{r_{n}}{r_{n+1}} e^{-i\phi_n},m_n \right),
\end{aligned}
\end{equation}
where
\begin{enumerate}
\item[(1)]
$\A_n(z_1,\overline{z_1},\dots,z_{n-1},\overline{z_{n-1}})$ is analytic with respect to its arguments in a neighborhood of $0$ and satisfies
\begin{equation}
\label{eq:wtUnordre}
\A_n(z_1,\overline{z_1},\dots,z_{n-1},\overline{z_{n-1}})  = \OO_2(z_1,\overline{z_1},\dots,z_{n-1},\overline{z_{n-1}}),
\end{equation}
\item[(2)]
for any ${\Kc}>1$, there exists $m>0$ such that $\A_{n+1}(z_1,\overline{z_1},\dots,z_n,\overline{z_{n}},m_n)$ is analytic in
\begin{align*}
D_{{\Kc},m} = & \{|z_j|,|\overline z_j| < {\Kc}^{-1}, \; j=1,\dots, n-1, \ |z_n|,
\ |\overline z_n| < {\Kc}, 
\\
& \quad |z_n -1|, \ |\overline z_n -1| > {\Kc}^{-1},\;
|m_n| <m\}
\end{align*}
and, defining
\begin{align}\label{eq:expansiodeUnmes1}
\wh \A_{n+1}(z,\overline{z},m_n)  = & \A_{n+1} (0,\dots,0,z,\overline{z},m_n)\\
  = &
\frac{M_n}{\left|1+\frac{m_n}{M_{n+1}} z \right|}+
\frac{m_n}{\left|1-\frac{M_n}{M_{n+1}} z \right|} - M_{n+1} \notag \\
  = &
\frac{M_n}{\left(1+\frac{m_n}{M_{n+1}} (z+\overline{z}) +  \frac{m_n^2}{M_{n+1}^2} z\overline{z} \right)^{1/2}} 
\frac{m_n}{\left(1-\frac{M_n}{M_{n+1}} (z+\overline{z})
	+ \frac{M_n^2}{M_{n+1}^2}z\overline{z} \right)^{1/2}} - M_{n+1} \notag,
\end{align}
one has
\begin{equation}
\label{eq:Anmesu_cancellacio}
\A_{n+1} (z_1,\overline{z_1},\dots,z_n,\overline{z_{n}},m_n) - \wh \A_{n+1}(z_n,\overline{z_n},m_n) = \OO_2(z_1,\overline{z_1},\dots,z_{n-1},\overline{z_{n-1}}),
\end{equation}
uniformly in $D_{K, m}\rho$.
Finally, 
\[
T(r,y,G) = \sum_{j=1}^{n+1} \frac{1}{2\mu_j} \left (y_j^2 + \frac{G_j^2}{r_j^2} \right ) .
\]
\end{enumerate}
\end{proposition}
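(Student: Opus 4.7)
The plan is to prove everything by direct computation from the explicit formulas for $U_n$ and $U_{n+1}$ in \eqref{def:whU}. First I would factor the natural scalings $m_n/r_n$ and $m_{n+1}/r_{n+1}$ so that each summand in the bracket takes the form $m_k/|1 + w_k(z)|$, where $z_\ell = (r_\ell/r_n) e^{i\phi_\ell}$ (respectively $z_\ell = (r_\ell/r_{n+1}) e^{i\phi_\ell}$) and $w_k$ is a first-degree polynomial in $z$ vanishing at $z=0$. Treating the $z_\ell$ and $\bar z_\ell$ as independent complex variables, $|1 + w(z)|^{-1} = [(1 + w(z))(1 + \bar w(\bar z))]^{-1/2}$ is analytic wherever the argument of the square root stays in a simply connected region bounded away from zero. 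Evaluating at $z = 0$ gives $m_0 + m_1 + \cdots + m_{n-1} = M_n$ (respectively $M_{n+1}$), which, multiplied by $m_n/r_n$ (respectively $m_{n+1}/r_{n+1}$), produces the leading terms $m_n M_n/r_n$ and $m_{n+1} M_{n+1}/r_{n+1}$ in \eqref{def:UntildeUn}--\eqref{def:UnmesutildeUmesu}; the residual defines $\A_n$ and $\A_{n+1}$ with the homogeneity structure of \eqref{def:AnAnmesu}, both vanishing at the origin.

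The main non-trivial ingredient is the algebraic cancellation behind the order-two claims \eqref{eq:wtUnordre} and \eqref{eq:Anmesu_cancellacio}. To prove $\A_n = \OO_2(z_1, \bar z_1,\dots,z_{n-1},\bar z_{n-1})$ I would compute the linear coefficient of $z_j$ (the one of $\bar z_j$ follows by conjugate symmetry) at $z = 0$: using the explicit form of each $w_k$, the only non-vanishing contributions come from the ``$m_0$-term'' and from those summands in \eqref{def:whU} indexed by $k \le j$. Setting $a_j := m_j/M_{j+1}$ and recalling $M_j = m_0 + m_1 + \cdots + m_{j-1}$, the relevant bracket evaluates to
\[
a_j(m_0 + m_1 + \cdots + m_{j-1}) - \frac{m_j M_j}{M_{j+1}} = \frac{m_j M_j}{M_{j+1}} - \frac{m_j M_j}{M_{j+1}} = 0,
\]
and the same computation, performed at $z_1 = \cdots = z_{n-1} = 0$ (with $z_n$ kept as a parameter), yields \eqref{eq:Anmesu_cancellacio}. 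This is the classical cancellation of the first-order inter-cluster interaction built into Jacobi coordinates, and it is essentially the only step that requires care. The explicit identity \eqref{eq:expansiodeUnmes1} then follows by direct substitution: the first term of $\A_{n+1}$ and the summands with $1\le k \le n-1$ collapse onto the common denominator $|1 + \frac{m_n}{M_{n+1}} z_n|$ with total numerator $M_n$, while the $k = n$ summand, using $-1 + m_n/M_{n+1} = -M_n/M_{n+1}$, becomes $m_n/|1 - \frac{M_n}{M_{n+1}} z_n|$.

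For the analyticity of $\A_{n+1}$ on $D_{K,m}$ the potentially singular denominators are $|1 + \frac{m_n}{M_{n+1}} z_n|$, bounded below as long as $m K/M_{n+1}$ is small, and $|1 - \frac{M_n}{M_{n+1}} z_n|$, bounded below once $|z_n - 1| > K^{-1}$ and $m$ is small enough (because $M_n/M_{n+1} \to 1$ as $m_n \to 0$, so its singularity approaches $z_n = 1$). The remaining denominators are holomorphic perturbations of these once $|z_j| < K^{-1}$ for $j < n$, so $\A_{n+1}$ extends analytically to $D_{K,m}$ for $m = m(K)$ small. The kinetic-energy formula is immediate from $\wt p = A^{-\top} p$, which gives $T(A^\top \wt p) = \tfrac{1}{2}\wt p^\top A\MM A^\top \wt p = \sum_{j=1}^{n+1} \|\wt p_j\|^2/(2\mu_j)$ after the standard Jacobi diagonalization, combined with $\|\wt p_j\|^2 = |y_j + i G_j/r_j|^2 = y_j^2 + G_j^2/r_j^2$. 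Modulo the cancellation identity, the rest is careful bookkeeping.
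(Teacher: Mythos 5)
Your proposal is correct and follows the same route as the paper: write $\A_n$ and $\A_{n+1}$ explicitly from~\eqref{def:whU}, evaluate at $z=0$ to extract the leading $M_n/r_n$ and $M_{n+1}/r_{n+1}$ terms, and verify that the first-order coefficients cancel via the telescoping identity $\tfrac{m_j}{M_{j+1}}M_j - \tfrac{m_j M_j}{M_{j+1}} = 0$. The paper dispatches all of this as a ``straightforward computation'' with the first-order terms cancelling ``identically''; you have carried out exactly the computation the authors leave implicit, including the correct observation that the $k=n$ summand contributes nothing to the $z_j$-derivative for $j<n$, so the cancellation persists with $z_n$ held as a parameter, which is what~\eqref{eq:Anmesu_cancellacio} asserts.
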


\begin{proof}
In view of~\eqref{def:whU}, we clearly have that
\[
\A_n(z_1,\overline{z_1},\dots,z_{n-1},\overline{z_{n-1}}) = \frac{m_0}{\left|1
  + \sum_{1 \le \ell \le n-1} \frac{m_\ell}{M_{\ell+1}} z_\ell\right|}
  +\sum_{1\le j \le n-1} \frac{m_j}{\left|1 -z_j + \sum_{j \le \ell \le n-1} \frac{m_\ell}{M_{\ell+1}} z_\ell\right|}-M_n
\]
and
\[
\A_{n+1} (z_1,\overline{z_1},\dots,z_n,\overline{z_{n}},m_n)
=
\frac{m_0}{\left|1 + \sum_{1 \le \ell \le n} \frac{m_\ell}{M_{\ell+1}} z_\ell \right|}
 +\sum_{1\le j \le n} \frac{ m_j}{\left|1 -z_j
 + \sum_{j \le \ell \le n} \frac{m_\ell}{M_{\ell+1}} z_\ell\right|}-M_{n+1}.
\]
The claim is then a straightforward computation. Formulas~\eqref{eq:wtUnordre} and~\eqref{eq:Anmesu_cancellacio} are obtained by expanding
in powers of $z_1,\overline{z_1},\dots,z_{n-1},\overline{z_{n-1}}$. The first order terms cancel out identically.
\end{proof}

Now we reduce the number of equations by the total angular momentum. To do so, we consider the symplectic change of variables
\begin{equation}
\label{def:reductionangularmomentum}
\begin{aligned}
\wt r_i & = r_i, &  \wt y_i & = y_i,  & i&=1,\dots,n+1 \\
\wt G_i & = G_i, &  \wt \theta_i & = \theta_i-\theta_{n+1},  \quad &i&=1,\dots,n \\
\wt G_{n+1} & = G_1+\cdots+G_{n+1}, & \qquad \wt \theta_{n+1} & = \theta_{n+1}.
\end{aligned}
\end{equation}
Since the total angular momentum $\Theta = G_1+\cdots+G_{n+1} = \wt G_{n+1}$ is a conserved quantity,
the Hamiltonian in the new variables does not depend on $\wt \theta_{n+1}$.

We remark that, since the potential $\wh U$ in~\eqref{def:whU} only depends on the angles through $\theta_i-\theta_j$, with $1\le i,j \le n-1$,
in the new variables~\eqref{def:reductionangularmomentum} it has the same expression. We will use it with the same name.
The same happens to $U_n$ but not to $U_{n+1}$. Dropping the tildes from the variables, the potential $U$ in the new variables --- which we denote again with the same letter although now does not depend on $\theta_{n+1}$ --- is
\begin{equation}
\label{def:potencialmomentangularreduit}
U ( \wh r, r_n, \wh \theta, \theta_n) = \wh U( \wh r, \wh \theta) + U_n (\wh r, r_n, \theta_1-\theta_n, \dots, \theta_{n-1}-\theta_n)
+ U_{n+1} (\wh r, r_n, r_{n+1},\wh \theta, \theta_n).
\end{equation}
The Hamiltonian in the new variables is
\begin{equation}
\label{def:Hamiltonianafterreductionoftheangularmomentum}
H(\wh r,\wh y, \wh \theta, \wh G, r_n, y_n, r_{n+1}, y_{n+1}, \theta_n, G_n) = \wh H (\wh r,\wh y, \wh \theta, \wh G)
+ \H (\wh r,\wh y, \wh \theta, \wh G, r_n, y_n, r_{n+1}, y_{n+1}, \theta_n, G_n),
\end{equation}
where
\begin{equation*}
\wh H (\wh r,\wh y, \wh \theta, \wh G) = \wh T(\wh r,\wh y,  \wh G) - \wh U (\wh r, \wh \theta),
\end{equation*}
with
\begin{equation}
\label{def:whT}
\wh T(\wh r,\wh y,  \wh G) =  \sum_{j=1}^{n-1} \frac{1}{2\mu_j} \left( y_j^2+ \frac{G_j^2}{r_j^2} \right),
\end{equation}
the potential $\wh U$ was introduced in~\eqref{def:whU} and
\begin{equation}
\label{hamiltoniaenvariablestilde}
\H (\wh r,\wh y, \wh \theta, \wh G, r_n, y_n, r_{n+1}, y_{n+1}, \theta_n, G_n) = \TTT(r_n, y_n, r_{n+1}, y_{n+1}, \wh G, G_n)  - \left(U(r,\theta) - \wh U (\wh r, \wh \theta)\right),
\end{equation}
with
\begin{equation*}
\TTT(r_n, y_n, r_{n+1}, y_{n+1}, \wh G, G_n) = \frac{1}{2\mu_n} \left( y_n^2+ \frac{G_n^2}{r_n^2} \right)
+ \frac{1}{2\mu_{n+1}} \left( y_{n+1}^2+ \frac{(\Theta-G_1\cdots -G_n)^2}{r_{n+1}^2} \right).
\end{equation*}

\subsection{A torus in the $n$-body problem}
\label{sec:tordiofantic}
The Hamiltonian $\wh H = \wh T - \wh U$, with $\wh T$ and $\wh U$ defined in~\eqref{def:whT} and~\eqref{def:whU}, respectively, is the Hamiltonian of a planar $n$-body problem in Jacobi coordinates. As such, it possesses $2(n-1)$-dimensional KAM invariant tori. Let $\omega \in \R^{2(n-1)}$ be a Diophantine frequency for which a KAM tori of $\wh H$ exists. There exists a symplectic with respect to the standard $2$-form
$d\wh r \wedge d \wh y + d \wh \theta \wedge d \wh G$, analytic change of variables $(\wh r, \wh y, \wh \theta, \wh G) = \wh \Phi(\varphi,\rho)$, $(\varphi,\rho) \in \T^{2(n-1)}\times B$, where $B\subset \R^{2(n-1)} $ is some ball,  such that
\begin{equation*}
\wh H_\omega (\varphi,\rho) = \wh H \circ \wh \Phi(\varphi, \rho) = \langle \omega, \rho \rangle + \OO_2(\rho).
\end{equation*}

Let 
\[
\Phi (\varphi,\rho,r_n, y_n, r_{n+1}, y_{n+1}, \theta_n, G_n) = (\wh \Phi(\varphi,\rho), r_n, y_n, r_{n+1}, y_{n+1}, \theta_n, G_n).
\] It is canonical in the sense that transforms  the standard $2$-form into
\begin{equation}
\label{def:formvarphirho}
d\varphi \wedge d \rho   + d r_n \wedge d y_n +
dr_{n+1} \wedge d y_{n+1}+ d\theta_n \wedge d G_n.
\end{equation}

We define
\begin{equation}\label{def:HT}
\begin{aligned}
H_\omega(\varphi,\rho,r_n, y_n, r_{n+1}, y_{n+1}, \theta_n, G_n)  & = H \circ \Phi(\varphi,\rho,r_n, y_n, r_{n+1}, y_{n+1}, \theta_n, G_n) \\
& = \wh H_\omega (\varphi,\rho) + \wt \H \circ \Phi(\varphi,\rho,r_n, y_n, r_{n+1}, y_{n+1}, \theta_n, G_n),
\end{aligned}
\end{equation}
the Hamiltonian in the new variables. 

We define the function
\begin{equation}
\label{def:wtTheta}
\wt \Theta (\varphi,\rho) = \Theta - (G_1+\cdots +G_{n-1}) \circ \wh \Phi(\varphi, \rho).
\end{equation}
Since, for $\rho=0$, $G_1+\cdots+G_{n-1}$ is a conserved quantity of $\wh H_\omega$, we have that
\begin{equation}
\label{def:wtTheta00}
\wt \Theta_0^0 =  \Theta - (G_1 + \cdots +G_{n-1})\circ \wh \Phi(\varphi,0)
\end{equation}
does not depend on $\varphi$ and it is the average with respect to $\varphi$ of $\wt \Theta(\varphi,0)$.

Theorem~\ref{thm:solucionsdoblementparaboliqueainfinitv1} is a consequence of the following result.

\begin{theorem}
\label{thm:movimentsdoblementparabolicsv2}
If $m_n,m_{n+1} >0 $ are small enough, then Hamiltonian~\eqref{def:HT} satisfies the following.
\begin{itemize}
\item \emph{Collinear case}.
There exist $A = 1+\OO(m_n,m_{n+1})$, depending on $m_n,m_{n+1}$, and $G_n^0$, depending on $m_n,m_{n+1}$ and $\wt \Theta_0^0$, and two $2+2(n-1)$-dimensional analytic invariant manifolds, $W_{\mathrm{Col}}^\mp$, invariant by the flow generated by~\eqref{def:HT} such that, for any solution
\[
(\varphi,\rho,r_n, y_n, r_{n+1}, y_{n+1}, \theta_n, G_n)(t) \in W_{\mathrm{Col}}^\mp,
\]
there exists $\varphi_\pm^0 \in \T^{2(n-1)}$ such that
\[
\begin{aligned}
\lim_{t\to \pm \infty} &  r_n(t) = \lim_{t\to \pm \infty} r_{n+1}(t) = \infty, \quad  
& \lim_{t\to \pm \infty} &  \theta_n(t) = \pi, \\
\lim_{t\to \pm \infty} &  y_n(t) = \lim_{t\to \pm \infty} y_{n+1}(t) = 0, &
\lim_{t\to \pm \infty} &  G_n(t) = G_n^0, \\
\lim_{t\to \pm \infty} &  \rho(t) =  0, &
\lim_{t\to \pm \infty} & [\varphi(t) - \omega t] = \varphi_\pm^0
\end{aligned}
\]
and
\[
\lim_{t\to \pm \infty} \frac{r_{n+1}(t)}{r_n(t)} = A.
\]
\item \emph{Equilateral case}.
There exist  $\wt \theta_0 = \pi/3+ \OO(m_n,m_{n+1})$ and $A = 1+\OO(m_n,m_{n+1})$, depending on $m_n,m_{n+1}$, and $G_n^0$, depending on $m_n,m_{n+1}$ and $\wt \Theta_0^0$, and two $3+2(n-1)$-dimensional analytic invariant manifold, $W_{\mathrm{Eq}}^\mp$, invariant by the flow generated by~\eqref{def:HT} such that, for any solution $(\varphi,\rho,r_n, y_n, r_{n+1}, y_{n+1}, \theta_n, G_n)(t) \in W_{\mathrm{Eq}}^\mp$, there exists $\varphi_\pm^0 \in \T^{2(n-1)}$ such that
\[
\begin{aligned}
\lim_{t\to \pm\infty} &  r_n(t) = \lim_{t\to \pm\infty} r_{n+1}(t) = \infty, & \quad \lim_{t\to \pm\infty} &  \theta_n(t) = \wt \theta_0, \\
\lim_{t\to \pm\infty} &  y_n(t) = \lim_{t\to \pm\infty} y_{n+1}(t) = 0, &
\lim_{t\to \pm\infty} &  G_n(t) = G_n^0, \\
\lim_{t\to \pm\infty} &  \rho(t) =  0, &
\lim_{t\to \pm\infty} & [\varphi(t) - \omega t] = \varphi_\pm^0
\end{aligned}
\]
and
\[
\lim_{t\to \pm\infty} \frac{r_{n+1}(t)}{r_n(t)} = A.
\]
\end{itemize}
\end{theorem}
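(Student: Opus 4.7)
The plan is to reduce Theorem~\ref{thm:movimentsdoblementparabolicsv2} to a direct application of Theorem~\ref{th:existenceflow} after bringing the Hamiltonian~\eqref{def:HT} into the form~\eqref{systemflow}. The first step is a sequence of coordinate changes: (a) a McGehee-type rescaling of the two escape distances, e.g.\ $r_n = 2/\xi_n^2$ and $r_{n+1} = 2/\xi_{n+1}^2$, together with a compatible rescaling of the momenta $y_n, y_{n+1}$ and a time reparametrization, which sends parabolic infinity to the origin of the $(\xi_n, \xi_{n+1})$-block and produces the characteristic $\xi^3$ weight in the transverse vector field; (b) a translation of the shape variables so that the relevant central configuration $(\theta_n, G_n) = (\theta_0, G_n^0)$, with $\theta_0 = \pi$ in case (C) and $\theta_0 = \wt\theta_0$ in case (E), sits at the origin; (c) a finite-order normal form in the KAM angles $\varphi$, permitted by Theorem~\ref{thm:smalldivisors} since $\omega$ is Diophantine, which removes the $\varphi$-dependence of the leading transverse terms up to the order $q^*$ required by~\eqref{def:regularitatminima}. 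The outcome is a vector field of the form~\eqref{systemflow} with $\omega \in \R^{2(n-1)}$, $\lambda = (m_n, m_{n+1})$, and transverse coordinates grouped into an escape block and a shape block.

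The averaged leading term $\overline{f}^N$ in the escape block is produced by the $1/r_n$ and $1/r_{n+1}$ Keplerian attractions isolated in Proposition~\ref{prop:descomposicio_i_expansio_del_potencial}, giving $N = 3$ and a constant $a_f > 0$; one reads off $b_f = a_f$, $A_f = 3 a_f$, $D_f = -3 a_f$, so the hypothesis $A_f > b_f \max\{1, N - P\}$ is immediate for the $P$ appearing after the normal form. The shape-block coefficient $\partial_y \overline{g}^M$ with $M = 3$ is controlled at leading order by the Hessian at the central configuration of the amended potential $\widehat{\A}_{n+1}$ from~\eqref{eq:expansiodeUnmes1}, corrected by the centrifugal contribution from $G_n^2/r_n^2$; for $(m_n, m_{n+1})$ small, this Hessian is a small perturbation of the classical three-body Hessian at the corresponding central configuration, whose signature is well known and yields the case split. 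In case (E) both shape eigenvalues are contracting, so $B_g > 0$, the hypothesis $2 + B_g/a_f > 0$ is automatic, and the whole shape block enters the cone $V$, giving $\dim V = 3$; in case (C) one shape eigenvalue is expanding, so we split $y = (y^s, y^u)$, keep $y^s$ as the variable $y$ of~\eqref{systemflow} while $y^u$ belongs to the unstable manifold (handled by time-reversal), and obtain $\dim V = 2$. The $\T^{2(n-1)}$ factor in the statement comes from the KAM angles $\varphi$.

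The main obstacle I anticipate is verifying hypothesis~\ref{hipotesisv}, the cone invariance under $x \mapsto x + \overline{f}^N(x, 0, \lambda)$, in the presence of the cross potential $U_{n+1}$ that couples the bounded $n$-body cluster to the two escaping masses. This is resolved by the cancellation~\eqref{eq:Anmesu_cancellacio} of Proposition~\ref{prop:descomposicio_i_expansio_del_potencial}: the first-order Taylor terms of $\A_{n+1}$ in the bounded-cluster coordinates vanish, so the coupling enters only at quadratic order and does not destroy invariance of a narrow sectorial cone around the central-configuration directions of the escape and stable-shape variables. Once the hypotheses are verified, Theorem~\ref{th:existenceflow} produces an analytic parametrization $K$ of the stable manifold, from $V_\rho \times \T^{2(n-1)} \times \R$ into phase space, with $\dot\varphi = \omega$ on its image; the unstable manifold is obtained by time-reversal. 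Unwinding the McGehee and translation changes then turns $\xi_n, \xi_{n+1} \to 0^+$ along the parametrized trajectories into $r_n, r_{n+1} \to \infty$ with ratio tending to $A$, the vanishing of the shape and momentum variables into the claimed limits $\theta_n \to \theta_0$, $G_n \to G_n^0$, $y_n, y_{n+1} \to 0$, and the convergences $\rho \to 0$, $\varphi(t) - \omega t \to \varphi_\pm^0$ are encoded directly in the structure of $K$ and the dynamics on it.
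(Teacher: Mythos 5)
Your proposal captures the broad outline — double McGehee coordinates, centring at the central configuration, some averaging, and an application of Theorem~\ref{th:existenceflow} — but it has several concrete gaps that would prevent the argument from closing, and it miscomputes the key orders and constants.

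First, the blow-up structure is substantially richer than a ``McGehee rescaling of two escape distances plus a translation of the shape variables.'' After $r_n = 2\alpha_n/x_n^2$, $r_{n+1} = 2\alpha_{n+1}/x_{n+1}^2$ (with mass-dependent constants $\alpha_k,\beta_k$ chosen so the Kepler terms normalize), the paper must blow up \emph{relative to the ratio}: $x_{n+1} = x_n(A+\xi_{n+1})$, $y_{n+1}=y_n(B+\eta_{n+1})$, $y_n = x_n(\nu+\zeta_n)$, with $A$, $B$ and $G_n^0$ determined by an implicit function theorem (Lemma~\ref{lem:constantsAiB} and~\eqref{def:Gn0}) that encodes the fact that the ratio $r_{n+1}/r_n \to A$ and the shape sits at the central configuration. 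One then performs a \emph{second} blow-up $\xi_{n+1}=x_n\wt\xi_{n+1}$, $\theta_n = \wt\theta_0 + x_n\wt\theta_n$, $\rho = x_n^3\wt\rho$ and a linear diagonalization $\wt Z = \mathbf{C}Z$ before~\eqref{eq:sistemacompletredressat} appears in the form~\eqref{systemflow}. Your description in which $\xi_n$ and $\xi_{n+1}$ are two separate escape coordinates does not reflect this structure; in the end $x$ is $(x_n,\wh\eta_{n+1})$ in the collinear case and $(\wh x_n,\wh\eta_{n+1},\wh\upsilon_n)$ in the equilateral case, and both the shape and momentum-ratio variables are entangled with the escape variable through the blow-ups.

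Second, the orders and constants are wrong. The Jacobian weight $x_n^3$ coming from the McGehee change is not the same as the degree $N$; since the leading transverse vector field is, e.g., $\dot x_n = -x_n^3 y_n + \dots$ with $y_n = x_n(\nu+\zeta_n)$, the actual degree is $N = M = P = 4$ (collinear case), not $N=3$. Moreover, because the stable block $x$ is $2$- or $3$-dimensional and $\overline f^N(x,0) = x_n^{N-1} S x$ with $S$ diagonal with \emph{distinct} eigenvalues, the one-dimensional identities $b_f = a_f$, $A_f = N a_f$, $D_f = -N a_f$ do \emph{not} hold; the paper computes $a_f \approx \nu$, $b_f \approx 1$, $A_f \approx 2$, $B_g \approx -\gamma_2$ in the collinear case by a Gershgorin-type argument (Proposition~\ref{prop:cascolinial}), and checking $A_f > b_f$ is a genuine verification, not a formula.

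Third, and most importantly, your proposal misses the $\ell$-trick which is \emph{necessary} in the equilateral case. There the extra contracting direction $\wh\upsilon_n$ has eigenvalue of size $\gamma_2 = \OO(m_n, m_{n+1})$, while the escape eigenvalue $\nu = 1+\OO(m_n,m_{n+1})$ is order one. With $N = 4$ as it stands, hypothesis (v) (the cone invariance along the escape direction) would be violated: the slow direction $\wh\upsilon_n$ would exit any fixed cone around the escape axis, because it decays much more slowly than $\wh x_n$. The paper fixes this by the reparametrization $x_n = \wh x_n^\ell$ with $\ell$ large enough that $\nu/\ell < \gamma_2 + \OO_2(m_n,m_{n+1})$, changing the order to $N = M = P = 3\ell+1$ (Proposition~\ref{prop:casequilater}). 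Without this step, the hypotheses of Theorem~\ref{th:existenceflow} simply fail in the equilateral case, so your proof as written does not go through there. Relatedly, your ``split $y = (y^s,y^u)$'' description in the collinear case misreads the mechanism: the expanding shape eigenvalue (coming from $\gamma_2<0$) is placed directly in the $y$ block of the theorem — the $y$ variables are exactly the repelled ones — and there is no additional splitting.

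Finally, your identification of the cancellation~\eqref{eq:Anmesu_cancellacio} as ``what resolves hypothesis (v)'' is not quite right: that cancellation ensures the coupling to the KAM torus enters at second order so it lands in the remainder, but the cone invariance itself is verified by a direct estimate of $\operatorname{dist}(x+\overline f^N(x,0), V^c)$ in the explicit blown-up coordinates, using the explicit diagonal form of $S$.
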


We devote the rest of the section to the proof of the theorem. The collinear case is a immediate consequence of Proposition~\ref{prop:cascolinial}
and, the equilateral one, of Proposition~\ref{prop:casequilater}, below.

\subsection{Local behaviour at infinity: double McGehee coordinates}

In order to study the behaviour of the system when $r_{n+1} , r_n \gg r_i$, $i=1,\dots, n-1$, we introduce the \emph{double} McGehee coordinates
$x_n\, x_{n+1}, \wt y_n, \wt y_{n+1}$ through
\begin{equation}
\label{def:McGeheedoble}
r_n = \frac{2\alpha_n}{x_n^2}, \qquad y_n = \beta_n \wt y_n, \qquad r_{n+1} = \frac{2\alpha_{n+1}}{x_{n+1}^2}, \qquad y_{n+1} = \beta_{n+1} \wt y_{n+1},
\end{equation}
where $\alpha_n$, $\beta_n$, $\alpha_{n+1}$ and $\beta_{n+1}$ are constants, depending on $m_n$,
$m_{n+1}$, such that
\begin{equation}
\label{def:alphanbetanetcimplicites}
\begin{aligned}
\frac{\beta_n}{4 \mu_n \alpha_n} & = \frac{m_n M_n}{4 \alpha_n^2 \beta_n} = 1, \\
\frac{\beta_{n+1}}{4 \mu_{n+1} \alpha_{n+1}} & = \frac{m_{n+1} M_{n+1}}{4 \alpha_{n+1}^2 \beta_{n+1}} = 1,
\end{aligned}
\end{equation}
that is,
\begin{equation}
\label{def:alphanbetanetcexplicites} 
\begin{aligned}
\alpha_n & = \frac{1}{2^{4/3}}M_{n+1}^{1/3}, & \beta_n & = 2^{2/3} \frac{M_n m_n}{M_{n+1}^{2/3}}, \\
\alpha_{n+1} & = \frac{1}{2^{4/3}}M_{n+2}^{1/3}, & \beta_{n+1} & = 2^{2/3} \frac{M_{n+1} m_{n+1}}{M_{n+2}^{2/3}}.
\end{aligned}
\end{equation}
We are interested in the case where $m_0+\dots+m_{n-1}$ is of order $1$ while $m_n$ and $m_{n+1}$ are small. In particular,
the constants $\alpha_n$ and $\alpha_{n+1}$ are of order $1$ while $\beta_n$ and $\beta_{n+1}$ are small. Furthermore,
we have that
\begin{equation}
\label{bound:quocientalphanalphanmes1}
\frac{\alpha_n}{\alpha_{n+1}} = 1+ \OO\left(\frac{m_{n+1}}{M_{n+1}}\right).
\end{equation}

The change~\eqref{def:McGeheedoble} is not symplectic. It transforms the form~\eqref{def:formvarphirho} into
\begin{equation}
\label{def:forminMcGehee}
d\varphi \wedge d \rho - \frac{4 \alpha_n \beta_n }{x_n^3} d x_n \wedge d \wt y_n
- \frac{4 \alpha_{n+1}\beta_{n+1}}{x_{n+1}^3} d x_{n+1} \wedge d \wt y_{n+1}+ d\theta_n \wedge d G_n.
\end{equation}
We denote $\wt \H = \wt \TTT - \wt U$ the Hamiltonian $\H$ in \eqref{hamiltoniaenvariablestilde} and $\wt H = \wh H + \wt \H$ in \eqref{def:Hamiltonianafterreductionoftheangularmomentum} both expressed in these new variables. We drop the tildes on the $y$ variables.

Taking into account~\eqref{def:potencialmomentangularreduit}, \eqref{def:whU}, \eqref{def:UntildeUn} and~\eqref{def:UnmesutildeUmesu},
the potential $U-\wh U$ (see \eqref{def:potencialmomentangularreduit}) is transformed into
\begin{align}
\label{def:wtU}
\wt U_\omega (\varphi, \rho, x_n, x_{n+1}, \theta_n)
 =& \frac{m_n M_n}{2\alpha_n }x_n^2 + \frac{m_{n+1} M_{n+1}}{2\alpha_{n+1} }x_{n+1}^2
\\
&+ m_n \frac{x_n^2}{2\alpha_n} \wt U_{n,\omega} (\varphi,\rho,x_n,\theta_n)
  +  m_{n+1} \frac{x_{n+1}^2}{2\alpha_{n+1}} \wt U_{n+1,\omega} (\varphi,\rho,x_n,x_{n+1},\theta_n), \notag
\end{align}
where
\begin{align}
\label{def:wtUnomega}
\wt U_{n,\omega} (\varphi,\rho,x_n,\theta_n) &  =
\wt U_n \left(\wh r(\varphi,\rho), \frac{2\alpha_n}{x_n^2}, \theta_1(\varphi,\rho)-\theta_n, \dots,
\theta_{n-1}(\varphi,\rho)-\theta_n\right), \\
\wt U_{n+1,\omega} (\varphi,\rho,x_n,x_{n+1},\theta_n) & = \wt U_{n+1} \left(\wh r(\varphi,\rho), \frac{2\alpha_n}{x_n^2}, \frac{2\alpha_{n+1}}{x_{n+1}^2}, \theta_1(\varphi,\rho), \dots,
\theta_{n-1}(\varphi,\rho),\theta_n\right), \notag
\end{align}
with $(r_1(\varphi,\rho),\dots,r_{n-1}(\varphi,\rho),\theta_1(\varphi,\rho),\dots,\theta_{n-1}(\varphi,\rho)) = (\wh r, \wh \theta) \circ \wh \Phi (\varphi,\rho)$,
while the kinetic energy part of $\H$ becomes
\begin{equation}
\label{def:wtTTT}
\wt \TTT_\omega(\varphi,\rho,x_n, y_n, x_{n+1}, y_{n+1},  G_n) = \frac{\beta_n^2}{2\mu_n}  y_n^2+ \frac{\beta_{n+1}^2}{2\mu_{n+1}}  y_{n+1}^2
+ \frac{G_n^2 x_n^4}{4 \alpha_n^2\mu_n} + \frac{(\wt \Theta(\varphi,\rho)-G_n)^2 x_{n+1}^4}{4\alpha_{n+1}^2\mu_{n+1}} ,
\end{equation}
where $\wt \Theta$ was introduced in~\eqref{def:wtTheta}.
\begin{proposition}
\label{prop:expansionsdetildeUnomegaitildeUnmesuomega}
Let $m_0,\dots,m_{n-1} \in \R^+$ be fixed.
\begin{enumerate}
\item[(1)]
$\wt U_{n,\omega}$ is analytic with respect to its arguments
in a neighborhood of $(\rho,x_n) = 0$ and admits an expansion of the form
\begin{equation*}
\wt U_{n,\omega} (\varphi,\rho,x_n,\theta_n) = \sum_{j\ge 2, \ell \ge 0} c_{j,\ell} (\varphi,\theta_n) x_n^{2j}\rho^\ell.
\end{equation*}
\item[(2)]
The function $\wt U_{n+1,\omega}$ can be written as
\begin{multline*}
\wt U_{n+1,\omega} (\varphi,\rho,x_n,x_{n+1},\theta_n) = u_0\left( \frac{\alpha_n}{\alpha_{n+1}}\frac{x_{n+1}^2}{x_n^2} e^{i \theta_{n}},\frac{\alpha_n}{\alpha_{n+1}}\frac{x_{n+1}^2}{x_n^2} e^{-i \theta_{n}}\right) \\ + \sum_{k\ge 2} u_k \left(\varphi,\rho, \frac{\alpha_n}{\alpha_{n+1}}\frac{x_{n+1}^2}{x_n^2} e^{i \theta_{n}},\frac{\alpha_n}{\alpha_{n+1}}\frac{x_{n+1}^2}{x_n^2} e^{-i \theta_{n}}\right) x_{n+1}^{2k},
\end{multline*}
where, given ${\Kc} >1$,   $u_j(\varphi,\rho,z,\overline{z})$ is analytic in a neighborhood of $\rho = 0$, $|z|$, $|\overline{z}| < {\Kc}$, $|1-z|$, $|1-\overline{z}| \ge \Kc^{-1}$ and
\begin{equation}
\label{def:u0}
u_0(z,\overline{z}) = \wh \A_{n+1}(z,\overline{z},m_n)
\end{equation}
where $\wh \A_{n+1}$ was introduced in~\eqref{eq:expansiodeUnmes1}.
For $k\ge 2$, we introduce the expansion
\begin{equation*}
u_k(\varphi,\rho,z,\overline{z}) = \sum_{j\ge 0} u_{k,j} (\varphi,z,\overline{z}) \rho^j.
\end{equation*}
\end{enumerate}
\end{proposition}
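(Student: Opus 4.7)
The plan is to prove Proposition~5.2 by direct substitution of the double McGehee change of variables~\eqref{def:McGeheedoble} into the representations of $\wt U_n$ and $\wt U_{n+1}$ furnished by Proposition~5.1, and then to read off the required order and analyticity from the abstract properties of $\A_n$, $\A_{n+1}$ and $\wh\A_{n+1}$ already established there.

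For part~(1), I would plug $r_n = 2\alpha_n/x_n^2$ into the formula for $\wt U_n$ in~\eqref{def:AnAnmesu} and obtain $\A_n$ evaluated at arguments of the form $(r_j(\varphi,\rho)\,x_n^2/(2\alpha_n))\,e^{\pm i(\theta_j(\varphi,\rho)-\theta_n)}$ for $j=1,\ldots,n-1$. Because $(\wh r,\wh\theta)\circ\wh\Phi$ is analytic at $\rho=0$ with $r_j$ bounded, each argument has size $\OO(x_n^2)$, so for $|x_n|$ small enough it lies in the analyticity domain of $\A_n$. Since $\A_n=\OO_2$ in its arguments by~\eqref{eq:wtUnordre}, every monomial in its Taylor expansion carries a factor $x_n^{2j}$ with $j\geq 2$; expanding the resulting analytic coefficients in powers of $\rho$ delivers the stated representation with coefficients $c_{j,\ell}(\varphi,\theta_n)$.

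For part~(2) the same substitution produces $\A_{n+1}$ evaluated at $z_j=(r_j(\varphi,\rho)\,x_{n+1}^2/(2\alpha_{n+1}))\,e^{i\theta_j(\varphi,\rho)}$ for $j<n$, which are small with $x_{n+1}$, and at $z_n=(\alpha_n/\alpha_{n+1})(x_{n+1}^2/x_n^2)\,e^{i\theta_n}$, which is of order unity, together with their conjugates. Using~\eqref{bound:quocientalphanalphanmes1}, if $m_{n+1}/M_{n+1}$ is small and $x_{n+1}/x_n$ is confined to a suitable annulus then $|z_n|<\Kc$ and $|z_n-1|>\Kc^{-1}$; additionally requiring $|x_{n+1}|$ to be small forces $|z_j|<\Kc^{-1}$ for $j<n$. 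Thus the arguments lie in $D_{\Kc,m}$ and the composition is analytic there.

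The structural step is to split $\A_{n+1}$ via the cancellation~\eqref{eq:Anmesu_cancellacio}, writing
$$\A_{n+1}(z_1,\overline{z_1},\ldots,z_n,\overline{z_n},m_n) = \wh\A_{n+1}(z_n,\overline{z_n},m_n) + \mathcal{R}(z_1,\overline{z_1},\ldots,z_n,\overline{z_n},m_n)$$
with $\mathcal{R}=\OO_2(z_1,\overline{z_1},\ldots,z_{n-1},\overline{z_{n-1}})$. The $\wh\A_{n+1}$-part is by construction independent of the $z_j$ with $j<n$ and gives precisely the $u_0$ of the statement via definition~\eqref{def:u0}. In the remainder, every Taylor monomial of total degree $\geq 2$ in the $2(n-1)$ small variables carries a factor $x_{n+1}^{2k}$ with $k\geq 2$; collecting like powers of $x_{n+1}^2$ and packaging the analytic dependence on $(\varphi,\rho,z_n,\overline{z_n})$ into functions $u_k(\varphi,\rho,z,\overline z)$ yields the stated expansion, with analyticity of the $u_k$ on the prescribed domain inherited from that of $\A_{n+1}$ on $D_{\Kc,m}$. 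The only genuine technical point is verifying that the domain $D_{\Kc,m}$ contains the image of the McGehee substitution for a suitable choice of $\Kc$, $m$ and smallness of $x_n,x_{n+1}$; everything else is bookkeeping of orders.
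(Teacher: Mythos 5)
Your proposal is correct and follows essentially the same route as the paper's proof: substitute the McGehee change~\eqref{def:McGeheedoble} into the formulas of Proposition~\ref{prop:descomposicio_i_expansio_del_potencial}, then read off the orders from the vanishing/cancellation properties~\eqref{eq:wtUnordre} and~\eqref{eq:Anmesu_cancellacio}. The paper states these conclusions as immediate and leaves the bookkeeping to the reader, whereas you have spelled out the argument (smallness of the first $2(n-1)$ arguments in $x_{n+1}^2$, order-unity ratio $z_n$, and the domain check that the image of the substitution lands in $D_{\Kc,m}$); the content is the same.
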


\begin{proof}
The claim for $\wt U_{n,\omega}$ follows immediately from~\eqref{def:wtUnomega}, \eqref{def:UntildeUn} and item (1) of Proposition~\ref{prop:descomposicio_i_expansio_del_potencial}.

As for $\wt U_{n+1,\omega} $, in view of~\eqref{def:AnAnmesu}, we have that
\begin{multline*}
\wt U_{n+1,\omega} (\varphi,\rho,x_n,x_{n+1},\theta_n) =
\A_{n+1}  \left( r_1(\varphi,\rho) \frac{x_{n+1}^2}{2\alpha_{n+1}} e^{i\theta_1(\varphi,\rho)},
 r_1(\varphi,\rho) \frac{x_{n+1}^2}{2\alpha_{n+1}} e^{-i\theta_1(\varphi,\rho)}, \right.\\
\left.  \dots, r_{n-1}(\varphi,\rho) \frac{x_{n+1}^2}{2\alpha_{n+1}} e^{i\theta_{n-1}(\varphi,\rho)},
r_{n-1}(\varphi,\rho) \frac{x_{n+1}^2}{2\alpha_{n+1}} e^{-i\theta_{n-1}(\varphi,\rho)},\frac{\alpha_n}{\alpha_{n+1}}\frac{x_{n+1}^2}{x_n^2} e^{i \theta_{n}},\frac{\alpha_n}{\alpha_{n+1}}\frac{x_{n+1}^2}{x_n^2} e^{-i \theta_{n}} \right).
\end{multline*}
The claim follows immediately from item (2) of Proposition~\ref{prop:descomposicio_i_expansio_del_potencial}.
\end{proof}

Let 
\[ 
V_0(\alpha,\theta) = u_0(\alpha e^{i\theta},\alpha e^{-i\theta}).
\]
The following lemma summarizes the properties of the functions $u_0$, $V_0$  that will be need.
\begin{lemma}
\label{lem:fitestermeacoblament}
There exist $\delta, K, m >0$ such that, for all $0\le m_n \le m$, $(\alpha,\theta) \in [1-\delta,1+\delta] \times [\pi-\delta,\pi+\delta] \cup [1-\delta,1+\delta] \times [\pi/3-\delta,\pi/3+\delta]$ where $V_0$ is analytic and
\[
 \left|\frac{\partial^j V_0}{\partial \alpha^j}(\alpha,\theta)\right| \le K m_n, \qquad j=0,1,2.
\]
Moreover,
\[
\begin{aligned}
\frac{\partial V_0}{\partial \theta}(\alpha,\theta) & = -\frac{7}{8} m_n ( \theta -\pi)(1+ \OO(\alpha-1,m_n,\theta -\pi)), \\
\frac{\partial V_0}{\partial \theta}(\alpha,\theta) & = \frac{9}{4} m_n \left( \theta-\frac{\pi}{3}+ \OO(\alpha-1,m_n)
+ \OO_2(\alpha-1,m_n,\theta-\pi/3)\right).
\end{aligned}
\]
In particular, for each $(\alpha,m_n) \in [1-\delta,1+\delta]\times [0,m]$, the equation
\[
\frac{\partial V_0}{\partial \theta}(\alpha,\theta) = 0
\]
has the solutions $\theta = \pi$ and the unique analytic solution in $[\pi/3-\delta,\pi/3+\delta]$, $\widehat{\theta}^0(\alpha,m_n)$, satisfying
\[
\widehat{\theta}^0(\alpha,m_n) = \frac{\pi}{3} + \OO(\alpha-1,m_n).
\]
\end{lemma}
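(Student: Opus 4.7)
The proof will be an essentially computational unwinding of the explicit formula for $V_0$. The plan is to substitute $z=\alpha e^{i\theta}$, $\bar z = \alpha e^{-i\theta}$ into the expression \eqref{eq:expansiodeUnmes1} for $\wh{\mathcal{A}}_{n+1}$, so that after using $z+\bar z = 2\alpha\cos\theta$ and $z\bar z = \alpha^2$ we obtain
\[
V_0(\alpha,\theta) = M_n f_1(\alpha,\theta) + m_n f_2(\alpha,\theta) - M_{n+1},
\]
where, writing $\mu = m_n/M_{n+1}$,
\[
f_1 = \big(1+2\mu\alpha\cos\theta + \mu^2\alpha^2\big)^{-1/2},\qquad
f_2 = \big(1-2(1-\mu)\alpha\cos\theta + (1-\mu)^2\alpha^2\big)^{-1/2}.
\]
For small $m$ and $\delta$, the first radicand is close to $1$, and the second one, evaluated at $(\alpha,\theta)$ in either of the two designated rectangles, stays bounded away from $0$ (at $\theta\approx\pi$ it is $\approx(1+(1-\mu)\alpha)^2$; at $\theta\approx\pi/3$ it is $\approx 1-(1-\mu)\alpha+(1-\mu)^2\alpha^2$, which is strictly positive for $(1-\mu)\alpha$ near $1$). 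Hence $V_0$ is real analytic in a complex neighbourhood of each rectangle.

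For the size estimates I would use the crucial observation that when $m_n=0$ one has $M_{n+1}=M_n$, $\mu=0$, and so $V_0(\alpha,\theta)\equiv 0$. Since $V_0$ is jointly analytic in $(\alpha,\theta,m_n)$, Taylor expansion in $m_n$ around $0$ gives $V_0(\alpha,\theta)=m_n\, W(\alpha,\theta,m_n)$ with $W$ analytic on the (closure of the) domain in question; the bound $|\partial^{j}_\alpha V_0|\le Km_n$ for $j=0,1,2$ then follows from a Cauchy estimate on $W$ after shrinking $\delta$ and $m$.

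For the $\theta$-derivative I would differentiate directly: both $\partial_\theta f_1$ and $\partial_\theta f_2$ produce a factor of $\sin\theta$, and a minor algebraic miracle collapses the prefactors, giving
\[
\partial_\theta V_0(\alpha,\theta) \;=\; \frac{M_n m_n}{M_{n+1}}\,\alpha\sin\theta\,\bigl(f_1^3-f_2^3\bigr).
\]
At $\theta=\pi$ we have $\sin\theta=0$, so $\theta=\pi$ is always a critical point of $V_0(\alpha,\cdot)$; writing $\sin\theta = -(\theta-\pi)(1+\mathcal{O}((\theta-\pi)^2))$ and evaluating at $(\alpha,m_n)=(1,0)$, where $f_1=1$ and $f_2=\tfrac12$, gives $f_1^3-f_2^3=\tfrac78$ and hence the first displayed expansion. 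Near $\theta=\pi/3$ one checks instead that at $(\alpha,m_n)=(1,0)$ one has $f_1=f_2=1$, so $f_1^3-f_2^3$ vanishes there; expanding $f_2 = (2-2\cos\theta)^{-1/2}$ around $\pi/3$ yields $f_1^3-f_2^3 = \tfrac{3\sqrt3}{2}(\theta-\pi/3)+\mathcal{O}(\alpha-1,m_n,(\theta-\pi/3)^2)$, and multiplication by $\alpha\sin\theta \to \sqrt3/2$ produces the stated coefficient $\tfrac94 m_n$.

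Finally, for the characterisation of critical points I would divide through by the nowhere vanishing factor $(M_n m_n/M_{n+1})\alpha$ and note that the equation $\sin\theta\,(f_1^3-f_2^3)=0$ splits into $\sin\theta=0$, giving the explicit solution $\theta=\pi$, and $f_1^3-f_2^3=0$. The latter, viewed as a scalar equation $F(\alpha,\theta,m_n)=0$ with $F$ analytic, satisfies $F(1,\pi/3,0)=0$ and $\partial_\theta F(1,\pi/3,0)=\tfrac{3\sqrt3}{2}\neq 0$, so the implicit function theorem produces the unique analytic root $\widehat\theta^0(\alpha,m_n)=\pi/3+\mathcal{O}(\alpha-1,m_n)$ in the prescribed rectangle. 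The only point requiring real care is keeping track of the factor of $m_n$ throughout the derivative computation (to recover the claimed prefactors $-7/8$ and $9/4$ rather than merely $\mathcal{O}(m_n)$), which is why the factored form of $\partial_\theta V_0$ above is the heart of the argument.
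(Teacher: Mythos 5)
Your proof is correct and follows the same route as the paper, which treats the analyticity as "clear", the $\theta$-derivative formula as "a straightforward computation", and the characterisation of critical points as "an immediate consequence"; you are simply supplying the computation. The factored identity $\partial_\theta V_0 = \frac{M_n m_n}{M_{n+1}}\,\alpha\sin\theta\,(f_1^3 - f_2^3)$ — which rests on the cancellation $M_n\mu = m_n(1-\mu)$ coming from $M_{n+1}=M_n+m_n$ — is correct, and the resulting coefficients $7/8$ and $9/4$ check out; the only cosmetic imprecision is that the prefactor $(M_n m_n/M_{n+1})\alpha$ vanishes at $m_n=0$, so rather than "dividing through" one should say the analytic branch $\widehat\theta^0$ is obtained by applying the implicit function theorem to $f_1^3-f_2^3=0$ directly (which is defined and regular down to $m_n=0$), exactly as you do in the next sentence.
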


\begin{proof}
In view of~\eqref{eq:expansiodeUnmes1} and recalling that $M_{n+1} = M_n+m_n$,
\[
V_0(\alpha,\theta) =
\frac{M_n}{\left(1+2 \frac{m_n}{M_{n+1}} \alpha \cos \theta +  \frac{m_n^2}{M_{n+1}^2} \alpha^2\right)^{1/2}}+
\frac{m_n}{\left(1-2\frac{M_n}{M_{n+1}} \alpha \cos \theta + \frac{M_n^2}{M_{n+1}^2} \alpha^2\right)^{1/2}} - M_{n+1}
\]
is clearly analytic in neighborhoods of $(\alpha,\theta,m_n) = (1,\pi,0)$ and $(\alpha,\theta,m_n) = (1,\pi/3,0)$, since, then,
$M_n/M_{n+1} = 1$, and ${V_0}_{\mid m_n =0} = 0$. This implies the first claim.

The second claim is a straightforward computation. The third one is an immediate consequence of the second.
\end{proof}

\subsection{The constants $A$, $B$ and $G_n^0$}

Next lemma provides constants that will be needed later.

\begin{lemma}
\label{lem:constantsAiB}
Let $M_n = \sum_{j=0}^{n-1} m_j$ be fixed. Consider the equations for the constants $A$ and $B$
\[
\left\{
\begin{aligned}
A^3 B & = A  , \\
\Bigg( 1+\frac{m_{n+1}}{4 \alpha_{n+1}^2 \beta_{n+1}} V_0\left(\frac{\alpha_n}{\alpha_{n+1}} A^2,\theta\right) + & \frac{m_{n+1}\alpha_n}{4\alpha_{n+1}^3 \beta_{n+1}}
\frac{\partial V_0}{\partial \alpha}\left(\frac{\alpha_n}{\alpha_{n+1}}  A^2,\theta\right)A^2 \Bigg) A^4 \\
 & =  \left( 1- \frac{m_{n+1}}{4 \alpha_{n+1}^2   \beta_n}
\frac{\partial V_0}{\partial \alpha}\left(\frac{\alpha_n}{\alpha_{n+1}} A^2,\theta\right)A^4 \right)B,
\end{aligned}
\right.
\]
with $\theta = \pi$ or $\theta = \theta^0(A,m_n):=\widehat{\theta}^0 \left (\frac{\alpha_n}{\alpha_{n+1}} A^2,m_n \right )$, where $\widehat{\theta}^0$ is the function introduced in Lemma~\ref{lem:fitestermeacoblament}. Then, if $m_n$ and $m_{n+1}$ are small enough, they
admit two pairs of solutions, $A,B$, corresponding to $\theta=\pi$ and $\theta= \theta^0(A,m_n)$, 
\begin{equation*}
A  = 1+\OO(m_n,m_{n+1}), \qquad B = 1+\OO(m_n,m_{n+1}).
\end{equation*}
As a consequence $\theta^0(A,m_n)=\frac{\pi}{3} + \OO(m_n,m_{n+1})$.
\end{lemma}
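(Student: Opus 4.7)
The plan is to apply the implicit function theorem at $(A,B,m_n,m_{n+1}) = (1,1,0,0)$. First I would observe that when $m_n = m_{n+1} = 0$, the system reduces to the unperturbed form $A^3 B = A$ and $A^4 = B$. In the branch of positive real solutions close to $1$, the first equation gives $A^2 B = 1$, and substituting in the second yields $A^6 = 1$, hence $A = B = 1$. This identifies the seed solution around which I will apply the implicit function theorem.

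Next I would check that the coefficients $m_{n+1}/(4\alpha_{n+1}^2 \beta_{n+1})$, $m_{n+1} \alpha_n/(4\alpha_{n+1}^3 \beta_{n+1})$, and $m_{n+1}/(4\alpha_{n+1}^2 \beta_n)$, appearing in the second equation, remain controlled as $(m_n, m_{n+1}) \to 0$. Using the explicit expressions in~\eqref{def:alphanbetanetcexplicites}, the first two ratios are $\OO(1)$ while the third is proportional to $m_{n+1}/m_n$. However, by Lemma~\ref{lem:fitestermeacoblament}, both $V_0$ and $\partial V_0/\partial \alpha$ are $\OO(m_n)$ in the regions of interest around $\theta = \pi$ and $\theta = \pi/3$, which compensates the $1/\beta_n \sim 1/m_n$ blow-up and makes the perturbation terms analytic in $(m_n, m_{n+1})$ of size $\OO(m_n) + \OO(m_{n+1})$. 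For the equilateral branch I also need the analyticity of $\theta^0(A,m_n) = \widehat{\theta}^0(\alpha_n A^2/\alpha_{n+1}, m_n)$ in $(A,m_n)$ near $(1,0)$, which is exactly what Lemma~\ref{lem:fitestermeacoblament} provides together with the expansion $\widehat{\theta}^0(\alpha,m_n) = \pi/3 + \OO(\alpha-1,m_n)$.

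At this point the two cases $\theta = \pi$ and $\theta = \theta^0(A,m_n)$ define two analytic maps
\[
F : (A,B,m_n,m_{n+1}) \mapsto \bigl( A^3 B - A,\; G(A,B,m_n,m_{n+1}) \bigr),
\]
where $G$ denotes the difference of the two sides of the second equation. I would compute the Jacobian $D_{(A,B)} F$ at $(1,1,0,0)$: it is
\[
\begin{pmatrix} 3A^2 B - 1 & A^3 \\ 4A^3 & -1 \end{pmatrix}_{(1,1)} = \begin{pmatrix} 2 & 1 \\ 4 & -1 \end{pmatrix},
\]
whose determinant equals $-6 \ne 0$. The implicit function theorem then produces a unique analytic branch $(A,B) = (A(m_n,m_{n+1}), B(m_n,m_{n+1}))$ close to $(1,1)$ for each of the two choices of $\theta$, and these branches satisfy $A = 1 + \OO(m_n,m_{n+1})$ and $B = 1 + \OO(m_n,m_{n+1})$. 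Finally, the last conclusion about $\theta^0(A,m_n) = \pi/3 + \OO(m_n,m_{n+1})$ follows by substituting the expansion for $A$ into the formula for $\theta^0$.

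The only genuine obstacle is the $1/\beta_n$ factor in the third coefficient, which a priori is singular as $m_n \to 0$. This will force me to rewrite the second equation so that the combination $m_{n+1} \partial_\alpha V_0/\beta_n$ is treated as a single analytic quantity of size $\OO(m_{n+1})$, rather than as a product of singular and vanishing factors; once this repackaging is done, the implicit function theorem applies in the standard way.
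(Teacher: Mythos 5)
Your proposal is correct and takes essentially the same route as the paper: the paper also applies the implicit function theorem at $(A,B,m_n,m_{n+1})=(1,1,0,0)$, and the crucial observation—that $V_0$ (and its $\alpha$-derivative) vanish at $m_n=0$, so the apparently singular factor $m_{n+1}/\beta_n\sim m_{n+1}/m_n$ is compensated and the perturbation is in fact analytic of order $\OO(m_n,m_{n+1})$—is exactly the paper's key remark. You merely spell out the Jacobian computation that the paper leaves implicit.
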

\begin{proof}
We emphasize that, for $z\in \mathbb{C}$, $z\neq 1$, 
$\wh \A_{n+1}(z,\overline{z},0)= M_n - M_{n+1}=0$. Then, when $m_n=0$, 
$V_0(\alpha,\theta) = \wh \A_{n+1}(\alpha e^{i\theta}, \alpha e^{-i\theta},0)=0$ for all $\alpha,\theta$ such that $\alpha e^{i\theta}\neq 1$. Using this, the claim simply follows by applying the standard implicit function theorem at the value $(A,B,m_n,m_{n+1}) = (1,1,0,0)$, taking into account the definitions of $\alpha_n$, $\beta_n$, $\alpha_{n+1}$ and $\beta_{n+1}$ in~\eqref{def:alphanbetanetcexplicites} and Lemma~\ref{lem:fitestermeacoblament}.
\end{proof}

We expand $\wt \Theta$, introduced in~\eqref{def:wtTheta}, as
\begin{equation*}
\wt \Theta (\varphi,\rho) = \sum_{k\ge 0} \wt \Theta_k (\varphi) \rho^k.
\end{equation*}
We also introduce
\begin{equation}
\label{def:Gn0}
G_n^0 = \frac{\wt \Theta_0^0 A^4}{\alpha_{n+1}^2 \mu_{n+1}} \left(\frac{1}{\alpha_n^2\mu_n} + \frac{A^4}{\alpha_{n+1}^2 \mu_{n+1}}\right)^{-1},
\end{equation}
where $A$ is given by Lemma~\ref{lem:constantsAiB} and $\wt \Theta_0^0$ was introduced in~\eqref{def:wtTheta00}.
Observe that $G_n^0$ can take two different values, one for $\theta = \pi$ and another one for $\theta =\theta^0(A,m_n)$ in the definition of the constants $A$ and $B$. We use the same letter to denote both quantities.

We use $G_n^0$ to introduce a new variable $g_n$ through $G_n = G_n^0+ g_n$. This change, which preserves the $2$-form~\eqref{def:forminMcGehee},  only affects the kinetic energy part of the Hamiltonian, in~\eqref{def:wtTTT}, which now becomes
\begin{equation*}
\begin{aligned}
\wt \TTT_\omega(\varphi,\rho,x_n, y_n, x_{n+1}, y_{n+1},  g_n) =& \frac{\beta_n^2}{2\mu_n}  y_n^2+ \frac{\beta_{n+1}^2}{2\mu_{n+1}}  y_{n+1}^2 \\
&+ \frac{(G_n^0+g_n)^2 x_n^4}{4 \alpha_n^2\mu_n} + \frac{(\wt \Theta(\varphi,\rho)-G_n^0-g_n)^2 x_{n+1}^4}{4\alpha_{n+1}^2\mu_{n+1}}.
\end{aligned}
\end{equation*}

\subsection{Some steps of normal form}
In order to apply Theorems~\ref{thm:approximationflows} and~\ref{th:existenceflow} we will need  some coefficients of the expansions of $\wt \TTT_\omega$ and $\wt U_\omega$ in powers of $x_n$, $x_{n+1}$, $x_{n+1}/x_n$, $\theta_n$  and $\rho$
to be independent of $\varphi$. To accomplish this, we perform several steps of normal form, as is done in~\cite{BFM20}. We use the following immediate fact.
Given the generating function $S(\wt \varphi, \rho, y_n, \wt x_n, y_{n+1}, \wt x_{n+1}, \wt \theta_n, g_n)$, if the equations
\begin{equation}
\label{def:funciogeneradora}
\left\{
\begin{aligned}
\varphi & = \wt \varphi + \frac{\partial S}{\partial \rho}, & \wt \rho & = \rho + \frac{\partial S}{\partial \wt \varphi} \\
\frac{2\alpha_k\beta_k}{x_k^2} & = \frac{2\alpha_k\beta_k}{\wt x_k^2} + \frac{\partial S}{\partial y_k}, \quad & \frac{4\alpha_k\beta_k}{\wt x_k^3} \wt y_k & =
\frac{4\alpha_k\beta_k}{\wt x_k^3}  y_k - \frac{\partial S}{\partial \wt x_k}, \quad k=n,n+1 \\
\theta_n & = \wt \theta_n + \frac{\partial S}{\partial g_n}, & \wt g_n & = g_n + \frac{\partial S}{\partial \wt \theta_n},
\end{aligned}
\right.
\end{equation}
define a close to the identity map 
\[
T:(\varphi, \rho, x_n, y_n,  x_{n+1},  y_{n+1},    \theta_n, g_n) \mapsto
(\wt \varphi, \wt \rho, \wt x_n, \wt y_n,  \wt x_{n+1}, \wt y_{n+1},  \wt \theta_n, \wt g_n),
\] 
then $T$ preserves the $2$-form~\eqref{def:forminMcGehee}.

\begin{proposition}
\label{prop:Hamiltoniapromitjat}
Choose $\theta= \pi$ or $\theta = \theta^0(A,m_n)$ in Lemma~\ref{lem:constantsAiB} and $\Kc$ as in Proposition~\ref{prop:expansionsdetildeUnomegaitildeUnmesuomega}. Then, after an averaging procedure, Hamiltonian~\eqref{def:HT} becomes
\begin{multline*}
H_\omega(\varphi,\rho, \theta_n, g_n,x_n, y_n, x_{n+1}, y_{n+1}) \\
\begin{aligned}
& = \langle \omega, \rho \rangle  + \frac{\beta_n^2}{2\mu_n}  y_n^2+ \frac{\beta_{n+1}^2}{2\mu_{n+1}}  y_{n+1}^2 - \frac{m_n M_n}{2\alpha_n }x_n^2 - \frac{m_{n+1} M_{n+1}}{2\alpha_{n+1} }x_{n+1}^2  \\ & -  m_{n+1} \frac{x_{n+1}^2}{2\alpha_{n+1}} u_0\left(\frac{\alpha_n}{\alpha_{n+1}}\frac{x_{n+1}^2}{x_n^2} e^{i \theta_{n}},\frac{\alpha_n}{\alpha_{n+1}}\frac{x_{n+1}^2}{x_n^2} e^{-i \theta_{n}}\right)  + \frac{1}{4\alpha_n^2 \mu_n} (G_n^0+g_n)^2 x_n^4
  + \wh \Theta_0 x_{n+1}^4
  \\
 & - \frac{1}{2\alpha_{n+1}^2\mu_{n+1}} (\wt \Theta_0^0 - G_n^0) x_{n+1}^4 g_n
 + \frac{1}{4\alpha_{n+1}^2 \mu_{n+1}} x_{n+1}^4 g_n^2
 + R(\varphi,\rho, \theta_n, g_n,x_n, y_n, x_{n+1}, y_{n+1}),
\end{aligned}
\end{multline*}
where
\begin{enumerate}
\item[(1)] the function $u_0(z,\bar z)$ was introduced in~\eqref{def:u0} (see also~\eqref{eq:expansiodeUnmes1}),
\item[(2)] $G_n^0$ and $\wt \Theta_0^0$ were introduced in~\eqref{def:Gn0} and~\eqref{def:wtTheta00}, respectively, and depend on the choice of
$\theta$ in Lemma~\ref{lem:constantsAiB},
\item[(3)] $\wh \Theta_0 = [(\wt \Theta_0(\varphi)-G_n^0)^2]/(4\alpha_{n+1}^2\mu_{n+1})$ is a constant,
\item[(4)] the remainder has the form
\begin{multline*}
R(\varphi,\rho, \theta_n, g_n,x_n, y_n, x_{n+1}, y_{n+1}) \\
 = \sum_{\scriptsize \begin{array}{c} k,j,m,l,r,s \ge 0,\\ k+j  \ge 2\end{array}} u_{k,j,m,l,r,s}\left(\varphi,\frac{x_{n+1}^2}{x_n^2}e^{i \theta_n},\frac{x_{n+1}^2}{x_n^2}e^{-i \theta_n}\right) x_n^{2k} x_{n+1}^{2j} y_n^m y_{n+1}^l g_n^r \rho^s,
\end{multline*}
and there exists $\bar{\varrho}$ depending on $\varrho$, such that $u_{k,j,m,l,r,s}$ are analytic in its arguments when 
$\varphi\in \T^{2(n-1)}$, 
$|1-z_1|, |1-\overline{z_1}| >\bar{\varrho}^{-1}$ 
with $z_1= \frac{x_{n+1}^2}{x_{n}^2} e^{i\theta_n}$. In addition $R$ satisfies
\[
\begin{aligned}
\frac{\partial R}{\partial x_n} & = \OO_5(x_n,x_{n+1}),&
\frac{\partial R}{\partial x_{n+1}}  = & \OO_5(x_n,x_{n+1}),\\
\frac{\partial R}{\partial y_n} & = \OO_6(x_n,x_{n+1}),& \frac{\partial R}{\partial y_{n+1}}  =&  \OO_6(x_n,x_{n+1}),\\
\frac{\partial R}{\partial \varphi} & = (\OO(\rho)+\OO_2(y_{n+1},g_n,x_n,x_{n+1})) \OO_6(x_n,x_{n+1}),& \frac{\partial R}{\partial \rho} = &  \OO_4(x_n,x_{n+1}),\\
\frac{\partial R}{\partial \theta_n} & = \OO(\rho)\OO_4(x_n,x_{n+1}) + \OO_6(x_n,x_{n+1}),& \frac{\partial R}{\partial g_n}  = & \OO(\rho)\OO_4(x_n,x_{n+1}) \\
& & & \ + \OO_6(x_n,x_{n+1}).
\end{aligned}
\]
\end{enumerate}
\end{proposition}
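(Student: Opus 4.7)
The starting point is the Hamiltonian $H_\omega$ from~\eqref{def:HT} after the McGehee blow-up~\eqref{def:McGeheedoble} and the shift $G_n = G_n^0 + g_n$, namely $H_\omega = \langle\omega,\rho\rangle + H_0 + H_{\mathrm{osc}}$, where $H_0$ collects the $\varphi$-independent pieces and $H_{\mathrm{osc}}$ is the $\varphi$-dependent remainder. By Proposition~\ref{prop:expansionsdetildeUnomegaitildeUnmesuomega}, the $\varphi$-dependence enters through (a) the KAM tail $\wh H_\omega - \langle\omega,\rho\rangle = \OO_2(\rho)$; (b) the potential $m_n (x_n^2/2\alpha_n)\wt U_{n,\omega}$, which by item~(1) is of order $x_n^6$; (c) the tail $\sum_{k\ge 2} u_k x_{n+1}^{2k}$ of $\wt U_{n+1,\omega}$ and the $\rho$-dependent corrections of $u_0$, of order $x_{n+1}^6$; and (d) the kinetic term $(\wt\Theta(\varphi,\rho) - G_n^0 - g_n)^2 x_{n+1}^4/(4\alpha_{n+1}^2\mu_{n+1})$. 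The leading $\varphi$-independent parts ($y_n^2$, $y_{n+1}^2$, the two quadratic $x$ potentials, the $u_0$ coupling, the $(G_n^0+g_n)^2 x_n^4$ angular-momentum piece) are already in the target form.

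The plan is to perform a finite sequence of near-identity canonical transformations generated by mixed generating functions as in~\eqref{def:funciogeneradora}, of the form $S = $ identity part $ + \, s(\wt\varphi,\rho,\wt x_n, y_n, \wt x_{n+1}, y_{n+1}, \wt\theta_n, g_n)$, where at each step the correction $s$ is chosen monomial-by-monomial in the variables $(x_n^2, x_{n+1}^2, y_n, y_{n+1}, g_n, \rho)$ to solve the small-divisors equation $\omega\cdot\partial_\varphi s = -\widetilde{c}(\varphi,\cdot)$, with $\widetilde{c}$ the oscillatory part of the current coefficient to be averaged. This is solvable analytically by Theorem~\ref{thm:smalldivisors} since $\omega$ is Diophantine. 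The resulting canonical transformation replaces each treated coefficient by its $\varphi$-average while only adding terms of strictly higher total degree in $(x_n,x_{n+1},y_n,y_{n+1},g_n,\rho)$, which are thrown into $R$.

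The displayed terms in the statement arise directly: the $u_0$ block is already $\varphi$-free; the $(G_n^0+g_n)^2 x_n^4/(4\alpha_n^2\mu_n)$ term as well; and for the second kinetic contribution one expands
\[
(\wt\Theta(\varphi,\rho) - G_n^0 - g_n)^2 = (\wt\Theta_0(\varphi) - G_n^0)^2 - 2(\wt\Theta_0(\varphi) - G_n^0) g_n + g_n^2 + \OO(\rho),
\]
averages using $\overline{\wt\Theta_0(\varphi) - G_n^0} = \wt\Theta_0^0 - G_n^0$ and the definition of $\wh\Theta_0$, and absorbs the oscillatory pieces into $R$ via a single averaging step. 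The analytic regularity of the coefficients $u_{k,j,m,l,r,s}$ in $(\varphi, z_1, \overline{z_1})$, with $z_1 = (x_{n+1}^2/x_n^2)e^{i\theta_n}$, away from the collision locus $z_1 = 1$, is inherited from Proposition~\ref{prop:expansionsdetildeUnomegaitildeUnmesuomega}(2), where the domain constant $\bar\varrho$ is chosen from the $\Kc$ of that statement.

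The main technical hurdle is the verification of the derivative bounds in (4). These follow by bookkeeping: after averaging, every surviving summand of $R$ factors as $u_{k,j,m,l,r,s}(\varphi,z_1,\overline{z_1}) x_n^{2k} x_{n+1}^{2j} y_n^m y_{n+1}^l g_n^r \rho^s$ with $k+j\ge 2$, so $R = \OO_4(x_n,x_{n+1})$, and differentiation with respect to $x_n$ or $x_{n+1}$ lowers this order by one giving $\OO_5$, while differentiation with respect to $y_n, y_{n+1}$ preserves the $\OO_6$ order since such factors come with $(k,j)$ satisfying $k+j\ge 3$ (the $y$-quadratic pieces are in $H_0$). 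The crucial bound $\partial R/\partial\varphi = (\OO(\rho) + \OO_2(y_{n+1},g_n,x_n,x_{n+1}))\OO_6(x_n,x_{n+1})$ reflects that the normal form step annihilates every purely $\varphi$-oscillatory coefficient whose monomial has total weight below the threshold, so the only $\varphi$-dependence left in $R$ carries at least one factor of $\rho$ (from the KAM tail or from $\wt\Theta - \wt\Theta_0$) or two factors from the vertical directions; the $\theta_n$ and $g_n$ derivatives are controlled analogously by noting that $\theta_n$ only appears through $z_1$ (weight $x_{n+1}^2/x_n^2$) and that $g_n$ is absent from the leading terms (c) of $H_{\mathrm{osc}}$.
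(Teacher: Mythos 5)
Your overall approach---iterated near-identity canonical transformations generated by mixed generating functions, with each oscillatory coefficient eliminated via a small-divisors equation---is the same strategy the paper uses. However, there is a genuine gap in your handling of the cascade of new terms created by the changes of variables.

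You assert that each averaging step replaces the treated coefficient by its $\varphi$-average ``while only adding terms of strictly higher total degree\dots which are thrown into $R$.'' This is too optimistic: some of the higher-order terms generated by the transformations are themselves $\varphi$-dependent and of low enough order that they cannot be left in $R$ without destroying the derivative bounds in item~(4). Concretely, averaging the kinetic term $(\wt\Theta(\varphi,\rho)-G_n^0)^2\,x_{n+1}^4/(4\alpha_{n+1}^2\mu_{n+1})$ requires shifting $y_{n+1}$ by an $\OO(x_{n+1}^6)$ term, which introduces a new oscillatory contribution of the form $c(\varphi)\,x_{n+1}^6\,y_{n+1}$; if this term were put into $R$ one would have $\partial R/\partial\varphi = \OO_1(y_{n+1})\,\OO_6(x_{n+1})$, which is not of the claimed form $\bigl(\OO(\rho)+\OO_2(y_{n+1},g_n,x_n,x_{n+1})\bigr)\OO_6(x_n,x_{n+1})$. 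Likewise, averaging the $x_{n+1}^4 g_n$ term shifts $\theta_n$ by $\OO(x_{n+1}^4)$, which, because $u_0$ depends on $\theta_n$, produces a new $\varphi$-dependent term of the form $\wt u_1(z,\bar z)\,x_{n+1}^6\,b_2(\varphi,0)$; this again cannot go into $R$ as it would make $\partial R/\partial\varphi$ of size $\OO_6(x_n,x_{n+1})$ with no additional smallness factor. In the paper these cascaded terms are explicitly identified (they constitute the fourth and fifth items in the list of quantities to average) and eliminated by further averaging steps; without this bookkeeping your proof of the derivative estimates on $R$ does not go through. The rest of your argument (solvability of the small-divisors equations by the Diophantine hypothesis, the expansion of $(\wt\Theta-G_n^0-g_n)^2$, and the inheritance of analyticity of the coefficients from Proposition~\ref{prop:expansionsdetildeUnomegaitildeUnmesuomega}) is sound.
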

\begin{proof}
Using Proposition~\ref{prop:expansionsdetildeUnomegaitildeUnmesuomega} for $\wt{U}_{n,\omega}$ and $\wt{U}_{n+1,\omega}$ we write (with the notation $z= \frac{\alpha_n x_{n+1}^2}{\alpha_{n+1} x_{n}^2} e^{i\theta_n} $)
\begin{align*}
\wt \TTT_\omega(\varphi,\rho,x_n,& y_n, x_{n+1}, y_{n+1},  g_n) = \frac{\beta_n^2}{2\mu_n}  y_n^2+ \frac{\beta_{n+1}^2}{2\mu_{n+1}}  y_{n+1}^2 + \frac{(G_n^0+g_n)^2 x_n^4}{4 \alpha_n^2\mu_n} + \frac{(G_n^0+g_n)^2 x_n^4}{4 \alpha_n^2\mu_n} \\ &+ \frac{ x_{n+1}^4 g_n^2 }{4\alpha_{n+1}^2 \mu_{n+1}}+ \frac{(\wt \Theta(\varphi,\rho)-G_n^0)^2 x_{n+1}^4}{4\alpha_{n+1}^2\mu_{n+1}} - \frac{1}{2 \alpha_{n+1}^2 \mu_{n+1}} (\wt \Theta(\varphi,\rho) - G_n^0 ) g_n x_{n+1}^4, \\
\wt U_\omega (\varphi, \rho, x_n, &x_{n+1}, \theta_n)
 =  \frac{m_n M_n}{2\alpha_n }x_n^2 + \frac{m_{n+1} M_{n+1}}{2\alpha_{n+1} }x_{n+1}^2
+ m_n \frac{x_n^6}{2\alpha_n} c_{2,0}(\varphi, \theta_n) 
\\ &+  m_{n+1} \frac{x_{n+1}^2}{2\alpha_{n+1}} u_0 \left ( z,\overline{z} \right ) + m_{n+1} \frac{x_{n+1}^6}{2\alpha_{n+1}} u_{2,0}(\varphi, z,\overline{z}) + R_0(\varphi,\rho,x_n,\theta_n)
\end{align*}
 with $u_{2,0}(\varphi,z,\bar{z})= u_{2}(\varphi,\rho,z,\bar{z})$ and $R_0$ satisfying the properties stated for $R$ in the Proposition. Indeed, the problematic terms are the ones of the form $u_k(\varphi, \rho, z, \bar{z}) x_{n+1}^{2k+2}$, $k\geq 3,$ with $u_k$ analytic. For those terms
$$
\partial_{x_n} (\wt u_k(\varphi,\rho,z,\bar{z}) x_{n+1}^{2k+2} ),\, 
\partial_{x_{n+1}} (\wt u_k(\varphi,\rho,z,\bar{z}) x_{n+1}^{2k+2} ) 
= \OO(x_{n}^{2k+1}),
$$
provided $|z-1|, |\bar{z}-1| > \varrho^{-1}$ and $m_{n},m_{n+1}$ are small enough, 
and its is immediate to check that these terms satisfy the other properties stated for $R$.  
 
 Therefore, the terms on the Hamiltonian we need to average out are the following:
\begin{enumerate}
\item[(1)] $x_{n+1}^4$ in $\wt \TTT_\omega$,
\item[(2)] $x_{n+1}^4 g_n$ in $\wt \TTT_\omega$,
\item[(3)] $x_n^6$ in $x_n^2 \wt U_{n,\omega}$,
\item[(4)] $x_{n+1}^6 u(\varphi,\rho, \frac{\alpha_n x_{n+1}^2}{\alpha_{n+1} x_n^2} e^{i\theta_n},\frac{\alpha_n x_{n+1}^2}{\alpha_{n+1} x_n^2} e^{-i\theta_n})$ that comes
 from the term $u_{2}$ in $\wt U_{\omega}$ and a contribution from the averaging step (2),  and
\item[(5)] $\beta_{n+1}b_1(\varphi) x_{n+1}^6 y_{n+1}/(4\alpha_{n+1} \mu_{n+1})$. This term appears after the averaging step (1).
 \end{enumerate}

We average them out with a sequence of transformations defined through~\eqref{def:funciogeneradora} with suitable generating functions $S$.
We drop the tildes in the variables after each step. Along the proof, after performing each step of averaging, we take care about the new terms that can not be considered as a remainder. The tedious but immediate substitution of the sequence of transformations is left to the reader.

We recall that, given a function $f$ depending on some angles $\wt \varphi$, $[f]$ denotes its average with respect $\wt \varphi$. 

We start with (1). We consider in~\eqref{def:funciogeneradora} the generating function $S(\wt \varphi, \rho, \wt x_{n+1}) = b_1(\wt \varphi,\rho) \wt x_{n+1}^4$, where $\langle \omega, \nabla_{\wt \varphi} b_1 \rangle  =  (4\alpha_{n+1}^2 \mu_{n+1})^{-1}\Big(\big (\wt{\Theta}(\tilde{\varphi},\rho) - G_n^0 \big )^2- [ \big (\wt{\Theta}(\tilde{\varphi},\rho) - G_n^0\big )^2]\Big)$. We recall that $\omega$ is Diophantine, and then the existence and analyticity of $b_1$ is guaranteed by Theorem~\ref{thm:smalldivisors} (see also~\cite{Russmann75}). In addition, we can select it with zero mean. Therefore,
\begin{equation*}
\begin{aligned}
\rho & = \wt \rho - \nabla_{\wt \varphi} b_1(\wt \varphi,\rho)\wt x_{n+1}^4, \qquad 
 &\varphi&=\wt \varphi + \nabla_\rho b_1 (\wt \varphi,\rho) \wt x_{n+1}^4, \\ 
 y_{n+1} & = \wt y_{n+1} +\frac{1}{\alpha_{n+1}\beta_{n+1}} b_1(\wt \varphi,\rho) \wt x_{n+1}^6. \qquad & &
 \end{aligned}
\end{equation*}
After this change, the term with $x_{n+1}^4$ in the kinetic energy becomes 
$$[ \big (\wt{\Theta}(\tilde{\varphi},\rho) - G_n^0\big )^2] \frac{1}{4 \alpha_{n+1}^2 \mu_{n+1}} = \wh \Theta_0 + \OO(\rho), $$
with $\OO(\rho)$ satisfying the conditions for the remainder $R$,  to which we add  
the term (besides some other terms considered as a remainder) 
$$
 b_{1}(\tilde{\varphi},\tilde{\rho})\frac{\beta_{n+1}}{\alpha_{n+1} \mu_{n+1}} \wt x_{n+1}^6 \wt y_{n+1}.
$$
We will average out this term in step (5).   

As for (2), we consider $b_2(\tilde{\varphi}, \rho)$ satisfying 
$\langle \omega, \nabla_{\wt \varphi} b_2 \rangle =  - (2 \alpha_{n+1}^2 \mu_{n+1})^{-1} (\wt \Theta(\wt \varphi, \rho) - G_n^0) - [\wt \Theta (\wt \varphi, \rho) - G_n^0]
$ and the generating function 
$S(\wt \varphi,\rho, \wt x_{n+1}, g_n)= b_2 (\wt \varphi,\rho) \wt x_{n+1}^4 g_n $. 
Again, since $\omega$ is Diophantine, this equation can be solved. It defines the change 
\begin{equation*}
\begin{aligned}
\rho & = \wt \rho -  \nabla_{\wt \varphi}  b_2 (\wt \varphi,\rho) \wt x_{n+1}^4 \wt g_n, \qquad &\varphi &= \wt \varphi + \nabla_\rho b_2 (\wt \varphi, \rho) \wt x_{n+1}^4 \wt g_n, \\ 
y_{n+1} & = \wt y_{n+1} + \frac{2}{\alpha_{n+1}\beta_{n+1}}  b_2(\wt \varphi,\rho ) \wt x_{n+1}^6 \wt g_n, \qquad 
&\theta_n & = \wt \theta_n + b_2(\wt \varphi,\rho) \wt x_{n+1}^4 
.
\end{aligned}
\end{equation*}
We emphasize that after this change the coefficient of $x_{n+1}^4 g_n$ becomes  
$$ 
-x_{n+1}^4 g_n \frac{([\wt \Theta (\wt \varphi, \rho)] - G_n^0)}{2\alpha_{n+1}^2 \mu_{n+1}} = 
-x_{n+1}^4 g_n \frac{(\wt \Theta_0^0 - G_n^0) }{2 \alpha_{n+1}^2 \mu_{n+1}} + \OO(x_{n+1}^4 g_n \rho)
$$
with $\OO(x_{n+1}^4 g_n \rho)$, independent on $\varphi$, satisfying the remainder conditions. Moreover, this change of variables produces a new term in the Hamiltonian of the form
\begin{equation}\label{def:tildeu}
\sum_{j\geq 1} \tilde{u}_j (z,\bar{z})  x_{n+1}^{2+4j} (b_{2} (\varphi,\rho))^j  = \tilde{u}_1 (z,\bar{z}) x_{n+1}^6 b_2(\varphi,0) + \OO(\rho x_{n+1}^6) + \OO(x_{n+1}^{10}),
\end{equation}
where $\wt u_j $ are analytic with respect their arguments, provided $|z-1|,|\bar{z}-1|> \varrho^{-1}$, see Proposition~\ref{prop:expansionsdetildeUnomegaitildeUnmesuomega}.

The coefficient $\wt u_1$ is averaged out in step (4). The rest of the terms go to the remainder.

Now we deal with (3).  
We consider $S(\wt \varphi, \wt \theta_n,\wt x_n) = b_{3}(\wt \varphi,\wt \theta_n) \wt x_n^6$, where $\langle \omega, \nabla_{\wt \varphi} b_{3} \rangle  = c_{2,0} - [c_{2,0}]$, 
and is straightforwardly checked that, after the change of variables induced by the generating function $S$, the new coefficient of $x_n^6$ is $[c_{2,0}]$ and that the remainder satisfies the required properties. 

To deal with (4), we consider a generating function of the form
\[
S(\wt \varphi, \wt x_n, \wt x_{n+1}) =   
\wh S\left(\wt \varphi,\frac{\alpha_{n}}{\alpha_{n+1}}\frac{\wt x_{n+1}^2}{\wt x_n^2}e^{i\wt \theta_n}, \frac{\alpha_{n}}{\alpha_{n+1}}\frac{\wt x_{n+1}^2}{\wt x_n^2}e^{-i\wt \theta_n}\right)\wt x_{n+1}^6,
\]
with, $\wh S$ satisfying
$$
\langle \omega, \nabla_{\wt \varphi}  \wh S\rangle = \frac{m_{n+1}}{2 \alpha_{n+1}} (u_{2,0}- [u_{2,0}]) + \wt u_{1} b_2 ,
$$
where $\wt u_{1}b_2 $, introduced in~\eqref{def:tildeu}, has zero mean.
In this case,  through~\eqref{def:funciogeneradora} 
$S$ defines the change,  
\begin{equation*}
\begin{aligned}
\rho & = \wt \rho -  \nabla_{\wt \varphi} \wh S\left(\wt \varphi,\rho, \wt z , \overline{\wt z} \right) \wt x_{n+1}^6, \qquad 
&g_n & = \wt g_n +  F_3\left(\wt \varphi,\rho, \wt z , \overline{\wt z} \right) \wt x_{n+1}^6 \\
y_{n} & = \wt y_{n} +  F_1\left(\wt \varphi,\rho, \wt z , \overline{\wt z} \right)\wt x_n^2 \wt x_{n+1}^6, \qquad 
&y_{n+1} & = \wt y_{n+1} +  F_2\left(\wt \varphi,\rho, \wt z , \overline{\wt z} \right) \wt x_{n+1}^8,
\end{aligned}
\end{equation*}
where $\tilde{z} =\frac{\alpha_{n}}{\alpha_{n+1}}\frac{\wt x_{n+1}^2}{\wt x_n^2} $,
and $F_i$, $i=1,2,3$, are analytic functions of their arguments.

Finally, in (5), we consider
\[
S(\wt \varphi, y_{n+1}, \wt x_{n+1}) = b_3(\wt \varphi)\wt x_{n+1}^6 y_{n+1},
\]
where 
$\langle \omega, \nabla_{\wt \varphi}  b_3 \rangle = {\beta_{n+1}b_1/(\alpha_{n+1} \mu_{n+1})}$.
Equations~\eqref{def:funciogeneradora} define the change
\begin{equation*}
\begin{aligned}
x_{n+1} & = \tilde{x}_{n+1} (1+ b_3 (\tilde{\varphi}) x_{n+1}^8)^{-1/2} = \wt x_{n+1}+ \wt S_1 \left(\wt \varphi,\wt x_{n+1}^8 \right)\wt x_{n+1}^9, \\
y_{n+1} & = \wt y_{n+1} (1 - 6 b_3(\tilde{\varphi})\wt x_{n+1}^2)^{-1/2} = \wt y_{n+1}+ \wt S_2 \left(\wt \varphi,\wt x_{n+1}^8\right) \wt x_{n+1}^8 \wt y_{n+1}, \\
\rho &=\wt \rho - \nabla_{\wt \varphi} b_3(\tilde\varphi) \wt x_{n+1}^6 y_{n+1}= \wt \rho + \wt S_3(\wt \varphi, \wt x_{n+1}^8) \wt x_{n+1}^6 \wt y_{n+1},
\end{aligned}
\end{equation*}
where $\wt S_i$, $i=1,2,3$, are analytic in their arguments.
\end{proof}

\subsection{Regularization of infinity}

In what follows, $\wt \theta_0$ will be either $-\pi$ or $\theta^0(A,m_n) = \frac{\pi}{3} + \OO(m_n,m_{n+1})$ in Lemma~\ref{lem:constantsAiB}. Recalling that
\begin{equation*}
u_0\left(\frac{\alpha_n}{\alpha_{n+1}}\frac{x_{n+1}^2}{x_n^2} e^{i \theta_{n}},\frac{\alpha_n}{\alpha_{n+1}}\frac{x_{n+1}^2}{x_n^2} e^{-i \theta_{n}}\right) = V_0\left(\frac{\alpha_n}{\alpha_{n+1}}\frac{x_{n+1}^2}{x_n^2},\theta_n \right)
\end{equation*}
where $V_0$ was introduced in Lemma~\ref{lem:fitestermeacoblament}, we define
\begin{equation}
\label{def:wtVj0} 
\wt v_{i,j} = \frac{\partial^{i+j} V_0}{\partial \alpha^i \partial \theta^j} \left(\frac{\alpha_n}{\alpha_{n+1}} A^2, \wt \theta_0 \right), \qquad i,j \ge 0.
\end{equation}
By Lemma~\ref{lem:fitestermeacoblament}, $\wt v_{i,j} = \OO(m_n)$.

For future purposes, we introduce the constants
\begin{equation} \label{def:nuGamman}
\nu = \sqrt{1- \frac{m_{n+1}}{4\alpha_{n+1}^2 \beta_n}A^4 \wt v_{1,0}} = 1+ \OO(m_{n+1}),\qquad
\Gamma_n = \frac{1}{2}\left(\frac{1}{\alpha_{n}^2 \mu_{n}} +\frac{A^4}{\alpha_{n+1}^2 \mu_{n+1}}\right),
\end{equation}
where $A$ and $B$ were introduced in Lemma~\ref{lem:constantsAiB}, whose value depends  on the choice of $\wt \theta_0$. We notice that, since
\[
\frac{\mu_{n}}{\mu_{n+1}} = \frac{M_{n+2} M_n m_n}{M_{n+1}^2 m_{n+1}} = \frac{m_n}{m_{n+1}} (1+ \OO(m_n,m_{n+1})),
\]
$A=1 + \OO(m_n,m_{n+1})$ and the conditions~\eqref{def:alphanbetanetcimplicites} and~\eqref{bound:quocientalphanalphanmes1}, we have that 
\begin{equation}\label{asymptoticGamma_n}
\Gamma_n = \frac{1}{2\alpha^2_n \mu_n} \left ( 1 + \frac{1}{m_{n+1}} (m_n + \OO_2(m_n,m_{n+1}) )\right ) =
\frac{8 \alpha_n}{ M_n m_n m_{n+1}} (m_{n+1}+ m_{n} + \OO_2 (m_n,m_{n+1})).
\end{equation}

The regularization will be obtained as a sequence of
simple changes of variables and blow-ups that are summarized in the following technical result.
\begin{proposition}
Consider the blow-ups given by 
\begin{equation*}
\begin{aligned}
x_{n+1}& =x_n(A+ \xi_{n+1}), \qquad y_{n+1}=y_{n}(B+\eta_{n+1}), \qquad  y_{n}=x_{n}(\nu + \zeta_n), \\
\xi_{n+1}&=x_n \wt \xi_{n+1}, \quad  \eta_{n+1}  = x_n \tilde\eta_{n+1},\quad 
\theta_n = \wt \theta_0 + x_n \wt \theta_n, \quad g_n = \Gamma_n^{-1}\wt g_n ,\quad  \rho = x_n^3 \wt \rho.
\end{aligned}
\end{equation*}
Then, denoting $Z=(\zeta_n,\wt \xi_{n+1}, \wt \eta_{n+1}, \wt \theta_n, \wt g_n, \wt \rho_n)$ there exists a linear change of variables $\wt Z= \mathbf{C}Z$, where
\[
C = \begin{pmatrix}
1 & 0 & 0 & 0 & 0 & 0 \\
0 & 1+\delta_{2,2} & 1+\delta_{2,3} & \delta_{2,4} & \delta_{2,5} & 0 \\
0 & -4+\delta_{3,2} & 1+\delta_{3,3} & \delta_{3,4} & \delta_{3,5} & 0 \\
0 & \delta_{4,2} & \delta_{4,3} & 1+\delta_{4,4} & 1+\delta_{4,5} & 0 \\
0 & \delta_{5,2} & \delta_{5,3} & \delta_{5,4} & -1+\delta_{5,5} & 0 \\
0 & 0 & 0 & 0 & 0 &  \Id
\end{pmatrix},
\]
and
\[
\delta_{i,j} = \OO(m_n,m_{n+1}),
\]
such that in these variables the Hamiltonian system with Hamiltonian $H_\omega$ has the equations
\begin{equation}
\label{eq:sistemacompletredressat}
\left\{
\begin{aligned}
\dot x_n & = -\nu x_n^4 +  x_n^4 \OO_1(\zeta_n)+ \OO_9(x_n),\\
\dot{\wt Z} & = x_n^3 \mathbf{M} \wt Z + x_n^3 \OO_2(x_n,\wt Z), \\
\dot \varphi & = \omega + x_n^3\OO_1(x_n, \wt Z),
\end{aligned}
\right.
\end{equation}
where 
\[
\mathbf{M} = \begin{pmatrix}
2+\eps_{1,1} & 0 & 0 & 0 & 0 & 0 \\
0 & 3+\eps_{2,2} & 0 & 0 & 0 & 0 \\
0 & 0 & -2+\eps_{3,3} & 0 & 0 & 0 \\
0 & 0 & 0 & 1+\eps_{4,4} & 0 & 0 \\
0 & 0 & 0 & 0 & -\gamma_2+\eps_{5,5} & 0 \\
0 & 0 & 0 & 0 & 0 & (1+\eps_{6,6}) \Id
\end{pmatrix},
\]
with
\[
\left\{
\begin{aligned} & \eps_{i,i}  = \OO(m_n,m_{n+1}), \qquad \text{if $\ \ i \neq 5$},\\
& \eps_{5,5}  = \OO_2(m_n,m_{n+1}).
\end{aligned}
\right.
\]
\end{proposition}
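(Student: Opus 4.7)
The plan is direct computation. First I would write Hamilton's equations for $H_\omega$ from Proposition~\ref{prop:Hamiltoniapromitjat} with respect to the symplectic form~\eqref{def:forminMcGehee}; since its $(x_k,y_k)$-part is $-\frac{4\alpha_k\beta_k}{x_k^3}dx_k\wedge dy_k$, they read $\dot x_k=-\frac{x_k^3}{4\alpha_k\beta_k}\partial_{y_k}H_\omega$ and $\dot y_k=\frac{x_k^3}{4\alpha_k\beta_k}\partial_{x_k}H_\omega$, while $(\varphi,\rho)$ and $(\theta_n,g_n)$ remain canonical. Using the normalizations $\frac{\beta_k}{4\mu_k\alpha_k}=\frac{m_k M_k}{4\alpha_k^2\beta_k}=1$ of~\eqref{def:alphanbetanetcimplicites} yields $\dot x_n=-x_n^3 y_n$ and $\dot y_n=-x_n^4+\OO_6(x_n,x_{n+1})$ at leading order, together with the analogous expressions for the $(n+1)$- and the angular pair.

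Next I substitute the outer blow-ups $y_n=x_n(\nu+\zeta_n)$, $x_{n+1}=x_n(A+\xi_{n+1})$, $y_{n+1}=y_n(B+\eta_{n+1})$, $g_n=\Gamma_n^{-1}\wt g_n$, $\rho=x_n^3\wt\rho$. The algebraic equations defining the constants in Lemma~\ref{lem:constantsAiB} for $(A,B)$, in~\eqref{def:nuGamman} for $\nu$, in Lemma~\ref{lem:fitestermeacoblament} for $\wt\theta_0$, and in~\eqref{def:Gn0} for $G_n^0$, are precisely the vanishing conditions for the constant parts of $\dot\xi_{n+1}$, $\dot\eta_{n+1}$, $\dot\zeta_n$, $\dot g_n$, $\dot\theta_n$ at the origin $\zeta_n=\xi_{n+1}=\eta_{n+1}=\wt g_n=\wt\rho=0$, $\theta_n=\wt\theta_0$. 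Applying afterwards the inner blow-ups $\xi_{n+1}=x_n\wt\xi_{n+1}$, $\eta_{n+1}=x_n\wt\eta_{n+1}$, $\theta_n=\wt\theta_0+x_n\wt\theta_n$ extracts one more order and produces
\[
\dot Z = x_n^3 M_0 Z + x_n^3\OO_2(x_n,Z),\qquad Z=(\zeta_n,\wt\xi_{n+1},\wt\eta_{n+1},\wt\theta_n,\wt g_n,\wt\rho),
\]
for a matrix $M_0(m_n,m_{n+1})$ depending analytically on the small masses. Analogous Jacobian bookkeeping gives the stated forms $\dot x_n=-\nu x_n^4+x_n^4\OO_1(\zeta_n)+\OO_9(x_n)$ and $\dot\varphi=\omega+x_n^3\OO_1(x_n,\wt Z)$, the remainders being controlled by the derivative estimates on $R$ at the end of Proposition~\ref{prop:Hamiltoniapromitjat}.

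The final step is the diagonalization of $M_0$. In the limit $(m_n,m_{n+1})\to(0,0)$ all derivatives $\wt v_{i,j}$ of the coupling $V_0$ vanish by Lemma~\ref{lem:fitestermeacoblament}, so $M_0$ becomes block diagonal with respect to $\zeta_n\oplus(\wt\xi_{n+1},\wt\eta_{n+1})\oplus(\wt\theta_n,\wt g_n)\oplus\wt\rho$. The $\zeta_n$-block is the scalar $2$, read off $\dot\zeta_n=x_n^3[(\nu+\zeta_n)^2-1]$; the Kepler-type block on $(\wt\xi_{n+1},\wt\eta_{n+1})$ has trace $1$ and determinant $-6$, hence eigenvalues $3,-2$ with eigenvectors $(1,-4)$ and $(1,1)$ matching the central $2\times 2$ block of the stated leading-order $\mathbf{C}$; the angular block on $(\wt\theta_n,\wt g_n)$ collapses to $\bigl(\begin{smallmatrix}1&1\\0&0\end{smallmatrix}\bigr)$ (since the $\wt\theta_n$-coefficient of $\dot{\wt g}_n$ is proportional to $m_{n+1}\Gamma_n\wt v_{0,2}$, itself $\OO(m_n,m_{n+1})$, and vanishes at the limit), with eigenvalues $1,0$ and eigenvectors $(1,0)$, $(1,-1)$ matching the next $2\times 2$ block of $\mathbf{C}$; and the $\wt\rho$-block is the identity, as $\rho$ is conserved by $H_\omega$ at leading order. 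These six eigenvalues being simple and distinct, analytic perturbation theory produces a diagonalizer $\mathbf{C}$ of exactly the stated block form, analytic in $(m_n,m_{n+1})$; because the zero eigenvalue receives the leading perturbation $-\gamma_2\propto m_{n+1}\Gamma_n\wt v_{0,2}=\OO(m_n,m_{n+1})$, the remaining correction is second-order in the masses, which is why $\eps_{5,5}=\OO_2(m_n,m_{n+1})$ while $\eps_{i,i}=\OO(m_n,m_{n+1})$ for $i\neq 5$.

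The main obstacle is purely organizational: one must propagate the cascade of blow-ups consistently, track the many Jacobian corrections of the form $x_n\dot{\wt\xi}_{n+1}=\dot\xi_{n+1}-\wt\xi_{n+1}\dot x_n$ and the crossed contributions they induce, and systematically apply the order estimates on every partial derivative of the remainder $R$ in Proposition~\ref{prop:Hamiltoniapromitjat} to discard subleading terms. The most delicate point is the angular block, where the singular scaling $\Gamma_n\sim 1/(m_n m_{n+1})$ from~\eqref{asymptoticGamma_n} must exactly compensate the smallness $\wt v_{0,2}=\OO(m_n)$ so that the product $m_{n+1}\Gamma_n\wt v_{0,2}$ is of order one and the linearization on $(\wt\theta_n,\wt g_n)$ extends continuously to $m_n,m_{n+1}\to 0$.
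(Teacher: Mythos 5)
Your overall plan matches the paper's: write out Hamilton's equations for $H_\omega$ with respect to the 2-form~\eqref{def:forminMcGehee}, substitute the cascade of blow-ups, and read off a matrix that becomes block-diagonal as $(m_n,m_{n+1})\to(0,0)$. Your eigenvalue/eigenvector computations for the $(\wt\xi_{n+1},\wt\eta_{n+1})$ block (trace $1$, determinant $-6$, eigenvalues $3,-2$ with eigenvectors $(1,-4)$, $(1,1)$) and for the angular block (limiting matrix $\bigl(\begin{smallmatrix}1&1\\0&0\end{smallmatrix}\bigr)$, eigenvalues $1,0$ with eigenvectors $(1,0)$, $(1,-1)$) are exactly what the paper derives and are consistent with the claimed leading-order structure of $\mathbf{C}$.

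There are, however, three concrete gaps. First, your justification for the $\wt\rho$ block is wrong: you say ``the $\wt\rho$-block is the identity, as $\rho$ is conserved by $H_\omega$ at leading order.'' If $\dot\rho$ were exactly $0$, then the rescaling $\rho = x_n^3\wt\rho$ together with $\dot x_n = -\nu x_n^4 + \cdots$ would give $\dot{\wt\rho} = -3 x_n^{-1}\dot x_n\wt\rho = 3\nu x_n^3\wt\rho+\cdots$, i.e.\ diagonal coefficient $3$, not $1$ — this is what the paper's equation~\eqref{eq:varphirhoterceraversio} shows. (The proposition statement has $1+\eps_{6,6}$, which does not match the paper's own derivation, but your stated reasoning supports neither value.)

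Second, your final paragraph is internally inconsistent and drops a factor. You write both ``$-\gamma_2\propto m_{n+1}\Gamma_n\wt v_{0,2}=\OO(m_n,m_{n+1})$'' and, two sentences later, ``the product $m_{n+1}\Gamma_n\wt v_{0,2}$ is of order one.'' These cannot both be right, and neither follows from the naive count $\Gamma_n\sim 1/(m_n m_{n+1})$, $\wt v_{0,2}\sim m_n$, which would indeed give $\OO(1)$. The asymptotics in~\eqref{asymptoticGamma_n} have an extra factor: $\Gamma_n\sim (m_n+m_{n+1})/(m_n m_{n+1})$, so $\gamma_2 = \OO(m_n,m_{n+1})$ as in~\eqref{midagamma1gamma2}. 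This order is essential — it is what makes the decomposition $-\gamma_2+\eps_{5,5}$ with $\eps_{5,5}=\OO_2(m_n,m_{n+1})$ meaningful, so the sign of the near-zero eigenvalue is determined by $\gamma_2$ alone.

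Third, the claim ``these six eigenvalues being simple and distinct'' does not hold at the degenerate limit: the $\wt\rho$ block has eigenvalue $3$ with multiplicity $2(n-1)$, coinciding with the $3$ from the Kepler block. This is harmless only because the $\wt\rho$ rows and columns are completely decoupled, so the bottom-right block of $\mathbf{C}$ can be taken to be $\Id$ and the $5\times5$ part is diagonalized separately — but this decoupling needs to be stated; a naive simple-eigenvalue perturbation argument would not apply to the full matrix. Relatedly, invoking ``analytic perturbation theory'' does not by itself yield the $\OO_2(m_n,m_{n+1})$ estimate for $\eps_{5,5}$: the paper's more explicit two-step diagonalization (first a block change of basis $\mathbf{B}$, then a fixed-point argument for the off-diagonal correction $\mathbf{A}_{1,2}$, then another for $\mathbf{A}_{2,1}$) is what tracks the quadratic smallness of the correction to the near-zero eigenvalue.
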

\begin{remark} 
Notice that, since the hypotheses of the existence result, Theorem~\ref{th:existenceflow}, only depend on the dominant terms, there is no need to control the dependence on $m_n,m_{n+1}$ of the non dominant terms. 
\end{remark}
\begin{proof}
We perform the blow ups in three steps. The first one corresponds to $(\xi_{n+1}, \eta_{n+1})$:
\begin{equation*} 
x_{n+1}  = x_n (A+ \xi_{n+1}), \qquad 
y_{n+1}  = y_n (B+\eta_{n+1}).
\end{equation*}

For any choice of $\wt \theta_0$ we have $\wt v_{0,1} = 0$ (see definition~\eqref{def:wtVj0} of $\wt v_{0,1}$ and Lemma~\ref{lem:fitestermeacoblament}).
We recall that the equations of motion associated to the Hamiltonian $H_\omega$, in Proposition~\ref{prop:Hamiltoniapromitjat}, are obtained using the $2$-form~\eqref{def:forminMcGehee} taking into account the choice of the constants $\alpha_n$, $\alpha_{n+1}$, $\beta_n$, and $\beta_{n+1}$ in~\eqref{def:alphanbetanetcimplicites}. Then, also using Lemma\ref{lem:constantsAiB} we have that
\[
\begin{aligned}
\dot \xi_{n+1} & = \frac{1}{x_n} \dot x_{n+1} - \frac{x_{n+1}}{x_n^2} \dot x_n  
  = - \frac{1}{4 \alpha_{n+1} \beta_{n+1}} \frac{x_{n+1}^3}{x_n} \frac{\partial H_\omega}{\partial y_{n+1}} +
\frac{1}{4 \alpha_{n} \beta_{n}}x_n x_{n+1} \frac{\partial H_\omega}{\partial y_{n}} \\
& = - \frac{1}{4 \alpha_{n+1} \beta_{n+1}} \frac{x_{n+1}^3}{x_n} \left( \frac{\beta_{n+1}^2}{\mu_{n+1}} y_{n+1}
+ \frac{\partial R}{\partial y_{n+1}}\right) +
\frac{1}{4 \alpha_{n} \beta_{n}}x_n x_{n+1} \left( \frac{\beta_n^2}{\mu_n} y_n + \frac{\partial R}{\partial y_n}\right) \\
& = - x_n^2 y_n (A+\xi_{n+1})^3(B+\eta_{n+1}) + x_n^2 y_n (A+\xi_{n+1}) + \OO_8(x_n) \\
& = x_n^2 y_n (-A^3 B +A)-\left((3A^2 B-1) \xi_{n+1} + A^3 \eta_{n+1} \right) x_n^2 y_n +\OO_2(\xi_{n+1},\eta_{n+1}) x_n^2 y_n
+ \OO_8(x_n) \\
&=  \left [-(2+ \OO(m_n,m_{n+1}) \xi_{n+1}   - (1+ \OO(m_n,m_{n+1}) \eta_{n+1}    + \OO_2(\xi_{n+1},\eta_{n+1}) \right ] x_n^2 y_n
+ \OO_8(x_n).   
\end{aligned}
\] 
To avoid cumbersome notation, $V_0$ (and its derivatives) means $V_0$ evaluated at $\left(\frac{\alpha_n}{\alpha_{n+1}}\frac{x_{n+1}^2}{x_n^2},\theta \right)$. 
Similar computations, recalling that $x_{n+1}=x_n(A+ \xi_{n+1})$ and $y_{n+1}=y_n(B+\eta_{n+1})$, and using again Lemma~\ref{lem:constantsAiB}, lead us to :
\[
\begin{aligned}
\dot \eta_{n+1}  = &\frac{1}{y_n} \dot y_{n+1} - \frac{y_{n+1}}{y_n^2} \dot y_n  =  \frac{1}{4 \alpha_{n+1} \beta_{n+1}} \frac{x_{n+1}^3}{y_n}\frac{\partial H_\omega}{\partial x_{n+1}} -
\frac{1}{4 \alpha_{n} \beta_{n}} \frac{x_{n}^3 y_{n+1}}{y_n^2} \frac{\partial H_\omega}{\partial x_{n}} \\
 =  &\frac{1}{4 \alpha_{n+1} \beta_{n+1}} \frac{x_{n+1}^4}{y_n}
 \left(-\frac{m_{n+1} M_{n+1}}{\alpha_{n+1}}    - \frac{m_{n+1}}{\alpha_{n+1}}   V_0 - \frac{m_{n+1}}{\alpha_{n+1}} \frac{\alpha_n}{\alpha_{n+1}} \frac{x_{n+1}^2}{x_n^2} \frac{\partial V_0}{\partial \alpha}+ x_{n+1}^{-1}\OO_5(x_n,x_{n+1})
 \right)\\
& -
\frac{1}{4 \alpha_{n} \beta_{n}} \frac{x_{n}^4 y_{n+1}}{y_n^2}
\left(-\frac{m_n M_n}{\alpha_n}   +\frac{m_{n+1}}{\alpha_{n+1}} \frac{\alpha_n}{\alpha_{n+1}} \frac{x_{n+1}^4}{x_n^4} \frac{\partial V_0}{\partial \alpha}+ x_{n}^{-1}\OO_5(x_n,x_{n+1})
 \right)
  \\
   =& \frac{x_n^4}{y_n} \Bigg(-A^4\left(1+\frac{1}{M_{n+1}} \wt v_{0,0}+\frac{1}{M_{n+1}}\frac{\alpha_n}{\alpha_{n+1}} \wt v_{1,0} A^2\right)+ B \left(1 - \frac{m_{n+1}}{4 \alpha_{n+1}^2\beta_n} \wt v_{1,0} A^4\right) \\
   & + L_1 \xi_{n+1} + L_2 \eta_{n+1} + L_3(\theta-\wt \theta_0)+ \OO_2 (\xi_{n+1},\eta_{n+1},\theta-\wt \theta_0) + \OO_4(x_n) \Bigg) \\
  = & \frac{x_n^4}{y_n} \left (L_1 \xi_{n+1} + L_2 \eta_{n+1} + L_3(\theta-\wt \theta_0)+ \OO_2 (\xi_{n+1},\eta_{n+1},\theta-\wt \theta_0) + \OO_4(x_n)  \right ),
\end{aligned}
\]
where, taking into account~\eqref{def:wtVj0} and ~\eqref{def:alphanbetanetcimplicites},
$$
L_1 
= -4 + \OO(m_n,m_{n+1}), \qquad 
L_2  =  
1+ \OO(m_n,m_{n+1}), \qquad 
L_3  = 
\OO(m_n,m_{n+1}).
$$

We emphasize that the non-explicit error terms are now analytic functions in their variables, the only non-regular factor being the quotient $x_n^4/y_n$.

The rest of the equations can be obtained immediately from the Hamiltonian structure and Proposition~\ref{prop:Hamiltoniapromitjat}. Concerning $x_n$ and $y_n$, using~\eqref{def:alphanbetanetcimplicites}, we have that
\begin{equation}
\label{equacionsperxniyn}
\begin{aligned}
\dot x_n & = - \frac{x_n^3}{4\alpha_n \beta_{n}}\frac{\partial H_\omega}{\partial y_n} = - x_n^3 y_n + \OO_9(x_n),\\
\dot y_n & = \frac{x_n^3}{4\alpha_n \beta_{n}}\frac{\partial H_\omega}{\partial x_n}= -\left( 1- \frac{m_{n+1}}{4\alpha_{n+1}^2 \beta_n}A^4 \wt v_{1,0}\right)x_n^4 + x_n^4  \OO_1(\xi_{n+1},\theta_n-\wt \theta_0)+ \OO_8(x_n).
\end{aligned}
\end{equation}
In the case of $\theta_n$ and $g_n$, by the choice of $G_n^0$ in~\eqref{def:Gn0} and Lemma~\ref{lem:fitestermeacoblament} and using that, by Lemma~\ref{lem:fitestermeacoblament} and the choice of $\wt \theta_0$, $\wt v_{0,1} = 0$, the equations are
\[
\begin{aligned}
\dot \theta_n  = & \frac{\partial H_\omega}{\partial g_n} = \frac{1}{2}\left(\frac{1}{\alpha_{n}^2 \mu_{n}} +\frac{A^4}{\alpha_{n+1}^2 \mu_{n+1}}\right)x_{n}^4 g_n + x_n^4 \OO_1(\xi_{n+1}) + \OO(\rho) \OO_4(x_n) + \OO_6(x_n),\\
\dot g_n  = & -\frac{\partial H_\omega}{\partial \theta_n} = \frac{m_{n+1}}{2\alpha_{n+1}} x_{n+1}^2 \frac{\partial V_0}{\partial \theta_n}
\left(\frac{\alpha_n}{\alpha_{n+1}}\frac{x_{n+1}^2}{x_n^2},\theta_n\right) +  \OO(\rho)\OO_4(x_n,x_{n+1}) + \OO_6(x_n,x_{n+1})\\
 = & \frac{m_{n+1}}{2\alpha_{n+1}} x_{n}^2 (A+\xi_{n+1})^2\frac{\partial V_0}{\partial \theta_n}
\left(\frac{\alpha_n}{\alpha_{n+1}}(A+\xi_{n+1})^2,\theta_n\right) +  \OO(\rho)\OO_4(x_n) + \OO_6(x_n) \\
 = & \frac{\alpha_n}{\alpha_{n+1}^2} m_{n+1}A^3 \wt v_{1,1} x_n^2 \xi_{n+1} + \frac{1}{2  \alpha_{n+1}} m_{n+1}A^2 { {\wt v_{0,2}}} x_n^2 (\theta_n-
\wt \theta_0) + x_n^2 \OO_2(\xi_{n+1},\theta-\wt \theta_0)\\
& +  \OO(\rho)\OO_4(x_n) + \OO_6(x_n).
\end{aligned}
\]
In view of Lemma~\ref{lem:fitestermeacoblament}, if $\wt \theta_0 = 0$, then $\wt v_{1,1} = 0$ but, if $\wt \theta_0 = \theta^0(A,m)$,
then $\wt v_{1,1} \neq 0$. The coefficient $\wt v_{0,2} $ is different from 0 for both choices of $\wt \theta_0$.

Finally, the equations for $\varphi$ and $\rho$ become
\begin{equation}
\label{eq:varphirho}
\begin{aligned}
\dot \varphi & = \frac{\partial H_\omega}{\partial \rho}=  \omega  + \OO_4(x_n) ,\\
\dot \rho & = -\frac{\partial H_\omega}{\partial \varphi} = (\OO(\rho)+\OO_2(y_n,g_n,x_n)) \OO_6(x_n).
\end{aligned}
\end{equation}
The change $y_n=x_n(\nu + \zeta_n$)  regularizes the term $x_n^4/y_n$ in the equation for $\eta_{n+1}$. Indeed, with this change,  
\begin{equation*}
\begin{aligned}
\dot \xi_{n+1}  = & \nu x_n^3 (1+ \nu^{-1} \zeta_n)\big[-(2+\OO(m_n,m_{n+1})) \xi_{n+1} - (1+\OO(m_n,m_{n+1})) \eta_{n+1}    +\OO_2(\xi_{n+1},\eta_{n+1} )\big] \\ & + \OO_8(x_n),\\
\dot \eta_{n+1}  = & \nu^{-1} x_n^3 (1 + \nu^{-1} \zeta_n)^{-1}\big[ -(4+\OO(m_n,m_{n+1})) \xi_{n+1} + (1+\OO(m_n,m_{n+1})) \eta_{n+1} \\
&  + \OO(m_n,m_{n+1})(\theta_n-\wt \theta_0)+\OO_2(\xi_{n+1},\eta_{n+1},\theta_n-\wt \theta_0 ) +\OO_4(x_n) \big],
\end{aligned}
\end{equation*}
while equations~\eqref{equacionsperxniyn} are transformed into
\begin{equation*}
\begin{aligned}
\dot x_n & = - \nu x_n^4  (1+ \nu^{-1} \zeta_n)+\OO_9(x_n),\\
\dot \zeta_n & = 2 \nu x_n^3 \zeta_n +  x_n^3  \OO_1(\xi_{n+1},\theta_n-\wt \theta_0)+x_n^3\zeta_n^2+\OO_5(x_n).
\end{aligned}
\end{equation*}
Equations~\eqref{eq:varphirho} become
\begin{equation*}
\begin{aligned}
\dot \varphi & = \omega + \OO_4(x_n),\\
\dot \rho &  = (\OO(\rho)+\OO_2(x_n,g_n)) \OO_6(x_n).
\end{aligned}
\end{equation*}
The equations for $\theta_{n},g_n$ remain unchanged (the higher order terms $\OO_l$ can change their explicit expression but they keep the same order). 

After this change, the vector field is analytic in its arguments in a neighborhood of
\[
\{\varphi\in \T, \;\rho= 0,\; x_n=0,\; \zeta_n = 0,\; \xi_{n+1}= 0,\; \eta_{n+1}= 0,\; \theta_n\in \T ,g_n = 0\}.
\]

Now we deal with the last blow-up:
\begin{equation*}
\xi_{n+1}  = x_n \wt \xi_{n+1}, \quad \theta_n   = \wt \theta_0 + x_n \wt \theta_n,  \quad \rho   = x_n^3 \wt \rho,\quad 
\eta_{n+1}  = x_n \wt \eta_{n+1},    \quad g_n   = \Gamma_n^{-1}\wt g_n.
\end{equation*}
 
Proceeding as before, it is immediate to check that  
\begin{equation}
\label{equacionsperxniynterceraversio}
\begin{aligned}
\dot x_n & = - \nu x_n^4 + x_n^4 \OO_1(\zeta_n)+ \OO_9(x_n),\\
\dot \zeta_n & = 2 \nu x_n^3 \zeta_n +  x_n^4  \OO_1(\wt \xi_{n+1},\wt \theta_n)+x_n^3\zeta_n^2+\OO_5(x_n).
\end{aligned}
\end{equation}
Also, for $(\wt \xi_{n+1},\wt \eta_{n+1})$,
\begin{align}
\dot{\tilde{ \xi}}_{n+1}  = & \nu x_n^3 \big[-(1+\OO(m_n,m_{n+1})) \wt \xi_{n+1} - (1+\OO(m_n,m_{n+1})) \wt \eta_{n+1}   + \OO_2(\wt \xi_{n+1},\wt \eta_{n+1},\zeta )\big] + \OO_7(x_n), \notag \\
\dot{\tilde{\eta}}_{n+1}  = & \nu^{-1} x_n^3\big[ -(4+\OO(m_n,m_{n+1}))\wt \xi_{n+1} + (1+ \nu^2+\OO(m_n,m_{n+1}))  \tilde\eta_{n+1} 
\label{eq:xinmes1etanmes1terceraversio}\\
&  + \OO(m_n,m_{n+1})\wt \theta + x_n\OO_2(\tilde\xi_{n+1},\tilde\eta_{n+1},\tilde\theta)+\OO_3(x_n)\big], \notag
\end{align}
and for $(\wt \theta_n, g_n)$,
\begin{equation}
\label{eq:thetangnterceraversio}
\begin{aligned}
\dot{\wt \theta}_n & = \nu x_n^3 \wt \theta_n + x_{n}^3 \wt g_n + x_n^4 \OO_1(\wt \xi_{n+1})
+ x_n^3 \OO_2(x_n, \wt \xi_{n+1},\wt \theta_n, \zeta_n),\\
\dot{\tilde{g}}_n &  = \gamma_1 x_n^3 \wt \xi_{n+1} + \gamma_2 x_n^3 \wt \theta_n + x_n^3 \OO_2(\wt \xi_{n+1},\wt \theta_n)
+ \OO_6(x_n), 
\end{aligned}
\end{equation}
where, using~\eqref{asymptoticGamma_n}, 
\begin{equation}
\label{def:coeficientsdedottildegn}
\begin{aligned}
\gamma_1 & =  \frac{\alpha_n}{\alpha_{n+1}^2} m_{n+1}A^3 \wt v_{1,1}\Gamma_n = \frac{8}{M_n}(m_n+m_{n+1}+\OO_2(m_n,m_{n+1})) \frac{\wt v_{1,1}}{m_n},\\
\gamma_2 & =  \frac{1}{2\alpha_{n+1}} m_{n+1}A^2 \wt v_{0,2} \Gamma_n= \frac{4}{M_n}(m_n+m_{n+1}+\OO_2(m_n,m_{n+1})) \frac{\wt v_{0,2}}{m_n},
\end{aligned}
\end{equation}
and, finally, for $(\varphi,\wt \rho)$,
\begin{equation}
\label{eq:varphirhoterceraversio}
\begin{aligned}
\dot \varphi & = \omega+   \OO_4(x_n),\\
\dot{ \tilde{ \rho}} &  = 3\nu x_n^3 \wt \rho+ \OO(\wt \rho)\OO_6(x_n) +\OO_2(x_n,g_n)\OO_3(x_n) + \tilde{\rho} x_n^3 \OO_1(\zeta_n).
\end{aligned}
\end{equation}

To finish the proof of the proposition, the last change is simply a linear change of variables to distinguish between the contracting and the expanding variables. It only involves the variables $(\wt \xi_{n+1},\wt \eta_{n+1}, \wt \theta_n, \wt g_n)$. Denoting $Z = (\zeta_n, \wt \xi_{n+1},\wt \eta_{n+1},\wt \theta_n,\wt g_n,\wt \rho)^\top$, equations~\eqref{equacionsperxniynterceraversio}, \eqref{eq:xinmes1etanmes1terceraversio}, \eqref{eq:thetangnterceraversio} and~\eqref{eq:varphirhoterceraversio} can be written as
\begin{equation*}
\left\{
\begin{aligned}
\dot x_n & = -\nu x_n^4 + x_n^4 \OO_1(\zeta_n)+ \OO_9(x_n),\\
\dot Z & = x_n^3 \overline{\mathbf{M}} Z + x_n^3\OO_2(x_n,Z), \\
\dot \varphi & = \omega + x_n^3\OO_1(x_n,Z),
\end{aligned}
\right.
\end{equation*}
with,
\begin{equation*}
M = \begin{pmatrix}
2+\eps_{1,1} & 0 & 0 & 0 & 0 & 0 \\
0 & -1+\eps_{2,2} & -1+\eps_{2,3} & \eps_{2,4} & 0 & 0 \\
0 & -4+\eps_{3,2} & 2+\eps_{3,3} & \eps_{3,4} & 0 & 0 \\
0 & 0 & 0 & 1+\eps_{4,4} & 1 & 0 \\
0 & \gamma_1 & 0 & \gamma_2 & 0 & 0 \\
0 & 0 & 0 & 0 & 0 & (3+\eps_{6,6}) \Id
\end{pmatrix},
\end{equation*}
where, using that, by~\eqref{def:nuGamman}, $\nu = 1+\OO(m_n,m_{n+1})$, Lemma~\ref{lem:fitestermeacoblament} and~\eqref{def:coeficientsdedottildegn},
\begin{equation*}
\eps_{i,j}  = \OO(m_n,m_{n+1}).
\end{equation*}
Taking into account the definition of $\wt v_{1,1}$ and $\wt v_{0,2}$ in~\eqref{def:wtVj0} and Lemma~\ref{lem:fitestermeacoblament}, we have that
\begin{equation}
\label{midagamma1gamma2}
\gamma_i = \OO(m_n,m_{n+1}), \qquad i = 1,2,
\end{equation}
and
\begin{equation}
\label{coef:signecoefm65}
\begin{cases}
\gamma_2 <0, \qquad \text{if \ \ $\wt \theta_0 = \pi$} \\
\gamma_2 > 0,\qquad \text{if \ \ $\wt \theta_0 = \wt \theta(A,m_n,m_{n+1})$}.
\end{cases}
\end{equation}

Next, we need to diagonalize the submatrix $\mathbf{M}$. 

We notice that the most part of the matrix $\textbf{M}$ is already in diagonal form so that it is only necessary to diagonalize the submatrix
\begin{equation*}
\wt{\mathbf{M}}
=
\begin{pmatrix}
-1+\eps_{2,2} & -1+\eps_{2,3} & \eps_{2,4} & 0  \\
-4+\eps_{3,2} & 2+\eps_{3,3} & \eps_{3,4} & 0  \\
 0 & 0 & 1+\eps_{4,4} & 1  \\
 \gamma_1 & 0 & \gamma_2 & 0
\end{pmatrix} = \begin{pmatrix}
\mathbf{M}_{1,1} & \mathbf{M}_{1,2} \\
\mathbf{M}_{2,1} & \mathbf{M}_{2,2}
\end{pmatrix}, 
\end{equation*}
where $\mathbf{M}_{i,j}$ are the $2\times 2$ blocks of $\wt{\mathbf{M}}$.

We observe that the eigenvalues of $\mathbf{M}_{1,1}$ are $3+\OO(m_n,m_{n+1})$ and $-2+\OO(m_n,m_{n+1})$ and, using~\eqref{midagamma1gamma2} and~\eqref{coef:signecoefm65}, the eigenvalues of $\mathbf{M}_{2,2}$ are $1 + \OO_1(m_n,m_{n+1})$ and $-\gamma_2 +\OO_2(m_n,m_{n+1})$. The corresponding eigenvectors are, respectively,
$v_1= (1,-4)^\top+ \OO(m_n,m_{n+1})$, $v_2=(1,1)^\top+ \OO(m_n,m_{n+1})$, $v_3=(1,0)^\top+ \OO_2(m_n,m_{n+1})$ and $v_4=(1,-1)^\top+ \OO(m_n,m_{n+1})$. Let $\mathbf{B}_{1,1}$ and $\mathbf{B}_{2,2}$ be the matrices with columns $v_1$, $v_2$ and $v_3$, $v_4$, respectively, and
\[
\mathbf{B} = \begin{pmatrix}
\mathbf{B}_{1,1} & 0 \\
0 & \mathbf{B}_{2,2}
\end{pmatrix}.
\]
Clearly, the matrix
\begin{equation*}
\wh{\mathbf{M}} = \mathbf{B}^{-1} \wt{\mathbf{M}} \mathbf{B} = \begin{pmatrix}
\wh{\mathbf{M}}_{1,1} & \wh{\mathbf{M}}_{1,2} \\
\wh{\mathbf{M}}_{2,1} & \wh{\mathbf{M}}_{2,2}
\end{pmatrix}
\end{equation*}
satisfies
\begin{equation*}
\begin{aligned}
\wh{\mathbf{M}}_{1,1} &= \begin{pmatrix}
3+\OO(m_n,m_{n+1}) & 0 \\
0 & -2+\OO(m_n,m_{n+1})
\end{pmatrix}, \\ \wh{\mathbf{M}}_{2,2} &= \begin{pmatrix}
1+\OO(m_n,m_{n+1}) & 0 \\
0 & -\gamma_2 +\OO_2(m_n,m_{n+1})
\end{pmatrix}
\end{aligned}
\end{equation*}
while
\begin{equation*}
\wh{\mathbf{M}}_{1,2}, \wh{\mathbf{M}}_{2,1} = \OO(m_n,m_{n+1}).
\end{equation*}
It remains to prove that there exists
\[
\mathbf{A} = \Id_{4\times4} + \OO_2(m_n,m_{n+1})
\]
such that
\begin{equation}\label{AMA_new}
\mathbf{A}^{-1} \wh{\mathbf{M}} \mathbf{A} = \begin{pmatrix}
\wt{\mathbf{M}}_{1,1}  & 0 \\
0 & \wt{\mathbf{M}}_{2,2}
\end{pmatrix},
\end{equation}
with
\[
\wt{\mathbf{M}}_{1,1} = \wh{\mathbf{M}}_{1,1} + \OO_2(m_n,m_{n+1}), \qquad \wt{\mathbf{M}}_{2,2} = \wh{\mathbf{M}}_{2,2} + \OO_2(m_n,m_{n+1})
\]
being diagonal matrices. We notice that, taking $\wt{\mathbf{C}} = \mathbf{B} \mathbf{A}$ and
\[
\mathbf{C} = \begin{pmatrix}
\Id & 0 & 0 \\
0 & \wt{\mathbf{C}} & 0 \\
0 & 0 & \Id
\end{pmatrix}, 
\]
the proposition follows. In order to prove~\eqref{AMA_new}, we first look for $\mathbf{A}_{1,2}$ such that the matrix
\[
\wt{\mathbf{A}} = \begin{pmatrix}
\Id & \mathbf{A}_{1,2} \\
0 & \Id
\end{pmatrix}
\]
satisfies
\begin{equation}
\label{primerpasdiagonalitzacio}
\wt{\mathbf{A}}^{-1} \wh{\mathbf{M}} \wt{\mathbf{A}}= \begin{pmatrix}
\wh{\mathbf{M}}_{1,1} + \OO_2(m_n,m_{n+1}) & 0 \\
\wh{\mathbf{M}}_{2,1} & \wh{\mathbf{M}}_{2,2}+ \OO_2(m_n,m_{n+1})
\end{pmatrix}.
\end{equation}
Since
\[
\wt{\mathbf{A}}^{-1} \wh{\mathbf{M}} \wt{\mathbf{A}} = \begin{pmatrix}
\wh{\mathbf{M}}_{1,1} + \mathbf{A}_{1,2} \wh{\mathbf{M}}_{2,1} & \wh{\mathbf{M}}_{1,1} \mathbf{A}_{1,2}- \mathbf{A}_{1,2} \wh{\mathbf{M}}_{2,2} + \wh{\mathbf{M}}_{1,2} - \mathbf{A}_{1,2} \wh{\mathbf{M}}_{2,1} \mathbf{A}_{1,2} \\
\wh{\mathbf{M}}_{2,1} & \wh{\mathbf{M}}_{2,2}+ \wh{\mathbf{M}}_{2,1} \mathbf{A}_{1,2}
\end{pmatrix},
\]
equation \eqref{primerpasdiagonalitzacio} is equivalent to find a solution $\mathbf{A}_{1,2} = \OO(m_n,m_{n+1})$ of
\begin{equation}
\label{def:equacioperA21}
\LL \mathbf{A}_{1,2} = -\wh{\mathbf{M}}_{1,2} + \mathbf{A}_{1,2} \wh{\mathbf{M}}_{2,1} \mathbf{A}_{1,2},
\end{equation}
where
\begin{equation*}
\LL \mathbf{A}_{1,2} = \wh{\mathbf{M}}_{1,1} \mathbf{A}_{1,2}- \mathbf{A}_{1,2} \wh{\mathbf{M}}_{2,2}.
\end{equation*}

One can easily check that $\LL$ is invertible and then
we rewrite equation~\eqref{def:equacioperA21} as the fixed point equation
\[
\mathbf{A}_{1,2} = \F \mathbf{A}_{1,2}  := - \L^{-1} \wh M_{1,2} + \L^{-1} \mathbf{A}_{1,2} \wh{\mathbf{M}}_{2,1} \mathbf{A}_{1,2}.
\]
We have that $\|\F 0\| = \|\LL^{-1} \wh M_{1,2}\| \le \|\LL^{-1}\| \|\wh{\mathbf{M}}_{1,2}\| = \OO(m_n,m_{n+1})$. Defining $\rho = 2 \|\F 0\|$, $\|\F \mathbf{A}_{1,2} - \F \wt{\mathbf{A}}_{1,2} \| \le 2\rho \|\L^{-1}\| \|\mathbf{A}_{2,1} - \wt{\mathbf{A}}_{2,1} \|$ if $\mathbf{A}_{1,2}, \wt{\mathbf{A}}_{1,2}$ satisfy $\|\mathbf{A}_{1,2}\|,\|\wt{\mathbf{A}}_{1,2}\| \le \rho$. Consequently, $\F$ is a contraction in the ball of radius $\rho$, if $m_n$ and $m_{n+1}$ are small enough, which proves the existence of $\mathbf{A}_{1,2} = \OO(m_n,m_{n+1})$.

Next, let
\[
\wt{\mathbf{B}} = \begin{pmatrix} \Id + \OO_2(m_n,m_{n+1}) & 0 \\
0 & \Id + \OO_2(m_n,m_{n+1})
\end{pmatrix}
\]
such that the diagonal blocks of
\[
\wt{\mathbf{B}}^{-1} \wt{\mathbf{A}}^{-1} \wh{\mathbf{M}} \wt{\mathbf{A}} \wt{\mathbf{B}} = \begin{pmatrix}
\mathbf{N}_{1,1} & 0 \\
\mathbf{N}_{2,1} & \mathbf{N}_{2,2}
\end{pmatrix}
\]
are in diagonal form. Such matrix $\wt{\mathbf{B}}$ exists because the diagonal blocks of $\mathbf{A}^{-1} \wh{\mathbf{M}} \mathbf{A}$ are already in diagonal form
up to errors of size $\OO_2(m_n,m_{n+1})$. We observe that
\[
\mathbf{N}_{1,1} = \wh{\mathbf{M}}_{1,1}+ \OO_2(m_n,m_{n+1}), \quad \mathbf{N}_{2,2} = \wh{\mathbf{M}}_{2,2}+ \OO_2(m_n,m_{n+1}), \quad \mathbf{N}_{2,1} = \wh{\mathbf{M}}_{2,1} +  \OO_2(m_n,m_{n+1}).
\]

Next, let $\mathbf{A}_{2,1}$ be such that
\[
\mathbf{N}_{2,2} \mathbf{A}_{2,1}- \mathbf{A}_{2,1} \mathbf{N}_{1,1} = - \mathbf{N}_{2,1}.
\]
Such matrix exists, since, as the operator $\LL$ above, the operator $\mathbf{A}_{2,1} \mapsto \mathbf{N}_{2,2} \mathbf{A}_{2,1}- \mathbf{A}_{2,1} \mathbf{N}_{1,1}$ is invertible. Let
\[
\wh{\mathbf{A}} = \begin{pmatrix}
\Id & 0 \\
\mathbf{A}_{2,1} & \Id
\end{pmatrix}.
\]
It is immediate to check that $\wh{\mathbf{A}}^{-1} \wt{\mathbf{B}}^{-1} \wt{\mathbf{A}}^{-1} \wt M \wt{\mathbf{A}}\wt{\mathbf{B}} \wh{\mathbf{A}}$ is block diagonal and, in fact, diagonal provided $\mathbf{N}_{1,1}, \mathbf{N}_{2,2}$ are diagonal matrices.
\end{proof}

\subsection{Applying Theorems~\ref{thm:approximationflows} and~\ref{th:existenceflow}. Collinear case}
\label{sec:provateoremacascolinial}
We need to distinguish the cases $\wt \theta_0 = \pi$ and $\wt \theta_0 = \theta_0 (A,m_n)$ since the corresponding stable invariant manifolds have different dimension (see Theorem~\ref{thm:movimentsdoblementparabolicsv2}). In this section we consider the case $\wt \theta_0 = \pi$, that corresponds to the collinear configuration. In this case, the constant $\gamma_2$ in the matrix $\mathbf{M}$ in~\eqref{eq:sistemacompletredressat} is negative.  
Following the notation of Section~\ref{sec:setupmap}, we introduce $x = (x_n,\wh \eta_{n+1})^\top$, $y = (\zeta_n, \wh \xi_{n+1},\wh \chi_n, \wh \upsilon_n,\wt \rho)^\top$ with $x \in \R^2$, $y\in \R^{4+2(n-1)}$ and $\varphi \in \T^{2(n-1)}$. Then, equations~\eqref{eq:sistemacompletredressat} become
\begin{equation}
\label{eq:sistemaperaplicarteorema1}
\begin{aligned}
\dot x & = f(x,y) + \OO_{5}(x,y),\\
\dot y & = g(x,y) + \OO_{5}(x,y), \\
\dot \varphi & = \omega + \OO_{4}(x,y),
\end{aligned}
\end{equation}
where
\begin{equation}
\label{figcolineal}
\begin{aligned}
f(x,y) & =  x_n^{3} S x, \\
g(x,y) & =  x_n^{3} U y,
\end{aligned}
\end{equation}
and
\begin{equation}
\label{SiUcolineal}
S = \begin{pmatrix}
-\lambda_1 & 0  \\
0 & -\lambda_2
\end{pmatrix}, \qquad
U =
\begin{pmatrix}
\wt \lambda_1 & 0 & 0 & 0 & 0 \\
0 & \wt \lambda_2 & 0 & 0 & 0 \\
0 & 0 & \wt \lambda_3 & 0 & 0 \\
0 & 0 & 0 &  \wt \lambda_4 & 0 \\
0 & 0 & 0 & 0 & \wt \lambda_5 \Id
\end{pmatrix}
\end{equation}
with
\begin{equation}
\label{def:lambdai1}
\begin{aligned}
\lambda_1 & = \nu, & \quad \lambda_2 & = 2 + \eps_{2,2}, &  & & \\
\wt \lambda_1 & = 2 + \wt \eps_{1,1}, & \quad \wt \lambda_2 & = 3 + \wt \eps_{2,2}, & \quad \wt \lambda_3 & = 1+\wt \eps_{3,3},
& \quad \wt \lambda_4 = -\gamma_2 +\wt \eps_{4,4}, 
& \quad \wt \lambda_5 = 1+\wt \eps_{5,5},
\end{aligned}
\end{equation}
and
\[
\eps_{i,i} = \OO(m_n,m_{n+1}),  \quad \tilde\eps_{j,j} = \OO(m_n,m_{n+1}), \; j \neq 4 , \quad \wt \eps_{4,4} = \OO_2(m_n,m_{n+1}).
\]

For $\delta, \kappa >0$, we introduce the cone in $\R^2$
\begin{equation*}
V_{\delta,\kappa} = \{x = (\wh x_n,\wh \eta_{n+1}) \in \R^2 \mid \; 0 < \wh x_n < \delta, \;
|\wh \eta_{n+1}| \le \kappa \wh x_n\}.
\end{equation*}
For all $x = (\wh x_n,\wh \eta_{n+1}) \in V_{\delta,\kappa}$ we have that
\begin{equation}
\label{propietatdecon1}
\wh x_n ,|\wh \eta_{n+1}| \le \|x\| \le (1+\kappa^2)^{1/2} \wh x_n,
\end{equation}
where $\|\cdot\|$ denotes the standard Euclidean norm in $\R^2$.

The next proposition guarantees that we can apply Theorems~\ref{thm:approximationflows} and~\ref{th:existenceflow} to Equation~\eqref{eq:sistemaperaplicarteorema1}.

\begin{proposition}
\label{prop:cascolinial}
The vector field corresponding to equation~\eqref{eq:sistemaperaplicarteorema1} has the form~\eqref{systemflow} with $N= M = P = 4$.
If $m_n$, $m_{n+1}$ are small enough, for $\delta$ small enough, it satisfies hypothesis~\ref{hipotesisv} in Section~\ref{sec:setupmap} in the domain $V_{\delta,\kappa}$ with
\[
a_V = \frac{1}{(1+\kappa^2)^{1/2}} \min \left\{\frac{1}{(1+\kappa^2)^{1/2}} \left(1+\OO(m_n,m_{n+1})  \right),  1+\OO(m_n,m_{n+1}) \right\} >0.
\]
The constants $a_f$, in~\eqref{defa}, $b_f, A_f$, in~\eqref{defbCf} and $B_g$, in~\eqref{defABg}, in the domain $V_{\delta,\kappa}$
have the following values:
\[
\begin{aligned}
a_f & \ge  \nu+ \OO(\delta^{3},\kappa^2), & \qquad b_f & \le  1+\OO(m_n,M_{n+1})+\OO(\kappa^2),\\
A_f & \ge  2+\OO(m_n,m_{n+1}) + \OO(\kappa)+ \OO(\wh \delta^{3},\kappa^2)  , & \qquad B_g  & \ge  -\gamma_2+\OO_2(m_n,m_{n+1}).
\end{aligned}
\]
Hence, if $m_n$, $m_{n+1}$ are small enough so that $-\gamma_2 + \OO_2(m_n,m_{n+1}) >0$, then, for $\kappa$ and $\delta$ small enough,
\[
a_f >0, \qquad A_f > b_f \max \{ 1, N-P\}, \qquad B_g >0.
\]
Consequently, Equation~\eqref{eq:sistemaperaplicarteorema1} satisfies the hypotheses of Theorems~\ref{thm:approximationflows} and~\ref{th:existenceflow}. The origin possesses a $2+2(n-1)$ analytic stable invariant manifold.
\end{proposition}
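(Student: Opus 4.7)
The plan is to verify the five structural hypotheses (i)--(v) of Section~\ref{sec:setupmap} for the vector field~\eqref{eq:sistemaperaplicarteorema1} on the cone domain $V_{\delta,\kappa}$, extract the quantitative bounds for $a_f$, $b_f$, $A_f$, $B_g$ and~$a_V$, and then invoke Theorems~\ref{thm:approximationflows} and~\ref{th:existenceflow}. First I read off the homogeneous decomposition: the leading term $f=x_n^3 Sx$ has components $-\lambda_1 x_n^4$ and $-\lambda_2 x_n^3\wh\eta_{n+1}$, each homogeneous of degree~$4$ in $(x,y)$; similarly $g=x_n^3 Uy$ is homogeneous of degree~$4$; and the angular remainder is $\OO_4(x,y)$. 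Thus $N=M=P=4$, hypotheses~(i)--(ii) are immediate, and since neither leading term depends on $\theta$, averaging is trivial: $\overline f^N=f^N$ and $\overline g^M=g^M$. Condition~(iv) follows from $g^M(x,0,\theta,\lambda)=x_n^3 Uy|_{y=0}=0$, and $\overline f^N(x,0)$ together with $\partial_y\overline g^M(x,0)=x_n^3 U$ extend analytically to all of~$\R^n$.

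Next I estimate the four rate constants by elementary asymptotic expansions, working with the Euclidean norm on $\R^2$. For $a_f$ I use the first order formula
\begin{equation*}
\|x+\overline f^N(x,0)\|-\|x\|=\frac{\langle x,\overline f^N(x,0)\rangle}{\|x\|}+\OO(\|x\|^7)=-\frac{\lambda_1 x_n^5+\lambda_2 x_n^3\wh\eta_{n+1}^2}{\|x\|}+\OO(\|x\|^7),
\end{equation*}
and dividing by $\|x\|^4$ and combining with $|\wh\eta_{n+1}|\le\kappa x_n$ yields $a_f\ge\nu+\OO(\delta^3,\kappa^2)$; the bound $b_f\le 1+\OO(m_n,m_{n+1})+\OO(\kappa^2)$ is the supremum of $\|\overline f^N(x,0)\|/\|x\|^4$. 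The matrix $\Id+D_x\overline f^N(x,0)$ perturbs $\mathrm{diag}(1-4\lambda_1 x_n^3,1-\lambda_2 x_n^3)$, and its smaller diagonal entry, together with $\OO(\kappa)$ off-diagonal corrections, gives $A_f\ge 2+\OO(m_n,m_{n+1})+\OO(\kappa)+\OO(\delta^3,\kappa^2)$. Since $\Id-D_y\overline g^M(x,0)=\Id-x_n^3 U$ is diagonal, its operator norm equals $1-x_n^3\min_i\wt\lambda_i$; the minimum is $\wt\lambda_4=-\gamma_2+\OO_2(m_n,m_{n+1})$, which in the collinear case~\eqref{coef:signecoefm65} is strictly positive, giving $B_g\ge-\gamma_2+\OO_2(m_n,m_{n+1})>0$.

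The key geometric step is hypothesis~(v). Writing the image as $(a,b)=(x_n(1-\lambda_1 x_n^3),\wh\eta_{n+1}(1-\lambda_2 x_n^3))$, one computes that its distance to the face $\wh\eta_{n+1}=\kappa x_n$ is $\kappa x_n|\lambda_2-\lambda_1|x_n^3/\sqrt{1+\kappa^2}+\OO(x_n^5)$ in the worst case $\wh\eta_{n+1}=\kappa x_n$, while the distance to the other faces ($\wh\eta_{n+1}=-\kappa x_n$ and $x_n=\delta$) is strictly larger; since $\lambda_2-\lambda_1=1+\OO(m_n,m_{n+1})>0$ the cone is forward invariant, and comparing with $\|x\|^4\le(1+\kappa^2)^2 x_n^4$ yields the stated value of~$a_V$. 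Having verified (i)--(v), the remaining hypotheses of Theorems~\ref{thm:approximationflows} and~\ref{th:existenceflow} read $a_f>0$, $A_f>b_f\max\{1,N-P\}=b_f$ (immediate since $2>1$ to leading order), and, because $M=N$, $2+B_g/a_f>0$ (automatic from $B_g>0$). These theorems then produce an analytic invariant stable manifold $K(V_\rho\times\T^{2(n-1)}\times\R,\lambda)\subset W^{\mathrm{s}}_{\Vext_{\rho,\beta}}$ of dimension $2+2(n-1)$. The main subtlety I expect is the cone-invariance check: the very sign inequality $\lambda_2>\lambda_1$ that pulls the cone inward is the same sign condition $A^3B<A$ of Lemma~\ref{lem:constantsAiB}, and it is precisely the signed quantity $\gamma_2<0$ that distinguishes the collinear case from the equilateral one and ultimately fixes the dimension $2+2(n-1)$ of the invariant manifold.
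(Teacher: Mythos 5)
Your proof follows the same route as the paper's: identify $N=M=P=4$ from the block structure of~\eqref{eq:sistemaperaplicarteorema1}, estimate $a_f$, $b_f$, $A_f$, $B_g$ and $a_V$ by elementary Euclidean-norm expansions on the cone $V_{\delta,\kappa}$, establish hypothesis~(v) by checking that the truncated map $x\mapsto x+\overline f^N(x,0)$ pushes the cone strictly inward because $\lambda_2>\lambda_1$, and then invoke Theorems~\ref{thm:approximationflows} and~\ref{th:existenceflow}. Your inner-product expansion $\|x+\overline f^N(x,0)\|-\|x\|=\langle x,\overline f^N(x,0)\rangle/\|x\|+\OO(\|x\|^7)$ is equivalent to the paper's direct square-root expansion, and the rest of the computations are correct.

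Two small inaccuracies in the commentary, neither of which breaks the argument. First, Lemma~\ref{lem:constantsAiB} gives the \emph{equality} $A^3B=A$ (the first defining equation for $A$ and $B$), not an inequality $A^3B<A$, and the inequality $\lambda_2>\lambda_1$ driving cone-invariance simply reads $2+\OO(m_n,m_{n+1})>\nu=1+\OO(m_n,m_{n+1})$, which holds in both the collinear and equilateral configurations; the case distinction is encoded only in the sign of $\gamma_2$ from~\eqref{coef:signecoefm65}, as you do correctly observe in the final clause. Second, for $A_f$ the operator norm of $\Id+D_x\overline f^N(x,0)$ is governed by the \emph{larger} diagonal entry $1-\lambda_2\wh x_n^3$ (the one with the smaller coefficient $\lambda_2$, not $4\lambda_1$), together with the $\OO(\kappa)$ off-diagonal term, which is what the paper extracts from Gershgorin applied to $(\Id+D_xf)^\top(\Id+D_xf)$; the phrase ``smaller diagonal entry'' is misleading, although your final bound $A_f\ge 2+\OO(m_n,m_{n+1})+\OO(\kappa)$ is correct.
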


\begin{proof}
We will use the standard Euclidean norm and its induced matrix norm to compute all the constants.
We start by computing $a_V$. Clearly, if $(a,b) \in V_{\delta, \kappa}$,
\[
d((a,b), V_{\delta,\kappa}^c) = \min\left\{ \frac{1}{(1+\kappa^2)^{1/2}}\left(\kappa a - |b|\right), \delta-a \right\}.
\]
Then, if $x\in V_{\kappa,\delta}$, denoting $x^* = (\wh x_n^*,\wh \eta_{n+1}^*) = x+ f(x,0)$, since
\[
| \wh \eta_{n+1}^* | = |\wh \eta_{n+1}|(1-\lambda_2 \wh x_n^{3}) < \kappa \wh x_n (1-\lambda_2 \wh x_n^{3}),
\]
we have that, for $\delta$ small enough,
\[
\begin{aligned}
\frac{1}{(1+\kappa^2)^{1/2}}\left(\kappa \wh x_n^* - |\wh \eta_{n+1}^*|\right) & = \frac{1}{(1+\kappa^2)^{1/2}}\left(\kappa \left(\wh x_n - \lambda_1 \wh x_n^{4}\right) - |\wh \eta_{n+1}^2(1-\lambda_2\wh x_n^{3})|\right) \\
& \ge \frac{1}{(1+\kappa^2)^{1/2}}\left(\kappa \left(\wh x_n - \lambda_1 \wh x_n^{4}\right) - \kappa x_n |1-\lambda_2\wh x_n^{3}|\right) \\
& = \frac{1}{(1+\kappa^2)^{1/2}} \left(\lambda_2-\lambda_1\right) \wh x_n^{4}.
\end{aligned}
\]
Also, for $x\in V_{\kappa,\delta}$,
\[
\delta - \wh x_n^* = \delta - \wh x_n + \lambda_1 \wh x_n^{4} \ge \lambda_1 \wh x_n^{4}.
\]
Hence, using~\eqref{propietatdecon1}, for $x\in V_{\kappa,\delta}$, we have that
\[
d(x^*,V_{\kappa,\delta}^c) \ge \frac{1}{(1+\kappa^2)^{1/2}} \min \left\{\frac{1}{(1+\kappa^2)^{1/2}} \left(\lambda_2-\lambda_1  \right),  \lambda_1 \right\} \|x\|^{4}.
\]
The claim for $a_V$ follows 
combining this last inequality with~\eqref{def:lambdai1} and taking into account that $\nu = 1 + \OO(m_n,m_{n+1})$, .

Now we compute $a_f$. Using~\eqref{figcolineal}, \eqref{SiUcolineal} and~\eqref{propietatdecon1}, since 
\[
\begin{aligned}
\|x+ f(x,0)\| & = \sqrt{\wh x_n^2 (1-\lambda_1\wh x_n^{3})^2 + \wh \eta_{n+1}^2 (1-\lambda_2 \wh x_n^{3})^2} \\
& = \|x\|\left( 1- 2\lambda_1 \frac{\wh x_n^{5}}{\|x\|^2} - 2\lambda_2 \frac{\wh x_n^{3}\wh \eta_{n+1}^2}{\|x\|^2} + \OO(\|x\|^{6})\right)^{1/2} \\
& \le \|x\| - (\lambda_1 + \lambda_2 \kappa^2 )\|x\|^{4} + \OO(\|x\|^{7})
\end{aligned}
\]
we have that
\[
a_f=-\sup_{x\in V_{\delta,\kappa} }\frac{\|x+ f(x,0)\| - \|x\|}{\|x\|^{4}} \ge \frac{\lambda_1+ \lambda_2 \kappa^2  + \OO(\delta^{3})}{(1+\kappa^2)^{3/2}}.
\]
By~\eqref{def:lambdai}, the claim follows.

Next, we compute $b_f$. Since, in view of~\eqref{figcolineal}, \eqref{SiUcolineal} and~\eqref{propietatdecon1},
\[
\|f(x,0)\| = \wh x_n^{3} \|S x\| \le x_n^{4}\sqrt{\lambda_1^2+\kappa^2\gamma_2^2 }
\]
we have that, using~\eqref{def:lambdai},
\[
b_f= \sup_{x\in V_{\delta,\kappa}}\frac{\|f(x,0)\|}{\|x\|^{4}} \le \sqrt{\nu+ 4\kappa^2(4+\OO(m_n,m_{n+}))}.
\]
The claim on $b_f$ follows then from~\eqref{def:lambdai1}.

Now we compute
\[
A_f=-\sup_{x\in V_{\delta,\kappa}} \frac{\|\Id+ D_x f(x,0)\| - 1}{\|x\|^{3}}.
\]
We bound the spectral radius of $(\Id+ D_x f(x,0))^\top(\Id+ D_x f(x,0))$. Since
\[
\Id+ D_x f(x,0) = \begin{pmatrix} 1-4 \lambda_1 \wh x_n^{3} & 0  \\
-3  \lambda_2 \wh x_n^{2} \wh \eta_{n+1} & 1 -  \lambda_2 \wh x_n^{3}
\end{pmatrix},
\]
we have that
\[
(\Id+ D_x f(x,0))^\top (\Id+ D_x f(x,0))  =
\begin{pmatrix}
1-8 \lambda_1 \wh x_n^{3} + \OO(\wh x_n^{6})& -(1-\lambda_2 \wh x_n^{3}) 3  \lambda_2 \wh x_n^{2} \wh \eta_{n+1}  \\
 -(1-\lambda_2 \wh x_n^{3}) 3  \lambda_2 \wh x_n^{2} \wh \eta_{n+1} &
 1-2 \lambda_2 \wh x_n^{3} + \OO(\wh x_n^{6})
\end{pmatrix}.
\]
Hence,  since ~\eqref{def:lambdai1} implies that
\[
8 \lambda_1 = 8 + \OO(m_n,m_{n+1}), \qquad 2\lambda_2 = 4 + \OO(m_n,m_{n+1}),
\]
applying Gershgorin circle theorem,
\[
\|\Id+ D_x f(x,0)\| \le 1- \left( 2+\OO(m_n,m_{n+1}) + \OO(\kappa)+ \OO(\wh x_n^{3})\right)\wh x_n^{3}.
\]
Hence,
\[
A_f\ge  \frac{2+\OO(m_n,m_{n+1}) + \OO(\kappa)+ \OO(\wh \delta^{3})}{(1+\kappa^2)^{3/2}}.
\]
We finally compute
\[
B_g=-\sup_{x\in V_{\kappa,\delta}} \frac{\|\Id- D_y g(x,0)\| - 1}{\|x\|^{3}}.
\]
By~\eqref{figcolineal} and~\eqref{SiUcolineal} it follows that $D_y g(x,0) = \wh x_n^{3} U$. Then, using~\eqref{def:lambdai1} we get
\[
\|\Id - D_y g(x,0)\| \le 1- (-\gamma_2+\OO_2(m_n,m_{n+1}))\wh x_n^{3},
\]
from which the claim for the stable manifold follows. In order to obtain the unstable one we apply the same procedure to the time reversed system.
\end{proof}

\subsection{Applying Theorems~\ref{thm:approximationflows} and~\ref{th:existenceflow}. Equilateral case}

Now we deal with the case $\wt \theta_0 = \theta_0 (A,m_n) = \pi/3+\OO(m_n,m_{n+1})$. Unlike the previous one, we will see that the invariant manifolds are $3+2(n-1)$-dimensional, because in this case $\wh \upsilon_n$ is a ``stable'' direction.

However, since $\wh \upsilon_n$ is very slow, it is easy to check that equation~\eqref{eq:sistemacompletredressat} does not readily satisfy the hypotheses in Theorems~\ref{thm:approximationflows} and~\ref{th:existenceflow}. To apply these theorems, we introduce a new set of variables in the next proposition.
We recall that $\gamma_2 = \OO(m_n,m_{n+1})$ and $\nu = 1 +\OO(m_n,m_{n+1})$.

\begin{proposition}
Let $m_n$, $m_{n+1} >  0$ be fixed but small enough. Take $\wt \theta_0 = \theta_0 (A,m_n)$ in equation~\eqref{eq:sistemacompletredressat}, that corresponds to $\gamma_2 >0$. Let $\ell\in \N$ and define $\wh x_n$ through $x_n = \wh x_n^\ell$, while maintaining the other variables the same. Equation~\eqref{eq:sistemacompletredressat} becomes
\begin{equation}
\label{eq:sistemacompletredressatcasequilater}
\left\{
\begin{aligned}
\dot{\wh x}_n & = -\frac{\nu}{\ell} \wh x_n^{3\ell+1} + \OO_{8\ell+1}(\wh x_n),\\
\dot{\wt Z} & = \wh x_n^{3\ell} C^{-1} M C \wt Z + \wh x_n^{3\ell} \OO_2(\wh x_n^{\ell},\wt Z), \\
\dot \varphi & = \omega + \wh x_n^{3\ell}\OO_1(\wh x_n^{\ell}, \wt Z).
\end{aligned}
\right.
\end{equation}
\end{proposition}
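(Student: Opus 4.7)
The statement is a change-of-variables computation. The substitution $x_n = \hat x_n^\ell$ affects only the scalar $x_n$; the other coordinates $\wt Z$ and $\varphi$ and the time variable remain unchanged. For $\ell\in \N$, restricting to $\hat x_n>0$ (which extends analytically to a complex sectorial neighborhood of $0$), the substitution is a diffeomorphism, so there is no real obstacle—only careful bookkeeping of orders.

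The plan for the first equation: by the chain rule $\dot x_n = \ell \hat x_n^{\ell-1}\dot{\hat x}_n$, so
\begin{equation*}
\dot{\hat x}_n \;=\; \frac{\dot x_n}{\ell \hat x_n^{\ell-1}} \;=\; \frac{1}{\ell \hat x_n^{\ell-1}}\Bigl[-\nu\, x_n^{4} + x_n^{4}\,\OO_1(\zeta_n) + \OO_9(x_n)\Bigr].
\end{equation*}
Inserting $x_n^{4} = \hat x_n^{4\ell}$ and $\OO_9(x_n) = \OO_{9\ell}(\hat x_n)$ gives
\begin{equation*}
\dot{\hat x}_n \;=\; -\frac{\nu}{\ell}\,\hat x_n^{3\ell+1} \;+\; \hat x_n^{3\ell+1}\OO_1(\zeta_n) \;+\; \OO_{8\ell+1}(\hat x_n),
\end{equation*}
which (after absorbing the middle term, which is of the form of the mixed remainder in~\eqref{eq:sistemacompletredressatcasequilater}) matches the first line of~\eqref{eq:sistemacompletredressatcasequilater}.

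For the second and third equations, since $\wt Z$ and $\varphi$ are not changed, it suffices to substitute $x_n = \hat x_n^{\ell}$ wherever it appears: the prefactor $x_n^{3}$ becomes $\hat x_n^{3\ell}$, and every $x_n$ appearing inside the $\OO$-terms becomes $\hat x_n^\ell$, so that
\begin{equation*}
\dot{\wt Z} \;=\; \hat x_n^{3\ell}\,\mathbf{M}\,\wt Z \;+\; \hat x_n^{3\ell}\,\OO_2(\hat x_n^\ell,\wt Z),
\qquad
\dot\varphi \;=\; \omega \;+\; \hat x_n^{3\ell}\,\OO_1(\hat x_n^\ell, \wt Z),
\end{equation*}
which are exactly the remaining equations of~\eqref{eq:sistemacompletredressatcasequilater}.

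The only thing to be careful about is the bookkeeping of the orders of the error terms: one must verify that every analytic remainder in~\eqref{eq:sistemacompletredressat}, when rewritten in the variable $\hat x_n$, is of at least the order claimed in~\eqref{eq:sistemacompletredressatcasequilater}. Since $\ell$ is a positive integer, analyticity in $\hat x_n$ is preserved and every $\OO_k(x_n)$ becomes $\OO_{k\ell}(\hat x_n)$; combined with the division by $\ell\hat x_n^{\ell-1}$ in the $\dot{\hat x}_n$ equation this produces the shift from order $9\ell$ down to $8\ell+1$, and no other order modifications arise in the $\wt Z$ and $\varphi$ equations. This is all that is needed; there is no nontrivial analytic or dynamical difficulty to overcome.
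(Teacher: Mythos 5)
Your proof takes the same route as the paper: a one-line chain-rule computation $\dot{\wh x}_n = \frac{1}{\ell \wh x_n^{\ell-1}}\dot x_n$ followed by order bookkeeping, with the $\wt Z$ and $\varphi$ equations unchanged except for the substitution $x_n = \wh x_n^\ell$. One small note: you explicitly carry along the term $\tfrac{1}{\ell}\wh x_n^{3\ell+1}\OO_1(\zeta_n)$, which the paper's proof (and the paper's statement of~\eqref{eq:sistemacompletredressatcasequilater}) silently drops; your phrase ``absorbing the middle term'' is a bit loose, since that term is of order $3\ell+1$ in $\wh x_n$ and so cannot literally live inside $\OO_{8\ell+1}(\wh x_n)$ --- rather, it survives into the next step, where the $\dot x$ equation in~\eqref{eq:sistemaperaplicarteorema} carries the remainder $\OO_{3\ell+2}(x,y)$ and does account for it. That said, this mirrors the paper's own informal treatment, so your proof is essentially equivalent to the one given.
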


\begin{proof} It is a straightforward computation. Indeed, using~\eqref{eq:sistemacompletredressat},
\[
\dot{\wh x}_n = \frac{1}{\ell \wh x^{\ell-1}} \dot x_n = \frac{1}{\ell \wh x^{\ell-1}} \left( -\nu \wh x_n^{4 \ell} + \OO_{9\ell}(\wh x_n)\right),
\]
from which the claim follows immediately.
\end{proof}

\begin{remark}
Later, in Proposition~\ref{prop:casequilater}, we will fix $\ell \ge 1$ such that $\frac{\nu}{\ell} < \gamma_2$. Since $\nu = 1+\OO(m_n,m_{n+1})$ and $\gamma_2 = \OO(m_n,m_{n+1})$, $\ell$ will be large
but fixed.
\end{remark}

We use the same notation as in Section~\ref{sec:provateoremacascolinial}. We introduce $x = (\wh x_n,\wh \eta_{n+1}, \wh \upsilon_n)^\top$, $y = (\zeta_n, \wh \xi_{n+1},\wh \chi_n,\wt \rho)^\top$, that is, $x \in \R^3$, $y\in \R^{3+2(n-1)}$ and $\varphi \in \T^{2(n-1)}$. Then, equation~\eqref{eq:sistemacompletredressatcasequilater} becomes
\begin{equation}
\label{eq:sistemaperaplicarteorema}
\begin{aligned}
\dot x & = f(x,y) + \OO_{3\ell+2}(x,y),\\
\dot y & = g(x,y) + \OO_{3\ell+2}(x,y), \\
\dot \varphi & = \omega + \OO_{3\ell+1}(x,y),
\end{aligned}
\end{equation}
where
\begin{equation}
\label{def:figdefinitives}
\begin{aligned}
f(x,y) & = \wh x_n^{3\ell} S x, \\
g(x,y) & = \wh x_n^{3\ell} U y
\end{aligned}
\end{equation}
and
\begin{equation}
\label{def:matriusSU}
S = \begin{pmatrix}
-\lambda_1 & 0 & 0 \\
0 & -\lambda_2 & 0 \\
0 & 0 & -\lambda_3
\end{pmatrix}, \qquad
U =
\begin{pmatrix}
\wt \lambda_1 & 0 & 0 & 0 \\
0 & \wt \lambda_2 & 0 & 0 \\
0 & 0 & \wt \lambda_3 & 0 \\
0 & 0 & 0 &  \wt \lambda_4 \Id
\end{pmatrix}
\end{equation}
with
\begin{equation}
\label{def:lambdai}
\begin{aligned}
\lambda_1 & = \frac{\nu}{\ell}, & \quad \lambda_2 & = 2 + \eps_{2,2}, & \quad \lambda_3 & = \gamma_2 + \eps_{3,3}, & & \\
\wt \lambda_1 & = 2 + \wt \eps_{1,1}, & \quad \wt \lambda_2 & = 3 + \wt \eps_{2,2}, & \quad \wt \lambda_3 & = 1+\wt \eps_{3,3},
, & \quad \wt \lambda_4 = 1+\wt \eps_{4,4},
\end{aligned}
\end{equation}
and
\[
\eps_{i,i} = \OO(m_n,m_{n+1}), \quad i \neq 3, \quad \eps_{3,3} = \OO_2(m_n,m_{n+1}), \quad \tilde\eps_{j,j} = \OO(m_n,m_{n+1}).
\]

For $\delta, \kappa >0$, we introduce the following cone in $\R^3$
\begin{equation*}
V_{\delta,\kappa} = \{x = (\wh x_n,\wh \eta_{n+1}, \wh \upsilon_n) \in \R^3 \mid \; 0 < \wh x_n < \delta, \;
\wh \eta_{n+1}^2+ \wh \upsilon_n^2 \le \kappa^2 \wh x_n^2\}.
\end{equation*}
For all $x = (\wh x_n,\wh \eta_{n+1}, \wh \upsilon_n) \in V_{\delta,\kappa}$ we have that
\begin{equation}
\label{propietatdecon}
\wh x_n ,|\wh \eta_{n+1}|, |\wh \upsilon_n| \le \|x\| \le (1+\kappa^2)^{1/2} \wh x_n,
\end{equation}
where $\|\cdot\|$ denotes the standard Euclidean norm in $\R^3$.

Next proposition is analogous to Proposition~\ref{prop:cascolinial} in this case.

\begin{proposition}
\label{prop:casequilater}
The vector field corresponding to equation~\eqref{eq:sistemaperaplicarteorema} has the form~\eqref{systemflow} with $N= M = P = 3\ell+1$.
If $m_n$, $m_{n+1}$ are small, choosing $\ell$ large enough, for $\delta$ small, hypothesis~\ref{hipotesisv} in Section~\ref{sec:setupmap}  is satisfied in 
the domain $V_{\delta,\kappa}$ with
\[
a_V = \frac{1}{(1+\kappa^2)^{1/2}} \min \left\{\frac{1}{(1+\kappa^2)^{1/2}} \left(2+\gamma_2-\frac{\nu}{\ell}+\OO(m_n,m_{n+1}) + \OO(\delta^{3\ell}) \right),  \lambda_1 \right\} >0.
\]
For the constants $a_f$, in~\eqref{defa}, $b_f, A_f$, in~\eqref{defbCf} and $B_g$, in~\eqref{defABg}, in the domain $V_{\delta,\kappa}$
have the following estimates:
\[
\begin{aligned}
a_f & \ge  \frac{\nu}{\ell}+ \OO(\delta^{3\ell},\kappa^2), & \qquad b_f & \le  \sqrt{\frac{\nu^2}{\ell^2}+\OO(\kappa^2)},\\
A_f & \ge  \gamma_2+\OO_2(m_n,m_{n+1}) + \OO(\kappa)+ \OO(\wh \delta^{3\ell},\kappa^2)  , & \qquad B_g  & \ge 1+\OO(m_n,m_{n+1}).
\end{aligned}
\]
Hence, if $m_n$, $m_{n+1}$ are small enough such that $\gamma_2 + \OO_2(m_n,m_{n+1}) >0$, taking $\ell$ sufficiently large so that $\nu/\ell < \gamma_2 + \OO_2(m_n,m_{n+1})$, then, for $\kappa$ and $\delta$ small,
\[
a_f <0, \qquad A_f > b_f \max \{ 1, N-P\}, \qquad B_g >0.
\]
Consequently, equation~\eqref{eq:sistemaperaplicarteorema} satisfies the hypotheses of Theorems~\ref{thm:approximationflows} and~\ref{th:existenceflow}. The origin possesses a $3+2(n-1)$ analytic stable invariant manifold.
\end{proposition}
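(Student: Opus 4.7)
The plan is to mirror the argument of Proposition~\ref{prop:cascolinial} adapted to $\R^3$ and to the fact that $\wh\upsilon_n$ contributes a (weakly) contracting direction whose eigenvalue $-\lambda_3 = -\gamma_2 + \OO_2(m_n,m_{n+1})$ is positive but very small. The artificial blow-up $x_n = \wh x_n^\ell$ is introduced precisely so that the leading contraction rate in the $\wh x_n$-direction, namely $\lambda_1 = \nu/\ell$, can be made smaller than~$\gamma_2$; this is what will allow hypothesis $A_f>b_f\max\{1,N-P\}$ to hold. The identification that $X$ has the form~\eqref{systemflow} with $N=M=P=3\ell+1$ is immediate from~\eqref{eq:sistemaperaplicarteorema}--\eqref{def:figdefinitives}, so the whole content lies in computing the constants $a_V$, $a_f$, $b_f$, $A_f$, $B_g$ associated to the dominant part $\wh x_n^{3\ell}(Sx,Uy)$ on the cone $V_{\delta,\kappa}$.

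First I would establish $a_V$. Let $x=(\wh x_n,\wh\eta_{n+1},\wh\upsilon_n)\in V_{\delta,\kappa}$ and denote by $x^*=x+f(x,0)$ the image under the dominant map. Writing $\wh x_n^* = \wh x_n(1-\lambda_1\wh x_n^{3\ell})$, $\wh\eta_{n+1}^*=\wh\eta_{n+1}(1-\lambda_2\wh x_n^{3\ell})$, $\wh\upsilon_n^*=\wh\upsilon_n(1-\lambda_3\wh x_n^{3\ell})$, one has
\[
(\wh\eta_{n+1}^*)^2+(\wh\upsilon_n^*)^2 \le \kappa^2 \wh x_n^2\big(1-2\min\{\lambda_2,\lambda_3\}\wh x_n^{3\ell}+\OO(\wh x_n^{6\ell})\big),
\]
and
\[
(\kappa\wh x_n^*)^2 - \big((\wh\eta_{n+1}^*)^2+(\wh\upsilon_n^*)^2\big) \ge 2\kappa^2\wh x_n^{3\ell+2}\big(\min\{\lambda_2,\lambda_3\}-\lambda_1+\OO(\wh x_n^{3\ell})\big),
\]
which, combined with the trivial bound $\delta-\wh x_n^*\ge \lambda_1\wh x_n^{3\ell+1}$ and the cone comparison~\eqref{propietatdecon}, yields the claimed $a_V$ after taking $\ell$ large (so that $\nu/\ell<\gamma_2+\OO_2(m_n,m_{n+1})<2+\OO(m_n,m_{n+1})$).

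Next I would compute $a_f$ and $b_f$ from the expansion
\[
\|x+f(x,0)\|^2 = \|x\|^2 - 2\wh x_n^{3\ell}\big(\lambda_1\wh x_n^{2}+\lambda_2\wh\eta_{n+1}^{2}+\lambda_3\wh\upsilon_n^{2}\big)+\OO(\|x\|^{6\ell+2}).
\]
Taking the square root and using~\eqref{propietatdecon} gives
\[
\|x+f(x,0)\| \le \|x\| - \frac{\lambda_1+\min\{\lambda_2,\lambda_3\}\kappa^2 +\OO(\delta^{3\ell})}{(1+\kappa^2)^{(3\ell+1)/2}}\|x\|^{3\ell+1},
\]
which, for $\ell$ chosen so that $\lambda_3=\gamma_2+\OO_2(m_n,m_{n+1})>\lambda_1=\nu/\ell$, yields $a_f\ge \nu/\ell+\OO(\delta^{3\ell},\kappa^2)>0$. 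The bound on $b_f$ comes directly from $\|f(x,0)\|\le \wh x_n^{3\ell}\sqrt{\lambda_1^2\wh x_n^2+\lambda_2^2\wh\eta_{n+1}^2+\lambda_3^2\wh\upsilon_n^2}\le \wh x_n^{3\ell+1}\sqrt{\lambda_1^2+\kappa^2\max\{\lambda_2,\lambda_3\}^2}$.

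For $A_f$, I would write the Jacobian
\[
\mathrm{Id}+D_xf(x,0) = \mathrm{diag}\big(1-(3\ell+1)\lambda_1\wh x_n^{3\ell},\,1-\lambda_2\wh x_n^{3\ell},\,1-\lambda_3\wh x_n^{3\ell}\big)+E(x),
\]
where $E(x)$ collects the $\wh\eta_{n+1}$-- and $\wh\upsilon_n$--derivatives and satisfies $\|E(x)\|=\OO(\kappa\,\wh x_n^{3\ell})$. Applying Gershgorin's theorem to $(\mathrm{Id}+D_xf)^\top(\mathrm{Id}+D_xf)$ one gets
\[
\|\mathrm{Id}+D_xf(x,0)\| \le 1 - \big(2\min\{(3\ell+1)\lambda_1,\lambda_2,\lambda_3\} + \OO(\kappa)+\OO(\wh x_n^{3\ell})\big)\wh x_n^{3\ell},
\]
so $A_f\ge \lambda_3+\OO(\kappa,\delta^{3\ell},m_n^2,m_{n+1}^2)=\gamma_2+\OO_2(m_n,m_{n+1})+\OO(\kappa,\delta^{3\ell})$, where I use $(3\ell+1)\lambda_1=(3\ell+1)\nu/\ell\to 3\nu$, so it is not the minimum. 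Finally, $B_g$ follows from the diagonal structure of $U$ in~\eqref{def:matriusSU}: $\|\mathrm{Id}-D_yg(x,0)\|=\|\mathrm{Id}-\wh x_n^{3\ell}U\|\le 1-\wt\lambda_3\wh x_n^{3\ell}\le 1-(1+\OO(m_n,m_{n+1}))\wh x_n^{3\ell}$, giving $B_g\ge 1+\OO(m_n,m_{n+1})$.

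The main obstacle, and the reason the blow-up $\ell$ is needed, is the comparison $A_f>b_f\max\{1,N-P\}=b_f$: since $\lambda_3=\gamma_2$ is $\OO(m_n,m_{n+1})$, one cannot hope to dominate the $b_f\sim \nu/\ell$ coming from the $\wh x_n$--direction unless $\nu/\ell<\gamma_2+\OO_2(m_n,m_{n+1})$, which forces $\ell$ large (but fixed once $m_n,m_{n+1}$ are fixed). Once $\ell$ is chosen so that this strict inequality holds, one takes $m_n,m_{n+1}$ small so that $\gamma_2+\OO_2>0$, and then $\kappa$ and $\delta$ small so that the error terms in the estimates above do not spoil the strict inequalities $a_f>0$, $A_f>b_f$, $B_g>0$. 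With these signs in hand, Theorems~\ref{thm:approximationflows} and~\ref{th:existenceflow} apply to~\eqref{eq:sistemaperaplicarteorema} and deliver the $3+2(n-1)$-dimensional analytic stable manifold. The unstable manifold is obtained by the same argument applied to the time-reversed system.
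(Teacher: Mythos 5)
Your proposal follows exactly the route of the paper's own proof (and of Proposition~\ref{prop:cascolinial} before it): read off $N=M=P=3\ell+1$ from the blown-up system~\eqref{eq:sistemaperaplicarteorema}, then bound $a_V$, $a_f$, $b_f$, $A_f$ and $B_g$ from the explicit diagonal dominant part $\wh x_n^{3\ell}(Sx,Uy)$ on the circular cone $V_{\delta,\kappa}$, using Gershgorin for the Jacobian and the cone comparison~\eqref{propietatdecon}, and finally invoke the artificial exponent $\ell$ to force $\lambda_1=\nu/\ell<\lambda_3=\gamma_2+\OO_2$. All the key ideas — identifying $\lambda_3$ as the bottleneck eigenvalue, recognising that the blow-up $x_n=\wh x_n^\ell$ is precisely what rescues $A_f>b_f$, and taking $m_n,m_{n+1}$, then $\ell$, then $\kappa,\delta$ in that order — are the paper's.

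One point worth flagging, which is more a correction to the paper than a defect of your argument. In the $a_V$ step you bound
\[
(\wh\eta_{n+1}^*)^2+(\wh\upsilon_n^*)^2 \le (\wh\eta_{n+1}^2+\wh\upsilon_n^2)\max\bigl\{(1-\lambda_2\wh x_n^{3\ell})^2,(1-\lambda_3\wh x_n^{3\ell})^2\bigr\}\le \kappa^2\wh x_n^2\bigl(1-\min\{\lambda_2,\lambda_3\}\wh x_n^{3\ell}\bigr)^2,
\]
which leads to the coefficient $\min\{\lambda_2,\lambda_3\}-\lambda_1=\gamma_2-\nu/\ell+\OO_2$. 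The paper instead writes $\sqrt{(\wh\eta_{n+1}^*)^2+(\wh\upsilon_n^*)^2}\le \kappa\wh x_n\sqrt{(1-\lambda_2\wh x_n^{3\ell})^2+(1-\lambda_3\wh x_n^{3\ell})^2}$, which introduces a spurious $\sqrt{2}$ making the leading term in the chain negative; the subsequent ``$=\lambda_2+\lambda_3-\lambda_1+\cdots$'' step therefore does not follow, and its constant $2+\gamma_2-\nu/\ell$ is what appears in the statement of the proposition. Your version is the correct computation, but it gives a \emph{smaller} positive coefficient $\gamma_2-\nu/\ell$, so strictly speaking you have not ``yielded the claimed $a_V$'' as you write — you have established a tighter, still positive, lower bound. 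Since only $a_V>0$ matters downstream, this discrepancy is harmless; just do not claim to recover the exact numerical constant in the statement. (You should also note that the ``$a_f<0$'' in the proposition statement is a sign typo for ``$a_f>0$'', as both your derivation and the paper's give $a_f\ge\nu/\ell+\OO(\delta^{3\ell},\kappa^2)>0$.)
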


\begin{proof}
 We assume $m_n$ and $m_{n+1}$ small enough so that $\lambda_3 =  \gamma_2 +\OO_2(m_n,m_{n+1})>0$ and choose $\ell$ such that $ \lambda_1 = \nu/\ell = (1+\OO(m_n,m_{n+1}))/\ell < \lambda_3$.

We will use the standard Euclidean norm and its induced matrix norm to compute all the constants.
We start by computing $a_V$. Clearly, if $(a,b,c) \in V_{\delta, \kappa}$,
\[
d((a,b,c), V_{\delta,\kappa}^c) = \min\left\{ \frac{1}{(1+\kappa^2)^{1/2}}\left(\kappa a - \sqrt{b^2+c^2}\right), \delta-a \right\}.
\]
Then, if $x\in V_{\kappa,\delta}$, denoting $x^* = (\wh x_n^*,\wh \eta_{n+1}^*, \wh \upsilon_n^*) = x+ f(x,0)$, since
\[
\begin{aligned}
| \wh \eta_{n+1}^* | &= |\wh \eta_{n+1}|(1-\lambda_2 \wh x_n^{3\ell}) < \kappa \wh x_n (1-\lambda_2 \wh x_n^{3\ell}), \\
| \wh \upsilon_n^* | &= |\wh \upsilon_n|(1-\lambda_3 \wh x_n^{3\ell}) < \kappa \wh x_n (1-\lambda_3 \wh x_n^{3\ell}),
\end{aligned}
\]
we have that
\begin{multline*}
\frac{1}{(1+\kappa^2)^{1/2}}\left(\kappa \wh x_n^* - \sqrt{(\wh \eta_{n+1}^*)^2+(\wh \upsilon_n^*)^2}\right)\\
\begin{aligned}
& = \frac{1}{(1+\kappa^2)^{1/2}}\left(\kappa \left(\wh x_n - \frac{\nu}{\ell} \wh x_n^{3\ell+1}\right) - \sqrt{\wh \eta_{n+1}^2(1-\lambda_2\wh x_n^{3\ell})^2+\wh \upsilon_n^2(1-\wt \lambda_3\wh x_n^{3\ell})^2}\right) \\
& \ge \frac{1}{(1+\kappa^2)^{1/2}}\left(\kappa \left(\wh x_n - \lambda_1 \wh x_n^{3\ell+1}\right) - \kappa x_n \sqrt{(1-\lambda_2\wh x_n^{3\ell})^2+(1-\lambda_3 \wh x_n^{3\ell})^2}\right) \\
& = \frac{1}{(1+\kappa^2)^{1/2}} \left(\lambda_2+\lambda_3-\lambda_1 + \OO(\delta^{3\ell}) \right) \wh x_n^{3\ell+1}.
\end{aligned}
\end{multline*}
Also, for $x\in V_{\kappa,\delta}$,
\[
\delta - \wh x_n^* = \delta - \wh x_n + \lambda_1 \wh x_n^{3\ell+1} \ge \lambda_1 \wh x_n^{3\ell+1}.
\]
Hence, using~\eqref{propietatdecon}, for $x\in V_{\kappa,\delta}$,
\[
d(x^*,V_{\kappa,\delta}^c) \ge \frac{1}{(1+\kappa^2)^{1/2}} \min \left\{\frac{1}{(1+\kappa^2)^{1/2}} \left(2+\wt \gamma_2 - \frac{\nu}{\ell} + \OO(\delta^{3\ell}) \right),  \frac{\nu}{\ell} \right\} \|x\|^{3\ell+1}.
\]
Now we compute $a_f$. Using~\eqref{def:figdefinitives}, \eqref{def:matriusSU} and~\eqref{propietatdecon}, we have
\[
\begin{aligned}
\|x+ f(x,0)\| & = \sqrt{\wh x_n^2 (1-\lambda_1\wh x_n^{3\ell})^2 + \wh \eta_{n+1}^2 (1-\lambda_2 \wh x_n^{3\ell})^2+ \wh \upsilon_n^2 (1-\lambda_3 \wh x_n^{3\ell})^2} \\
& = \|x\|\left( 1- 2\lambda_1 \frac{\wh x_n^{3\ell+2}}{\|x\|^2} - 2\lambda_2 \frac{\wh x_n^{3\ell}\wh \eta_{n+1}^2}{\|x\|^2} -
2\lambda_3 \frac{\wh x_n^{3\ell}\wh \upsilon_{n+1}^2}{\|x\|^2}+ \OO(\|x\|^{6\ell})\right)^{1/2} \\
& \le \|x\| - (\lambda_1 + (\lambda_2+\lambda_3) \kappa^2 )\|x\|^{3\ell+1} + \OO(\|x\|^{6\ell+1})
\end{aligned}
\]
and
\[
a_f=-\sup_{x\in V_{\delta,\kappa} }\frac{\|x+ f(x,0)\| - \|x\|}{\|x\|^{3\ell+1}} \ge \frac{(\lambda_1 + (\lambda_2+\lambda_3) \kappa^2 ) + \OO(\delta^{3\ell})}{(1+\kappa^2)^{3\ell/2}}.
\]
By~\eqref{def:lambdai}, the claim follows.

Next, we compute $b_f$. Since, in view of~\eqref{def:figdefinitives}, \eqref{def:matriusSU} and~\eqref{propietatdecon}
\[
\|f(x,0)\| = \wh x_n^{3\ell} \|S x\| \le x_n^{3\ell+1}\sqrt{\lambda_1^2+\kappa^2(\gamma_2^2 + \gamma_3^2)}
\]
Using~\eqref{def:lambdai} we obtain
\[
b_f= \sup_{x\in V_{\delta,\kappa}}\frac{\|f(x,0)\|}{\|x\|^{3\ell+1}} \le \sqrt{\frac{\nu^2}{\ell^2}+\kappa^2(4 + \gamma_2^2+\OO(m_n,m_{n+1}))}.
\]
Now we compute
\[
A_f=-\sup_{x\in V_{\delta,\kappa}} \frac{\|\Id+ D_x f(x,0)\| - 1}{\|x\|^{3\ell}}.
\]
We bound the spectral radius of $(\Id+ D_x f(x,0))^\top(\Id+ D_x f(x,0))$. Since
\[
\Id+ D_x f(x,0) = \begin{pmatrix} 1-(3\ell+1) \lambda_1 \wh x_n^{3\ell} & 0 & 0 \\
-3 \ell \lambda_2 \wh x_n^{3\ell-1} \wh \eta_{n+1} & 1 -  \lambda_2 \wh x_n^{3\ell} & 0 \\
-3 \ell \lambda_2 \wh x_n^{3\ell-1}\wh \upsilon_n & 0 & 1- \lambda_3 \wh x_n^{3\ell}
\end{pmatrix},
\]
we have that
\begin{multline*}
(\Id+ D_x f(x,0))^\top (\Id+ D_x f(x,0)) \\ =
\begin{pmatrix}
1-2 (3\ell+1) \lambda_1 \wh x_n^{3\ell} + \OO(\wh x_n^{6\ell})& -(1-\lambda_2 \wh x_n^{3\ell}) 3 \ell \lambda_2 \wh x_n^{3\ell-1} \wh \eta_{n+1} &
-(1-\lambda_3 \wh x_n^{3\ell}) 3 \ell \lambda_3 \wh x_n^{3\ell-1} \wh \upsilon_{n} \\
 -(1-\lambda_2 \wh x_n^{3\ell}) 3 \ell \lambda_2 \wh x_n^{3\ell-1} \wh \eta_{n+1} &
 1-2 \lambda_2 \wh x_n^{3\ell} + \OO(\wh x_n^{6\ell}) & 0 \\
 -(1-\lambda_3 \wh x_n^{3\ell}) 3 \ell \lambda_3 \wh x_n^{3\ell-1} \wh \upsilon_{n} &  0 &
 1-2 \lambda_3 \wh x_n^{3\ell} + \OO(\wh x_n^{6\ell})
\end{pmatrix}.
\end{multline*}
Hence,  since ~\eqref{def:lambdai} implies that
\[
2 (3\ell+1) \lambda_1 > 6 + \OO(m_n,m_{n+1}), \qquad 2\lambda_2 > 4 + \OO(m_n,m_{n+1}), \qquad 2 \lambda_2 = 2 \gamma_2 + \OO_2(m_n,m_{n+1}),
\]
applying Gershgorin circle theorem,
\[
\|\Id+ D_x f(x,0)\| \le 1- \left( \gamma_2+\OO_2(m_n,m_{n+1}) + \OO(\kappa)+ \OO(\wh x_n^{3\ell})\right)\wh x_n^{3\ell}.
\]
Therefore,
\[
A_f\ge  \frac{\gamma_2+\OO_2(m_n,m_{n+1}) + \OO(\kappa)+ \OO(\wh \delta^{3\ell})}{(1+\kappa^2)^{3\ell/2}}.
\]
We finally compute
\[
B_g=-\sup_{x\in V_{\kappa,\delta}} \frac{\|\Id- D_y g(x,0)\| - 1}{\|x\|^{3\ell}}.
\]
By~\eqref{def:figdefinitives} and~\eqref{def:matriusSU} we have $D_y g(x,0) = \wh x_n^{3\ell} U$. By~\eqref{def:lambdai}, this implies
\[
\|\Id - D_y g(x,0)\| \le 1- (1+\OO(m_n,m_{n+1})\wh x_n^{3\ell},
\]
from which the claim  for the stable manifold follows. As in the collinear case, in order to obtain the unstable one it is only necessary to apply the same procedure to the time reversed system.
\end{proof}


\section{Acknowledgements}

I.B. has been partially supported by the grant PID-2021-122954NB-100, E.F. has been partially supported by the grant PID2021-125535NB-I00, and P.M. has been partially supported by the grant PID2021-123968NB-I00, funded by the
Spanish State Research Agency through the programs MCIN/AEI/10.13039/501100011033
and “ERDF A way of making Europe”. 

Also, all authors have been partially supported by the Spanish State Research Agency, through the Severo Ochoa and Mar\'ia de Maeztu Program for Centers and Units of Excellence in R\&D (CEX2020-001084-M).

\appendix 


\section{Proof of Remark~\ref{prop:formsystem}} \label{app:proofprop}

As in the rest of this work, we do not write the dependence of the different objects with respect to the parameter $\lambda$. 

Assume that a map $\mathcal{F}$ as in the remark satisfies conditions (i)-(iii) and has an invariant manifold tangent to $\{y=0\}$ represented as $y=\mathcal{K}(x,\th)$. It is clear that if $M>N$, we can take $\sum_{j=N}^{M-1} g^{ j}(x,y,\th)=0$, hence   (iv) is satisfied and we are done. 

Now we consider the case $M\leq N$. By Lemma~\ref{lem:Ftilde}, we can remove the dependence on $\th$ of the map $\F$ up to  order $N$. Let $\overline{f}^{\geq N}_*(x,y)$, $\overline{g}^{\geq M}_*(x,y)$ 
and $\overline{h}^{\geq P}_*(x,y)$ be the terms of degree less or equal than $N$ in each component of  $\mathcal{F}-\Id$, respectively, after the dependence on $\th$ has been removed. 
The invariance condition for $\mathcal{K}(x,\th) $ reads 
$$
\mathcal{K} (x,\th ) + \overline{g}_*^{\geq M}(x,\mathcal{K}(x,\th))    =
\mathcal{K} \big(x+ \overline{f}_*^{\geq N} (x,\mathcal{K}(x,\th)),\th + \omega + \overline{h}_*^{\geq P}(x,\mathcal{K} (x,\th))\big ) + \OO(\|x\|^{N+1}).
$$
Differentiating with respect to $\th$ and writing 
$$
\F_{x,\th}^*=\big(x+ \overline{f}_*^{\geq N} (x,\mathcal{K}(x,\th)),\th + \omega + \overline{h}_*^{\geq P}(x,\mathcal{K} (x,\th))\big )
$$
we have
\begin{align}
\partial_\th \mathcal{K}(x,\th) -& \partial_{\th}\mathcal{K} (x,\th + \omega) \notag\\  = & - \partial_y\overline{g}_*^{\geq M}(x,\mathcal{K}(x,\th)) \partial_\th \mathcal{K}(x,\th)+
\partial_x \mathcal{K}(\F^*_{x,\th}) 
\partial_y\overline{f}_*^{\geq N}(x,\KK(x,\th)) 
\partial_\th \mathcal{K}(x,\th) \notag \\ & + \partial_\th \mathcal{K}(\F^*_{x,\th}) 
\partial_y\overline{h}_*^{\geq P}(x,\mathcal{K} (x,\th)) \partial_{\th}\mathcal{K}(x,\th) \notag \\
& +
\partial_\th \mathcal{K}(\F^*_{x,\th})
-\partial_{\th}\mathcal{K} (x,\th + \omega)+ \OO(\|x\|^{N+1}) \label{eq:KKappendix} .
\end{align}
If we assume $\partial_\th \mathcal{K}(x,\th) = \OO(\|x\|^m)$
with $m<N$, then the right hand side of \eqref{eq:KKappendix} has order $\min \{N+1,m+1\}$ with respect to $x$. Since we are assuming $\K$ exists,   
$\partial_\th \mathcal{K}$ has zero average and therefore the right hand side of~\eqref{eq:KKappendix} should have zero average. By Theorem~\ref{thm:smalldivisors}, $\partial_\th \mathcal{K}$ has to have order $m+1$, which is a contradiction. Hence $m\ge N$. 
So we conclude that $\partial_\th \mathcal{K}(x,\th) = \OO(\|x\|^{N+1})$. Therefore, we can write $\mathcal{K}(x,\th) = \KK^\leq (x) + \OO(\|x\|^{N+1})$ and  the invariance condition becomes
$$
 \overline{g}_*^{\geq M}(x,\mathcal{K}^\leq (x)) =
 \int_{0}^1 D \mathcal{K}^{\leq }(x+ s \overline{f}_*^{\geq N}(x,\mathcal{K}^\leq (x))) \overline{f}_*^{\geq N}(x,\mathcal{K}^\leq (x))\, ds   + \OO(\|x\|^{N+1}).
$$
We decompose $\overline{g}_*^{\geq M}(x,y)
= \overline{g}_*^{\geq M}(x,0) +[\overline{g}_*^{\geq M}(x,y)-\overline{g}_*^{\geq M}(x,0)]=:
 g_1(x) + g_2(x,y)y$ and
we denote $M_1$ the order of $g_1$ and  $M_2-1$ the order of $g_2$. If $M_1 > M_2=M$, $\overline{g}_*^{M}(x,y)=g_2(x,y)y$ and satisfies $\overline{g}_*^M(x,0)=0$. In the other case, $M=M_1 \leq M_2 $, we have
$$
g_1(x)  = -g_2(x,\K^\le(x))\K^\le(x) +
D\mathcal{K}^{\leq}(x) \overline{f}_*^{\geq N}(x,\mathcal{K}^\leq (x))
+ \OO(\|x\|^{2N}) +\OO(\|x\|^{N+1})
$$
and this implies that $N\ge M=M_1\geq \min\{M_2+1,N+1,2N \} $ which provides a contradiction that comes from assuming that $M_1\leq M_2$. 

\section{Proof of Corollary~\ref{cor:rootsunity}}\label{proof_corollary}

We first prove that 
\begin{equation}\label{stablesetsFG_1}
\bigcup_{j=0}^{\ell -1} G^j_\lambda(W_{\Vext_{\rho,\beta}}^{\mathrm{s}}(F_\lambda)) \subset W_{\Bext_{\rho,\beta}}^{\mathrm{s}}.
\end{equation}
Take 
$(x,y,\th) \in G_\lambda^j (W_{\Vext_{\rho,\beta}}^{\mathrm{s}}(F_\lambda))$. We have that  
$
(\bar{x},\bar{y},\bar{\th}):=G^{-j}_\lambda(x,y,\th) \in W_{\Vext_{\rho,\beta}}^{\mathrm{s}}(F_\lambda).
$
For all $l\in \N$, there exist $p,q\in \N$, $0\leq p \leq \ell-1$ such that $j+l=q\ell +p$. Then, 
$$
G_{\lambda}^l(x,y,\th) = G_\lambda^{j+l} (\bar{x},\bar{y},\bar{\th}) = G_\lambda^{q \ell + p}(\bar{x},\bar{y},\bar{\th}) =G_\lambda^p \big (F_\lambda^q(\bar{x},\bar{y},\bar{\th} )\big ) \in 
G_\lambda^p (\Vext_{\rho,\beta} ) \subset \Bext_{\rho,\beta}.
$$
Moreover
$$
\| (G^{l}_\lambda)_{x,y}(x,y,\th) \| =  \left \|\left (G_\lambda^p (F_\lambda^q (\bar{x},\bar{y},\bar{\th})\right )_{x,y} \right \| \leq \Mg \left \|(F_\lambda^q)_{x,y} (\bar{x},\bar{y},\bar{\th}) \right \| \to 0 \qquad \text{as   }\;\; q\to \infty.
$$
Therefore, since $q\to \infty$ if and only if $l\to \infty$, $(x,y,\th)\in W^{\mathrm{s}}_{\Bext_{\rho,\beta}}$.

Then, by Theorem~\ref{th:existencemap}
$$
\mathcal{W} \subset \bigcup_{j=0}^{\ell -1} G_\lambda^j (W^{\mathrm{s}}_{\Vext_{\rho,\beta}} (F_\lambda) \subset W_{\Bext_{\rho,\beta}}^{\mathrm{s}}
$$
and the first claim of Corollary~\ref{cor:rootsunity} is proved. 

Assume now that $B_g>0$, then, by Theorem~\ref{th:existencemap} we have the properties in~\eqref{KequalWsmap}:
\begin{equation}\label{KequalWsmap_proof}
K(\wh V_{\rho}\times \T^d,\lambda)  =  W^{\mathrm{s}}_{\wh \Vext_{\rho,\beta}} (F_\lambda) \qquad \mathrm{and} \qquad 
W^{\mathrm{s}}_{\wh \Vext_{\rho,\beta}} (F_\lambda)=  \bigcap_{k\geq 0} F_\lambda^{-k} (\wh \Vext_{\rho,\beta} ),
\end{equation}
where we recall that $\wh \Vext_{\rho,\beta}= \wh V_{\rho,\beta} \times \mathbb{T}^d$ where $\wh V$ is a slightly smaller cone contained in $V$. To avoid cumbersome notations, we skip the symbol \ $\wh{}$ \ in our notation. To prove the last part of the result, by~\eqref{stablesetsFG_1}  and~\eqref{KequalWsmap_proof}, we only need to check that 
\begin{equation}\label{stablesetsFG_2}
W_{\Bext_{\rho,\beta}}^{\mathrm{s}}  \subset  \bigcup_{j=0}^{\ell -1} G^j_\lambda(W_{\Vext_{\rho,\beta}}^{\mathrm{s}}(F_\lambda))
\end{equation}
because, if~\eqref{stablesetsFG_2} holds true, then by~\eqref{stablesetsFG_1} and~\eqref{KequalWsmap_proof}, 
$$
W_{\Bext_{\rho,\beta}}^{\mathrm{s}} =  \bigcup_{j=0}^{\ell -1} G^j_\lambda(W_{\Vext_{\rho,\beta}}^{\mathrm{s}}(F_\lambda)) = \bigcup_{j=0}^{\ell -1} G^j_\lambda \big (K(  V_\rho \times \mathbb{T}^d,\lambda) ) = \mathcal{W}.
$$

Next we  prove~\eqref{stablesetsFG_2}. 
We first observe that, since $G_\lambda, F_\lambda$ are local diffeomorphisms, we have that 
\begin{equation}\label{propunion}
G^l_\lambda(W_{\Vext_{\rho,\beta}}(F_\lambda) )=W_{G^l_\lambda(\Vext_{\rho,\beta}\times \T^d)}^{\mathrm{s}}(F_\lambda), \qquad l\in \mathbb{Z}. 
\end{equation}
Now we notice that, if for some $j\in \{0,\cdots,\ell-1\}$
$$
G^{j}_\lambda (\Vext_{\rho,\beta} ) =\bigcup_{i\neq j} G^{i}_\lambda( \Vext_{\rho,\beta})
$$
then~\eqref{stablesetsFG_2} holds true and the proof is complete in this case. Indeed, in this case $\Bext_{\rho,\beta}=G^{j}_\lambda (\Vext_{\rho,\beta} )$. Therefore, if $(x,y,\theta) \in W_{\Bext_{\rho,\beta}}^{\mathrm{s}}$, then, for all $l\in \mathbb{N}$,
$G_\lambda^{l} (x,y,\theta) \in G^{j}_\lambda (\Vext_{\rho,\beta} )$ and, in particular, $F_\lambda^{l} (x,y,\theta) \in G^{j}_\lambda (\Vext_{\rho,\beta} )$ for all $l\in \mathbb{N}$. From the second identity in~\eqref{KequalWsmap_proof} and~\eqref{propunion}, we conclude that $(x,y,\theta) \in W^{\mathrm{s}}_{G^{j}_\lambda (\Vext_{\rho,\beta} )} (F_\lambda)  = G^j_\lambda \big (W^{\mathrm{s}}_{\Vext_{\rho,\beta}} (F_\lambda) \big )$ and~\eqref{stablesetsFG_2} follows trivially. 

From the previous arguments, we now assume that the set $\Bext_{\rho,\beta}$ can be rewritten as 
$$
\Bext_{\rho,\beta}=\bigcup_{j=0}^{\ell -1 } B_j,\qquad B_j= G^{j}_\lambda (\Vext_{\rho,\beta} ) \backslash \left \{\bigcup_{i\neq j} G^{i}_\lambda( \Vext_{\rho,\beta})
\right \} \neq \emptyset.
$$
We notice that $B_j\cap B_i= \emptyset$ if $i\neq j$.

Let $(x,y,\th) \in W_{\Bext_{\rho,\beta}}^{\mathrm{s}} \cap B_0$. It is clear that $G^l_\lambda(x,y,\th) \in G^{l}_\lambda(B_0)$ if $l\leq \ell -1$ and since the only set $B_j$ with non-empty intersection with $G^l_\lambda (B_0)$ is $B_l$, then $G^l_\lambda(x,y,\th) \in B_l$. In addition, 
$$
G^{\ell }_\lambda (x,y,\th) \in G_\lambda(B_{\ell-1})=
G^{\ell }_\lambda (\Vext_{\rho,\beta} ) \backslash \left \{\bigcup_{i=1 }^{\ell -1} G^{i}_\lambda(\Vext_{\rho,\beta})
\right \}.
$$
Since $B_j\cap B_{i}= \emptyset$ and $G^{\ell}_\lambda(x,y,\th) \notin G^{i}_\lambda(\Vext_{\rho,\beta})$ for $i=1,\cdots,\ell-1$, we conclude that $G^{\ell }_{\lambda} (x,y,\th) \in B_0$. By induction, we prove that if $(x,y,\th) \in B_0   $, $F^q_\lambda(x,y,\th)=G^{q\ell}_\lambda(x,y,\th) \in B_0 $. Therefore, $(x,y,\th) \in W_{B_0}^{\mathrm{s}}(F_\lambda) \subset W_{\Vext_{\rho,\beta}}^{\mathrm{s}}(F_\lambda)$. 

When $(x,y,\th) \in W_{\Bext_{\rho,\beta}}^{\mathrm{s}} \cap B_j$, reasoning in an analogous way as for $j=0$, we conclude that $(x,y,\th) \in W_{B_j}^{\mathrm{s}}(F_\lambda) \subset W_{G^j_\lambda (\Vext_{\rho,\beta})}^{\mathrm{s}}(F_\lambda)$ and by property~\eqref{propunion} the proof of~\eqref{stablesetsFG_2} is complete.

\section{Proof of Lemma~\ref{lem:calR} } \label{apendixA}

We first recall that for $z\in \mathbb{C}^l$ we use the norm $\|z\|=\max (\|\Re z\|, \|\Im z\|)$. In addition, by definition of the complex set $\Omega_{\rho}(\gamma)$, $\|\Im z\| \leq \gamma \| \Re z\|$ and therefore $\|z\|= \| \Re z\|$ if $\gamma \leq 1$. As a consequence, if we consider the definition of the values $a_f,b_f,A_f,D_f $ and $B_g$ in~\eqref{defa}, \eqref{defbCf} and~\eqref{defABg} 
with $x$ belonging to $\Omega_\rho(\gamma) $ instead of $V_\rho$,
they change by a quantity of order $\gamma$, 
provided $\gamma $ is small enough. Since all the conditions on these constants are open conditions we can choose $\gamma$ small enough such that those properties still hold true. 

We also recall that, $\widecheck{R}_v(v)=v+ \overline{f}^N(v,0)+ w^{\ge N+1}(v)$, with $ w^{\ge N+1}(v)= \OO(\|v\|^{N+1})$. 

The two first items in Lemma~\ref{lem:calR} has been proven in previous works~\cite{BFM2020a,BFM2020b,BFM20}. Then,  we sketch a simple proof of them. 
The first item relies on the invariance by $\widecheck{R}_v$ of the set $\Omega_\rho(\gamma)$. To do so,  the following technical lemma, which is a straightforward consequence of Taylor's theorem, is used.  
\begin{lemma}\label{lem:realanalytic} Let $0<\rho,\gamma\leq 1$. 
If $\chi:\Omega_\rho(\gamma) \subset \C^n \to \C^n$ is a real analytic function, satisfying $\chi(v)=\OO(\|v\|^k)$, then
\begin{align*}
\chi(v) = \chi(\Re v) + i\int_{0}^1 D\chi(\Re v + is  \Im v) \Im v \, ds  
=\chi(\Re v) +i D \chi (\Re v) \Im v + \gamma^2 \OO(\|\Re v\|^{k}).
\end{align*}
\end{lemma}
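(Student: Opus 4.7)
The plan is to prove the two equalities separately, both via the holomorphic fundamental theorem of calculus. For the first equality, I would parametrize the complex line segment $\gamma(s)=\Re v+is\,\Im v$, $s\in[0,1]$, and check that this entire segment lies inside $\Omega_\rho(\gamma)$: indeed $\Re\gamma(s)=\Re v\in V_\rho$ and $\|\Im\gamma(s)\|=s\|\Im v\|\le\|\Im v\|\le\gamma\|\Re v\|$, so the definition of $\Omega_\rho(\gamma)$ is satisfied pointwise on $[0,1]$. Since $\chi$ is real analytic on $\Omega_\rho(\gamma)$, it is in particular complex differentiable there, and I would apply the fundamental theorem of calculus along $\gamma$:
\[
\chi(v)-\chi(\Re v)=\int_0^1\frac{d}{ds}\chi\bigl(\Re v+is\,\Im v\bigr)\,ds=i\int_0^1 D\chi\bigl(\Re v+is\,\Im v\bigr)\Im v\,ds,
\]
which is exactly the first equality.

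For the second equality, I would add and subtract $D\chi(\Re v)$ inside the integral to isolate the principal term $iD\chi(\Re v)\Im v$ and obtain the remainder
\[
i\int_0^1\bigl[D\chi(\Re v+is\,\Im v)-D\chi(\Re v)\bigr]\Im v\,ds.
\]
Applying the same line-integral trick to $D\chi$ itself along the segment from $\Re v$ to $\Re v+is\,\Im v$, I would write
\[
D\chi(\Re v+is\,\Im v)-D\chi(\Re v)=i\int_0^1 D^2\chi(\Re v+ist\,\Im v)(s\,\Im v)\,dt.
\]
Because $\chi$ is analytic and $\chi(v)=\OO(\|v\|^k)$, Cauchy estimates on a slightly shrunk complex cone inside $\Omega_\rho(\gamma)$ yield $\|D^2\chi(w)\|\le \MM\|w\|^{k-2}$. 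Here is the one spot where care is needed: I would use the convention $\|z\|=\max\{\|\Re z\|,\|\Im z\|\}$ fixed in Section~\ref{sec:notacio}, so that the intermediate argument satisfies $\|\Re v+ist\,\Im v\|=\max\{\|\Re v\|,st\|\Im v\|\}=\|\Re v\|$ (using $\|\Im v\|\le\gamma\|\Re v\|\le\|\Re v\|$). Consequently
\[
\bigl\|D\chi(\Re v+is\,\Im v)-D\chi(\Re v)\bigr\|\le \MM\,\|\Im v\|\,\|\Re v\|^{k-2},
\]
and multiplying by $\|\Im v\|$ and integrating in $s$ gives an overall bound $\MM\,\|\Im v\|^2\|\Re v\|^{k-2}\le \MM\gamma^2\|\Re v\|^k$, which is precisely the remainder $\gamma^2\OO(\|\Re v\|^k)$.

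There is no serious obstacle; the only point requiring attention is the mild shrinking of the cone used to invoke Cauchy estimates for $D^2\chi$, and checking that the norm convention keeps the argument $\Re v+ist\,\Im v$ of size comparable to $\|\Re v\|$ uniformly in $s,t\in[0,1]$. Both points are routine once the norm $\|\cdot\|=\max\{\|\Re\cdot\|,\|\Im\cdot\|\}$ and the cone condition $\|\Im v\|\le\gamma\|\Re v\|$ are in hand, which is why the result is stated as a straightforward consequence of Taylor's theorem.
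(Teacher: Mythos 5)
Your proof is correct and is exactly the intended argument: the paper dismisses this lemma as ``a straightforward consequence of Taylor's theorem,'' and what you have written out is precisely the second-order Taylor expansion of $\chi$ along the segment from $\Re v$ to $v = \Re v + i\Im v$, obtained by iterating the fundamental theorem of calculus, together with the observation that the whole segment stays in $\Omega_\rho(\gamma)$ and that, in the norm $\|z\|=\max\{\|\Re z\|,\|\Im z\|\}$ fixed before Lemma~\ref{lem:calR}, every intermediate point $\Re v + ist\,\Im v$ has norm exactly $\|\Re v\|$. Your remark about needing a mild domain shrinking to get the Cauchy bound $\|D^2\chi(w)\|\le\MM\|w\|^{k-2}$ is the right caveat for the lemma as stated with only the hypothesis $\chi=\OO(\|v\|^k)$; in the paper's actual applications $\chi$ is a finite sum of homogeneous functions (or comes with the derivative bounds of condition~(ii)), so the bound on $D^2\chi$ is available directly without shrinking, but your Cauchy-estimate route covers the lemma in the generality in which it is stated.
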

We fix $a,b,A$ satisfying~\eqref{defabACanalytic}, namely $a<a_f$, $b>b_f$ and $A<A_f$. Recall that $a_f\le  b_f$.
Let $v\in \Omega_\rho(\gamma) $. 
We are going to check that $\Re  \widecheck{R}_v(v) \in V_\rho$ and 
$\|\Im \widecheck{R}_v(v)\| \leq \gamma \|\Re \widecheck{R}_v(v)\| $.
On the one hand, by hypothesis (v) on $\overline{f}^N$ and Lemma~\ref{lem:realanalytic} we have that, if $\gamma$ is small, 
$$
\text{dist} \big (\Re \widecheck{R}_v(v), V_\rho^c \big ) \geq \text{dist}\big (\Re v +   \overline{f}^{N}(\Re v,0) ,V_\rho^c \big ) - \Mg \gamma \|\Re v\|^{N}  \geq \frac{a_V}{2} \|\Re v\|^{N}.
$$
On the other hand, if $v\in \Omega_\rho(\gamma)$ with $\gamma\leq 1$, using again  Lemma~\ref{lem:realanalytic}, and that $\|v\| = \| \Re v \| $, we obtain 
\begin{align*}
\|\Im \widecheck{R}_v(v)\| &\leq \| \Im v \| \big ( \| \Id + D\overline{f}^N (\Re v,0)\| + \Mg \gamma \|v\|^{N-1} + \Mg \| v\|^{N} \big ) \\ & \leq 
\gamma \| \Re v\| (1 - (A_f- \Mg \gamma - \Mg \rho) \|v\|^{N-1}).
\end{align*}
Using similar arguments we can see that  $\| \Re \widecheck{R}_v(v)\| \geq \| \Re v \| (1 - (b_f+ \Mg \gamma + \Mg \rho) \|v\|^{N-1})$. Then, to check that 
$\|\Im \widecheck{R}_v(v)\| \leq \gamma \| \Re \widecheck{R}_v(v)\|$, it is  sufficient to check that 
$$
b_f + \Mg (\gamma + \rho) < b < A< A_f - \Mg (\gamma+ \rho) 
$$
which is satisfied if $b<A$ and $\rho,\gamma$ are small enough. This proves that  $\Omega_\rho(\gamma) $ is invariant by $\widecheck{R}_v$.

To prove~\eqref{propcalRlema} in the second item of Lemma~\ref{lem:calR}, we note that there exist $\rho,\gamma$ small enough such that if $v\in \Omega_\rho(\gamma)$,
\begin{equation}\label{appendixpropa}
\|\widecheck{R}_v(v) \| \leq \|v\|- a_f \|v\|^{N} + \Mg \|v\|^{N+1} \leq \|v\| (1- a \|v\|^{N-1}),
\end{equation}
and
\begin{equation}\label{appendixpropb}
\|\widecheck{R}_v(v) \| \geq \|v\| - b_f \|v\|^{N}- \Mg \|v\|^{N+1} \geq \|v\|(1-b\|v\|^{N-1}).
\end{equation}
Analogously,
\begin{equation}\label{appendixpropA}
\|D\widecheck{R}_v(v)\|\leq 1- A \|v\|^{N-1}. 
\end{equation}
Then, since
$$
\|v\|(1-b\|v\|^{N-1}) \leq  \| \widecheck{R}_v(v)\|\leq \|v\|(1-a\|v\|^{N-1}),
$$
taking $a^*  <a(N-1)$, $b^*  >b(N-1)$ and $\rho,\gamma$ small enough, it is clear that
\begin{equation} \label{cotessupinf}
\frac{\|v\|}{\big [ 1 + b^*  \|v\|^{N-1}]^{\frac{1}{N-1}}} \leq \| \widecheck{R}_v(v)\|\leq \frac{\|v\|}{\big [ 1 + a^*  \|v\|^{N-1}\big ]^{\frac{1}{N-1}}}, \qquad v\in \Omega_\rho(\rho,\gamma) .
\end{equation}
Introducing the map
$
\mathcal{R}_{\mathbf{c}} (\xi) = \xi \big [ 1+ \mathbf{c} \xi^{N-1}\big ]^{-\frac{1}{N-1}},
$
with $\mathbf{c}>0$, \eqref{cotessupinf} can be rewritten as
$$
\mathcal{R}_{b^*}  (\|v\|) \leq  \| \widecheck{R}_v(v)\| \leq  \mathcal{R}_{a^* } (\|v\|).
$$
On the other hand, the flow $\varphi(t,w)$ of the differential equation $\dot{w}=-\frac{\mathbf{c}}{N-1} w^N $ is
$$
\varphi(t,w)= \frac{w}{\big [1 + t \mathbf{c} w^{N-1}\big ]^{\frac{1}{N-1}} }.
$$
Clearly, by induction on $k$, $\mathcal{R}^k_{\mathbf{c}} (\|v\|)=\varphi(k,\|v\|)$ for all $k\ge 0$. 
Since $\mathcal{R}_{a^* } $ and $\mathcal{R}_{b^*}  $ are increasing functions and $\Omega_\rho(\gamma)$ is invariant by $\widecheck{R}_v$, using again induction on $k$ we prove~\eqref{propcalRlema}.

In order to prove items (3) and (4) of Lemma~\ref{lem:calR}, we first need some estimates on $D\widecheck{R}_v$ and $ D^2 \widecheck{R}_v$. 

\begin{lemma}\label{Lem:R2}  
Let $a,b$ and $A$ satisfy~\eqref{appendixpropa}, \eqref{appendixpropb} and~\eqref{appendixpropA} with  $A>b$. Let also $1<\ell <A/b$ and $b^*   = \frac{(N-1)A}{\ell}$. Then,  there exist  $\rho,\gamma$ small enough and a constant $\Mg>0$ such that
for all $v \in \Omega_\rho(\gamma) $ and $k\geq 1$
\begin{align}
\|D\widecheck{R}_v^k(v )\| &\leq
\prod_{l=0}^{k-1} \|D\widecheck{R}_v (\widecheck{R}_v^l(v ))\|
\leq \frac{1}{\big [1 + kb^*   \|v\|^{N-1} \big ]^{\frac{\ell}{N-1} }}, \label{boundDRk}
\\ \|D^2 \widecheck{R}_v^k (v )\| & \leq \Mg
\frac{1}{\|v\| \big [ 1 + k b^*   \|v\|^{N-1}\big ]^{\frac{\ell}{N-1}}}. \notag
\end{align}
In addition,
\begin{equation}\label{bound:imRk}
\|\Im \widecheck{R}_v^k (v )\| \leq  \frac{\|\Im v \|}{\big [1 + k b^*  \|v\|^{N-1}\big ]^{\frac{\ell}{N-1}}}.
\end{equation}
\end{lemma}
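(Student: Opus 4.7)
The plan is to prove the three bounds of Lemma~\ref{Lem:R2} in sequence, each one feeding into the next.

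\textbf{Bound on $\|D\widecheck{R}_v^k(v)\|$.} The first inequality in \eqref{boundDRk} is the chain rule applied to $\widecheck{R}_v^k = \widecheck{R}_v \circ \cdots \circ \widecheck{R}_v$. For the second, I would start from \eqref{appendixpropA}, which gives $\|D\widecheck{R}_v(w)\| \le 1 - A\|w\|^{N-1}$ for $w \in \Omega_\rho(\gamma)$ (with $\rho,\gamma$ small), and take the logarithm of the product. Using $\log(1-x) \le -x$ and the lower bound \eqref{propcalRlema} from item~(2) of Lemma~\ref{lem:calR}, namely $\|\widecheck{R}_v^l(v)\|^{N-1} \ge \|v\|^{N-1}/(1+l b^*\|v\|^{N-1})$, I obtain
\[
\log\prod_{l=0}^{k-1}\bigl(1-A\|\widecheck{R}_v^l(v)\|^{N-1}\bigr) \le -A\|v\|^{N-1}\sum_{l=0}^{k-1}\frac{1}{1+lb^*\|v\|^{N-1}}.
\]
Comparing the monotone decreasing sum with its integral, $\sum_{l=0}^{k-1}(1+lb^*\|v\|^{N-1})^{-1} \ge \int_0^k(1+tb^*\|v\|^{N-1})^{-1}dt = (b^*\|v\|^{N-1})^{-1}\log(1+kb^*\|v\|^{N-1})$. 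Exponentiating and using $A/b^* = \ell/(N-1)$ by the choice $b^* = (N-1)A/\ell$ produces exactly the claimed exponent. The only care needed is that one must pick $a^*$ for the lower bound in item~(2) so that $b^*$ in this step equals the $b^*$ in that previous bound; since in item~(2) we only needed $b^* > b(N-1)$, and here we set $b^* = (N-1)A/\ell$ with $A/\ell > b$, both requirements are compatible.

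\textbf{Bound on $\|\Im \widecheck{R}_v^k(v)\|$.} This follows at once from the first bound. Writing $v = \Re v + i\Im v$ and using that $\widecheck{R}_v^k(\Re v)$ is real,
\[
\Im \widecheck{R}_v^k(v) = \Im\!\int_0^1 D\widecheck{R}_v^k(\Re v + is\Im v)\,(i\Im v)\,ds.
\]
For every $s\in[0,1]$ the point $w(s) = \Re v + is\Im v$ lies in $\Omega_\rho(\gamma)$ (since $\|\Im w(s)\| \le \|\Im v\| \le \gamma\|\Re v\|$) and satisfies $\|w(s)\| = \|\Re v\| = \|v\|$. Applying \eqref{boundDRk} at $w(s)$ and taking sup over $s$ gives \eqref{bound:imRk}.

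\textbf{Bound on $\|D^2 \widecheck{R}_v^k(v)\|$.} The cleanest route is Cauchy's estimate, which also serves as the only substantive place where analyticity enters. For $v \in \Omega_\rho(\gamma)$ (possibly in a slightly shrunken subdomain) one has a ball of radius $c\|v\|$ around $v$, with $c>0$ small but fixed, contained in the analyticity domain $\Omega_\rho(\gamma)$. Then
\[
\|D^2\widecheck{R}_v^k(v)\| \le \frac{1}{c\|v\|}\sup_{\|w-v\|=c\|v\|}\|D\widecheck{R}_v^k(w)\|.
\]
For such $w$ we have $\|w\| \ge (1-c)\|v\|$, so applying \eqref{boundDRk} with $w$ in place of $v$ yields a bound of the form $[1+kb^*(1-c)^{N-1}\|v\|^{N-1}]^{-\ell/(N-1)}$. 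Since $(1-c)^{N-1}$ is a fixed positive constant and the quotient of $[1+k\alpha\|v\|^{N-1}]^{-\ell/(N-1)}$ for two comparable $\alpha>0$ is bounded on $\Omega_\rho(\gamma)$, the extra factor is absorbed into the universal constant $\Mg$, yielding the desired estimate. The mildly delicate point is fixing the parameters $a^*,b^*,\ell,\rho,\gamma,c$ consistently so that the product bound, the lower bound from item~(2), and the Cauchy radius all work with the same constants; but this is a matter of opening the constants in the right order, not a deep obstacle.
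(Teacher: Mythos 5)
The first two bounds you prove the same way the paper does: the logarithm-of-product estimate combined with \eqref{propcalRlema} and the integral comparison for $\|D\widecheck{R}_v^k\|$, and Lemma~\ref{lem:realanalytic} (the integral formula for $\Im\widecheck{R}_v^k$) together with \eqref{boundDRk} for the imaginary-part bound. Both are correct and identical in substance to the paper's argument.

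Your third bound, on $\|D^2\widecheck{R}_v^k\|$, via Cauchy's estimate has a genuine gap. To apply Cauchy you need $D\widecheck{R}_v^k$ analytic, and bounded by \eqref{boundDRk}, on a full ball $B_{c\|v\|}(v)$. But $\Omega_\rho(\gamma)$ is a cone-like domain with $\Re v$ constrained to the cone $V_\rho$, and near $\partial V$ the distance from $\Re v$ to $V^c$ can be arbitrarily small relative to $\|\Re v\|$, so no ball of radius $c\|v\|$ fits inside $\Omega_\rho(\gamma)$ (nor inside $\Omega_{\rho_0}(\gamma_0)$ for any larger $\rho_0,\gamma_0$ — the obstruction is the cone boundary, not the outer radius or $\gamma$). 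You can shrink the cone to a $\widehat V$ strictly inside $V$ so that balls fit, but then you only obtain the estimate on the shrunken cone, whereas the lemma (and its downstream use in the invariance of $\Gamma_\rho(\gamma,\sigma)$ and in the norms on $\mathcal X_s$) requires it on the original $\Omega_\rho(\gamma)$. You cannot instead enlarge $V$, because the invariance in item (1) of Lemma~\ref{lem:calR} depends on hypothesis~(v), which is specific to the given cone $V$ and does not automatically hold on any enlargement; off $V$ the forward iterates $\widecheck{R}_v^l(w)$ may leave the domain of analyticity and \eqref{boundDRk} is unavailable. The paper avoids the issue entirely by using the chain-rule identity for second derivatives of a $k$-fold composition,
\[
\|D^2\widecheck{R}_v^k(v)\| \le \sum_{m=0}^{k-1}\|D^2\widecheck{R}_v(\widecheck{R}_v^m(v))\|\,\|D\widecheck{R}_v^m(v)\|\prod_{l=0}^{k-1}\|D\widecheck{R}_v(\widecheck{R}_v^l(v))\|\,\|D\widecheck{R}_v(\widecheck{R}_v^m(v))\|^{-1},
\]
where all evaluations are at points of the forward orbit, which stay inside $\Omega_\rho(\gamma)$; combined with $\|D^2\widecheck{R}_v(w)\|\le\Mg\|w\|^{N-2}$, \eqref{boundDRk}, and \eqref{propcalRlema}, this gives a convergent geometric-type series precisely because $\ell>1$. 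This is the step you need to replace.
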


\begin{proof}
By the chain rule and \eqref{appendixpropA}, if $v\in \Omega_\rho(\gamma)$,   
$$
\|D\widecheck{R}_v^k(v)\| \leq \prod_{l=0}^{k-1} \|D\widecheck{R}_v (\widecheck{R}_v^l(v))\| \leq  \prod_{l=0}^{k-1} (1 - A \| \widecheck{R}_v^l(v)\|^{N-1}).
$$
Now we bound the logarithm of the product. Since $b^*   > b(N-1)$, using property~\eqref{propcalRlema} we obtain
\begin{align*}
\sum_{l=0}^{k-1} \log \big ( 1- A\|\widecheck{R}_v^l(v)\|^{N-1}\big ) & \leq - A \sum_{l=0}^{k-1} \|\widecheck{R}_v^l(v)\|^{N-1}
\leq  -A \|v\|^{N-1} \sum_{l=0}^{k-1} \frac{1}{1+ l b^*  \|v\|^{N-1}} 
\\
&\leq - \frac{A}{b^*  }  \log \big ( 1+ k b^*   \|v\|^{N-1}) .
\end{align*}
Therefore, 
$$
\|D\widecheck{R}_v^k(v)\|
\leq \frac{1}{\big [1 + k b^*   \|v\|^{N-1}\big ]^{A/b^*  }}.
$$
Finally, since $\frac{A}{b^*  } = \frac{\ell}{N-1}$, property~\eqref{boundDRk} is proven. 

Now, we deal with the bound for $\|D^2 \widecheck{R}_v^k(v)\|$. We have that
$$
\|D^2 \widecheck{R}_v^k(v)\| \leq \sum_{m=0}^{k-1} \|D^2 \widecheck{R}_v (\widecheck{R}_v^m(v))\| \|D\widecheck{R}_v^m(v)\|
\prod_{l=0}^{k-1}\|D\widecheck{R}_v(\widecheck{R}_v^l(v))\| \|D\widecheck{R}_v (\widecheck{R}_v^m(v))\|^{-1}.
$$
We recall that $a^*  <a(N-1)$ and $a^*  < b^*  $. Using that $\|D\widecheck{R}_v(\widecheck{R}_v^m(v))\|\geq 1-\Cc\rho ^{N-1}$ for all $m\in \mathbb{N}$, that $\|D^2 \widecheck{R}_v(v)\|\leq \Mg \|v\|^{N-2}$, \eqref{boundDRk} and~\eqref{propcalRlema}:
\begin{align*}
\|D^2 \widecheck{R}_v^k& (v)\| \leq \Mg
\prod_{l=0}^{k-1} \|D\widecheck{R}_v (\widecheck{R}_v^l(v))\|
\sum_{m=0}^{k-1} \|\widecheck{R}_v^m(v)\|^{N-2} \|D\widecheck{R}_v^m(v)\| \\
& \leq \Mg \|v\|^{N-2} \frac{1}{\big [1 + kb^*   \|v\|^{N-1} \big ]^{\frac{\ell}{N-1}}} \sum_{m=0}^{k-1}
\frac{1}{\big [1+ m a^*   \|v\|^{N-1}\big]^{\frac{N-2}{N-1}}}
\frac{1}{\big [1+ m b^*  \|v\|^{N-1}\big]^{\frac{\ell}{N-1}}} \\
& \leq \Mg  \|v\|^{N-2} \frac{1}{\big [1 + kb^*   \|v\|^{N-1} \big ]^{\frac{\ell}{N-1} }}  \sum_{m=0}^{k-1}
\frac{1}{\big [1+ m a^*   \|v\|^{N-1}\big]^{\frac{N-2 + \ell}{N-1}}}.
\end{align*}
Then, since $\ell>1$, the sum above converges when $k\to \infty$ and we conclude that
$$
\|D^2 \widecheck{R}_v^k(v)\| \leq \Mg \frac{1}{\|v\|} \frac{1}{\big [1 + kb^*   \|v\|^{N-1} \big ]^{\frac{\ell}{N-1} }}.
$$

To finish the proof of this lemma we prove~\eqref{bound:imRk}. By Lemma~\ref{lem:realanalytic},
$$
\| \Im \widecheck{R}_v^k(v) \|\leq  \|\Im v \|\int_{0}^1 \| D \widecheck{R}_v^k(\Re v +i s \Im v) \| \, ds.
$$
Then, from the fact $\|\Re v +i s \Im v\|= \max \{ \|\Re v\|, s \|\Im v\|\} = \|\Re v \|=\|v\|$, using~\eqref{boundDRk} for
$\|D\widecheck{R}_v^k(\Re v +i s \Im v)\|$, we obtain the result.
\end{proof}
\begin{remark}
When $n=1$ one can further check that $\Im \widecheck{R}_v^k (v)\cdot \Im v \geq 0$ and that for $a^* <a(N-1)$ and $\ell>N$,
$$
|\Im \widecheck{R}_v^k(v )|\geq  \frac{|\Im v|}{\big [1 + k a^*  \|v\|^{N-1}\big ]^{\frac{\ell}{N-1}}}.
$$
Indeed, when $n=1$, $\widecheck{R}(v)=v-av^N + \mathcal{O}(|v|^{N+1})$.
Then, $\Im \widecheck{R}(v)=\Im v (1- a \mathcal{O}(|v|)^{N-1})$
and it is clear that, 
if $\Im v$ is small,
$\Im R^k(v)$ and $\Im v$ have the same sign.

To prove the lower bound for $\Im \widecheck{R}^k(v)$ we use that, for any $B>a N$, taking $\gamma,\rho$ small enough
$$
|\Im \widecheck{R}(v)|\geq |\Im x | (1- B |v|^{N-1}), \qquad x \in \Omega(\gamma,\rho).
$$
Therefore,
$$
|\Im \widecheck R^k(v)|\geq |\Im v | \prod_{l=0}^{k-1} (1-B|\widecheck R^l(v)|^{N-1}).
$$
As we did in the proof of Lemma~\ref{Lem:R2}, we consider the logarithm of the last product:
$$ 
\sum_{l=0}^{k-1} \log \big (1-B|\widecheck R^l(v)|^{N-1}\big )
 \geq - \frac{B}{a^* }  \log (1+ a^* k) |v|^{N-1}) .
$$   
Take $\wh a^* <a^* $ and $\rho$ small enough such that  
$$ 
|\Im \widecheck R^k(v)| \geq \frac{|\Im v| }{\big [1+ a^*  k\|v\|^{N-1} \big]^{B/\wh a^*}}.
$$
Since the choice of $B, a^* , \wh a^*$ can be done arbitrarily close to $Na,  a(N-1), a^* $ and $B/\wh a^*>N /(N-1) $ the proof is finished.
\end{remark}

Next, we prove property~\eqref{propImRlema} in the third item of Lemma~\ref{lem:calR}. 
Recall that  $\widecheck{R}_\psi(v,\psi)=\omega+\psi + R_\psi(v)$
with  $R_\psi(v)=\OO(\|v\|^P)$. By Lemma~\ref{lem:realanalytic} one has that
$$
\| \Im R_\psi (v) \| \leq  \| \Im v \| \int_{0}^1 \| D R_\psi(\Re v+ is \Im v) \|  \, ds \leq \Mg  \| \Im v \|  \| v \|^{P-1}.
$$
Let $\ell$ be such that $\max\{1,N-P\}<\ell <A/b$. Then, using~\eqref{propcalRlema} and Lemma~\ref{Lem:R2}:
\begin{equation*}
\begin{aligned}  
\sum_{j=0}^\infty \|\Im R_\psi(\widecheck{R}_v^{j} (v))\| & \leq  \Mg \sum_{j=0}^\infty \|\Im \widecheck{R}_v^{j} (v)\| \| \widecheck{R}_v^j (v)\|^{P-1} \\
&\leq \Mg  \|\Im v\| \|v\|^{P-1} \sum_{j=0}^\infty \frac{1}{\big [ 1 + j a^* \|v\|^{N-1}\big ]^{\frac{\ell+ P-1}{N-1}}}
\\
&\leq \Mg \frac{\|\Im v\|}{\|v\|^{N-P}},
\end{aligned}
\end{equation*}
where we have used that 
$a<b$ and $\ell+ P-1>N-1$.

Finally, for item (4) let $(v,\psi)\in \Gamma_\rho(\gamma,\sigma)$. We have already seen that  $\widecheck{R}_v(v)\in \Omega_\rho(\gamma)$.
It remains to prove that $\widecheck{R}_\psi(v,\psi )$  satisfies the condition of the definition of the set 
$\Gamma_\rho(\gamma,\sigma)$. We have
\begin{align*}
\|\Im \widecheck{R}_\psi(v,\psi ) \| + \sum_{l=0}^\infty \|\Im R_\psi(\widecheck{R}_v^{l+1} (v))\|
=&  \| \Im (\psi + R_\psi (v)) \|+ \sum_{l=0}^\infty \|\Im R_\psi(\widecheck{R}_v^{l+1} (v)) \| 
\\  \leq &\|\Im \psi\| + \sum_{l=0}^\infty \|\Im R_\psi(\widecheck{R}_v^{l } (v)) \| 
< \sigma 
\end{align*}
so that $\Gamma_\rho(\gamma,\sigma)$ is invariant by $ \widecheck{R} $.
This finishes the proof of Lemma \ref{lem:calR}.
 
\bibliography{references}

\end{document}